\documentclass[10pt,dvipsnames, reqno]{amsart}
\usepackage[T1]{fontenc}
\usepackage{amsmath,amssymb,amsfonts, mathtools}
\usepackage{bbm}
\usepackage[mathscr]{eucal}
 \usepackage[margin=1.5in]{geometry}
\usepackage[all]{xy}
\usepackage{xparse}
\usepackage{fullpage}

\allowdisplaybreaks

\usepackage{tikz, tikz-cd}%
\usetikzlibrary{matrix,arrows,decorations.pathmorphing,cd,patterns,calc,backgrounds,patterns}
\tikzset{dummy/.style= {circle,fill,draw,inner sep=0pt,minimum size=1.2mm}}
\tikzset{vertex/.style={fill, circle, minimum size=.1cm, inner sep=0pt}}

 \usepackage{comment}
\usetikzlibrary{matrix,arrows}
\usepackage{extarrows}
\usepackage{url}



\newcommand{\C}{\mathcal{C}}
\newcommand{\Cc}{\mathsf{C}}
\renewcommand{\Mc}{\mathsf{M}}
\newcommand{\Wc}{\mathsf{W}}
\newcommand{\op}{\mathsf{op}}
\newcommand{\id}{\mathsf{id}}
\newcommand{\bI}{\mathbb{I}}
\newcommand{\N}{\mathcal{N}}
\newcommand{\cotor}{\mathrm{CoTor}}

\newcommand{\dual}{\star}

\newcommand{\dcomod}{\mathcal{D}\bicomod{}{}}

\newcommand{\Ainf}{{\mathbb{E}_1}}

\newcommand{\Ck}{{\mathrm{Ch}_\mathbbm{k}^{\geq 0}}}
\newcommand{\comod}{\mathrm{CoMod}}

\newcommand{\tr}{\mathrm{tr}}
\newcommand{\cotr}{\mathrm{cotr}}

\newcommand{\ncoTHH}{\mathrm{coTHH}}
\newcommand{\coTHH}{\mathrm{coHH}}
\newcommand{\THH}{\mathrm{THH}}
\newcommand{\coHH}{\mathrm{coHH}}
\newcommand{\HH}{\mathrm{HH}}

\newcommand{\holim}{\mathrm{holim}}

\renewcommand{\hom}{\mathrm{Hom}}
\newcommand{\Endo}{\mathrm{End}}

\newcommand{\ev}{\mathrm{ev}}

\newcommand{\ccobars}[1]{\Omega^\bullet (#1, C, C)}
\newcommand{\ccobar}[1]{\Omega(#1, C, C)}
\newcommand{\cobars}[2]{\Omega^\bullet(#1, C, #2)}
\newcommand{\cobar}[2]{\Omega(#1, C, #2)}

\newcommand{\pull}{\arrow[dr, phantom, "\lrcorner", very near start]}

\newcommand{\DDk}{\mathcal{D}^{\geq 0}(\mathbbm{k})}

\newcommand{\bicomod}[2]{{}_{#1}\mathrm{CoMod}_{#2}}
\newcommand{\bimod}[2]{{}_{#1}\mathrm{Mod}_{#2}}

\newcommand{\dcotensor}{\widehat{\square}}


\def\Ho{\textup{Ho}}
\def\Sp{\textup{Sp}}
\newcommand{\lang}{{\langle \langle}}
\newcommand{\rang}{{\rangle \rangle}}

\numberwithin{equation}{section}

\theoremstyle{plain}   
\newtheorem{thm}[equation]{Theorem} 
\newtheorem{cor}[equation]{Corollary}
\newtheorem{lemma}[equation]{Lemma}
\newtheorem{prop}[equation]{Proposition}

\theoremstyle{definition}
\newtheorem{defn}[equation]{Definition}

\newtheorem{rem}[equation]{Remark}
\newtheorem{ex}[equation]{Example}

\usepackage[pagebackref, colorlinks, citecolor=DarkOrchid, linkcolor=black, urlcolor=NavyBlue]{hyperref}
\hypersetup{linktoc=all,} 

\hypersetup{
  colorlinks   = true, 
  urlcolor     = blue, 
  linkcolor    = PineGreen, 
colorlinks, citecolor    = DarkOrchid 
}

\usepackage[textwidth=25mm, textsize=tiny]{todonotes}






\title{Trace methods for coHochschild homology}
\author[S. Klanderman]{Sarah Klanderman}
\author[M.\ P\'eroux]{Maximilien P\'eroux}

\address{Department of Mathematical and Computational Sciences, Marian University, 3200 Cold Spring Road, Indianapolis, IN, 46222, USA}
\email{sklanderman@marian.edu}

\address{Department of Mathematics, Michigan State University, 619 Red Cedar Rd, East Lansing, MI 48824, USA}
\email{peroux@msu.edu}
    














\keywords{coalgebra, comodule, coHochschild homology, trace, bicategory, $K$-theory, Morita--Takeuchi equivalence.}
\subjclass[2020]{Primary: 16E40, 16T15, 18N10, 19A99, 55P43, 55U15. Secondary: 16D90, 18F30, 18M70, 18N60, 57T30.}

\begin{document}

\begin{abstract}
Hochschild homology is a classical invariant of rings that plays an important role because of its connection to algebraic $K$-theory via the Dennis trace.
At level zero, the Dennis trace is induced by the Hattori--Stallings trace.
In this paper, we introduce new algebraic $K$-theories of coalgebras and obtain coalgebraic refinements of the Hattori--Stallings trace that connect these algebraic $K$-theories to coHochschild homology---the invariant analogous to Hochschild homology but for coalgebras.
We employ bicategorical methods of Ponto to show that coHochschild homology is a shadow. Consequently, we obtain that coHochschild homology is Morita--Takeuchi invariant.
\end{abstract}
\maketitle

\setcounter{tocdepth}{1}
\tableofcontents

\section{Introduction}

Trace methods are essential for computations in algebraic $K$-theory. The \textit{Dennis trace} $K_*(R)\rightarrow \HH_*(R)$ relates $K$-theory to the more computable \textit{Hochschild homology} (HH). B\"okstedt refined this by introducing \emph{topological Hochschild homology} (THH), yielding the trace $K(R)\rightarrow \THH(R)$, which has been instrumental in computations, for instance \cite{trace1, trace2, tch}.

\textit{CoHochschild homology} (coHH) is the invariant of coalgebras analogous to Hoch\-schild homology that was first introduced by Doi \cite{doi1981homological}.
Based on previous work by Hess--Parent--Scott in chain complexes \cite{hess2009cohochschild}, Hess--Shipley extended Doi's definition to the setting of stable homotopy theory and introduced \textit{topological coHochschild homology} (coTHH) as the spectral refinement of coHH in \cite{HScothh}. 
Given a connected space $X$, there is an equivalence of spectra \[\THH(\Sigma^\infty_+ \Omega X)\simeq \Sigma^\infty_+\mathcal{L}X,\] where $\mathcal{L}X$ is the free loop space on $X$, see \cite[7.3.11]{loday1998cyclichomology} or \cite[IV.3.3]{tch}.
Building on work of Malkiewich \cite[2.22]{Cary}, Hess--Shipley show that
further connectivity conditions on $X$ give an equivalence of spectra \[\ncoTHH(\Sigma^\infty_+X)\simeq \Sigma^\infty_+\mathcal{L}X,\] see \cite[3.7]{HScothh}. 
These equivalences of spectra have led to new computations of the homology of the free loop space \cite{loop}, using a\textit{ coB\"okstedt spectral sequence} for coTHH constructed in \cite{bohmann2018computational}.

Given the prominent role played by trace methods in computations of algebraic $K$-theory, it is natural to extend trace methods to coTHH.
Combining the equivalences above with the Dennis trace, Hess--Shipley obtained a new trace to coTHH \cite{HScothh} for $X$ simply connected:
\[K(\Sigma^\infty_+ \Omega X)\longrightarrow \THH(\Sigma^\infty_+ \Omega X)\simeq \ncoTHH(\Sigma^\infty_+ X).\]
Additionally, work of the second author and Bay{\i}nd{\i}r \cite[4.4]{dualitySW} shows that applying Spanier--Whitehead duality on the Dennis trace determines a pairing 
$K(C^\vee)\wedge \ncoTHH(C) \longrightarrow \mathbb{S}$,
that can be reformulated as 
\begin{equation}\label{eq: dualizable dennis trace}
   K(C^\vee)\longrightarrow \ncoTHH(C)^\vee, 
\end{equation}
where $C^\vee$ is the Spanier--Whithead dual of a coalgebra spectrum $C$ with some finiteness conditions.

However, both of the coTHH traces mentioned above are not internal to the coalgebraic setting; they are directly induced by combining the usual Dennis trace  with an identification of coTHH with THH.
Moreover, there has been no clear candidate of an algebraic $K$-theory for coalgebras without using a duality on coalgebras into rings.
In this paper, as a first step towards trace methods in the coalgebraic setting, we introduce new algebraic $K$-theories for coalgebras and construct analogues of the Dennis trace on coHochschild homology at level zero. Our results lay the foundational framework for building higher level traces on coHochschild homology as well as spectral refinements to coTHH.

\subsection*{Review of trace methods}
If $A$ is a connective ring spectrum, the Dennis trace $K(A)\rightarrow \THH(A)$ at level zero is a homomorphism $K_0(\pi_0(A))\rightarrow \HH_0(\pi_0(A))$, induced by $K_0(R)\rightarrow \HH_0(R)$ for a (discrete) ring $R$. 
In order to obtain trace maps in the coalgebraic setting, we must first determine what happens at level zero for coHochschild homology.
In the algebraic setting, the homomorphism $K_0(R)\rightarrow \HH_0(R)$ arises from the usual trace of  matrices, which is additive and cyclic.
These properties generalize to endomorphisms on dualizable objects in symmetric monoidal categories \cite{doldpuppe, traceinSMon}. In non-symmetric contexts, Hattori and Stallings define the \textit{Hattori--Stallings trace}, a homomorphism $\Endo_R(M)\rightarrow R/[R,R]$ for a finitely generated projective $R$-module $M$ \cite{hattori, stallings}. The group $R/[R,R]=\operatorname{HH}_0(R)$ serves as the universal target for trace maps $\mathcal{M}_n(R)\rightarrow A$. Evaluating this trace on identity endomorphisms induces $K_0(R)\rightarrow \HH_0(R)$.

Ponto generalized this approach by introducing the notion of traces in \textit{shadowed bicategories} \cite{ponto2008fixed}. 
The key observation is that given (not-necessarily commutative) rings $R$ and $S$, an $(R,S)$-bimodule $M$ and an $(S,R)$-bimodule $N$, the relative tensor products $M\otimes_S N$ and $N\otimes_R M$ are \textit{not} isomorphic.
In fact, they do not even have the same algebraic structure, as the former is an $(R,R)$-bimodule while the latter is an $(S,S)$-bimodule.
However, the two bimodules become equivalent after applying (relative) Hochschild homology:
\[
\HH_0(R, M\otimes_S N)\cong \HH_0(S, N\otimes_R M).
\]
Considering instead the derived tensor product shows that higher levels of Hochschild homology $\HH_*$ are also shadows, and replacing rings by ring spectra shows that THH is a shadow. In fact, THH is the universal cyclic invariant, and the universal shadow in a sense made precise in \cite{nima}. 
Every shadowed bicategory comes equipped with a notion of a bicategorical trace that is cyclic, and the bicategorical trace on dualizable bimodules recovers the Hattori--Stallings trace in the underived case.

\subsection*{Results}
 We dualize the construction of trace maps in the coalgebraic setting, using the framework of bicategorical shadows. In doing so, we are confronted with several obstacles.
 For instance, given coalgebras $C$ and $D$, the \textit{cotensor product} $M\square_D N$ of a $(C,D)$-bicomodule $M$ with a $(D,C)$-bicomodule $N$ need not to be a $(C,C)$-bicomodule.  
Even worse, the cotensor product is not even associative in general when considering coalgebras over $\mathbb{Z}$ \cite[IV.2.5]{families}. 
To remedy this, we assume that $\mathbbm{k}$ is a ring of global dimension zero, as this guarantees we have a nice associative relative cotensor product on bicomodules.

\begin{thm}[{Theorem \ref{thm: coHH zero is a shadow}}]\label{theorem: main theorem}
Let $\mathbbm{k}$ be a commutative ring with global dimension zero.
The zeroth coHochschild homology $\coHH_0$ defines a shadow on the bicategory of $\mathbbm{k}$-coalgebras and bicomodules with their relative cotensor products. 
\end{thm}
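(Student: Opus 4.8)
The plan is to realize the bicategory in question concretely: its objects are $\k$-coalgebras $C$, the $1$-cells from $C$ to $D$ are $(C,D)$-bicomodules, horizontal composition is the relative cotensor product $\square$, and the unit $1$-cell at $C$ is $C$ itself viewed as a $(C,C)$-bicomodule. The candidate shadow sends a $(C,C)$-bicomodule $M$ to
\[
\coHH_0(C;M)\;=\;\mathrm{eq}\!\left(M \rightrightarrows M\otimes_\k C\right),
\]
the equalizer of the right $C$-coaction and the left $C$-coaction composed with the symmetry of $\otimes_\k$; equivalently the degree-zero part of the totalization of the cosimplicial $\k$-module computing $\coHH(C;M)$. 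First I would record that this assignment is functorial on $\coprod_C \bicomod{C}{C}$ and valued in $\k$-modules, since equalizers are functorial, and recall from the earlier part of the paper that, because $\k$ has global dimension zero, $\square$ is an associative and unital horizontal composition and $M\square_D N$ genuinely carries the $(C,C)$-bicomodule structure used below (its left $C$-coaction coming from $M$, its right $C$-coaction from $N$).

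Second, I would construct the shadow isomorphism $\theta\colon \coHH_0(C; M\square_D N)\xrightarrow{\ \cong\ }\coHH_0(D; N\square_C M)$ for $M\in\bicomod{C}{D}$ and $N\in\bicomod{D}{C}$. The key computation is to expand the left-hand side: $\coHH_0(C;M\square_D N)$ is an equalizer of $C$-coactions on $M\square_D N$, which is itself the equalizer of the two $D$-coactions on $M\otimes_\k N$; since finite limits commute, this is the joint equalizer of the single diagram recording that the $C$-coactions (left on $M$, right on $N$) and the $D$-coactions (right on $M$, left on $N$) agree around the circle formed by $M$, $D$, $N$, $C$. The analogous expansion of $\coHH_0(D;N\square_C M)$ produces the same type of joint equalizer for the circle formed by $N$, $C$, $M$, $D$, and these two diagrams are identified by the symmetry isomorphism $\tau$ of $(\k\text{-modules},\otimes_\k)$, which implements the rotation of the necklace exchanging the two ways of cutting it between consecutive coalgebra beads. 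Passing to equalizers yields $\theta$, and its naturality in $M$ and $N$ is inherited from naturality of $\tau$. This argument uses crucially that $\square$ is exact and associative, hence the global dimension zero hypothesis.

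Third, I would verify the two coherence axioms of a shadow. For the associativity (cyclicity) axiom, given $M\in\bicomod{C}{D}$, $N\in\bicomod{D}{E}$, $P\in\bicomod{E}{C}$, one unwinds all the instances of $\theta$ and of the horizontal associators as maps between iterated equalizers built from $M$, $N$, $P$ and their interspersed coalgebras; the required hexagon then reduces to the hexagon and naturality axioms for the braiding $\tau$ together with the pentagon and triangle coherence of $\square$. For the unit axiom one checks that the composite $\coHH_0(C;M)\cong \coHH_0(C;M\square_C C)\xrightarrow{\theta}\coHH_0(C; C\square_C M)\cong\coHH_0(C;M)$ is the identity; after unwinding this says that the relevant instance of $\tau$ is compatible with the unit isomorphisms of $\square$, again a consequence of the coherence of the symmetric monoidal structure on $\k$-modules. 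Both checks are diagram chases through the universal properties of equalizers.

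The main obstacle is the second step. In the classical Hochschild setting the shadow isomorphism is transparent because $\HH_0$ is a coequalizer, so one may slide tensor factors freely; here $\coHH_0$ and the cotensor product are both equalizers, i.e. limits, so the analogous manipulations require that cotensoring be exact and strictly associative — which is precisely what fails over a general base and what the global dimension zero hypothesis on $\k$ is imposed to repair. A subsidiary point, presumably established earlier in the paper, is that $M\square_D N$ really is a $(C,C)$-bicomodule (this needs the relevant coflatness), without which the statement of $\theta$ would not even typecheck.
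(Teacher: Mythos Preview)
Your proposal is correct and follows essentially the same approach as the paper: the isomorphism $\theta$ is induced by restricting the symmetry $\tau\colon M\otimes N\to N\otimes M$ of $\k$-modules to the nested equalizers defining $\coHH_0(M\square_D N,C)$ and $\coHH_0(N\square_C M,D)$, and the hexagon and triangle axioms are then checked directly from the formulas for $\theta$, $a$, $\ell$, $r$. The only difference is stylistic---the paper carries out each verification by explicit element-chasing in Sweedler notation (conditions (1) and (2) in the proof swap roles under $\tau$, and the unit axiom unwinds to $\varepsilon_C(c)p=\varepsilon_C(c)p$), whereas you phrase the same checks via universal properties of equalizers and coherence of the symmetric monoidal structure.
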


In particular, given $\mathbbm{k}$-coalgebras $C$ and $D$, a $(C,D)$-bicomodule $M$ and a $(D,C)$-bicomodule $N$, there is a natural isomorphism of $\mathbbm{k}$-modules
\[
\coHH_0(M\square_D N, C) \cong \coHH_0(N\square_C M, D).
\]
Given a right dualizable $(C,D)$-bicomodule $M$ and an endomorphism $f\colon M\rightarrow M$ of $(C,D)$-bicomodules, the fact that $\coHH_0$ is a shadow induces a $\mathbbm{k}$-linear homomorphism
\begin{equation}\label{eq: bicategorical trace}
 \tr_C^D(f)\colon \coHH_0(C) \longrightarrow \coHH_0(D), 
\end{equation}
that is cyclic (Proposition \ref{prop: cyclic trace}). Considering either $C=\mathbbm{k}$ or $D=\mathbbm{k}$ leads to two particularly interesting traces.

\subsubsection*{Case 1: $D=\mathbbm{k}$ in \eqref{eq: bicategorical trace}}
In this case, if $M$ is a finitely cogenerated and injective left $C$-comodule (see Definition \ref{def: finitely cogenerated}), then it is right dualizable as a $(C, \mathbbm{k})$-bicomodule.
Given a $C$-colinear endomorphism $f$ of $M$, the assignment $f\mapsto \tr_C^\mathbbm{k}(f)$ determines a $\mathbbm{k}$-linear homomorphism that we call the\textit{ Hattori--Stallings cotrace}:
\begin{equation*}\label{eq: hattori-stallings cotrace intro}
  {}_C\Endo(M)\rightarrow \hom_\mathbbm{k}(\coHH_0(C), \mathbbm{k}).  
\end{equation*}
We shall write $ \hom_\mathbbm{k}(\coHH_0(C), \mathbbm{k})$ as $\coHH_0(C)^*$.
The class of finitely cogenerated and injective left $C$-comodules is the dual analogue of the class of finitely generated and projective right $R$-modules considered when computing the algebraic $K$-theory $K(R)$ of a ring $R$. 
Therefore, this analogy leads us to introduce the following construction.

\begin{defn}\label{def: the first K-theory}
Let $\mathbbm{k}$ be a commutative ring with global dimension zero and let $C$ be a $\mathbbm{k}$-coalgebra.
Let $\bicomod{C}{}^\mathrm{fc, inj}$ denote the category of finitely cogenerated and injective left $C$-comodules, viewed as a Waldhausen category whose cofibrations are monomorphisms and whose weak equivalences are isomorphisms.
Let $K^c(C)$ denote the resulting algebraic $K$-theory spectrum associated to the Waldhausen category $\bicomod{C}{}^\mathrm{fc, inj}$, as in \cite{waldy} or \cite[IV.8.5]{Kbook}.
\end{defn}

Just as the Hattori--Stallings trace $\Endo_R(M)\rightarrow \HH_0(R)$ led to a homomorphism $K_0(R)\rightarrow \HH_0(R)$ of abelian groups by considering the trace on the identity, the Hattori--Stallings cotrace ${}_C\Endo(M)\rightarrow \coHH_0(C)^*$ on the identity (i.e., the \textit{corank}) induces the following map.

\begin{thm}[{Theorem \ref{thm: the corank as a theorem}}]\label{thm: first trace}
Let $\mathbbm{k}$ be a commutative ring with global dimension zero, and let $C$ be a $\mathbbm{k}$-coalgebra.
The Hattori--Stallings corank induces a homomorphism of abelian groups
\[
K_0^c(C) \longrightarrow \coHH_0(C)^*.
\]
\end{thm}

The cotrace $K_0^c(C) \rightarrow \coHH_0(C)^*$ is analogous
to the Dennis trace $K_0(C^*)\rightarrow \coHH_0(C)^*$ obtained when $C$ is finitely generated as $\mathbbm{k}$-modules, a particular case of \eqref{eq: dualizable dennis trace} from \cite{dualitySW}.
We view Theorem \ref{thm: first trace} as a first step in constructing the Dennis trace entirely in the coalgebraic setting, with no finiteness assumption on $C$.

\subsubsection*{Case 2: $C=\mathbbm{k}$ in \eqref{eq: bicategorical trace}}
In this case, relabeling $D$ as $C$, if $M$ is a right $C$-comodule that is finitely generated as a $\mathbbm{k}$-module, then it is right dualizable as a $(\mathbbm{k}, C)$-bicomodule.
Given a $C$-colinear endomorphism $f$ on $M$,
the assignment $f\mapsto \tr^C_\mathbbm{k}(f)(1)$
determines a $\mathbbm{k}$-linear homomorphism that we call the \textit{colinear Hattori--Stallings trace}:
\begin{equation*}\label{eq: colinear hattori-stallings}
  \Endo_C(M)\longrightarrow \coHH_0(C).  
\end{equation*}
The class of right $C$-comodules that are finitely generated as $\mathbbm{k}$-modules is similar to classes that have been considered in algebraic $K$-theory before in the literature \cite{HSwaldausen, pertower}. 

\begin{defn}\label{def: the second algebraic K-theory}
Let $\mathbbm{k}$ be a commutative ring with global dimension zero, and let $C$ be a $\mathbbm{k}$-coalgebra. Let $\bicomod{}{C}^\mathrm{fg}$ denote the category of right $C$-comodules that are finitely generated (and automatically projective) as $\mathbbm{k}$-modules. 
It is a Waldhausen category whose cofibrations are monomorphisms and whose weak equivalences are isomorphisms.
Let  
$G^c(C)$ denote the resulting algebraic $K$-theory spectrum $K(\bicomod{}{C}^\mathrm{fg})$.
\end{defn}

We show in Proposition \ref{prop: G-theory} that this algebraic $K$-theory is similar to the $G$-theory of rings.
A particular example of interest is the group ring $\mathbbm{k} \Gamma$ for $\Gamma$ a finite group, as $G(\mathbbm{k} \Gamma)$ is the $K$-theory of representations of $\Gamma$ in the category of $\mathbbm{k}$-modules, also known as Swan theory \cite{Swan}.  
The colinear Hattori--Stallings trace $\Endo_C(M)\rightarrow \coHH_0(C)$ on the identity defines a colinear rank. 
In \cite{coalgKtheory}, together with Gerhardt and Sor\'e, the second author show that the colinear rank is dual to the character of finite dimensional representations.

\begin{thm}[{Theorem \ref{thm: the second trace on K-theory}}]\label{Thm: second trace}
Let $\mathbbm{k}$ be a commutative ring with global dimension zero, and let $C$ be a $\mathbbm{k}$-coalgebra.
The colinear Hattori--Stallings rank induces a homomorphism of abelian groups
\[
G^c_0(C)\rightarrow \coHH_0(C).
\]
\end{thm}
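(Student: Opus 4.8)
The plan is to reduce the statement to the classical additivity of the trace. Since the cofibrations of $\bicomod{}{C}^\mathrm{fg}$ are the monomorphisms and its weak equivalences are the isomorphisms, the group $K_0(\bicomod{}{C}^\mathrm{fg})$ is the Grothendieck group of the underlying exact category: the free abelian group on isomorphism classes $[M]$ of right $C$-comodules that are finitely generated over $\k$, modulo the relation $[M]=[M']+[M'']$ for each short exact sequence $0\to M'\to M\to M''\to 0$ in $\bicomod{}{C}^\mathrm{fg}$ (see \cite[IV.8.5]{Kbook} or \cite{waldy}). So it suffices to check that the colinear Hattori--Stallings rank $\rk(M):=\tr^C_\k(\id_M)(1)\in\coHH_0(C)$ is (i) an isomorphism invariant of $M$, and (ii) additive on short exact sequences.

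For (i), an isomorphism $\phi\colon M\xrightarrow{\ \cong\ }N$ of right $C$-comodules identifies $M$ and $N$ as right dualizable $(\k,C)$-bicomodules and conjugates $\id_M$ to $\id_N$, so cyclicity of the bicategorical trace (Proposition \ref{prop: cyclic trace}) yields $\tr^C_\k(\id_M)=\tr^C_\k(\phi^{-1}\circ\id_N\circ\phi)=\tr^C_\k(\id_N)$, hence $\rk(M)=\rk(N)$.

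For (ii), fix a short exact sequence $0\to M'\xrightarrow{i}M\xrightarrow{p}M''\to 0$ in $\bicomod{}{C}^\mathrm{fg}$. Because $\k$ has global dimension zero, $M''$ is $\k$-projective, so $p$ has a $\k$-linear section $\sigma$, and $(i,\sigma)$ gives an isomorphism $M\cong M'\oplus M''$ of $\k$-modules. Since $i$ and $p$ are $C$-colinear and $i$ is a kernel of $p$ as comodules, the coaction $\rho_M\colon M\to M\otimes C$ is block upper triangular with respect to this decomposition, of the form $\left(\begin{smallmatrix}\rho_{M'}&\phi\\ 0&\rho_{M''}\end{smallmatrix}\right)$ for some $\k$-linear $\phi\colon M''\to M'\otimes C$. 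By the description of the colinear Hattori--Stallings rank in Section \ref{subsec: colinear trace} (Definition \ref{def: colinear Hattori--Stallings trace}), $\rk(M)$ is the image in $\coHH_0(C)\subseteq C$ of the twisted Hattori--Stallings trace of $\rho_M$, i.e.\ the symmetric monoidal trace of the $\k$-linear map $\rho_M$ on the dualizable $\k$-module $M$ with coefficients in $C$. The symmetric monoidal trace is additive over such block upper triangular maps on a direct sum of dualizable objects, the off-diagonal term $\phi$ contributing nothing since $\ev_M$ vanishes on the mixed summand \cite{doldpuppe, traceinSMon}; hence $\rk(M)=\rk(M')+\rk(M'')$ in $\coHH_0(C)$. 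This establishes additivity and produces the desired homomorphism $K_0(\bicomod{}{C}^\mathrm{fg})\to\coHH_0(C)$.

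The main obstacle is carrying out step (ii) rigorously, in two places: first, deducing the triangular form of $\rho_M$ from the colinearity of $i$ and $p$ together with the identification of $i$ with $\ker p$ (the comodule analogue of the elementary fact that a self-map restricts to a subobject and descends to a quotient); and second, matching the twisted Hattori--Stallings trace of Section \ref{subsec: colinear trace} with the ordinary symmetric monoidal trace over $\k$ precisely enough that its classical additivity over triangular decompositions applies and the outcome is seen to lie in the sub-$\k$-module $\coHH_0(C)\subseteq C$. Both become routine once the formula of Section \ref{subsec: colinear trace} is in hand; the remaining additivity is then purely formal.
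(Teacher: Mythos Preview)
Your proposal is correct and follows essentially the same route as the paper. The paper also identifies $K_0(\bicomod{}{C}^\mathrm{fg})$ with the Grothendieck group (citing the same references), and reduces everything to an Additivity Lemma proved via the $\k$-linear splitting $M\cong M'\oplus M''$ together with the fact that $\tr^C(f)=\tr(\rho\circ f)$ is the usual twisted Hattori--Stallings trace, which is additive on such block-triangular data---exactly your argument. The only cosmetic differences are that the paper states the additivity lemma for an arbitrary compatible triple of endomorphisms $(f',f,f'')$ rather than just the identities, and then supplements the one-line twisted-trace argument with an explicit Sweedler-notation verification; your version skips both of these and goes straight to the rank.
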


In this paper, we also extend our shadow framework of Theorem \ref{theorem: main theorem} to the derived setting.
Our approach dualizes the arguments which show Hochschild homology is a shadow \cite{ponto2008fixed}, using an adaptation of the so-called Dennis--Waldhausen Morita argument. 
However, we are again faced with the following challenges. 

First, obtaining model structures on categories of comodules and coalgebras is a difficult problem \cite{left1,hkrs, left3}. Even in the case when model structures do exist, they may not be well-behaved \cite{perouxshipley, dkcoalg}. 
Nevertheless, the second author showed that there is a model structure on connective comodules over simply connected differential graded $\mathbbm{k}$-coalgebras up to quasi-isomorphism representing homotopy coherent connective comodules in $H\mathbbm{k}$-spectra \cite{connectivecomod}.

Second, constructing a derived cotensor product is challenging but necessary for a shadow structure on derived bicomodules. While the two-sided bar construction models the derived tensor product, a two-sided cobar construction should model the derived cotensor product. However, topological and algebraic cobar constructions each have issues: the former lacks comodule preservation, while the latter fails to be invariant under quasi-isomorphisms, see Remark \ref{rem: topological cobar vs algebraic cobar}. Restricting to simply connected dg-coalgebras, as in \cite{pertower, connectivecomod}, offers a workaround, though subtleties remain, such as totalization not commuting with tensoring, which we resolve here.

\begin{thm}[{Theorem \ref{theorem: coHH is shadow (derived)}}]\label{thm: intro derived shadow}
Let $\mathbbm{k}$ be a commutative ring with global dimension zero.
CoHochschild homology $\coHH$ defines a shadow on the bicategory of simply connected differential graded $\mathbbm{k}$-coalgebras and bicomodules with their derived relative cotensor product. 
\end{thm}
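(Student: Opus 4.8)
The plan is to adapt the Dennis--Waldhausen Morita argument, dualized, from the underived case of Theorem~\ref{theorem: main theorem} (which we may assume) to the derived setting, running in parallel with Ponto's proof that Hochschild homology is a shadow \cite{ponto2008fixed} but working with the bicategory of simply connected differential graded $\k$-coalgebras, bicomodules, and derived relative cotensor products. First I would fix a model for the derived relative cotensor product: for simply connected dg $\k$-coalgebras, the two-sided cobar construction $\Omega^\bullet$ does preserve bicomodule structures, and in the simply connected range it computes the correct homotopy type (this is the point of the comparison alluded to in Remark~\ref{rem: topological cobar vs algebraic cobar} and the setup of \cite{pertower, connectivecomod}), so $M \dcotensor_D N := \Tot\,\Omega^\bullet(M, D, N)$ is a well-behaved, homotopy-invariant, associative (up to coherent equivalence, using that $\k$ has global dimension zero) functor. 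Then I would \emph{define} the derived shadow by $\coHH(M) := \Tot\,\ccobars{M} = \Tot\,\Omega^\bullet(M, C, C)$ for a $(C,C)$-bicomodule $M$, i.e.\ the cyclic bar-type totalization of the cobar complex, with the cyclic structure supplying the candidate symmetry isomorphism.

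The core of the proof is the natural equivalence $\coHH(M \dcotensor_D N) \simeq \coHH(N \dcotensor_C M)$ for a $(C,D)$-bicomodule $M$ and a $(D,C)$-bicomodule $N$. I would establish this by writing both sides, after expanding the cobar resolutions, as the totalization of one common cosimplicial (indeed bicosimplicial) object built from $M$, $N$, $C$, and $D$ --- the ``cyclic cobar'' object $\Omega^\bullet(M, C; N, D)$ whose $(p,q)$-simplices are $M \otimes C^{\otimes p} \otimes N \otimes D^{\otimes q}$ --- and observing that the two iterated totalizations arise by totalizing in the two different orders. The cyclic symmetry of this object (rotating $M$ past $N$) is exactly the Hattori--Stallings-type symmetry, and on $H_0$ it must reduce to the isomorphism $\coHH_0(M\square_D N, C)\cong \coHH_0(N\square_C M, D)$ already supplied by Theorem~\ref{theorem: main theorem}, which pins down coherence at the bottom. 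The unit, associativity, and the two coherence axioms (compatibility of the shadow isomorphism with associators and with the unit isomorphisms) for a bicategorical shadow then follow by the same bookkeeping as in \cite{ponto2008fixed}, transported across the (op-)duality, each reducing to a manipulation of cosimplicial identities on these explicit cobar objects.

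The main obstacle --- and the subtlety flagged in the excerpt --- is that \emph{totalization does not commute with the (co)tensor product}, so one cannot naively interchange the $\Tot$ that defines $\dcotensor_D$ with the $\Tot$ that defines $\coHH$, nor with $\otimes$-ing in further copies of $C$. Concretely, $\Tot\,\Omega^\bullet(M,C,C)\,\dcotensor_D\,N$ is not literally $\Tot$ of a single cobar object in the obvious way; one needs a cofinality/Eilenberg--Zilber-type argument, or an explicit contracting homotopy, to show that the relevant (partial) totalizations and the diagonal totalization of the bicosimplicial object agree in the simply connected range. I would handle this by exploiting connectivity: the cobar filtration on a simply connected dg coalgebra is such that in each total degree only finitely many cosimplicial levels contribute, so the towers are eventually constant and $\Tot$ commutes with the relevant finite limits and with $-\dcotensor_D N$ up to quasi-isomorphism. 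Once that commutation lemma is in place, the rest of the verification is the formal dualization of Ponto's argument, and invoking the already-established underived shadow of Theorem~\ref{theorem: main theorem} on $\pi_0$ fixes the remaining sign/coherence choices. Morita--Takeuchi invariance of $\coHH$ is then an immediate formal consequence, exactly as in the classical shadow story.
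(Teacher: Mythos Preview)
Your proposal is essentially the same strategy the paper uses: model the derived cotensor by the two-sided cobar construction, define $\coHH$ via the cyclic cosimplicial object, and obtain $\theta$ as the shuffling isomorphism of the bicosimplicial object with terms $M\otimes D^{\otimes i}\otimes N\otimes C^{\otimes j}$ (the dual Dennis--Waldhausen argument). Two points of divergence are worth flagging.

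First, the paper organizes the ``$\Tot$ does not commute with tensor'' issue differently from your raw connectivity sketch. Rather than arguing directly that only finitely many cosimplicial levels contribute in each degree, the paper works systematically with \emph{fibrant} bicomodules in the left-induced model structure (Appendix~\ref{chap: appendix}): fibrant objects are retracts of Postnikov towers that stabilize degreewise, fibrant implies two-sided coflat, and cotensoring with a fibrant bicomodule preserves quasi-isomorphisms via an Eilenberg--Moore spectral sequence (Proposition~\ref{Prop: cotensor preserve weak equivalence if fibrant}). This yields the precise commutation lemmas (Corollaries~\ref{cor: tensor commute with cobar}, \ref{cor: bicosimplicial of cobar}, \ref{cor: bicosimplicial of cohh}) that let one identify $\coHH(\Omega(M,D,N),C)$ with the homotopy limit over $\Delta\times\Delta$ of the bicosimplicial object. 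Your connectivity heuristic is the correct intuition and could be made to work, but you should be aware that the paper's argument is packaged through the model-categorical fibrancy machinery rather than a bare boundedness claim.

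Second, your suggestion to ``pin down coherence'' by reducing to $\pi_0$ and invoking the underived Theorem~\ref{theorem: main theorem} is not how the paper proceeds, and would not suffice: the hexagon and triangle axioms must hold as equalities of morphisms in $\Ck$, not merely on $H_0$. The paper instead verifies both coherence diagrams directly at the level of (bi- and tri-)cosimplicial objects before taking homotopy limits, tracking the gradings explicitly; the checks are elementary permutations of tensor factors compatible with all cofaces and codegeneracies, and then \cite[18.5.3]{hir} passes them to the homotopy limit.
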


One of the defining properties of $K$-theory and Hochschild homology is invariance under Morita equivalences --- rings that have equivalent (possibly derived) categories of left modules. 
The dual notion for coalgebras and comodules is called \textit{Morita--Takeuchi equivalence}. 
Despite the pathological nature of model categories of comodules discussed above, we are able to show that coHochschild homology is invariant under Morita--Takeuchi equivalence in the derived context using our shadow framework.

\begin{thm}[{Theorem \ref{thm: coHH is morita invariant}}]\label{THEOREM E}
Let $C$ and $D$ be homotopically  Morita--Takeuchi equivalent simply connected differential graded $\mathbbm{k}$-coalgebras.
Then there is an isomorphism \[\coHH(C)\cong \coHH(D).\] 
\end{thm}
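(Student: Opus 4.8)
The plan is to deduce Theorem \ref{THEOREM E} from the shadow structure established in Theorem \ref{thm: intro derived shadow} together with the general principle that shadows are invariant under Morita-type equivalences, exactly as in Ponto's treatment of Hochschild homology. First I would recall the definition of a homotopical Morita--Takeuchi equivalence between simply connected differential graded $\k$-coalgebras $C$ and $D$: it should be realized by a $(C,D)$-bicomodule $M$ and a $(D,C)$-bicomodule $N$ that are inverse to one another, in the sense that $M\dcotensor_D N\simeq C$ as $(C,C)$-bicomodules and $N\dcotensor_C M\simeq D$ as $(D,D)$-bicomodules, where $\dcotensor$ denotes the derived relative cotensor product. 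In the language of the derived bicategory of dg $\k$-coalgebras, this says precisely that $C$ and $D$ are equivalent objects (internally dualizable with $M$, $N$ witnessing the duality/equivalence). I would take care to verify that $M$ and $N$ indeed lie in the derived bicategory from Theorem \ref{thm: intro derived shadow} and are (right) dualizable there, so that all the formal apparatus applies.

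Next, I would invoke the general fact---valid in any shadowed bicategory---that a shadow sends equivalent objects to isomorphic values. Concretely, the shadow $\coHH$ applied to the identity $1$-cell, together with the natural isomorphism $\coHH(M\dcotensor_D N)\cong \coHH(N\dcotensor_C M)$ furnished by the shadow axiom (this is the derived analogue of the displayed isomorphism following Theorem \ref{theorem: main theorem}), yields
\[
\coHH(C)\cong \coHH(M\dcotensor_D N)\cong \coHH(N\dcotensor_C M)\cong \coHH(D),
\]
where the outer isomorphisms come from applying $\coHH$ to the equivalences $M\dcotensor_D N\simeq C$ and $N\dcotensor_C M\simeq D$, using that $\coHH$ is homotopy invariant (it is built from the derived cotensor product, so it sends weak equivalences of bicomodules to isomorphisms in the homotopy category). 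One should also check that the composite isomorphism is independent of the choices, or at least record the statement only up to the ambient choices, which is all that is claimed.

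The main obstacle I expect is not the formal shadow argument but the homotopical bookkeeping needed to make it rigorous in the dg-coalgebra setting: namely, verifying that the derived cotensor product used to define the Morita--Takeuchi equivalence agrees with the one underlying the shadow in Theorem \ref{thm: intro derived shadow}, and that the equivalences $M\dcotensor_D N\simeq C$ can be promoted to the level of $2$-cells in the derived bicategory compatibly with the shadow. As flagged in the excerpt, the derived cotensor product is delicate---totalization does not commute with tensor products, and the topological versus algebraic cobar constructions behave differently---so the careful part is confirming that, in the simply connected dg setting, the two-sided cobar model computing $\dcotensor$ is both homotopy invariant and compatible with bicomodule structures, so that the equivalences witnessing Morita--Takeuchi equivalence are genuinely morphisms in the relevant homotopy bicategory. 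Once that compatibility is in place, the isomorphism $\coHH(C)\cong\coHH(D)$ follows formally from the shadow axioms applied to the dualizing bicomodule $M$, just as Morita invariance of $\HH$ follows from its shadow structure in \cite{ponto2008fixed}.
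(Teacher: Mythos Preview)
Your proposal is correct and follows essentially the same approach as the paper: the proof there is a one-line combination of Theorem \ref{theorem: coHH is shadow (derived)} (coHH is a shadow on $\dcomod$) with Proposition \ref{prop: morita invariance for shadows} (shadows send Morita equivalences to isomorphisms via the Euler characteristic), and your chain $\coHH(C)\cong\coHH(M\dcotensor_D N, C)\cong\coHH(N\dcotensor_C M, D)\cong\coHH(D)$ is exactly the content of that proposition. The homotopical worries you raise in your final paragraph turn out to be non-issues here, because in the paper ``homotopically Morita--Takeuchi equivalent'' is \emph{defined} to mean Morita equivalent in the bicategory $\dcomod$; the compatibility of the derived cotensor with the bicategory structure and the shadow is already absorbed into the construction of $\dcomod$ and the proof of Theorem \ref{theorem: coHH is shadow (derived)}, so nothing further needs to be checked at this stage.
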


\subsection*{Vista}
The present paper is the first step of a program to extend the Dennis trace $K(R)\rightarrow \THH(R)$ to the coalgebraic setting. 
Our results propose natural candidates for the algebraic $K$-theory of a coalgebra.
The Hess--Shipley identification
in \cite{HSwaldausen} suggests that these new $K$-theories for coalgebras ``deloop'' the usual algebraic $K$-theories of rings.
In fact, given any ring spectrum $R$, its bar construction $BR$ is a coalgebra and modules over $R$ are equivalent to certain comodules over $BR$ under a Koszul duality \cite{gijs}. 
Therefore, obtaining computational tools to determine the algebraic $K$-theories of a coalgebra can shed new light on the usual $K$-theory of rings. Additionally, in forthcoming work with Brazelton, Calle, Chan, and Keenan, the second author intends to extend current methods in this paper to the bicategory of bicomodules over the coalgebra spectrum $\Sigma^\infty_+ X$.

In \cite{coalgKtheory}, Gerhardt, Sor\'e, and the second author show how the algebraic $K$-theories $K^c(C)$ and $G^c(C)$ relate further to the usual algebraic $K$-theory and $G$-theory of a ring. They also show the compatibility of the corank and colinear rank with the usual Hattori--Stallings trace and the character of finite dimensional representations.
In forthcoming work with Agarwal and Mehrle, the second author intends to extend the trace of Theorems \ref{thm: first trace} to higher level
\(
K_n^c(C) \longrightarrow (\coHH_n(C))^*,
\) for all $n \geq 0$.
 
\subsection*{Organization}
We finish this section with notations and definitions that are used throughout this paper. 
In section \ref{section :universal cotrace}, we construct the Hattori--Stallings cotrace and the colinear Hattori--Stallings trace  using only algebraic methods and motivation. This section does not require any knowledge of bicategories, shadows, or homotopy theory.
In order to be a self-contained paper, we will recall all necessary bicategorical definitions in later sections.
We introduce the different bicategories of bicomodules 
in Section \ref{section: bicategory of comodules}. Our key result, that coHochschild homology defines a bicategorical shadow, is proved in Section \ref{section: coHH as a shadow}. We then explore the notion of duality in bicomodules in Section \ref{section: duality}, which has been under-documented in the literature thus far.
In Section \ref{section: traces}, we provide a bicategorical description of the traces that appear in Section \ref{section :universal cotrace}.
In Section \ref{section: morita}, we show how Morita--Takeuchi equivalences are part of the bicategorical structures. In Appendix \ref{chap: appendix}, we carefully detail how to represent the derived cotensor product of bicomodules and explore what fibrant bicomodules are.

\subsection*{Acknowledgments}  
We thank Jonathan Campbell and Cary Malkiewich for discussions that sparked some of the initial ideas of this paper, as well as Teena Gerhardt and Gabriel Angelini-Knoll. 
We also thank Haldun Özgür Bayındır, Thomas Brazelton, Maxine Calle, David Chan, Andres Mejia, Svetlana Makarova, Mona Merling, and Manuel Rivera for helpful and enlightening conversations.
We also thank Maxine Calle and David Chan for their valuable feedback on earlier versions of this paper.

\subsection*{Notation} 
We begin by establishing notation that we use throughout this paper.
\begin{enumerate}
    \item The letter $\mathbbm{k}$ shall always denote a commutative ring with global dimension zero, i.e., a finite product of fields, or said differently, a commutative semisimple ring. Every $\mathbbm{k}$-module is projective and injective.
    \item Let $\bimod{}{\mathbbm{k}}$ denote the category of $\mathbbm{k}$-modules. We write the relative tensor product $\otimes_\mathbbm{k}$ as $\otimes$.
    \item Given $\mathbbm{k}$-modules $M$ and 
    $C$, and $\mathbbm{k}$-linear homomorphisms $\Delta\colon C\rightarrow C\otimes C$ and $\rho\colon M\rightarrow M\otimes C$, we may employ the \textit{Sweedler notation} in coalgebraic contexts:
\[
\Delta(c)=\sum_{i=1}^n {c_{(1)}}_i\otimes {c_{(2)}}_i=:\sum_{(c)}c_{(1)}\otimes c_{(2)},\, \text{and}\,  \rho(m)=\sum_{i=1}^n {m_{(0)}}_i\otimes {m_{(1)}}_i =:\sum_{(m)}m_{(0)}\otimes m_{(1)}.
\]
    \item Let $\Ck$ be the category of non-negative chain complexes of $\mathbbm{k}$-modules (graded homologically). The category is endowed with a symmetric monoidal structure. The tensor product of two chain complexes $X$ and $Y$ is defined by
\[
(X\otimes Y)_n=\bigoplus_{i+j=n} X_i\otimes_\mathbbm{k} Y_j,
\]
with differential given on homogeneous elements by
\[
d(x\otimes y)=dx \otimes y + (-1)^{\vert x \vert}x\otimes dy.
\]
We denote the tensor simply as $\otimes$. The monoidal unit is denoted $\mathbbm{k}$, which is the chain complex $\mathbbm{k}$ concentrated in degree zero.
\item We write $\hom_\mathbbm{k}(-,-)$ for the internal hom for both $\bimod{}{\mathbbm{k}}$ and $\Ck$. We write $M^*=\hom_\mathbbm{k}(M,\mathbbm{k})$, the linear dual.
\item Given any symmetric monoidal category $(\Cc, \otimes)$, we write $\tau\colon X\otimes Y\stackrel{\cong}\rightarrow Y\otimes X$, the natural symmetric isomorphism.
\end{enumerate}

\subsection*{General definitions}
We recall here the categorical definitions of coalgebras and comodules, and their related concepts.

\begin{defn}
Let $(\Cc, \otimes, \bI)$ be a symmetric monoidal category.
A \emph{coalgebra} $(C, \Delta, \varepsilon)$ in $\Cc$ consists of an object $C$ in $\Cc$ together with a \emph{coassociative} comultiplication $\Delta\colon C\rightarrow C\otimes C$, such that the following diagram commutes
\[
\begin{tikzcd}[column sep =large]
C\ar{r}{\Delta} \ar{d}[swap]{\Delta} &C\otimes C\ar{d}{\id_C\otimes \Delta}\\
C\otimes C \ar{r}{\Delta\otimes \id_C} & C\otimes C\otimes C,
\end{tikzcd}
\]
and that admits a \emph{counit} morphism $\varepsilon \colon C\rightarrow \mathbb{I}$ such that we have the following commutative diagram
\[
\begin{tikzcd}
C\otimes C \ar{r}{\id_C\otimes \varepsilon} & C\otimes \mathbb{I} \cong C \cong \mathbb{I}\otimes C & C\otimes C \ar{l}[swap]{\varepsilon \otimes \id_C}\\
& C.\ar[equals]{u}\ar[bend left]{ul}{\Delta} \ar[bend right]{ur}[swap]{\Delta} &
\end{tikzcd}
\]
The coalgebra is \emph{cocommutative} if the following diagram commutes
\[
\begin{tikzcd}
C\otimes C \ar{rr}{\tau}[swap]{\cong} && C\otimes C.\\
& C\ar{ul}{\Delta}\ar{ur}[swap]{\Delta} &
\end{tikzcd}
\]
Given a coalgebra $(C, \Delta, \varepsilon)$, we can define its opposite coalgebra $C^\op$ to be the coalgebra $(C, \tau \circ \Delta, \varepsilon)$. If $C$ is cocommutative, then $C=C^\op$.
\end{defn}

\begin{defn}\label{def: bicomodules}
Let $(\mathsf{C}, \otimes, \mathbb{I})$ be a symmetric monoidal category, and let $(C, \Delta, \varepsilon)$ be a coalgebra in $\mathsf{C}$. A \emph{right $C$-comodule} $(M, \rho)$ is an object $M$ in $\Cc$ together with a \emph{coassociative} and \emph{counital} right coaction morphism $\rho:M\rightarrow M\otimes C$ in $\Cc$, i.e., the following diagrams commute
\[
\begin{tikzcd}
M\ar{r}{\rho}\ar{d}[swap]{\rho} & M\otimes C \ar{d}{\rho \otimes \id_C}  & & M \ar{r}{\rho}\ar[equals]{ddr} & M\otimes C \ar{d}{ \id_M \otimes \varepsilon}\\
M\otimes C \ar{r}[swap]{ \id_M\otimes \Delta} &M\otimes C\otimes C, & & & M\otimes \bI  \ar{d}{\cong} \\[-5pt]
  & & &  & M. 
\end{tikzcd}
\]
We can similarly define \emph{a left $C$-comodule} $(M, \lambda)$. In fact, notice that a left $C$-comodule is a right $C^\op$-comodule. Given two coalgebras $C$ and $D$ in $\Cc$, a \textit{$(C, D)$-bicomodule} $(M, \lambda, \rho)$ is a left $C$-comodule $(M, \lambda)$ and a right $D$-comodule $(M, \rho)$ such that the following diagram commutes
\[
\begin{tikzcd}
M \ar{r}{\rho} \ar{d}[swap]{\lambda} & M\otimes D\ar{d}{\lambda\otimes \id_D}\\
C\otimes M \ar{r}{\id_C\otimes \rho} & C \otimes M \otimes D.
\end{tikzcd}
\]
A {morphism $(M, \rho_M)\rightarrow (N, \rho_N)$ of right $C$-comodules} is a morphism $f\colon M\rightarrow N$ in $\Cc$ such that the following diagram commutes
\[
\begin{tikzcd}
M\ar{r}{f}\ar{d}[swap]{\rho_M} & N\ar{d}{\rho_N}\\
M\otimes C \ar{r}{f\otimes \id_C} & N\otimes C.
\end{tikzcd}
\]
This morphism will be referred as a \emph{(right) $C$-colinear homomorphism}. A \textit{$(C,D)$-bicolinear homomorphism} is a morphism that is both a left $C$-colinear map and a right $D$-colinear homomorphism.
The notation $\bicomod{}{C}(\Cc)$ will denote the category of right $C$-comodules in $\Cc$. Similarly, we use $\bicomod{C}{}(\Cc)$ to denote the category of left $C$-comodules in $\Cc$, and $\bicomod{C}{D}(\Cc)$ for the category of $(C, D)$-bicomodules. Notice that we have isomorphisms of categories $\bicomod{C}{\bI}(\Cc)\cong\bicomod{C}{}(\Cc)$ and $\bicomod{\bI}{C}(\Cc)\cong \bicomod{}{C}(\Cc)$.
\end{defn}

\begin{defn}
We say a coalgebra $(C, \Delta, \varepsilon)$ is \emph{flat} in a symmetric monoidal category $(\Cc, \otimes, \bI)$ if the induced functor $C\otimes -:\Cc\rightarrow \Cc$ preserves equalizers when they exist.
\end{defn}

\begin{rem}\label{rem: bicomodules are right comodules (ordinary)}
Given a symmetric monoidal category $(\Cc, \otimes, \bI)$, and coalgebras $C$ and $D$ in $\Cc$,
note that we obtain an isomorphism of categories
\[
\bicomod{C}{D}(\Cc) \cong \bicomod{}{C^\op\otimes D}(\Cc).
\]
In particular, given a $(C,D)$-bicomodule $(M, \lambda, \rho)$, we obtain a right $(C^\op\otimes D)$-comodule via
\[
\begin{tikzcd}
M\ar{r}{\rho} & M\otimes D \ar{r}{\lambda\otimes \id_D} & C\otimes M \otimes D \ar{r}{\tau \otimes \id_D} & M\otimes C \otimes D.
\end{tikzcd}
\]
\end{rem}

\begin{defn}
Let $(\Cc,\otimes, \mathbb{I})$ be a symmetric monoidal category, let $C, D,$ and $E$ be coalgebras in $\Cc$, and let $(M, \lambda_M, \rho_M)$ be a $(D, C)$-bicomodule and $(N, \lambda_N, \rho_N)$ be a $(C, E)$-bicomodule. 
Define the \emph{cotensor product} $M \square_C N$ to be the following equalizer in $\Cc$:
\[
\begin{tikzcd}
M\square_C N \ar{r} & M\otimes N \ar[shift left]{r}{\rho_M\otimes \id_N} \ar[shift right]{r}[swap]{\id_M\otimes \lambda_N} & [20pt] M\otimes C \otimes N.
\end{tikzcd}
\]
If $D$ and $E$ are flat coalgebras, then
$M\square_C N$ is a $(D, E)$-bicomodule via
\[
\begin{tikzcd}[row sep=large]
M\square_C N \ar{r} \ar[dashed]{d}{\exists !} & M\otimes N \ar{d}{\lambda_X\otimes \rho_Y}\ar[shift left]{r}{\rho_M\otimes \id_N} \ar[shift right]{r}[swap]{\id_M\otimes \lambda_N} & [4em] M\otimes C \otimes N. \ar{d}{\lambda_M\otimes \id_C\otimes \rho_N} \\
D\otimes (M\square_C N)\otimes E \ar{r} & D\otimes M\otimes N\otimes E\ar[shift left]{r}{\id_D\otimes \rho_M\otimes \id_N\otimes \id_E} \ar[shift right]{r}[swap]{\id_D\otimes \id_M\otimes \lambda_N\otimes \id_E} & D\otimes M\otimes C \otimes N\otimes E.
\end{tikzcd}
\]
\end{defn}

\begin{defn}\label{defn: Morita-Takeuchi equivalence}
    Given a symmetric monoidal category $(\Cc,\otimes, \mathbb{I})$, we say two coalgebras $C$ and $D$ in $\Cc$ are \textit{Morita--Takeuchi equivalent} if the categories ${}_C\comod(\Cc)$ and ${}_D\comod(\Cc)$ are equivalent.
\end{defn}

\begin{defn}
    Given a closed symmetric monoidal category $(\Cc,\otimes, \mathbb{I}, [-,-])$, and a coalgebra $C$, we can provide an enrichment of $\comod_C(\Cc)$ as follows. Given right $C$-comodules $M$ and $N$, define $\hom_C(M,N)$ as the equalizer in $\Cc$
    \[
    \begin{tikzcd}
    \hom_C(M,N) \ar{r} & {[M,N]} \ar[shift left]{r} \ar[shift right]{r} & {[M, N\otimes C]},
    \end{tikzcd}
    \]
    where the first parallel map is induced  by the coaction $N\rightarrow N\otimes C$ while the second is defined by forgetful-cofree adjointness. We denote $\Endo_C(M)=\hom_C(M,M)$. We can similarly define ${}_C\hom(-,-)$ and ${}_C\Endo(-)$ for left $C$-comodules.
\end{defn}

\begin{defn}\label{def: cotor}
    Let $\mathsf{A}$ be an abelian symmetric monoidal category with enough injective objects. Suppose $C$ is a flat coalgebra in $\mathsf{A}$. Then $\bicomod{C}{}(\mathsf{A})$ also has enough injective objects. If $M$ is a right $C$-comodule in $\mathsf{A}$, then the functor $M\square_C-\colon \bicomod{C}{}(\mathsf{A})\rightarrow \mathsf{A}$ is left exact between abelian categories with enough injectives and thus we can right derive it. We denote by $\cotor^i_C(M,-)$ the  $i^{th}$ right derived functor.
    Similarly, if $N$ is a left $C$-comodule, we can right derive the functor $-\square_C N\colon\bicomod{}{C}(\mathsf{A})\rightarrow \mathsf{A}$ and obtain functors $\cotor^i_C(-, N)$. Per usual, we can check that $\cotor^i_C(M,N)$ is unambiguous. 
\end{defn}

\section{Coalgebraic variations of the Hattori--Stallings trace}\label{section :universal cotrace}

We introduce the Hattori--Stallings cotrace (Definition \ref{def: the hattori-stallings cotrace}) and the colinear Hattori--Stallings trace (Definition \ref{def: colinear Hattori--Stallings trace}) using only (co)algebraic methods.
We then obtain trace maps on $K$-theories in Theorem \ref{thm: the corank as a theorem} and Theorem \ref{thm: the second trace on K-theory}. 
We first begin by briefly recalling how the Hattori--Stallings trace is built so that the reader sees the analogies between the algebraic setting and coalgebraic setting.

\subsection{The Hattori--Stallings trace}

We recall \cite{hattori, stallings}.
Let $R$ be a ring, not necessarily commutative, and let $A$ be an abelian group. Denote the ring of square matrices of size $n$ with coefficients in $R$ by $\mathcal{M}_n(R)$.

\begin{defn}\label{def: trace function on a ring}
A \textit{trace function on $R$ with values in $A$} is a collection of $\mathbb{Z}$-linear homomorphisms denoted $T_n\colon \mathcal{M}_n(R)\rightarrow A$ for each $n\geq 1$, such that $T_n(MN)=T_m(NM)$, where we chose $M$ to be a $n\times m$-matrix, and $N$ a $m\times n$-matrix.
\end{defn}

\noindent 
Each map $T_n\colon\mathcal{M}_n(R)\rightarrow A$ is entirely determined by the map $T_1\colon R=\mathcal{M}_1(R)\rightarrow A$ \cite[1.4,1.5]{stallings}, as $T_n(M)=\sum_{i=1}^n T_1(m_{ii})$, where $M=[m_{ij}]$.
Therefore we can rephrase a \textit{trace} on $R$ with values in $A$ to be a $\mathbb{Z}$-linear homomorphism $T\colon R\rightarrow A$ such that $T(rs)=T(sr)$, for all $r,s\in R$, i.e., the following diagram commutes
\[
\begin{tikzcd}
    R\otimes_\mathbb{Z} R \ar[shift left]{r}{m}
 \ar[shift right]{r}[swap]{m\circ \tau} & R \ar{r}{T} & A,\end{tikzcd}
\]
where $m$ denotes the multiplication on $R$.

If we let $[R,R]=\{rs-sr| r,s\in R\}$ be the subgroup of commutators, then we obtain a universal trace $\tr\colon R\rightarrow R/[R,R]$ by the quotient homomorphism, and every trace $T\colon R\rightarrow A$ is uniquely determined by a $\mathbb{Z}$-linear homomorphism $R/[R,R]\rightarrow A$:
\[
\begin{tikzcd}
    R \ar{r}{T} \ar{d}[swap]{\tr} & A.\\
    R/{[R, R]} \ar[dashed]{ur}[swap]{\exists !}
\end{tikzcd}
\]
Given $M$ a finitely generated free right $R$-module, then an $R$-linear endomorphism $f\colon M\rightarrow M$ is represented by a square matrix on $R$, and thus any trace $T\colon R\rightarrow A$ defines a trace $T_M\colon\Endo_R(M)\rightarrow A$ as we have the isomorphism of rings $\Endo_R(M)\cong \mathcal{M}_n(R)$, where $n$ is the rank of $M$.

More generally, if $M$ is a finitely generated projective right $R$-module, then there exists another projective module $N$ such that $M\oplus N$ is a free $R$-module. 
Therefore if we extend an $R$-linear endomorphism $f\colon M\rightarrow M$ to the endomorphism $f+0\colon M\oplus N\rightarrow M\oplus N$, we again obtain a square matrix, and thus any trace $T\colon R\rightarrow A$ defines a trace $T_M\colon\Endo_R(M)\rightarrow A$. 

\begin{defn}\label{def: hattori-stallings definition}
Let $R$ be a ring, and $M$ be a finitely generated projective right $R$-module.
The \textit{Hattori--Stallings trace} on $M$ is the trace induced by the universal trace $\tr\colon R\rightarrow R/[R, R]$ on the endomorphisms of $M$,
\[
\tr_M\colon \Endo_R(M)  \longrightarrow R/[R,R].
\]
The \textit{Hattori--Stallings rank} of $M$ is then the trace of the identity endomorphism on $M$. The abelian group $R/[R,R]$ is isomorphic to $\HH_0(R)=R\otimes_{R\otimes R^\op} R$, the zeroth \textit{Hochschild homology of $R$}.
\end{defn}

The definition above depends on the choice of generators of $M$ and a section $R^{\oplus n}\rightarrow M$. However, the Hattori--Stallings trace can be defined coordinate-free as follows. 
Given a finitely generated projective right $R$-module $M$, we have an isomorphism
\[
M\otimes_R \hom_R(M, R) \stackrel{\cong}\longrightarrow \hom_R(M, M)=\Endo_R(M),
\]
and thus the Hattori--Stallings trace is the composition
\[
\begin{tikzcd}
    \Endo_R(M) & \ar{l}[swap]{\cong} M\otimes_R \hom_R(M,R) \ar{r}{\ev} & R \ar{r}{\tr} &\HH_0(R).
\end{tikzcd}
\]

Let $\mathrm{P}(R)$ be the set of isomorphism classes of finitely generated projective right $R$-modules. It is a commutative monoid with respect to direct sum. 
Any trace $T\colon R\rightarrow A$ defines an additive homomorphism with its rank,
\begin{align*}
    \mathrm{P}(R) & \longrightarrow A\\
    [M] & \longmapsto T_M(\id_M).
\end{align*}
Let $K_0(R)$ be the group completion of 
$\mathrm{P}(R)$. Then the additive map factors through a $\mathbb{Z}$-linear homomorphism
\[
T\colon K_0(R) \longrightarrow A.
\]
Choosing $T$ to be the universal trace $\tr\colon R\rightarrow \HH_0(R)$ defines the \textit{Hattori--Stallings rank} $K_0(R)\rightarrow \HH_0(R)$, which is the zeroth level of the Dennis trace $K_*(R)\rightarrow \HH_*(R)$.

\subsection{The Hattori--Stallings cotrace}
We wish to dualize the approach of Hattori and Stallings in the coalgebraic context. Just as the group $\HH_0(R)$ is the universal home for traces, we introduce a group $\coHH_0(C)$ as the universal home for so-called cotraces.
Throughout the rest of the section, let $(C, \Delta, \varepsilon)$ be a $\mathbbm{k}$-coalgebra, not necessarily cocommutative. 

\begin{defn}\label{def: cotrace on a coalgebra}
Let $V$ be a $\mathbbm{k}$-module.
A $\mathbbm{k}$-linear homomorphism $T\colon V\rightarrow C$ is said to be a \textit{cotrace on $C$ from $V$} if it is \textit{cocyclic} in the sense that the following diagram commutes:
\[
\begin{tikzcd}
V \ar{r}{T} & C \ar[shift left]{r}{\Delta} \ar[shift right]{r}[swap]{\tau\circ \Delta} & C\otimes C.
\end{tikzcd}
\]   
In other words, using the Sweedler notation $\Delta(c)=\sum_{(c)}c_{(1)}\otimes c_{(2)}$, the $\mathbbm{k}$-linear homomorphism $T\colon V\rightarrow C$ is a cotrace if for all $v\in V$:
\[
\sum_{(T(v))} T(v)_{(1)}\otimes T(v)_{(2)} = \sum_{(T(v))} T(v)_{(2)}\otimes T(v)_{(1)}.
\] 
\end{defn}

\begin{defn}
The \textit{cocommutator} submodule $\lang C\rang $ is the kernel in $\mathbbm{k}$-modules
\[
\begin{tikzcd}
\lang C\rang \ar[hook]{r}{\cotr} & C \ar[shift left]{r}{\Delta} \ar[shift right]{r}[swap]{\tau\circ \Delta} & C\otimes C.
\end{tikzcd}
\]
The \textit{universal cotrace on $C$} is the resulting cotrace $\cotr\colon\lang C \rang \hookrightarrow C$. Every cotrace $T\colon V\rightarrow C$ is uniquely determined by a $\mathbbm{k}$-linear homomorphism $V\rightarrow \lang C \rang$:
\[
\begin{tikzcd}
    & \lang C \rang \ar[hook]{d}{\cotr}\\
    V \ar[dashed]{ur}{\exists !} \ar{r}[swap]{T} & C
\end{tikzcd}
\]
Notice that $\lang C \rang$ is isomorphic to the zeroth coHochschild homology $\coHH_0(C):=C\square_{C\otimes C^{\op}} C$.
We recall the definition of $\coHH_q(C)$ more generally in Definition \ref{def: cohochschild homology underived}.
\end{defn}

Recall that any trace $T\colon R\rightarrow A$ can be extended on square matrices $T_n\colon \mathcal{M}_n(R)\rightarrow A$  by assigning $T_n(M)=\sum_{i=1}^n T(m_{ii})$, where $M=[m_{ij}]$ . 
We would like to have a similar property for cotraces.

\begin{defn}\label{def: comatrices}
Given $n,m\geq 1$, define the \textit{$(n,m)$-comatrices $\mathcal{M}_{n\times m}^c(\mathbbm{k})$} to be usual $\mathbbm{k}$-module $\mathcal{M}_{n\times m}(\mathbbm{k})$ of matrices of size $n\times m$ with coefficients in $\mathbbm{k}$, but endowed with the following two operations. 
Let $\{E_{ij}\}$ be the canonical basis of $\mathcal{M}_{n\times m}(\mathbbm{k})$ composed of the elementary matrices. 
Define for all $r\geq 1$ the $\mathbbm{k}$-linear homomorphism
\begin{align*}
    \Delta\colon \mathcal{M}^c_{n\times m}(\mathbbm{k}) & \longrightarrow \mathcal{M}^c_{n\times r}(\mathbbm{k})\otimes \mathcal{M}^c_{r\times m}(\mathbbm{k})\\
    E_{ij} & \longmapsto \sum_{\ell=1}^r E_{i\ell}\otimes E_{\ell j},
\end{align*}
and the $\mathbbm{k}$-linear homomorphism
\begin{align*}
    \varepsilon\colon \mathcal{M}^c_{n\times n}(\mathbbm{k}) & \longrightarrow \mathbbm{k}\\
    E_{ij} & \longmapsto \delta_{ij},
\end{align*}
where $\delta_{ij}$ is the Kronecker symbol.
The maps $\Delta$ and $\varepsilon$ are coassociative and counital in the sense that the comatrices $\mathcal{M}_{n\times m}^c(\mathbbm{k})$ constitute the morphisms of a category $\mathrm{Mat}^c(\mathbbm{k})$ enriched in $\mathrm{Mod}_\mathbbm{k}^\mathrm{op}$, the opposite category of $\mathbbm{k}$-modules. In fact, the underlying category of $\mathrm{Mat}^c(\mathbbm{k})$ is anti-equivalent to the usual category $\mathrm{Mat}(\mathbbm{k})$ of matrices.
Denote by $\mathcal{M}_{n\times m}^c(C)=C\otimes \mathcal{M}_{n\times m}^c(\mathbbm{k})$ the \textit{$(n,m)$-comatrices with values in $C$.}
Combining with the comultiplication and counit of $C$, the $\mathbbm{k}$-modules $\mathcal{M}_{n\times m}^c(C)$ extend the previous operations to $\mathcal{M}_{n\times m}^c(C)\rightarrow  \mathcal{M}^c_{n\times r}(C)\otimes \mathcal{M}^c_{r\times m}(C)$ and $\mathcal{M}_{n\times m}^c(C)\rightarrow \mathbbm{k}$, which are again coassociative and counital.
When $n=m$, we denote the square comatrices by $\mathcal{M}_n^c(C)$; these are $\mathbbm{k}$-coalgebras and $\mathcal{M}_n^c(\mathbbm{k})$ is anti-equivalent to the usual matrix ring $\mathcal{M}_n(\mathbbm{k})$.
Moreover, by \cite[12.20]{brzezinski2003corings}, the coalgebras $C$ and $\mathcal{M}_n^c(C)$ are Morita--Takeuchi equivalent for all $n\geq 1$ (Definition \ref{defn: Morita-Takeuchi equivalence}).
\end{defn}

Given a cotrace $T\colon V\rightarrow C$, it can be extended to a $\mathbbm{k}$-linear homomorphism $T_n\colon V\rightarrow \mathcal{M}^c_n(C)$ by assigning $v\mapsto T(v)\otimes I_n$, where $I_n$ is the identity matrix of size $n$. 
These homomorphisms $T_n\colon V\rightarrow \mathcal{M}^c_n(C)$ are cocyclic in the sense that the following diagram commutes:
\begin{equation}\label{eq: coyclic cotraces on matrices}
\begin{tikzcd}
	& {\mathcal{M}_n^c(C)} & {\mathcal{M}^c_{n\times m}(C)\otimes \mathcal{M}^c_{m\times n}(C)} \\
	V \\
	& {\mathcal{M}_m^c(C)} & {\mathcal{M}^c_{m\times n}(C)\otimes \mathcal{M}^c_{n\times m}(C).}
	\arrow["{T_n}", from=2-1, to=1-2]
	\arrow["\Delta", from=1-2, to=1-3]
	\arrow["\tau", from=1-3, to=3-3]
	\arrow["{T_m}"', from=2-1, to=3-2]
	\arrow["\Delta"', from=3-2, to=3-3]
\end{tikzcd}
\end{equation}
The commutative diagram captures the dual property $T_n(MN)=T_m(NM)$ of traces.

\begin{lemma}
A cotrace $T\colon V\rightarrow C$ is uniquely determined by a collection of $\mathbbm{k}$-linear homomorphisms $T_n\colon  V\rightarrow \mathcal{M}^c_n(C)$ satisfying \eqref{eq: coyclic cotraces on matrices} for which  $T_n(v)=T(v)\otimes I_n$  for  all  $v\in V$ and $n\geq 1$.
\end{lemma}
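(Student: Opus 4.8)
The plan is to exhibit a bijection between cotraces $T\colon V\to C$ and collections $(T_n\colon V\to\mathcal{M}^c_n(C))_{n\geq 1}$ satisfying \eqref{eq: coyclic cotraces on matrices} together with the normalization $T_n(v)=T(v)\otimes I_n$, where the correspondence sends $T$ to $(T(-)\otimes I_n)_n$ and sends $(T_n)_n$ to $T_1$. Since $\mathcal{M}^c_1(C)\cong C$ and $I_1=1$, the normalization condition forces $T_1=T$, so these two assignments are mutually inverse as soon as each is well defined; this is exactly the ``uniquely determined'' clause, so the content of the lemma reduces to the two well-definedness checks below.

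First I would verify that if $T$ is a cotrace then $T_n:=T(-)\otimes I_n$ satisfies \eqref{eq: coyclic cotraces on matrices}. Expanding the comultiplication on $\mathcal{M}^c_n(C)=C\otimes\mathcal{M}^c_n(\k)$ as the combination of $\Delta_C$ with the comatrix comultiplication $E_{ij}\mapsto\sum_{\ell}E_{i\ell}\otimes E_{\ell j}$, and using $I_n=\sum_i E_{ii}$, one gets
\[
\Delta\bigl(T(v)\otimes I_n\bigr)=\sum_{(T(v))}\sum_{i,\ell}\bigl(T(v)_{(1)}\otimes E_{i\ell}\bigr)\otimes\bigl(T(v)_{(2)}\otimes E_{\ell i}\bigr),
\]
so that the top route of the diagram (apply $\Delta$, then $\tau$) produces $\sum_{(T(v))}\sum_{i,\ell}(T(v)_{(2)}\otimes E_{\ell i})\otimes(T(v)_{(1)}\otimes E_{i\ell})$, whereas the bottom route (apply $\Delta$ to $T_m(v)$) produces $\sum_{(T(v))}\sum_{i,\ell}(T(v)_{(1)}\otimes E_{i\ell})\otimes(T(v)_{(2)}\otimes E_{\ell i})$ after renaming the summation indices. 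These two expressions agree precisely because $\sum_{(T(v))}T(v)_{(1)}\otimes T(v)_{(2)}=\sum_{(T(v))}T(v)_{(2)}\otimes T(v)_{(1)}$, which is the defining condition of a cotrace. Hence this direction works, and the cotrace hypothesis is used in an essential (and sufficient) way.

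Conversely, I would specialize \eqref{eq: coyclic cotraces on matrices} to $n=m=1$. There $\mathcal{M}^c_1(C)\cong C$ and $\mathcal{M}^c_{1\times 1}(C)\otimes\mathcal{M}^c_{1\times1}(C)\cong C\otimes C$, the displayed map $\Delta$ becomes the comultiplication of $C$, and the commuting square reads $\tau\circ\Delta\circ T_1=\Delta\circ T_1$; that is exactly the statement that $T_1\colon V\to C$ is a cotrace. Together with the first check and the observation that $T\mapsto(T(-)\otimes I_n)_n$ and $(T_n)_n\mapsto T_1$ are mutually inverse, this completes the proof.

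I expect the only nonroutine point to be the index bookkeeping in the first computation: keeping straight which of the two factors $\mathcal{M}^c_{n\times m}(C)$, $\mathcal{M}^c_{m\times n}(C)$ each elementary matrix sits in, and confirming that the relabelling needed to identify the two sides of the square amounts to nothing more than transposing the two Sweedler legs of $\Delta_C(T(v))$. No structural input beyond the definitions of the comatrix coalgebras and of a cotrace is needed.
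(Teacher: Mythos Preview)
The paper states this lemma without proof, treating it as an elementary verification left to the reader; it likewise does not spell out the preceding claim that the maps $T_n(v)=T(v)\otimes I_n$ satisfy the cocyclicity diagram \eqref{eq: coyclic cotraces on matrices}. Your argument is correct and supplies precisely the details the paper omits: the forward direction is the index computation showing that cocyclicity of the $T_n$ reduces, after relabelling the matrix indices, to the cotrace identity $\sum T(v)_{(1)}\otimes T(v)_{(2)}=\sum T(v)_{(2)}\otimes T(v)_{(1)}$, and the converse is the specialization $n=m=1$. There is nothing to compare against in the paper beyond confirming that your approach is the obvious one the authors had in mind.
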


Just as a trace $T\colon R\rightarrow A$ can be extended to $T_M\colon \Endo_R(M)\rightarrow A$ for $M$ a finitely generated and projective right $R$-module, we now would like to further extend the cotraces.

\begin{defn}\label{def: finitely cogenerated}
We say a left $C$-comodule $M$ is \textit{finitely cogenerated} if there is a $C$-colinear monomorphism $M\hookrightarrow C^{\oplus n}$ for some $n\geq 0$.
\end{defn}

A left $C$-comodule $M$ is \textit{injective} in the usual abelian category sense: any $C$-colinear monomorphisms $M\hookrightarrow N$ splits.
It is immediate to obtain the following result.

\begin{lemma}
For any finitely cogenerated and injective left $C$-comodule $M$, there is another finitely cogenerated and injective left $C$-comodule $N$ such that $M\oplus N\cong C^{\oplus n}$ for some $n\geq 0$.
\end{lemma}

Finitely cogenerated and injective left $C$-comodules are the dualizable comodules in a sense that we make precise in Section \ref{section: duality}.
From Definition \ref{def: quasi-finite comodules} and Example \ref{ex: dualizable comodules with cofree}, given any finitely cogenerated left $C$-comodule $M$, the functor $M\otimes-\colon \bimod{}{\mathbbm{k}}\rightarrow \bicomod{C}{}$ admits a left adjoint $h_C(M,-)\colon\bicomod{C}{}\rightarrow \bimod{}{\mathbbm{k}}$ called the \textit{cohom} functor. 
In other words, given any left $C$-comodule $N$, the cohom $h_C(M,N)$ is the universal $\mathbbm{k}$-module providing a natural $\mathbbm{k}$-linear isomorphism
\[
\bimod{}{\mathbbm{k}}\Big( h_C(M, N), V\Big)\cong \bicomod{C}{}\Big(N, M\otimes V\Big),
\]
for any $\mathbbm{k}$-module $V$. In particular, for $V=\mathbbm{k}$, we obtain the isomorphism $h_C(M,N)^*\cong  {}_C\hom(N, M)$.
Given any left  $C$-comodule $N$, there is a $C$-colinear homomorphism $\eta\colon N\rightarrow M\otimes h_C(M,N)$ called the \textit{coevaluation on $N$ with $M$}, adjoint of the identity on $h_C(M,N)$.

\begin{defn}[{\cite[12.9]{brzezinski2003corings}}]\label{def: coendomorphism coalgebra}
    Let $M$ be a finitely cogenerated left $C$-comodule. The \textit{coendomorphism coalgebra on $M$} denoted $e_C(M)$ is the $\mathbbm{k}$-module $h_C(M,M)$ endowed with the comultiplication
    \[
    h_C(M,M)  \longrightarrow h_C(M, M) \otimes h_C(M,M),
    \]
    which is the adjoint of the $C$-colinear homomorphism
    \[
    \begin{tikzcd}
        M \ar{r}{\eta} & M\otimes h_C(M, M) \ar{r}{\eta\otimes 1} & M\otimes h_C(M,M)\otimes h_C(M, M).
    \end{tikzcd}
    \]
    The counit $h_C(M,M)\rightarrow \mathbbm{k}$ is the adjoint of the identity of the $C$-colinear homomorphism $M\stackrel{\cong}\rightarrow M\otimes \mathbbm{k}$.
\end{defn}

We have an isomorphism of $\mathbbm{k}$-coalgebras $e_C(C^{\oplus n})\cong \mathcal{M}_n^c(C)$ \cite[12.20]{brzezinski2003corings}.  More generally, we obtain the identification $h_C(C^{\oplus n}, C^{\oplus m})\cong \mathcal{M}_{n\times m}^c(C)$. The linear dual $e_C(M)^*$ is anti-equivalent to the $\mathbbm{k}$-algebra ${}_C\Endo(M)$, see \cite[12.9]{brzezinski2003corings}.

\begin{lemma}\label{lem: extending  fin cogenerated to cofree}
    Given a finitely cogenerated and injective left $C$-comodule $M$,  there exists a $\mathbbm{k}$-linear homomorphism $\mathcal{M}_n(C)\rightarrow e_C(M)$ that becomes precisely the following monomorphism after taking the linear dual:
    \begin{align*}
        {}_C\Endo(M) & \longrightarrow {}_C\Endo(C^{\oplus n})\\
        \left( M\stackrel{f}\rightarrow M\right) & \longmapsto \left(M\oplus N \stackrel{f\oplus 0}\rightarrow M\oplus N \right), 
    \end{align*}
    for some choice of $N$ such that $M\oplus N\cong C^{\oplus n}$.
\end{lemma}

\begin{proof}
    As $M$ is finitely cogenerated, there exists $n \geq 0$ such that $M \hookrightarrow C^{\oplus n}$. Since $M$ is injective, we can choose a $C$-colinear section $C^{\oplus n}\rightarrow M$.
    Define $h_C(C^{\oplus n}, C^{\oplus n})=\mathcal{M}_n(C)\rightarrow e_C(M)=h_C(M,M)$ as the adjoint the composition
    \[
    C^{\oplus n} \longrightarrow M\otimes h_C(M,C^{\oplus n}) \longrightarrow C^{\oplus n}\otimes  h_C(M, C^{\oplus n}) \longrightarrow C^{\oplus n}\otimes  h_C(M, M).
    \]
    The first map is induced by coevaluation on $C^{\oplus n}$, the second map is induced by $M\hookrightarrow C^{\oplus n}$, and the third map is induced by applying $h_C(M,-)$ on the section $C^{\oplus n}\rightarrow M$.
\end{proof}

Therefore, given a finitely cogenerated and injective left $C$-comodule $M$, any cotrace $T\colon V\rightarrow C$ extends to $T_M\colon V\rightarrow e_C(M)$ as the composite
\[
V\stackrel{T_n}\longrightarrow \mathcal{M}_n(C) \longrightarrow e_C(M),
\]
for some $n\geq 1$.
Applying the linear dual yields a $\mathbbm{k}$-linear homomorphism \[T_M\colon  {}_C\Endo(M)\cong  e_C(M)^*\longrightarrow V^*.\] 
As $C^{\oplus n}$ is the cofree $C$-comodule on $\mathbbm{k}^{\oplus n}$, we obtain the identification
\[
{}_C\Endo(C^{\oplus n})\cong \hom_\mathbbm{k}(C^{\oplus n}, \mathbbm{k}^{\oplus n}) \cong \hom_\mathbbm{k}(C, \mathcal{M}_n(\mathbbm{k})),
\]
which sends the extended endomorphism $f\oplus 0$ on $C^{\oplus n}$ of a $C$-colinear endomorphism $f\colon M\rightarrow M$ to a $\mathbbm{k}$-linear homomorphism $C\rightarrow \mathcal{M}_n(\mathbbm{k})$ that we denote $c\mapsto f_c$.
Unwinding the definitions, we obtain the following formula for $T_M\colon {}_C\Endo(M)\longrightarrow V^*$,
\begin{equation*}
    T_M(f)(v)=\sum_{(T(v))}\varepsilon\left( T(v)_{(1)}\right)\tr\left(f_{T(v)_{(2)}}\right),
\end{equation*}
for any $v\in V$ and $C$-colinear endomorphism $f\colon M\rightarrow M$. 
Here $\varepsilon\colon C\rightarrow \mathbbm{k}$ denotes the counit, and the comultiplication $C\rightarrow C\otimes C$ is denoted $c\mapsto\sum_{(c)} c_{(1)}\otimes c_{(2)}$ using Sweedler notation.

\begin{defn}\label{def: the hattori-stallings cotrace}
Let $M$ be a finitely cogenerated and injective left $C$-comodule.
The \textit{Hattori--Stallings cotrace on $M$} is the cotrace induced by universal cotrace $\cotr\colon \coHH_0(C)\hookrightarrow C$ on the endomorphisms of $M$
\[
\cotr_M\colon {}_C\Endo(M)\longrightarrow \coHH_0(C)^*.
\]
Explicitly, it is given by the formula
\[
\cotr_M(f)(c)=\sum_{(c)}\varepsilon\left( c_{(1)}\right)\tr\left(f_{c_{(2)}}\right),
\]
for all $c\in \coHH_0(C)$.
The \textit{Hattori--Stallings corank on $M$} is the cotrace on the identity.
\end{defn}

\begin{rem}
    A priori, the definition of the Hattori--Stallings cotrace may depend on the choice of the section $C^{\oplus n}\rightarrow M$, but we shall see in Proposition \ref{prop: hs-cotrace is a bicategorical trace} that the cotrace is a bicategorical  trace in a shadow framework and thus has a coordinate-free formulation, just as the usual Hattori--Stallings trace does. 
\end{rem}

\begin{rem}
We can provide a very explicit formulation of the Hattori--Stallings corank on a finitely cogenerated injective left $C$-comodule $M$.
There exists a comodule $N$ such that $M\oplus N\cong C^{\oplus n}$. 
Denote by $e^M\colon C^{\oplus n}\rightarrow C^{\oplus n}$ the $C$-colinear projection $M\oplus N\rightarrow M$.
Denote the composition
 \[
 \begin{tikzcd}
     C \ar{r}{\iota_j} & C^{\oplus n} \ar{r}{e^M} & C^{\oplus n} \ar{r}{p_i} & C
 \end{tikzcd}
 \]
by $e^M_{ij}\colon C\rightarrow
 C$, where $\iota_j$ is the $j$-th inclusion and $p_i$ is the $i$-th projection. 
Then, by adjunction, the map $e^M$ corresponds to a $\mathbbm{k}$-linear homomorphism 
\begin{align*}
    C & \longrightarrow \mathcal{M}_n(\mathbbm{k})\\
    c & \longmapsto [\varepsilon(e^M_{ij}(c))],
\end{align*}
where $\varepsilon\colon C\rightarrow \mathbbm{k}$ is the counit.
By our choice of $e^M$, there exists $n_M\geq 1$ such that $\cotr_M(\id_M)\in (\coHH_0(C))^*$ is determined by
\begin{equation*}\label{eq: cotrace formula}
\cotr_M(\id_M)(c)=\sum_{(c)}\varepsilon(c_{(1)})\left(\sum_{i=1}^n\varepsilon(e^M_{ii}(c_{(2)}))\right)=\sum_{i=1}^{n_M} \sum_{(c)} \varepsilon(c_{(1)})\varepsilon(c_{(2)})=\sum_{i=1}^{n_M} \varepsilon(c)
\end{equation*}
for any $c\in \coHH_0(C)$, where $\Delta\colon C\rightarrow C\otimes C$ is denoted $\Delta(c)=\sum_{(c)}c_{(1)}\otimes c_{(2)}$ using Sweedler notation.
\end{rem}

\begin{rem}\label{rem: Quillen}
In \cite{quillencochain}, Quillen already established $\coHH_0(C)$ as the universal home for cotrace maps with universal cotrace $\coHH_0(C)\hookrightarrow C$ as we do here.
He identified $\coHH_0(C)$ with cyclic homology $\operatorname{HC}_*$ in certain cases. In \cite{ddaniel}, Quillen utilized the universal cotrace to extend trace maps on a ring $R$ to traces $K_0(A)\rightarrow \operatorname{HC}_{2n}(A)$, where $A$ is a quotient algebra of $R$. 
However, Quillen did not extend cotraces $V\rightarrow C$ to cotraces $V\rightarrow \mathcal{M}_n^c(C)$ nor $V\rightarrow e_C(M)$. Our Hattori--Stallings cotrace is an extension of the universal cotrace $\coHH_0(C)\hookrightarrow C$.
\end{rem}

Let $\mathrm{I}(C)$ be the isomorphism class of finitely cogenerated injective left $C$-comodules. It is a commutative monoid with respect to the direct sum.  
Any cotrace $T\colon V\rightarrow C$, defines an additive homomorphism with its rank:
\begin{align*}
\mathrm{I}(C) &\longrightarrow V^*\\
[M] & \longmapsto T_M(\id_M).
\end{align*}
Denote $K_0^c(C)$ the group completion of $\mathrm{I}(C)$.
Since the abelian category $\bicomod{C}{}^\mathrm{fc,inj}$ is split exact, then $K_0^c(C)$ is indeed equivalent to the $\pi_0$-group of the algebraic $K$-theory spectrum of Definition \ref{def: the first K-theory}.
Therefore, the above additive map factors through a $\mathbb{Z}$-linear homomorphism:
\[
T\colon K_0^c(C)\longrightarrow V^*.
\]
Choosing $T$ to be the universal cotrace $\cotr\colon \coHH_0(C)\hookrightarrow C$ yields the following result.

\begin{thm}\label{thm: the corank as a theorem}
    Let $C$ be a $\mathbbm{k}$-coalgebra.
    The Hattori--Stallings corank defines a homomorphism of abelian groups:
    \[
    K_0^c(C)\longrightarrow \coHH_0(C)^*.
    \]
\end{thm}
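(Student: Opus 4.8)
The plan is to verify that the additive map $\mathrm{I}(C) \to \coHH_0(C)^*$ sending $[M] \mapsto \cotr_M(\id_M)$ is well-defined on isomorphism classes, is a monoid homomorphism, and hence factors through the group completion $K_0^c(C)$. The universal property of group completion then furnishes the desired $\mathbb{Z}$-linear homomorphism $K_0^c(C) \to \coHH_0(C)^*$, and the identification $K_0^c(C) \cong \pi_0$ of the $K$-theory spectrum of Definition \ref{def: the first K-theory} (which holds because $\bicomod{C}{}^{\mathrm{fc,inj}}$ is split exact, as noted just before the statement) completes the proof.

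First I would address well-definedness: if $M \cong M'$ as left $C$-comodules, then any isomorphism conjugates the identity to the identity, and since $\cotr$ is a cotrace satisfying the cocyclicity condition of Definition \ref{def: cotrace on a coalgebra}, the extended cotrace $\cotr_M$ is invariant under such conjugation --- this is exactly the dual of the statement $\tr(gfg^{-1}) = \tr(f)$ and follows formally from the cocyclic property captured in diagram \eqref{eq: coyclic cotraces on matrices}. Concretely, using the explicit formula $\cotr_M(f)(c) = \sum_{(c)} \varepsilon(c_{(1)}) \tr(f_{c_{(2)}})$, the claim reduces to ordinary conjugation-invariance of the matrix trace $\tr$ on $\mathcal{M}_n(\k)$, together with the observation that the choice of section $C^{\oplus n} \to M$ does not affect the value (this is the content of the forthcoming Remark/Proposition referenced, that $\cotr_M$ is a bicategorical trace, but for the $\pi_0$-level statement one can also check it directly).

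Next I would check additivity: for finitely cogenerated injective left $C$-comodules $M$ and $M'$, one has $\cotr_{M \oplus M'}(\id_{M \oplus M'}) = \cotr_M(\id_M) + \cotr_{M'}(\id_{M'})$. This again follows from the explicit formula together with the block-diagonal structure: extending $M \oplus M' \hookrightarrow C^{\oplus n} \oplus C^{\oplus n'} = C^{\oplus(n+n')}$ and computing $(\id_{M \oplus M'})_c$ as a block-diagonal matrix, the trace splits as the sum of the two traces, so $\cotr_{M\oplus M'}(\id)(c) = \sum_{(c)} \varepsilon(c_{(1)})\bigl(\tr((\id_M)_{c_{(2)}}) + \tr((\id_{M'})_{c_{(2)}})\bigr) = \cotr_M(\id_M)(c) + \cotr_{M'}(\id_{M'})(c)$. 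Thus the rank map is a homomorphism of commutative monoids $\mathrm{I}(C) \to \coHH_0(C)^*$, where the target is an abelian group, hence it factors uniquely through $K_0^c(C)$.

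The main obstacle I anticipate is not any single step but rather making precise that the construction is independent of the auxiliary choices --- the section $C^{\oplus n} \to M$ and the complement $N$ with $M \oplus N \cong C^{\oplus n}$ --- so that the assignment genuinely descends to isomorphism classes and is compatible with the monoid structure. The cleanest route is to invoke the coordinate-free description: by Lemma \ref{lem: extending  fin cogenerated to cofree} the cotrace $\cotr_M$ is obtained from the coevaluation $\eta\colon M \to M \otimes h_C(M,M)$ and the universal cotrace $\coHH_0(C) \hookrightarrow C$, a formulation manifestly independent of coordinates; alternatively one defers entirely to the bicategorical trace framework of Section \ref{section: traces}. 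Everything else is a formal consequence of the universal property of group completion and the split-exactness identification $K_0^c(C) \cong \pi_0 K^c(C)$.
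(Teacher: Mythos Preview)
Your proposal is correct and follows essentially the same route as the paper: the paper simply asserts that any cotrace $T\colon V\to C$ gives an additive map $\mathrm{I}(C)\to V^*$, $[M]\mapsto T_M(\id_M)$, factors it through the group completion $K_0^c(C)$, and then specializes to the universal cotrace. You have filled in the details the paper leaves implicit (well-definedness on isomorphism classes, additivity via block-diagonal matrices, and the independence-of-choices issue that the paper defers to Proposition~\ref{prop: hs-cotrace is a bicategorical trace}).
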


\begin{ex}
    If $C=\mathbbm{k}$, then $K^c(\mathbbm{k})$ is equivalent to the algebraic $K$-theory $K(\mathbbm{k})$ of $\mathbbm{k}$ as a ring, and thus $K_0^c(\mathbbm{k})\cong \mathbb{Z}^{\times n}$ as $\mathbbm{k}$ is a finite product of fields, and $\coHH_0(C)^*=\mathbbm{k}$, and the corank $\mathbb{Z}^n\rightarrow \mathbbm{k}$ is induced by the unique ring homomorphism from $\mathbb{Z}$ on each field in the direct summand of $\mathbbm{k}$.
\end{ex}

\begin{rem}[Eilenberg Swindle]
   Just as in the case for rings, we needed to restrict to finitely cogenerated comodules, as otherwise $K_0^c(C)=0$. Indeed, let $\mathbbm{k}^{\oplus \infty}$ be the $\mathbbm{k}$-vector space with countably infinite basis. Let $C^{\oplus \infty}=C\otimes \mathbbm{k}^{\oplus \infty}$ be an infinitely cogenerated cofree left $C$-comodule. 
   Then we obtain isomorphisms of left $C$-comodules
   \begin{align*}
       C^{\oplus \infty} & = C\otimes \mathbbm{k}^{\oplus \infty}\\
       & \cong  C\otimes \big( \mathbbm{k}^{\oplus n} \oplus \mathbbm{k}^{\oplus n} \oplus \cdots \big)\\
       &  \cong (C\otimes \mathbbm{k}^{\oplus n}) \oplus (C\otimes \mathbbm{k}^{\oplus n}) \oplus \cdots \\
       & \cong C^{\oplus n} \oplus C^{\oplus n} \oplus \cdots.
   \end{align*}
   Therefore if $M\oplus N\cong C^{\oplus n}$, then we obtain
   \begin{align*}
       M\oplus C^{\oplus \infty} & \cong M\oplus (N\oplus M) \oplus (N\oplus M) \oplus \cdots\\
       & \cong (M\oplus N)\oplus (M\oplus N)\oplus \cdots\\
       & \cong C^{\oplus \infty}.
   \end{align*}
   Hence, in $K_0^c(C)$ we would get $[M]=0$.
\end{rem}

\begin{prop}\label{prop: K-zero is Z for conil}
    If $C$ is a conilpotent coaugmented $\mathbbm{k}$-coalgebra,  then $K_0^c(C)\cong \mathbb{Z}$.
\end{prop}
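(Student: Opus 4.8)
The plan is to show that for a conilpotent coaugmented $\k$-coalgebra $C$, every finitely cogenerated injective left $C$-comodule is cofree, i.e., isomorphic to $C^{\oplus n}$ for a unique $n \geq 0$; this immediately gives $\mathrm{I}(C) \cong \mathbb{N}$ as monoids and hence $K_0^c(C) \cong \mathbb{Z}$ after group completion. The key structural input is that for a conilpotent coalgebra, the coradical filtration starts at the coaugmentation $\k \hookrightarrow C$, so that $C$ is an ``irreducible'' coalgebra with unique simple subcoalgebra $\k$. Dually to the fact that finitely generated projective modules over a local (or connected graded) ring are free, one expects finitely cogenerated injective comodules over a conilpotent coalgebra to be cofree.

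\begin{proof}[Proof sketch]
First I would recall that since $C$ is conilpotent and coaugmented, its coradical (the sum of its simple subcoalgebras) is just $\k$, so $C$ is an irreducible pointed coalgebra. Consequently the category $\bicomod{C}{}$ of left $C$-comodules has, up to isomorphism, a unique simple object $S = \k$ (the one-dimensional comodule on which $C$ coacts through the coaugmentation), and its injective envelope is $C$ itself, which is indecomposable as a $C$-comodule.

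Next, let $M$ be a finitely cogenerated injective left $C$-comodule. Finitely cogenerated means $M \hookrightarrow C^{\oplus n}$ for some $n$, and injectivity means this splits, so $M$ is a direct summand of $C^{\oplus n}$. Since $C$ is indecomposable injective with local endomorphism ring (the endomorphism ring ${}_C\Endo(C) \cong C^* $ is local, being the dual of an irreducible coalgebra, so its only idempotents are $0$ and $1$), the category of finitely cogenerated injectives is a Krull--Schmidt category whose only indecomposable object is $C$. Hence $M \cong C^{\oplus m}$ for some $m \leq n$. Uniqueness of $m$ follows because $\dim_\k \hom_C(\k, C^{\oplus m}) = m$ (as $\hom_C(\k,C) \cong \k$, again using that $\k$ has a one-dimensional socle inside $C$), or alternatively by comparing lengths/dimensions over a field factor of $\k$.

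Therefore the monoid $\mathrm{I}(C)$ of isomorphism classes of finitely cogenerated injective left $C$-comodules is freely generated by the class $[C]$, i.e., $\mathrm{I}(C) \cong \mathbb{N}$, and its group completion is $K_0^c(C) \cong \mathbb{Z}$, generated by $[C]$.
\end{proof}

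I expect the main obstacle to be the care needed in the case $\k$ is a nontrivial finite product of fields rather than a single field: one must check that ``conilpotent coaugmented'' still forces the coradical to be exactly $\k$ (not a proper product of some of the field factors) and that the Krull--Schmidt argument goes through over such a semisimple base, so that $C$ remains indecomposable and ${}_C\Endo(C) \cong C^*$ is still suitably local. A clean way around this is to argue one field factor at a time, decomposing $\k = \k_1 \times \cdots \times \k_r$, writing $C = C_1 \times \cdots \times C_r$ accordingly with each $C_i$ a conilpotent coaugmented $\k_i$-coalgebra, and noting $\mathrm{I}(C) \cong \prod_i \mathrm{I}(C_i)$ — but the coaugmentation $\k \to C$ being a single summand forces $r=1$ in the relevant sense, or more precisely forces the comparison to still yield $\mathbb{Z}$ rather than $\mathbb{Z}^r$; this bookkeeping is where I would be most careful.
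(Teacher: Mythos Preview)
Your approach is essentially the same as the paper's: both establish that finitely cogenerated injective $C$-comodules are cofree (hence $\mathrm{I}(C)\cong\mathbb{N}$), with the paper simply citing a reference for this fact while you sketch the Krull--Schmidt argument directly via indecomposability of $C$ and locality of ${}_C\Endo(C)\cong C^*$. Your caution about the case where $\k$ is a nontrivial product of fields is well-placed and is not addressed by the paper's one-line proof either.
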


\begin{proof}
  By \cite[2.6]{leo}, injective $C$-comodules are all cofree when $C$ is a conilpotent coaugmented coalgebra.
  Thus $\mathrm{I}(C)\cong \mathbb{N}$, and the result follows.
\end{proof}

\begin{prop}\label{prop: K(C) is a ring spectrum}
    If $C$ is a cocommutative $\mathbbm{k}$-coalgebra, then $K_0^c(C)$ is a commutative ring.
    In fact, if  $C$ is cocommutative, then $K^c(C)$ is a ring spectrum.
\end{prop}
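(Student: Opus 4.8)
The plan is to lift the symmetric monoidal structure that comodules over a cocommutative coalgebra carry to a multiplicative structure on the Waldhausen $K$-theory of Definition \ref{def: the first K-theory}. The first step: since $C$ is cocommutative, its comultiplication $\Delta\colon C\to C\otimes C$ is a morphism of $\k$-coalgebras---indeed cocommutativity is exactly the condition needed for $\Delta$ to preserve comultiplications. Restricting scalars along $\Delta$ and composing with the external tensor product $\bicomod{C}{}\times\bicomod{C}{}\to\bicomod{}{C\otimes C}$ then equips $\bicomod{C}{}$ with a symmetric monoidal structure whose tensor product is the cotensor product $\square_C$ and whose unit is $C$ itself (this is classical; see, e.g., \cite{brzezinski2003corings}), with $M\square_C N\subseteq M\otimes N$ inheriting its $C$-coaction compatibly from either factor. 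Here it matters that $C$ is flat over $\k$, which holds as $\k$ has global dimension zero, and that $C=C^\op$ by cocommutativity, so that left and right $C$-comodules coincide and the structure is genuinely symmetric.

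Next I would verify that $\bicomod{C}{}^{\mathrm{fc,inj}}$ is closed under this structure. The unit $C\cong C^{\oplus 1}$ is finitely cogenerated and injective, being cofree on $\k$. Given $M,N$ in $\bicomod{C}{}^{\mathrm{fc,inj}}$, write $M\oplus M'\cong C^{\oplus n}$ and $N\oplus N'\cong C^{\oplus m}$ (every finitely cogenerated injective comodule is a summand of a cofree one). Since equalizers commute with finite products, $\square_C$ preserves finite direct sums in each variable, and $C^{\oplus n}\square_C C^{\oplus m}\cong C^{\oplus nm}$ because $C$ is the unit; hence $M\square_C N$ is a direct summand of $C^{\oplus nm}$, so it is again finitely cogenerated and injective, and the same computation gives distributivity of $\square_C$ over $\oplus$ up to natural isomorphism. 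Finally $M\square_C(-)$ is a finite limit, hence left exact, so it preserves the monomorphisms (cofibrations) and isomorphisms (weak equivalences) of $\bicomod{C}{}^{\mathrm{fc,inj}}$; as that category is split exact it also preserves cofiber sequences, so $\square_C$ is a biexact pairing of Waldhausen categories, coherently associative, unital and symmetric---equivalently, $(\bicomod{C}{}^{\mathrm{fc,inj}},\oplus,\square_C,C)$ is a symmetric bimonoidal (bipermutative) category.

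The final step is to feed this into the $K$-theory machine: by the multiplicative formalism for Waldhausen $K$-theory, or---since the category is split exact---by applying group-completion $K$-theory to the underlying symmetric bimonoidal category (cf. \cite{waldy,Kbook}), $K^c(C)=K(\bicomod{C}{}^{\mathrm{fc,inj}})$ acquires the structure of an $E_\infty$-ring spectrum, in particular a ring spectrum, with unit the class $[C]$. Passing to $\pi_0$ then shows $K_0^c(C)$ is a commutative ring: it is the group completion of the commutative monoid $(\mathrm{I}(C),\oplus)$ with product induced by $\square_C$, unit $[C]$, commutativity from the symmetry isomorphism, and distributivity from the preceding step.

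I expect the main obstacle to lie in the first step rather than in the $K$-theory input: one must pin down that cocommutativity is precisely what makes $\Delta$ a coalgebra map, hence $\square_C$ a symmetric monoidal product on $\bicomod{C}{}$ that restricts to $\bicomod{C}{}^{\mathrm{fc,inj}}$. The remaining subtlety is that one needs a version of multiplicative $K$-theory applicable to a Waldhausen category whose cofibrations are all monomorphisms; split exactness of $\bicomod{C}{}^{\mathrm{fc,inj}}$ is what lets one bypass the more delicate biexact-pairing machinery by reducing to bipermutative categories.
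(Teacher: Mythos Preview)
Your proposal is correct and follows essentially the same approach as the paper: both establish the symmetric monoidal structure on $\bicomod{C}{}$ via $\square_C$ from cocommutativity, check closure of $\bicomod{C}{}^{\mathrm{fc,inj}}$ under $\square_C$, observe that $\square_C$ preserves monomorphisms, and then invoke a multiplicative $K$-theory machine. The only minor difference is that the paper cites Doi \cite[Proposition~1]{doi1981homological} for preservation of injectivity and \cite[2.8]{blum-man} for the ring spectrum structure, whereas you give the self-contained summand argument ($M\square_C N$ is a retract of $C^{\oplus nm}$) and appeal to the bipermutative machinery directly; both are valid and yield the same conclusion.
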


\begin{proof}
    If $C$ is cocommutative, then the cotensor product forms a symmetric monoidal structure on $C$-comodules.
    The cotensor product of injective $C$-comodules is injective \cite[Proposition 1]{doi1981homological}. The cotensor product of finitely cogenerated $C$-comodules is finitely cogenerated. Therefore $K_0^c(C)$ is a commutative ring via cotensor product of $C$-comodules.
    As the the cotensor product preserves monomorphisms in each variable, then the Waldhausen category ${}_C\comod^\mathrm{fc,inj}$ is symmetric monoidal and thus $K^c(C)$ is a ring spectrum \cite[2.8]{blum-man}.  
\end{proof}

\subsection{The colinear Hattori--Stallings trace}\label{subsec: colinear trace}
Another approach is to obtain a coalgebraic refinement of the Hattori--Stallings trace as we now explain.
We can extend the notion of a trace $T\colon R\rightarrow A$ to a trace on any $(R,R)$-bimodule $P$ as a $\mathbb{Z}$-linear homomorphism $T\colon P\rightarrow A$ such that $T(rx)=T(xr)$ for all $r\in R$, $x\in P$. 
A universal trace is then given by $\tr\colon P\rightarrow \HH_0(R, P):=R\otimes_{R\otimes R^{\op}} P$. 
If $M$ is a right $R$-module, then $\hom_R(M,R)\otimes_\mathbb{Z} M$ is an $(R,R)$-bimodule. 

\begin{defn}[{\cite[2.3.4]{ponto2008fixed}}]\label{def: twisted hattori-stallings trace}
Let $M$ be a finitely generated projective right $R$-module, and $P$ an $(R,R)$-bimodule.
The \textit{twisted Hattori--Stallings trace} $\hom_R(M,M\otimes_R P)\rightarrow \HH_0(R,P)$ is defined on a right $R$-linear homomorphism $f\colon M\rightarrow M\otimes_R P$, referred to as an \textit{endomorphism on $M$ twisted by $P$}, as the image of $f$ under the composition
\[
\begin{tikzcd}
    \hom_R(M, M\otimes_R P) & \ar{l}[swap]{\cong} (M\otimes_R P )\otimes_R \hom_R(M,R) \ar{r}{\ev} & P \ar{r}{\tr} & \HH_0(R, P).
\end{tikzcd}
\]
\end{defn}

Given a $\mathbbm{k}$-coalgebra $C$, we now choose $R=\mathbbm{k}$ and $P=C$ as a $(\mathbbm{k}, \mathbbm{k})$-bimodule, then the twisted Hattori--Stallings trace becomes the $\mathbbm{k}$-linear homomorphism $\tr\colon \hom_\mathbbm{k}(M, M\otimes C)\rightarrow \HH_0(\mathbbm{k}, C)=C$.
If we require $M$ to be a right $C$-comodule, finitely generated as a $\mathbbm{k}$-module, with coaction $\rho\colon M\rightarrow M\otimes  C$, we define $\tr^C\colon \Endo_C(M)\rightarrow C$ by $\tr^C(f)=\tr(\rho\circ f)$.

\begin{prop}\label{prop: colinear is a cotrace}
    The $\mathbbm{k}$-linear homomorphism $\tr^C\colon\Endo_C(M)\rightarrow C$ is a cotrace. In particular, it factors uniquely through the universal cotrace:
    \[
\begin{tikzcd}[row sep=large]
    & \coHH_0(C) \ar[hook]{d}{\cotr}\\
    \Endo_C(M) \ar[dashed]{ur}{\exists ! \tr^C} \ar{r}[swap]{\tr^C} & C.
\end{tikzcd}
\]
\end{prop}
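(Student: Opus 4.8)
The plan is to verify directly that $\tr^C$ satisfies the cocyclicity condition of Definition \ref{def: cotrace on a coalgebra}, namely that $\Delta \circ \tr^C = \tau \circ \Delta \circ \tr^C$ as maps $\Endo_C(M) \to C \otimes C$; once this is established, the factorization through $\cotr\colon \coHH_0(C) \hookrightarrow C$ is automatic by the universal property of the cocommutator kernel. First I would rewrite $\tr^C$ coordinate-free: since $M$ is finitely generated (hence projective) as a $\k$-module, there is an isomorphism $M \otimes M^* \xrightarrow{\cong} \Endo_\k(M)$, so $\tr^C(f) = \tr(\rho \circ f)$ is obtained by transporting $\rho \circ f \in \hom_\k(M, M\otimes C)$ across $\hom_\k(M, M\otimes C) \cong (M\otimes C)\otimes M^*$ and applying the evaluation $M^* \otimes M \to \k$ in the appropriate slot. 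Writing a $\k$-basis $\{e_i\}$ of $M$ with dual basis $\{e_i^*\}$ and setting $\rho(e_j) = \sum_i e_i \otimes c_{ij}$ for the matrix $(f_{ij})$ of $f$, one gets the explicit formula $\tr^C(f) = \sum_{i,j} f_{ji}\, c_{ij} \in C$ (equivalently $\sum_k (\rho f)(e_k)$ paired against $e_k^*$).

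The key computational step is then to expand both $\Delta(\tr^C(f))$ and $\tau(\Delta(\tr^C(f)))$ using this formula together with the two defining properties available: coassociativity/counitality of the coaction $\rho$ (which controls how $\Delta$ interacts with the $c_{ij}$) and the fact that $f$ is $C$-\emph{colinear}, i.e. $\rho \circ f = (f \otimes \id_C)\circ \rho$. Concretely, colinearity of $f$ says $\sum_i f_{ki} c_{ij} \otimes \text{(stuff)}$ matches an expression coming from $\rho f$ computed the other way; combined with coassociativity of $\rho$, which unpacks $(\id_M \otimes \Delta)\rho = (\rho\otimes\id_C)\rho$ into a relation among the $c_{ij}$ and their comultiplications, one should be able to rewrite $\sum (c_{ij})_{(1)} \otimes (c_{ij})_{(2)}$ (weighted by the $f_{ji}$) so that the two tensor factors can be swapped. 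The underlying reason this works is structural: $\tr^C$ is, up to the identifications above, the composite $\Endo_C(M) \hookrightarrow \hom_\k(M, M\otimes C) \cong (M\otimes C)\otimes M^* \xrightarrow{\ev} C$, and colinearity forces the "$C$-part" to be shuffled symmetrically, exactly as in the classical proof that $\tr(AB) = \tr(BA)$.

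I expect the main obstacle to be bookkeeping: keeping track of which tensor slot each comultiplied factor lands in, and correctly invoking colinearity of $f$ at the right moment to convert an asymmetric sum into a symmetric one. There are no sign issues here since we are in ungraded $\k$-modules, which simplifies matters, but the index juggling with Sweedler notation on the $c_{ij}$ is where an error would most likely creep in. An alternative, cleaner route that I would consider is to recognize $\tr^C$ as a bicategorical trace in the shadow framework of Theorem \ref{theorem: main theorem} (this is essentially Case 2 of the Introduction with the roles of $\k$ and $C$ as in \eqref{eq: colinear hattori-stallings}), from which cocyclicity follows formally from the cyclicity of bicategorical traces; however, since this proposition appears in Section \ref{section :universal cotrace} which is advertised as self-contained and purely (co)algebraic, the direct Sweedler-notation verification sketched above is the appropriate argument to present here, with the bicategorical interpretation deferred to a later remark.
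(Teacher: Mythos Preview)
Your proposal is correct and takes essentially the same approach as the paper: a direct Sweedler/coordinate verification that $\Delta\circ\tr^C=\tau\circ\Delta\circ\tr^C$, using colinearity of $f$ and coassociativity of the coaction, followed by the universal property of the cocommutator kernel. The only cosmetic difference is that the paper packages the computation by introducing the left $C$-comodule structure on $M^*$ (with coaction $\lambda$ induced from $\rho$) and a compatibility square relating $\lambda$, $\rho$, and evaluation, whereas you work with matrix coefficients $c_{ij}$ of the coaction; these are equivalent bookkeeping devices, and your anticipation that the index juggling is the only delicate point is exactly right.
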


\begin{defn}\label{def: colinear Hattori--Stallings trace}
Given a right $C$-comodule $M$ that is finitely generated as a $\mathbbm{k}$-module, we call the induced  $\mathbbm{k}$-linear homomorphism $\tr^C\colon \Endo_C(M)\rightarrow \coHH_0(C)$ the \textit{colinear Hattori--Stallings trace on $M$}.
Explicitly, given a basis $(e_1, \dots, e_n)$ of $M$, and a $C$-colinear homomorphism $f\colon M\rightarrow M$, we have the formula:
\[
\tr^C(f)=\sum_{i=1}^n \sum_{(e_i)} e_i^*(f({e_i}_{(0)})){e_i}_{(1)}
\]
where we denoted $m\mapsto \sum_{(m)} m_{(0)}\otimes m_{(1)}$ the  coaction  $M\rightarrow M\otimes C$.
The \textit{colinear Hattori--Stallings rank on $M$} is the colinear trace on the identity.
\end{defn}

\begin{proof}[Proof of Proposition \ref{prop: colinear is a cotrace}]
 Denote $\Delta\colon C\rightarrow C\otimes C$ the comultiplication on $C$.
 We need to show that for all colinear endomorphisms $f$, we have $\Delta(\tr^C(f))=\tau(\Delta(\tr^C(f)))$. 
 As $f$ is  colinear and $M$  is a right $C$-comodule,  we obtain  the commutative diagram:
\begin{equation}\label{eq: f is colinear}
\begin{tikzcd}[row sep=2.25em]
	M & M & {M\otimes C} \\
	{M\otimes C} & {M\otimes C} & {M\otimes C^{\otimes 2}}
	\arrow["f", from=1-1, to=1-2]
	\arrow["\rho"', from=1-1, to=2-1]
	\arrow["\rho", from=1-2, to=2-2]
	\arrow["{f\otimes 1}"', from=2-1, to=2-2]
	\arrow["\rho", from=1-2, to=1-3]
	\arrow["{1\otimes \Delta}", from=1-3, to=2-3]
	\arrow["{\rho\otimes 1}"', from=2-2, to=2-3]
\end{tikzcd}
\end{equation}
 As $M$ is a right $C$-comodule, its linear  dual $M^*$ is a left $C$-comodule with coaction:
 \begin{align*}
     \lambda\colon M^* & \longrightarrow \hom_k(M,C) \stackrel{\cong}\leftarrow C\otimes M^*\\
     \alpha  & \longmapsto \left(M\rightarrow M\otimes C \stackrel{\alpha}\rightarrow C\right)
 \end{align*}
The coactions are compatible on $M^*\otimes M$ in the sense that the following diagram commutes:
\begin{equation}\label{eq: compatibility of M^* coaction}
\begin{tikzcd}[row sep=2.25em]
	{M^*\otimes M} & {C\otimes M^*\otimes M} \\
	{M^*\otimes M\otimes C} & C & {C^{\otimes 2}\otimes M^* \otimes M} \\
	& {M^*\otimes M \otimes C^{\otimes 2}} & {C^{\otimes 2}.}
	\arrow["{\lambda\otimes 1}", from=1-1, to=1-2]
	\arrow["{1\otimes \mathrm{ev}}", from=1-2, to=2-2]
	\arrow["{\mathrm{ev}\otimes 1}"', from=2-1, to=2-2]
	\arrow["{1\otimes \rho}"', from=1-1, to=2-1]
	\arrow["{\lambda\otimes 1 \otimes 1}", from=1-2, to=2-3]
	\arrow["{1\otimes 1 \otimes \rho}"', from=2-1, to=3-2]
	\arrow["{\mathrm{ev}\otimes 1 \otimes 1}"', from=3-2, to=3-3]
	\arrow["{1\otimes 1\otimes\mathrm{ev}}", from=2-3, to=3-3]
\end{tikzcd}
\end{equation}
 Using the usual convention for Sweedler notation, and \eqref{eq: f is colinear}  and \eqref{eq: compatibility of M^* coaction}, we obtain 
\begin{align*}
    \Delta(\tr^C(f)) & = \sum_i \sum_{(e_i)} e_i^*(f({e_i}_{(0)})){e_i}_{(1)} \otimes {e_i}_{(2)}\\
    & = \sum_i \sum_{(f(e_i))} e_i^*({f(e_i)}_{(0)}) {f(e_i)}_{(1)}\otimes {f(e_i)}_{(2)}\\
    & = \sum_i \sum_{(e_i^*)} (e_i^*)_{(0)}(f(e_i))(e_i^*)_{(2)} \otimes (e_i^*)_{(1)}\\
    & = \sum_i \sum_{(e_i)} e_i^*(f({e_i})_{(0)}){e_i}_{(2)}\otimes {e_i}_{(1)}\\
    & = \tau(\Delta(\tr^C(f))). \qedhere
\end{align*}
\end{proof}

Let $\mathrm{M}(C)$ be the isomorphism class of right $C$-comodules that are finitely generated as $\mathbbm{k}$-modules. 
By \cite[II.9.1.3, IV.8.4]{Kbook}, the $\pi_0$-group of the algebraic $K$-theory $G^c(C)=K(\comod_C^\mathrm{fg})$ of Definition \ref{def: the second algebraic K-theory} is isomorphic to the free abelian group on $\mathrm{M}(C)$ under the relation $[M]=[M']+[M'']$ whenever $0\rightarrow M'\rightarrow M \rightarrow M''\rightarrow 0$ is an exact sequence in $\comod_C^\mathrm{fg}$. The colinear Hattori--Stallings rank defines an additive homomorphism $\mathrm{M}(C)\rightarrow \coHH_0(C)$. We wish to show it factors through $G^c_0(C)$. 

\begin{lemma}[Additivity]
    Given a morphism of short exact sequences in $\comod_C^\mathrm{fg}$:
    \[
    \begin{tikzcd}
        0 \ar{r} & M' \ar{d}{f'} \ar{r} & M \ar{d}{f}\ar{r} & M'' \ar{d}{f''} \ar{r} & 0\\
        0 \ar{r} & M' \ar{r} & M \ar{r} & M'' \ar{r} & 0
    \end{tikzcd}
    \]
    we obtain $\tr^C(f)=\tr^C(f')+\tr^C(f'')$ in $\coHH_0(C)$.
\end{lemma}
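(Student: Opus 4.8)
The plan is to exploit that $\k$ has global dimension zero, so the short exact sequences of underlying $\k$-modules split, and then to compute $\tr^C$ in a \emph{filtration-adapted basis}. Recall from the definition preceding Proposition~\ref{prop: colinear is a cotrace} that $\tr^C(f) = \tr(\rho_M\circ f)$, the coordinate-free twisted Hattori--Stallings trace of $\rho_M\circ f\colon M\to M\otimes C$; in particular $\tr^C(f)$ does not depend on the chosen $\k$-basis of $M$, and similarly for $f'$ and $f''$, so I am free to pick convenient compatible bases. Concretely, since $M'\hookrightarrow M$ is a monomorphism of $\k$-modules onto a direct summand, choose a $\k$-basis $(e_1',\dots,e_k')$ of $M'$, lift a $\k$-basis $(\bar e_1'',\dots,\bar e_\ell'')$ of $M''=M/M'$ to elements $(e_1'',\dots,e_\ell'')$ of $M$, and take $(e_1',\dots,e_k',e_1'',\dots,e_\ell'')$ as a basis of $M$.

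Next I would invoke the structural input of the morphism of short exact sequences: the left square forces $f(M')\subseteq M'$ with $f|_{M'}=f'$, and the right square forces $f$ to induce $f''$ on the quotient, i.e. $f(e_j'')$ is a lift of $f''(\bar e_j'')$ modulo $M'$. In parallel, $M'\hookrightarrow M$ being $C$-colinear says $\rho_M$ restricts to $\rho_{M'}$ on $M'$ (so every Sweedler summand $(e_i')_{(0)}$ lies in $M'$), and $M\twoheadrightarrow M''$ being $C$-colinear says $\rho_{M''}$ is $\rho_M$ read modulo $M'$. Plugging the adapted basis into the explicit formula of Definition~\ref{def: colinear Hattori--Stallings trace} splits the sum as
\[
\tr^C(f)=\sum_{i=1}^k\sum_{(e_i')}(e_i')^*\!\big(f((e_i')_{(0)})\big)\,(e_i')_{(1)}\;+\;\sum_{i=1}^\ell\sum_{(e_i'')}(e_i'')^*\!\big(f((e_i'')_{(0)})\big)\,(e_i'')_{(1)}.
\]
In the first sum, $(e_i')_{(0)}\in M'$, $f$ acts there as $f'$, and $(e_i')^*$ restricts to the dual basis element of $e_i'$ in $(M')^*$, so it equals $\tr^C(f')$. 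In the second, write $(e_i'')_{(0)}=\sum_j p_j e_j'+\sum_j q_j e_j''$; since $(e_i'')^*$ annihilates $M'$ and $f(M')\subseteq M'$, only the $\sum_j q_j e_j''$ part contributes, and $(e_i'')^*(f(e_j''))=(\bar e_i'')^*(f''(\bar e_j''))$ because $f$ induces $f''$ mod $M'$; as $\sum_j q_j\bar e_j''$ is exactly $\rho_{M''}(\bar e_i'')$ in Sweedler form, the second sum equals $\tr^C(f'')$. Adding the two yields the claim.

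The only genuine work is the Sweedler bookkeeping in the last step --- tracking which summands of $(e_i'')_{(0)}$ land in $M'$ versus $M''$ and checking that the dual-basis functionals transport correctly along $M'\hookrightarrow M\twoheadrightarrow M''$; this is routine but must be done with care. (Alternatively, one can argue basis-free: tensoring the split short exact sequence $0\to M'\to M\to M''\to 0$ with $C$ produces a morphism of short exact sequences with vertical maps $\rho_{M'}\circ f'$, $\rho_M\circ f$, $\rho_{M''}\circ f''$, and the conclusion is then additivity of the twisted Hattori--Stallings trace over such block-triangular data, which is the trace-is-additive-on-the-diagonal principle.)
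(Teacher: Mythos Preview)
Your proposal is correct and follows essentially the same approach as the paper: reduce to the additivity of the ordinary twisted trace via $\tr^C(f)=\tr(\rho\circ f)$, then verify this by choosing a filtration-adapted $\k$-basis of $M\cong M'\oplus M''$ and computing with the explicit formula of Definition~\ref{def: colinear Hattori--Stallings trace}. If anything, your treatment of the $M''$ summand---explicitly tracking that $(e_i'')_{(0)}$ may have components in $M'$ which are killed by $(e_i'')^*$ because $f(M')\subseteq M'$---is more carefully written than the paper's corresponding step.
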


\begin{proof}
    This follows from $\tr^C(f)=\tr(\rho\circ f)$ and the fact that the usual twisted trace is additive.
    In more details, we have $M\cong M'\oplus M''$ as $\mathbbm{k}$-modules. 
    Denote the coactions as follows:
    \begin{align*}
        M &\rightarrow M\otimes C & M' & \rightarrow M'\otimes C & M'' & \rightarrow M''\otimes C \\
        m & \mapsto \sum_{(m)} m_{(0)}^M\otimes m_{(1)}^M & m & \mapsto \sum_{(m)} m_{(0)}^{M'}\otimes m_{(1)}^{M'} & m & \mapsto  \sum_{(m)} m_{(0)}^{M''}\otimes m_{(1)}^{M''}.
    \end{align*}
    Choose a basis $(e_1,\dots, e_n, e_{n+1}, \dots, e_{n+m})$ of $M$ such that $(e_1, \dots, e_n)$ is a basis of $M'$ and $(e_{n+1}, \dots, e_{n+m})$ is a basis of $M''$.
    As $f$,$f'$ and $f''$ are colinear, we obtain for all $1\leq i \leq n$ and $n+1 \leq j \leq n+m$:
    \begin{align*}
        \sum_{(e_i)} f'({e_i}_{(0)}^{M'})\otimes {e_i}_{(1)}^{M'}= \sum_{(e_i)} f({e_i}_{(0)}^{M})\otimes {e_i}_{(1)}^{M} & & \sum_{(e_i)} f''({e_i}_{(0)}^{M''})\otimes {e_i}_{(1)}^{M''}=\sum_{(e_i)} f({e_i}_{(0)}^{M})\otimes {e_i}_{(1)}^{M}.
    \end{align*}
    Then:
    \begin{align*}
        \tr^C(f) & = \sum_{i=1}^{n+m} \sum_{(e_i)} e_i^*(f({e_i}_{(0)}^M)) {e_{i}}_{(1)}^M\\
        & =\sum_{i=1}^n  \sum_{(e_i)}  e_i^*(f({e_i}_{(0)}^{M})){e_{i}}_{(1)}^{M} + \sum_{i=n+1}^{n+m} \sum_{(e_i)} e_i^*(f({e_i}_{(0)}^M)){e_{i}}_{(1)}^M \\
        & =\sum_{i=1}^n  \sum_{(e_i)}  e_i^*(f'({e_i}_{(0)}^{M'})){e_{i}}_{(1)}^{M'} + \sum_{i=n+1}^{n+m} \sum_{(e_i)} e_i^*(f''({e_i}_{(0)}^{M''})){e_{i}}_{(1)}^{M''}\\
        & = \tr^C(f')+\tr^C(f'').\qedhere
    \end{align*}
\end{proof}

\begin{thm}\label{thm: the second trace on K-theory}
    Let $C$ be a $\mathbbm{k}$-coalgebra.
    The colinear Hattori--Stallings rank defines a homomorphism of abelian groups:
    \[
    G^c_0(C)\longrightarrow \coHH_0(C).
    \]
\end{thm}

\begin{proof}
    Considering trace on identity yields a homomorphism $\mathbb{Z}[\mathrm{M}(C)]\rightarrow \coHH_0(C)$.
    Since the colinear trace $\tr^C$ is additive, then the previous homomorphism must factor through $G^c_0(C)$.
\end{proof}

Any right $C$-comodule $M$ can be given a left $C^*$-module with the action given by the coaction followed by evaluation:
\[
C^*\otimes M \rightarrow C^*\otimes M \otimes C\cong C^*\otimes C \otimes M \rightarrow \mathbbm{k}\otimes M\cong M.
\]
This defines a fully faithful functor $\bicomod{}{C}\hookrightarrow \bimod{C^*}{}$ that is an equivalence of categories if and only if $C$ is finitely generated as a $\mathbbm{k}$-module \cite[4.7]{brzezinski2003corings}.
In that case, the ring $C^*$ is also finitely generated as a $\mathbbm{k}$-module, and a left $C^*$-module is finitely generated over $C^*$ if and only if is finitely generated over $\mathbbm{k}$.
This results into an equivalence $\bicomod{}{C}^\mathrm{fg}\simeq \bimod{C^*}{}^\mathrm{fg}$ between right $C$-comodules and left $C^*$-modules that are finitely generated as $\mathbbm{k}$-modules.
Considering $\bimod{C^*}{}^\mathrm{fg}$ as a Waldhausen category with cofibrations all monomorphisms, we obtain $K(\bimod{C^*}{}^\mathrm{fg})\simeq G(C^*)$, the $G$-theory of the ring $C^*$.
The Cartan map $K(C^*)\rightarrow G(C^*)$ combined with the colinear Hattori--Stallings rank defines a homomorphism $K_0(C^*)\rightarrow \coHH_0(C)\cong \HH_0(C^*)^*\cong \HH_0(C^*)$, and this is the usual Hattori--Stallings rank $K_0(C^*)\rightarrow \HH_0(C^*)$ on the ring $C^*$. 
We record our observation as follows choosing $C^*=R$, then $R$ is a $\mathbbm{k}$-algebra that is finitely generated as a $\mathbbm{k}$-module, i.e. an Artinian $\mathbbm{k}$-algebra. In that case, we see $R^*$ becomes a $\mathbbm{k}$-coalgebra.

\begin{prop}\label{prop: G-theory}
    Let $R$ be an Artinian $\mathbbm{k}$-algebra.
    There is an equivalence of algebraic $K$-theory spectra:
    \[
    G(R)\simeq G^c(R^*),
    \]
    and the combination of the Cartan maps $K(R)\rightarrow G(R)$ and colinear Hattori--Stallings rank on $R^*$ recovers the Dennis trace $K_0(R)\rightarrow \coHH_0(R^*)\cong \HH_0(R)$.
\end{prop}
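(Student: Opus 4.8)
The plan is to assemble Proposition \ref{prop: G-theory} out of three ingredients, all of which were essentially built in the surrounding discussion. First I would verify the equivalence of Waldhausen categories $\bicomod{}{R^*}^\mathrm{fg}\simeq \bimod{R}{}^\mathrm{fg}$. Since $R$ is Artinian over $\k$, it is finitely generated as a $\k$-module, hence so is its $\k$-linear dual $C:=R^*$; moreover $C^*\cong R$ canonically because $R$ is $\k$-projective of finite rank (here we use that $\k$ has global dimension zero, so every module is projective and reflexive). Then the general fact cited above — the functor $\bicomod{}{C}\hookrightarrow \bimod{C^*}{}$ is an equivalence precisely when $C$ is finitely generated over $\k$ \cite[4.7]{brzezinski2003corings} — applies and restricts to an equivalence on the full subcategories of objects that are finitely generated as $\k$-modules, because over $C^*\cong R$, finite generation over $R$ is equivalent to finite generation over $\k$ (as $R$ itself is a finite $\k$-module). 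This equivalence is exact and carries monomorphisms to monomorphisms in both directions, so it is an equivalence of Waldhausen categories, and therefore induces $K(\bicomod{}{R^*}^\mathrm{fg})\simeq K(\bimod{R}{}^\mathrm{fg})$. Identifying $K(\bimod{R}{}^\mathrm{fg})$ with $G(R)$ is just the definition of $G$-theory as the $K$-theory of finitely generated $R$-modules (with monomorphisms as cofibrations and isomorphisms as weak equivalences), together with the standard fact that for a Noetherian — here Artinian — ring this agrees with Quillen's $G$-theory; cf.\ \cite[II.9, IV.8]{Kbook}.

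Second, I would chase the identifications of coefficient groups. For a coalgebra $C$ finite over $\k$, dualizing the cotensor defining $\coHH_0(C)=C\square_{C\otimes C^\op}C$ turns the equalizer into a coequalizer, giving $\coHH_0(C)^*\cong \HH_0(C^*)$; and since $\coHH_0(C)$ is itself a finite $\k$-module (a sub-$\k$-module of $C$, which is finite and hence, $\k$ being semisimple, every submodule is a summand), double-dualizing yields $\coHH_0(C)\cong \coHH_0(C)^{**}\cong \HH_0(C^*)^*$. Combining, $\coHH_0(R^*)\cong \HH_0(R)^*\cong \HH_0(R)$, the last step again by reflexivity of the finite $\k$-module $\HH_0(R)=R/[R,R]$. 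I would spell out just enough of this to make the isomorphism $\coHH_0(R^*)\cong\HH_0(R)$ canonical, since the point of the proposition is that the colinear Hattori--Stallings rank matches the classical one under this identification.

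Third — the substantive point — I would show that the composite
\[
K_0(R)\xrightarrow{\ \text{Cartan}\ } G_0(R)\cong K_0\big(\bicomod{}{R^*}^\mathrm{fg}\big)\xrightarrow{\ \tr^{R^*}\ }\coHH_0(R^*)\cong\HH_0(R)
\]
agrees with the classical Hattori--Stallings rank $K_0(R)\to\HH_0(R)$. It suffices to check this on the class $[P]$ of a finitely generated projective right… — more precisely left — $R$-module $P$, i.e.\ to compare $\tr^{R^*}(\id_P)$, computed via the coaction $\rho\colon P\to P\otimes R^*$ adjoint to the $R$-action, against $\tr_P(\id_P)\in\HH_0(R)$ from Definition \ref{def: hattori-stallings definition}. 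Unwinding Definition \ref{def: colinear Hattori--Stallings trace}: picking a $\k$-basis $(e_i)$ of $P$, one has $\tr^{R^*}(\id_P)=\sum_i\sum_{(e_i)}e_i^*\big((e_i)_{(0)}\big)\,(e_i)_{(1)}$, and writing the right $R$-action as $e_i\cdot r=\sum_j a_{ij}(r)e_j$ with $a_{ij}\in R^*$, the coaction is $(e_i)_{(0)}\otimes(e_i)_{(1)}=\sum_j e_j\otimes a_{ij}$, so the colinear rank equals $\sum_i a_{ii}\in R^*{}^*\cong R$, projected to $R/[R,R]$ — which is exactly the trace of the matrix of right-multiplication, i.e.\ the Hattori--Stallings rank of $P$. (Compatibility with the Cartan map is automatic, since the Cartan map is induced by the inclusion of projectives into all finitely generated modules and both ranks are defined by the same formula on a chosen basis.) I would note this matches the twisted Hattori--Stallings trace of Definition \ref{def: twisted hattori-stallings trace} with $P=R$ as an $(R,R)$-bimodule, as asserted in Section \ref{subsec: colinear trace}.

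The main obstacle I expect is purely bookkeeping: keeping the left/right conventions straight when passing between right $R^*$-comodules, left $C^*$-modules, and the right- versus left-module conventions in Definitions \ref{def: hattori-stallings definition} and \ref{def: colinear Hattori--Stallings trace}, and making sure the canonical reflexivity isomorphisms $C^{**}\cong C$ and $\HH_0(R)^{**}\cong\HH_0(R)$ are used coherently so that the final identification of traces is genuinely an equality and not merely an equality up to an unidentified automorphism. None of the three steps is deep given \cite[4.7]{brzezinski2003corings}, the definition of $G$-theory, and the explicit basis formula for $\tr^C$; the content is checking that everything is the \emph{same} map.
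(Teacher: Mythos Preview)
Your proposal is correct and follows essentially the same approach as the paper: the paper's argument is the paragraph immediately preceding the proposition (the proposition merely ``records'' that discussion), using \cite[4.7]{brzezinski2003corings} for the equivalence of categories, matching finite-generation conditions, identifying with $G$-theory, and asserting that the composite recovers the classical Hattori--Stallings rank via $\coHH_0(C)\cong\HH_0(C^*)^*\cong\HH_0(C^*)$. Your treatment is more thorough---in particular your explicit basis computation in the third step, where you verify $\tr^{R^*}(\id_P)=\sum_i a_{ii}$, supplies a check the paper leaves to the reader---and your flagging of the left/right bookkeeping is well-placed, since the equivalence lands in \emph{left} $C^*$-modules while Definition~\ref{def: hattori-stallings definition} is stated for right modules.
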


\begin{rem}
If $R$ is a commutative Artinian $\mathbbm{k}$-algebra, then $R=R_1\times \dots\times R_n$ is a finite product of Artinian local rings $R_i$ \cite[X.7.7.(ii)]{lang}. By \cite[V.4.2.1]{Kbook}, when $\mathbbm{k}$ is a field, we get $G(R_i)\simeq K(\mathbbm{k})$. Therefore, for $R$ a commutative Artinian algebra over a field $\mathbbm{k}$, we obtain:
\[
G^c(R^*)\simeq \prod_{i=1}^n K(\mathbbm{k}).
\]
\end{rem}

\section{Bicategories of bicomodules}\label{section: bicategory of comodules}

In this section, we introduce the bicategories of bicomodules in Definition \ref{def: bicategory of discrete bicomodules} and Definition \ref{def: bicategory of derived bicomodules}.
For convenience, we recall the definition of a bicategory, further details of which may be found in \cite{ponto2008fixed, Ponto_2012, campbell2019topological}.

\begin{defn}
A \textit{bicategory} $\mathcal{B}$ consists of following.
\begin{itemize}
    \item A collection of objects, $ob(\mathcal{B})$, called \emph{0-cells}. We write $C\in \mathcal{B}$ instead of $C\in ob(\mathcal{B})$.
    \item Categories $\mathcal{B}(C,D)$ for each pair of objects $C, D \in \mathcal{B}$.  The objects in these categories are referred to as \emph{1-cells} and the morphisms as \emph{2-cells}. The composition is referred to as \emph{vertical composition}.
    \item Unit functors $U_C \in \mathcal{B}(C,C)$ for all $C \in \mathcal{B}$. 
    \item \textit{Horizontal composition} functors for $C,D,E \in \mathcal{B}$
    $$\odot\colon \mathcal{B}(C,D) \times \mathcal{B}(D,E) \to \mathcal{B}(C,E),$$ which are not required to be strictly associative or unital.
    \item Natural isomorphisms for $M \in \mathcal{B}(C,D)$, $N \in \mathcal{B}(D,E)$, $P \in \mathcal{B}(E,F)$, and $Q \in \mathcal{B}(F,G)$ given $C, D, E, F, G \in \mathcal{B}$:
 \[        a\colon (M \odot N) \odot P \xlongrightarrow{\cong} M \odot (N \odot P),\]
       \[ \ell\colon U_C \odot M \xlongrightarrow{\cong} M, \quad\quad \quad r\colon M \odot U_D \xlongrightarrow{\cong} M,\]
    which satisfy the triangle identity
    \[
    \begin{tikzcd}
     (M \odot U_D) \odot N \arrow{dr}[swap]{r \odot \id_N} \arrow{rr}{a} &&M \odot (U_D \odot N) \arrow{dl}{\id_M \odot \ell}\\
    & M \odot N ,
    \end{tikzcd}
    \]
    and the pentagon identity
    \[
    \begin{tikzcd}[row sep= small, column sep=small]
    & &(M \odot N) \odot (P \odot Q) \arrow{dr}{a} &[-2em]\\
    &((M \odot N) \odot P) \odot Q \arrow{ur}{a} \arrow{d}{a \odot \id_Q} & &M \odot (N \odot (P \odot Q))\\
   [+2em] &(M \odot (N \odot P)) \odot Q \arrow{rr}{a}& & M \odot ((N \odot P) \odot Q). \arrow{u}{\id_M \odot a}
    \end{tikzcd}
    \]
\end{itemize}
\end{defn}

A bicategory with one object is precisely a monoidal category. The bicategory of categories in which $0$-cells are categories, $1$-cells are functors, and $2$-cells are natural transformations is a classical example. We recall other examples below that will be motivating in our context. 

\begin{ex}\label{ex: bicat of modules classical}
We can define a bicategory whose 0-cells are rings and whose 1- and 2-cells come from the category of $(R,S)$-bimodules for rings $R$ and $S$. In this case the horizontal composition is given by the tensor product of bimodules. The derived version of this bicategory has 0-cells which are rings and 1- and 2-cells from the derived category of $(R,S)$-bimodules, where horizontal composition is given by the derived tensor product $\otimes^\mathbb{L}$.
Recall that if $M$ is a right $S$-module, and $N$ is a left $S$-module, then $M\otimes_S^\mathbb{L} N$ is equivalent to the two-sided bar construction, $\mathsf{Bar}(M,S,N)$. 
\end{ex}

\begin{ex}\label{ex: bicat of modules derived}
When the 0-cells are given by ring spectra and the 1- and 2-cells come from the homotopy category of $(R,S)$-bimodules for ring spectra $R$ and $S$, then the horizontal composition is given by the derived smash product of spectra. Note that as in the previous example, $\mathsf{Bar}(M,S,N) \simeq M \wedge^\mathbb{L}_S N$, see \cite[IV.7.5]{elmendorf1995rings},
and so we may also consider this horizontal composition as the two-sided bar construction.
\end{ex}

In this paper, we will introduce two bicategories formed by bicomodules instead of bimodules. Their horizontal compositions will be given by the (derived) cotensor product.

\begin{prop}
Let $(\Cc, \otimes, \mathbb{I})$ be a symmetric monoidal category, and let $C$ be a flat coalgebra in $\Cc$. Then $(\bicomod{C}{C}(\Cc), \square_C, C)$ is a monoidal category. Further, it is symmetric monoidal if $C$ is cocommutative.
\end{prop}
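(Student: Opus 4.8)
The plan is to verify directly that the cotensor product $\square_C$ equips $\bicomod{C}{C}(\Cc)$ with the structure of a monoidal category, using flatness of $C$ to ensure everything is well-behaved. First I would observe that since $C$ is flat, the functor $C\otimes-$ preserves equalizers, so for a $(C,C)$-bicomodule $M$ and a $(C,C)$-bicomodule $N$, the cotensor product $M\square_C N$ (defined by the equalizer of $\rho_M\otimes\id_N$ and $\id_M\otimes\lambda_N$) is again a $(C,C)$-bicomodule; this is exactly the second diagram in the definition of the cotensor product, where flatness of $C$ guarantees the dashed map exists and is unique. Similarly flatness is what makes $\square_C$ a functor in each variable that is ``exact enough'' for the constructions below to make sense.

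Next I would construct the associativity isomorphism. Given bicomodules $M$, $N$, $P$, both $(M\square_C N)\square_C P$ and $M\square_C(N\square_C P)$ can be computed as the joint equalizer (limit) of the evident diagram built from $M\otimes N\otimes P$ with the four coaction maps into $M\otimes C\otimes N\otimes P$ and $M\otimes N\otimes C\otimes P$; because $C\otimes-$ and $-\otimes C$ preserve the relevant equalizers, iterating an equalizer in either order computes this same limit, yielding a canonical isomorphism $a\colon (M\square_C N)\square_C P\xrightarrow{\cong} M\square_C(N\square_C P)$. For the unit, I would note that $C$ itself is a $(C,C)$-bicomodule via $\Delta$ on both sides, and the counital axioms for a right (resp. left) comodule give the isomorphisms $C\square_C M\cong M$ and $M\square_C C\cong M$: indeed $C\square_C M$ is the equalizer of $\Delta\otimes\id_M$ and $\id_C\otimes\lambda_M$ inside $C\otimes M$, and the coaction $\lambda_M\colon M\to C\otimes M$ factors through this equalizer, giving a map $M\to C\square_C M$ whose inverse is $(\varepsilon\otimes\id_M)$ restricted from $C\otimes M$; the coassociativity and counitality of $\lambda_M$ show these are mutually inverse. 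The symmetric argument using $\rho_M$ handles $r\colon M\square_C C\xrightarrow{\cong} M$.

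The remaining work is to check the coherence conditions: the triangle identity relating $a$, $\ell$, $r$, and the pentagon identity for $a$. Both reduce to diagram chases after unwinding the equalizer descriptions of all the objects involved; since all the maps in sight are built from $\Delta$, $\varepsilon$, the coactions, and structural isomorphisms of $(\Cc,\otimes)$, coherence follows from coassociativity, counitality, and Mac Lane's coherence for the ambient symmetric monoidal category $\Cc$. I expect this step to be the main bookkeeping obstacle, though not a conceptual one---it is the standard ``the cotensor is like a relative tensor product'' argument dualized, and one may also simply invoke that $\bicomod{C}{C}(\Cc)\cong \bicomod{}{C^{\op}\otimes C}(\Cc)$ (Remark \ref{rem: bicomodules are right comodules (ordinary)}) and that cotensor over a flat coalgebra gives a monoidal structure on bicomodules in the standard references, e.g. \cite{brzezinski2003corings}.

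Finally, for the symmetric monoidal claim when $C$ is cocommutative: in that case $C=C^{\op}$, and the symmetry $\tau$ of $\Cc$ induces a natural isomorphism $M\square_C N\cong N\square_C M$, because cocommutativity $\tau\circ\Delta=\Delta$ means the two equalizer diagrams defining $M\square_C N$ and $N\square_C M$ are carried to one another by $\tau_{M,N}\colon M\otimes N\xrightarrow{\cong} N\otimes M$ together with $\tau$ on the middle $C$ factor. One then checks the hexagon axiom for this symmetry, which again follows formally from the hexagon for $\tau$ in $\Cc$ and cocommutativity of $\Delta$. I would expect to state this last part briefly, as it is a routine verification once the monoidal structure is in place.
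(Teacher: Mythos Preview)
The paper does not actually supply a proof of this proposition; it is stated and then, in the immediately following Definition~\ref{def: bicategory of discrete bicomodules}, the associativity and unit isomorphisms are simply written down with a citation to Doi~\cite{doi1981homological}. Your sketch is therefore more detailed than anything in the paper, and it is essentially the standard argument that Doi carries out.

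One point deserves care. For the associativity isomorphism you argue that both $(M\square_C N)\square_C P$ and $M\square_C(N\square_C P)$ compute the same joint equalizer inside $M\otimes N\otimes P$. This is correct, but the step that identifies $(M\square_C N)\otimes P$ with the equalizer of $M\otimes N\otimes P\rightrightarrows M\otimes C\otimes N\otimes P$ requires that $-\otimes P$ preserve that equalizer, not merely that $C\otimes-$ does. In other words, flatness of $C$ alone does not suffice in an arbitrary symmetric monoidal category; one needs the underlying objects of the bicomodules to be flat as well (or, equivalently, that $\otimes$ preserve equalizers in each variable). In the paper's actual settings---$\k$-modules and $\Ck$ with $\k$ of global dimension zero---every object is flat, so this is automatic, and indeed the introduction flags exactly this point when it notes that cotensor fails to be associative over~$\mathbb{Z}$. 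So this is less a gap in your argument than an imprecision in the generality of the statement which you have inherited; just be explicit that you are using flatness of $P$ (or of $M$, on the other side) when you pass to the joint equalizer, and your proof goes through.
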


\begin{defn}\label{def: bicategory of discrete bicomodules}
    Define $\bicomod{}{}$ to be the bicategory whose $0$-cells are coalgebras in $\bimod{}{\mathbbm{k}}$, and whose $1$-cells, $2$-cells, and vertical compositions are given by the category $\bicomod{C}{D}(\bimod{}{\mathbbm{k}})$. 
The unit $U_C$ is the $(C,C)$-bicomodule $C$, and horizontal composition is given by the cotensor product:
\begin{align*}
    \odot: \mathcal{B}(C,D) \times \mathcal{B}(D,E) &\to \mathcal{B}(C,E)\\
    (M,N) &\mapsto M \odot N := M \square_D N.
\end{align*}
For $(C,D)$-bicomodule $M$, $(D,E)$-bicomodule $N$, and $(E,F)$-bicomodule $P$, the natural isomorphisms
   \[
        a\colon (M \odot N) \odot P \xlongrightarrow{\cong} M \odot (N \odot P), \quad \quad 
        \ell\colon U_C \odot M \xlongrightarrow{\cong} M, \quad \quad 
        r\colon M \odot U_D \xlongrightarrow{\cong} M,
    \]
follow as in \cite{doi1981homological} from the natural isomorphisms 
\[
(M \square_D N) \square_E P \cong M \square_D (N \square_E P), \quad \quad  C \square_C M \cong M, \quad \quad M \square_D D \cong M.
\]
More specifically, these maps are given by
\begin{align*}
    a\colon (M \square_D N) \square_E P &\stackrel{\cong}\longrightarrow M \square_D (N \square_E P) &  \ell\colon C \square_C M &\stackrel{\cong}\longrightarrow M &  r\colon M \square_D D &\stackrel{\cong}\longrightarrow M\\
    (m \otimes n) \otimes p &\longmapsto m \otimes (n \otimes p) &  c\otimes m &\longmapsto  \varepsilon_C(c) m &  m \otimes d &\longmapsto   \varepsilon_D(d)m.
\end{align*}
\end{defn}

We now consider the derived case. We first need a homotopy theory of bicomodules.
Let $\Mc=\Ck$ be the category of non-negative chain complexes over $\mathbbm{k}$. There is a model structure on $\Ck$ in which weak equivalences are quasi-isomorphisms, cofibrations are monomorphisms, and fibrations are positive levelwise epimorphisms, see \cite{hovey}. 
It is a combinatorial symmetric monoidal model category with its usual tensor product, and a simplicial model category via the Dold-Kan correspondence. In addition, every object is cofibrant and fibrant.

\begin{prop}
    Let $C$ and $D$ be dg-coalgebras over $\mathbbm{k}$. There exists a simplicial combinatorial model structure on $\bicomod{C}{D}(\Ck)$ in which
\begin{itemize}
    \item the weak equivalences $\mathsf{W}$ are the morphisms of $(C, D)$-bicomodules that are quasi-isomorphisms in $\Ck$;
    \item the cofibrations are the morphisms of $(C, D)$-bicomodules that are monomorphisms in $\Ck$.
\end{itemize}
In particular, every object is cofibrant.
\end{prop}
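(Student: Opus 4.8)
<br>

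The plan is to build the model structure by transferring it along an adjunction from a known model structure on chain complexes, exploiting the fact that $\k$ has global dimension zero (every $\k$-module is projective and injective). Since a $(C,D)$-bicomodule is the same as a right $(C^{\op}\otimes D)$-comodule by Remark \ref{rem: bicomodules are right comodules (ordinary)}, it suffices to produce the desired model structure on $\bicomod{}{E}(\Ck)$ for an arbitrary dg-coalgebra $E$ over $\k$; one then sets $E = C^{\op}\otimes D$, noting that $C^{\op}\otimes D$ is again a dg-coalgebra over $\k$ (here we use that $\k$ is semisimple, so $C^{\op}\otimes D$ is flat and the tensor product of coalgebras is a coalgebra). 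So the real content is a single statement about right comodules over a dg-coalgebra.

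First I would set up the cofree--forgetful adjunction $U\colon \bicomod{}{E}(\Ck)\rightleftarrows \Ck \colon -\otimes E$, where $U$ forgets the coaction and the right adjoint sends a chain complex $X$ to the cofree comodule $X\otimes E$ with coaction $\id_X\otimes\Delta_E$. Because $\k$ has global dimension zero, $E$ is flat, so $-\otimes E$ is exact; this makes $\bicomod{}{E}(\Ck)$ an abelian (indeed Grothendieck) category in which monomorphisms and quasi-isomorphisms are detected by $U$. I would then invoke the right-transfer (cofibrantly-generated, or rather injective-type) recognition theorem — citing \cite{hovey} or the general left-induced model structure machinery as in \cite{left1,hkrs} — to transfer the injective model structure on $\Ck$ (weak equivalences = quasi-isomorphisms, cofibrations = monomorphisms, fibrations = levelwise epimorphisms with injective kernel, or the variant where all objects are cofibrant) across $U$, so that a map of $E$-comodules is a weak equivalence or cofibration exactly when $U$ of it is. The hypotheses to check are that $\Ck$ is a nice (combinatorial, all objects cofibrant) model category, that $\bicomod{}{E}(\Ck)$ is locally presentable (true since it is a category of comodules over a coalgebra in a locally presentable category, via the comonadicity of $U$), and the acyclicity condition for the transfer, which here is automatic because $U$ preserves and reflects quasi-isomorphisms and the factorizations can be built using the smallness/generation of $\Ck$. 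The simplicial enrichment descends from the Dold--Kan simplicial structure on $\Ck$ since $U$ is simplicial and strong symmetric monoidal-compatible. Finally, every object is cofibrant because cofibrations are exactly the monomorphisms and $0\to M$ is always a monomorphism.

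The main obstacle I anticipate is verifying the \emph{acyclicity} (a.k.a.\ "no-new-weak-equivalences" / pushout-product or its left-induced analogue) hypothesis of the transfer theorem: one must show that pushouts (or, in the left-induced/injective setting, the relevant factorizations) of the generating trivial cofibrations remain weak equivalences after applying $U$. In the injective model structure this is the subtle point, because the generating (trivial) cofibrations are not as explicit as in a projective structure; here the global-dimension-zero hypothesis on $\k$ does the heavy lifting, since it forces $E$ to be flat and guarantees that the forgetful functor is exact and that $\bicomod{}{E}(\Ck)$ is Grothendieck, so one can appeal to the existence of injective-type model structures on any Grothendieck abelian category of chain complexes (e.g.\ \cite{hovey}) and then check that the comodule structure is compatible. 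I would treat the rest — local presentability, the simplicial structure, combinatoriality, cofibrancy of all objects — as routine consequences of $U$ being comonadic, simplicial, and exact.
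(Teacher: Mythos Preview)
Your approach is essentially the same as the paper's: reduce to right comodules over $E=C^{\op}\otimes D$ via Remark~\ref{rem: bicomodules are right comodules (ordinary)}, then left-induce the model structure along the forgetful--cofree adjunction using the machinery of \cite{hkrs}. The paper simply cites \cite[2.12]{connectivecomod} and \cite[6.3.7]{hkrs} for the one-sided case and is done.

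One terminological wobble worth fixing: you oscillate between ``right-transfer'' and ``left-induced'', and in your obstacle paragraph you describe the acyclicity condition as being about \emph{pushouts of generating trivial cofibrations}. That is the right-transferred condition. For a \emph{left-induced} model structure along a left adjoint $U$, the acyclicity condition is dual: one must show that every map with the right lifting property against all $U$-cofibrations is already a $U$-weak equivalence (equivalently, that the would-be trivial fibrations are genuine weak equivalences). This is exactly what \cite[6.3.7]{hkrs} handles under local presentability and accessibility hypotheses, so your overall plan still goes through, but the condition you named is not the one you would actually verify.
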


\begin{proof}
    The model structure is left-induced in the sense of \cite{hkrs} via the forget\-ful-cofree adjunction
\[
\begin{tikzcd}[column sep= large]
\bicomod{C}{D}\left(\Ck\right) \ar[shift left=2]{r}{U}[swap]{\perp} & \Ck. \ar[shift left=2]{l}{C\otimes -\otimes D}
\end{tikzcd}
\]
Indeed, this follows from \cite[2.12]{connectivecomod} and \cite[6.3.7]{hkrs} by Remark \ref{rem: bicomodules are right comodules (ordinary)}.
\end{proof}

The above result is true more generally  for unbounded chain complexes over any ring. However, it is not known if this forms an algebraic model for homotopy coherent comodules in this generality.

\begin{defn}
A (connective) \emph{dg-coalgebra over $\mathbbm{k}$} is a coalgebra $C$ in $\Ck$.
We say it is \emph{simply connected} if $C_0\cong\mathbbm{k}$ and $C_1=0$.
\end{defn}

When $C$ and $D$ are simply connected, we show the model structure from the previous proposition defines the correct homotopy type, see Theorem \ref{thm: bicomodules connective}.
Fibrant objects in $\bicomod{C}{D}(\Ck)$ are retracts of Postnikov towers, see more details in Appendix \ref{chap: appendix}.

In order to define horizontal composition for the bicategory in the derived setting, we now need to derive the cotensor product. However, it is more subtle than the usual tensor product of modules because the cotensor product is neither a left nor a right adjoint in general. Nevertheless, one can right derive the cotensor product using methods of \cite{connectivecomod, pertower}.
We leave the details in Appendix \ref{chap: appendix}, but essentially, we show that the cotensor product of fibrant bicomodules is again fibrant (Proposition \ref{prop: cotensor of fibrants}) and that the cotensor preserves weak equivalences on fibrant objects (Proposition \ref{Prop: cotensor preserve weak equivalence if fibrant}). Moreover, just as the derived tensor product can be interpreted as a two-sided bar construction, the derived cotensor product can be interpreted as a two-sided cobar construction, as we shall now explain.

\begin{defn}\label{def: cobar cosimplicial}
Let $C$ and $D$ be simply connected dg-coalgebras over $\mathbbm{k}$, with comultiplication and counit of $C$ given by $\Delta\colon  C\rightarrow C\otimes C$ and $\varepsilon:C\rightarrow \mathbbm{k}$ respectively.
Let $M$ be a $(D,C)$-bicomodule in $\Ck$, and let $\rho\colon M\rightarrow M\otimes C$ denote its right coaction over $C$.
The \emph{two-sided cosimplicial cobar construction} $\ccobars{M}$ of $M$ is the cosimplicial object in $\bicomod{D}{C}(\Ck)$:
\[
\begin{tikzcd}
M\otimes C \ar[shift left]{r}\ar[shift right]{r}  &   \ar{l} M\otimes C \otimes C \ar[shift left=2]{r}\ar{r} \ar[shift right=2]{r} &  \ar[shift right]{l} \ar[shift left]{l} \cdots, 
\end{tikzcd}
\]
defined as follows.
\begin{itemize}
\item For all $n\geq 0$, $\Omega^{n}(M, C, C)=M\otimes C^{\otimes n+1}$.
\item The zeroth coface map $d^0\colon \Omega^{n}(M, C, C)\rightarrow \Omega^{n+1}(M, C, C)$ is given by $d^0 = \rho\otimes \id_{C^{n+1}}$.
\item For $1\leq i \leq n+1$, the $i^{th}$ coface map $d^i\colon \Omega^{n}(M, C, C) \rightarrow \Omega^{n+1}(M, C, C)$ is given by \[d^i= \id_M \otimes \id_{C^{\otimes i-1}} \otimes \Delta \otimes \id_{C^{\otimes n+1-i}}.\]
\item For all $0\leq j\leq n$, the $j^{th}$ codegeneracy map $s^j\colon \Omega^{n+1}(M, C, C) \rightarrow \Omega^{n}(M, C, C)$ is given by
\[
s^j=\id_M\otimes \id_{C^{\otimes j}} \otimes \varepsilon \otimes \id_{C^{\otimes n+1-j}}.
\]
\end{itemize}
Since $\bicomod{D}{C}(\Ck)$ is a simplicial model category, homotopy limits over cosimplicial diagrams are computed as in \cite[18.1.8]{hir}. 
We denote the homotopy limit of the cosimplicial diagram $\ccobars{M}$ in $\bicomod{D}{C}(\Ck)$ by $\ccobar{M}$, and we say it is the\emph{ two-sided cobar resolution of $M$.}
By \cite[2.5]{connectivecomod}, we have $M\simeq \ccobar{M}$ as a $(D,C)$-bicomodule if $M$ is fibrant as a left $D$-comodule. 
Notice that each object in the cosimplicial diagram $\ccobars{M}$ is a fibrant right $C$-comodule by Lemma \ref{lem: tensor preserves fibrant (cofree case)}.
Thus $\ccobar{M}$ is a fibrant $(D,C)$-bicomodule by \cite[18.5.2]{hir} if $M$ is a fibrant left $D$-comodule. 
\end{defn}

\begin{defn}\label{def: cosimplicial cobar general}
Let $C$, $D$ and $E$ be simply connected dg-coalgebras over $\mathbbm{k}$. Let $M$ be a $(D,C)$-bicomodule and $N$ be a $(C, E)$-bicomodule.
We define the\emph{ two-sided cosimplicial cobar construction of $M$ and $N$} to be the cosimplicial object $\cobars{M}{N}$ in $\bicomod{D}{E}(\Ck)$ given by $\cobars{M}{C}\square_C N$. We write $\cobar{M}{N}$ for the homotopy limit of $\cobars{M}{N}$ in $\bicomod{D}{E}(\Ck)$. 
As noted in \cite[2.5]{connectivecomod}, it is equivalent to compute the homotopy limit in $\Ck$.
\end{defn}

\begin{prop}\label{equivalence}
Let $C$, $D$ and $E$ be simply connected dg-coalgebras over $\mathbbm{k}$. Let $M$ be a $(D,C)$-bicomodule and $N$ be a fibrant $(C, E)$-bicomodule.
Then we obtain an equivalence $\cobar{M}{C}\square_C N\simeq \cobar{M}{N}$ as $(D,E)$-bicomodules.
\end{prop}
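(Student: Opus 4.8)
The plan is to reduce the proposition to the assertion that $(-)\square_C N$ commutes with the homotopy limit defining $\cobar{M}{C}$, and then to establish that commutation by exploiting that $C$ is highly connected, so the relevant totalization is computed degreewise by a finite direct sum.

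First I would unwind the definitions. By Definition~\ref{def: cosimplicial cobar general} we have $\cobars{M}{N}=\cobars{M}{C}\square_C N$, so $\cobar{M}{N}=\holim_\Delta\big(\cobars{M}{C}\square_C N\big)$, whereas $\cobar{M}{C}\square_C N=\big(\holim_\Delta\cobars{M}{C}\big)\square_C N$ by definition of $\cobar{M}{C}$. Hence the proposition asserts exactly that the canonical comparison map
\[
\big(\holim_\Delta\cobars{M}{C}\big)\square_C N\;\longrightarrow\;\holim_\Delta\big(\cobars{M}{C}\square_C N\big)
\]
is a weak equivalence of $(D,E)$-bicomodules. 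Here each term $\Omega^n(M,C,C)=M\otimes C^{\otimes n+1}$ is a cofree, hence fibrant, right $C$-comodule (Lemma~\ref{lem: tensor preserves fibrant (cofree case)}), and $\Omega^n(M,C,C)\square_C N\cong M\otimes C^{\otimes n}\otimes N$, since cotensoring a cofree right $C$-comodule with $N$ returns the underlying tensor product; as $N$ is fibrant, each such cotensor is a fibrant $(D,E)$-bicomodule agreeing with the derived cotensor (Propositions~\ref{prop: cotensor of fibrants} and~\ref{Prop: cotensor preserve weak equivalence if fibrant}). Both homotopy limits may be computed in $\Ck$ along the forgetful functors, which preserve fibrations and weak equivalences, so I would work in $\Ck$, where the forgetful functor to chain complexes is faithful and reflects isomorphisms.

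Next I would make the totalization explicit and use connectivity. Since $C$ is simply connected, write $C=\k\oplus\bar C$ with $\bar C$ concentrated in homological degrees $\geq 2$; the total complex of the normalized cochain complex of $\cobars{M}{C}$ (which computes $\holim_\Delta\cobars{M}{C}$) then has $n$-th column $M\otimes\bar C^{\otimes n}\otimes C$, concentrated in homological degrees $\geq 2n$, hence contributing to the total complex only in degrees $\geq n$. Thus in each fixed homological degree $\holim_\Delta\cobars{M}{C}$ is a \emph{finite} direct sum of such modules; in particular it converges and commutes with any additive functor on $\Ck$ that preserves finite direct sums. Using the identification $\Omega^n(M,C,C)\square_C N\cong M\otimes C^{\otimes n}\otimes N$ and the connectivity of $N$, the same estimate shows $\holim_\Delta\big(\cobars{M}{C}\square_C N\big)$ is likewise degreewise finite. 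Now $(-)\square_C N$ is, by definition, the equalizer of the two natural transformations $(-)\otimes N\rightrightarrows(-)\otimes C\otimes N$; since $\k$ has global dimension zero, $(-)\otimes N$ and $(-)\otimes C\otimes N$ are exact and preserve arbitrary direct sums, so $(-)\square_C N$ preserves finite limits and finite direct sums. Combining this with the degreewise finiteness above, the comparison map displayed earlier is an isomorphism in each homological degree, hence an isomorphism of chain complexes, and it is manifestly compatible with the $(D,E)$-bicomodule structures, giving the asserted equivalence $\cobar{M}{C}\square_C N\simeq\cobar{M}{N}$.

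The step I expect to be the main obstacle is matching the homotopy limit $\holim_\Delta$---as produced by the simplicial and Reedy machinery in $\bicomod{D}{C}(\Ck)$---with the explicit normalized total cobar complex to which the connectivity estimate applies: one must check that this identification is compatible with the forgetful functor to $\Ck$ (so that the convergence estimate is legitimate), is unaffected by any Reedy fibrant replacement one is forced to perform, and remains valid after applying $(-)\square_C N$, so that the resulting isomorphism is genuinely one of $(D,E)$-bicomodules rather than merely of underlying complexes. This is precisely the ``totalization does not commute with the tensor product'' subtlety flagged in the introduction, and it is the simple connectivity of $C$ (together with the connectivity of $N$) that resolves it.
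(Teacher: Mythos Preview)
Your proposal is correct and follows essentially the same strategy the paper invokes (via its reference to \cite[4.2]{connectivecomod}): use the simple connectivity of $C$ so that the cobar double complex is bounded in each total degree---equivalently, the associated tower stabilizes degreewise---whence the product totalization coincides with the coproduct totalization and $(-)\square_C N$ commutes with it. The one place your write-up could be tightened is the passage from ``$(-)\square_C N$ preserves finite limits and finite direct sums'' to ``isomorphism in each homological degree'': since $\square_C N$ mixes degrees, the clean formulation is that $\Tot^\Pi=\Tot^\oplus$ by boundedness and $(-)\square_C N$ commutes with $\Tot^\oplus$ because direct sums are exact over $\k$ and kernels commute with them.
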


\begin{proof}
This argument is similar to discussion in \cite[4.2]{connectivecomod} (the cocommutative requirement there was not needed).
\end{proof}

\begin{rem}\label{rem: topological cobar vs algebraic cobar}
    By the Dold-Kan correspondence, since $C$ is a simply connected dg-coal\-gebra, the two-sided cosimplicial cobar resolution $\Omega(M, C, N)$ from Definition \ref{def: cobar cosimplicial} is quasi-isomorphic to $\underline{\Omega}(M,C,N)$, the \emph{conormalized cobar  resolution of $M$ and $N$ over $C$}, which we now define. 
    We first establish the following notation conventions. 
    \begin{itemize}
    \item Given $V$ a graded $\mathbbm{k}$-module, we define
    \[ T(V)=\bigoplus_{n\geq 0} V^{\otimes n}.\]
    Elements in the summands are denoted $v_1 \vert \cdots \vert v_n$, where $v_i\in V$.
    \item Let $s^{-1}$ denote the desuspension functor on graded $\mathbbm{k}$-modules where, for $V=\bigoplus_{i\in \mathbb{Z}} V_i$, we define $(s^{-1} V)_i=V_{i+1}$.
    Given a homogeneous element $v$ in $V$, we write $s^{-1} v$ for the corresponding element in $s^{-1}V$.

    \item We denote the kernel of the counit $\varepsilon\colon C\rightarrow \mathbbm{k}$ by $\underline{C}$, often referred to as the coideal of $C$.
\end{itemize}
The conormalized cobar resolution of $C$ is the chain complex
\(
\underline{\Omega}C:=(T(s^{-1}\underline{C}), d_\Omega)
\)
where, if $d$ denotes the differential on $C$, then
\begin{eqnarray*}
d_\Omega(s^{-1} c_1 \vert \cdots \vert s^{-1} c_n) & = & \sum_{j=1}^n \pm s^{-1}c_1 \vert \cdots \vert s^{-1}(dc_j)\vert \cdots \vert s^{-1}c_n\\
& & + \sum_{j=1}^n \sum_{(c_j)}\pm s^{-1}c_1 \vert \cdots \vert s^{-1} {c_{j}}_{(1)} \vert s^{-1}{c_j}_{(2)} \vert \cdots \vert s^{-1}c_n,
\end{eqnarray*}
where $\Delta(c_j)=\sum_{(c_j)}{c_{j}}_{(1)}\otimes {c_j}_{(2)}$ denotes the (reduced) comultiplication of $\underline{C}$ on the element $c_j$ using the Sweedler notation. To determine the signs above, one needs to apply the Koszul rule.
More generally, we define the conormalized cobar resolution of $M$ and $N$ over $C$ to be the chain complex
\(
\underline{\Omega}(M,C,N):=(M\otimes T(s^{-1}\underline{C}) \otimes N, \delta),
\)
where up to Koszul sign, the differential $\delta$ is defined as
\begin{eqnarray*}
\delta(m \otimes s^{-1} c_1 \vert \cdots \vert s^{-1}c_n \otimes n) & = & dm \otimes s^{-1}c_1 \vert \cdots \vert s^{-1}c_n \otimes n \\
& & \pm \sum_{(m)} m_{(0)}\otimes s^{-1} m_{(1)}\vert s^{-1} c_1 \vert \cdots \vert s^{-1} c_n \otimes n\\
& & \pm m\otimes d_\Omega\Big(s^{-1} c_1 \vert \cdots \vert s^{-1} c_n\Big) \otimes n\\ 
& & \pm m\otimes s^{-1}c_1 \vert \cdots \vert s^{-1}c_n \otimes dn\\
& & \pm \sum_{(n)} m\otimes s^{-1} c_1 \vert \cdots \vert s^{-1} c_n \vert s^{-1}n_{(1)} \otimes n_{(0)},
\end{eqnarray*}
where $d$ denotes either the differential on $M$ or $N$, and $m\mapsto \sum_{(m)} m_{(0)}\otimes m_{(1)}$ denotes the coaction of $M\rightarrow M\otimes C$ applied to an element $m$, and $n\mapsto \sum_{(n)} n_{(1)}\otimes n_{(0)}$ denotes the coaction of $N\rightarrow C\otimes N$ applied to an element $n$.

By the Dold-Kan correspondence, there is an isomorphism of categories between cosimplicial objects and non-positive cochain complexes of bicomodules given by the conormalization functor
\[
N^*\colon\left(\bicomod{D}{E}(\Ck)\right)^\Delta \stackrel{\cong}\longrightarrow \mathsf{coCh}^{\leq 0}\left(\bicomod{D}{E}(\Ck) \right).
\]
Therefore, we denote $N^*(\Omega^\bullet(M, C, N))$ by $\underline{\Omega}^\bullet(M, C, N)$. It is a double complex, and one can show that its total complex is precisely $\underline{\Omega}(M,C, N)$.
A word of warning however: a double complex in general has two possible totalizations, one given using coproducts and one using products (see \cite[1.2.6]{weibel}). 
The product-total complex of $\underline{\Omega}^\bullet(M, C, N)$ is quasi-isomorphic to the homotopy limit of $\Omega^\bullet(M,C,N)$, i.e., $\Omega(M,C,N)$ (see \cite[4.23]{ulrich} for instance). 
On the other hand, the coproduct-total complex of $\underline{\Omega}^\bullet(M, C, N)$ is $\underline{\Omega}(M,C,N)$. Of course, if the double complex is bounded, these are equal. 
For instance, when $C$ is simply-connected, then $\underline{\Omega}^{-q}(M,C,N)_p=0$ for $0\leq p \leq 2q-1$, and is therefore bounded. Thus when $C$ is simply-connected, we obtain a quasi-isomorphism
\[
\Omega(M,C,N)\simeq \underline{\Omega}(M,C, N).
\]
But in general, if $C$ is not simply-connected, no such claim can be made.
\end{rem}

\begin{rem}\label{rem: cotor is cohomology of cobar}
Let $C$ be a simply-connected coalgebra in $\Ck$.
    Let $M$  and $N$ be left and right $C$-comodules respectively. As noted in \cite[A1.2.12]{ravenel}, for all $i\geq 0$ we have an isomorphism
\[\cotor_C^i(M, N)\cong H^i\left(\underline{\Omega}^\bullet(M,C,N)\right).\] 
\end{rem}

Therefore, on the homotopy category of comodules, we have a derived cotensor product, which we denote by $\widehat{\square}$. In fact, $M\dcotensor_C N$ is quasi-isomorphic to the two-sided cobar resolution $\Omega(M,C, N)$, see Corollary \ref{cor: cotensor of fibrant is equal to cobar}. The derived cotensor preserves any homotopy coherent coactions, see \cite{connectivecomod}.

Having established the necessary definitions for each of the components, we now define the appropriate bicategory for the derived bicomodule setting. 

\begin{defn}\label{def: bicategory of derived bicomodules}
    Define $\dcomod$ to be the bicategory whose $0$-cells are simply connected dg-coalgebras over $\mathbbm{k}$, and whose 1-cells, 2-cells, and vertical compositions are given by the homotopy category of the model category $\bicomod{C}{D}(\Ck)$. The unit $U_C$ is the fibrant $(C,C)$-bicomodule $C$, and horizontal composition is given by the derived cotensor product of bicomodules. Given a fibrant $(C,D)$-bicomodule $M$ and a fibrant $(D,E)$-bicomodule $N$, their horizontal composition $M\odot N$ is the fibrant $(C,E)$-bicomodule 
    \[
    M\square_D N \simeq M\dcotensor_D N \simeq \Omega(M,D, N).
    \]
    If $P$ is a fibrant $(E,F)$-bicomodule, then we define the natural isomorphisms:
    \[
    a\colon (M\square_D N)\square_E P \stackrel{\cong}\longrightarrow M\square_D (N\square_E P),
    \]
    \[
    \ell\colon C\square_C M \stackrel{\cong}\longrightarrow M, \quad r\colon M\square_D D \stackrel{\cong}\longrightarrow M,
    \]
    as follows. The isomorphism $a$ follows from  the natural isomorphism $(M\otimes N)\otimes P\cong M\otimes (N\otimes P)$, and therefore automatically respects the pentagon identity.
    The natural isomorphisms $\ell$ and $r$ are induced by the counits $C\rightarrow \mathbbm{k}$ and $D\rightarrow \mathbbm{k}$ respectively, and the triangle identities follow for the cotensor products since they hold for the tensor product.

    Since we shall need it in next section, we provide an explicit definition of $a$, $\ell$, and $r$ from above, where we instead use the cobar construction as a model for the derived cotensor product.
    The associative equivalence
    \[
a\colon \Omega(\Omega(M, D, N), E, P) \stackrel{\simeq}\longrightarrow \Omega(M ,D, \Omega(N, E, P))
    \]
    is induced by an isomorphism of cosimplicial objects $\Omega^\bullet(\Omega^\bullet(M, D, N), E, P)\cong \Omega^\bullet(M ,D, \Omega^\bullet(N, E, P))$, using Corollary \ref{cor: bicosimplicial of cobar} from Appendix \ref{chap: appendix}. Indeed, the $(i, j)$-spot corresponds to the $(j,i)$-spot up to isomorphism
    \[
(M\otimes D^{\otimes j}\otimes N)\otimes E^{\otimes i}\otimes P \stackrel{\cong}\longrightarrow M\otimes D^{\otimes j}\otimes (N\otimes E^{\otimes i}\otimes P)
    \]
    from the usual associative isomorphism. The equivalences $\ell: \Omega(C,C, M)\stackrel{\simeq}\rightarrow M$ and $r\colon \Omega(M, D, D)\stackrel{\simeq}\rightarrow M$ are defined as in Definition \ref{def: cobar cosimplicial}, and are induced by the counits. For instance, the map $C^{\otimes n+1}\otimes M\rightarrow M$ is induced by repeatedly applying the counit on $C$.
\end{defn}

\section{CoHochschild homology as a shadow}\label{section: coHH as a shadow}

Here we show our main results, namely that coHochschild homology provides a shadow structure on  the previously defined bicategories of bicomodules.

\begin{defn}[\cite{ponto2008fixed, Ponto_2012}]
    A \emph{shadow functor} for a bicategory $\mathcal{B}$ consists of functors
    $$\lang -\rang_C\colon \mathcal{B}(C,C) \longrightarrow \mathbf{T} $$
    for every $C \in \mathcal{B}$ and some fixed category $\mathbf{T}$ equipped with a natural isomorphism for $M \in \mathcal{B}(C,D)$, $N \in \mathcal{B}(D,C)$: 
\begin{align*}
    \theta\colon \lang M \odot N \rang_C \xlongrightarrow{\cong} \lang N \odot M \rang_D.
\end{align*}
For $P \in \mathcal{B}(C,C)$, these functors must satisfy the following commutative diagrams
\[
\begin{tikzcd}
    \lang (M \odot N) \odot P \rang_C \ar{r}{\theta} \ar{d}[swap]{\lang a \rang} & \lang P \odot (M \odot N) \rang_C \ar{r}{\lang a \rang} & \lang (P \odot M) \odot N\rang_C\\
    \lang M \odot (N \odot P)\rang_C \ar{r}{\theta} & \lang (N \odot P) \odot M\rang_D \ar{r}{\lang a \rang} & \lang N \odot (P \odot M)\rang_D, \ar{u}[swap]{\theta}
\end{tikzcd}
\]
\[
\begin{tikzcd}
    \lang P \odot U_C \rang_C \ar{r}{\theta} \ar{dr}[swap]{\lang r \rang} & \lang U_C \odot P \rang_C \ar{d}{\lang \ell\rang } \ar{r} & \lang P \odot U_C\rang_C\ar{dl}{\lang r \rang}\\
    &\lang P \rang_C.
\end{tikzcd}
\]
In this case, we say $\mathcal{B}$ is a \emph{shadowed bicategory}, and we write $(\mathcal{B}, \lang - \rang)$ for the bicategory and its shadow.
\end{defn}

\begin{ex}
We can now consider shadows for the bicategories that we introduced in Examples \ref{ex: bicat of modules classical} and \ref{ex: bicat of modules derived}. For a ring $R$, the $0^{th}$ Hochschild homology, $\HH_0(R,-)$, is a shadow on the bicategory with 1- and 2-cells from the category of $(R,R)$-bimodules to the category of abelian groups $\mathrm{Ab}$:
\begin{align*}
    \lang-\rang_R\colon \bimod{R}{R} &\to \mathrm{Ab}\\
    M &\mapsto R \otimes_{R \otimes R^{\op}} M \cong \HH_0(R, M).
\end{align*}
More generally, a Dennis-Waldhausen Morita argument shows that Hochschild homology, $\HH(R,-)$, is a shadow in the derived setting \cite{waldhausen}. Further, \cite{blumberg2012localization} shows that topological Hochschild homology, $\THH(R, -)$, is a shadow to the homotopy category of spectra:
\begin{align*}
    \lang-\rang_R\colon \Ho( \bimod{R}{R}) &\to \Ho(\Sp)\\
    M &\mapsto \THH(R, M).
\end{align*}
\end{ex}

Since (topological) Hochschild homology provides a bicategorical shadow for the setting of modules, we want to consider the analogue of this construction for the context of comodules. 
Work of Doi defines coHochschild homology, denoted $\coHH$, as an invariant of coalgebras analogous to Hochschild homology. 

\begin{defn}[\cite{doi1981homological}]\label{def: cohochschild homology underived}
    For a commutative ring $R$, a coassociative, counital $R$-coalgebra $C$, and a $(C,C)$-bicomodule $M$, build the cochain complex $\mathcal{H}(M,C)$:
    $$\cdots \xlongleftarrow{} M \otimes_R C \otimes_R C \xlongleftarrow{} M \otimes_R C \xlongleftarrow{} M \xlongleftarrow{} 0,$$
    as follows. Let $\mathcal{H}^r(M, C)=M\otimes_R C^{\otimes r}$ for $r\geq 0$
    with coboundary map $\delta^r\colon\mathcal{H}^r(M, C)\rightarrow \mathcal{H}^{r+1}(M, C)$ defined by
    \[\delta^r = \sum_{i=0}^{r+1} (-1)^i d_i,\] for
    $d_i$ given by
    \begin{align*}
        d_i &= \begin{cases}
    \rho \otimes \id_C^{\otimes r} &i=0\\
    \id_M \otimes \id_C^{\otimes i-1} \otimes \Delta \otimes \id_C^{\otimes (r-i)} &1 \le i \le r\\
    \Tilde{t} \circ (\lambda \otimes \id_C^{\otimes r}) &i=r+1,
    \end{cases}
    \end{align*}
    where $\rho\colon M\rightarrow M\otimes_R C$ denotes the right coaction, $\lambda\colon M\rightarrow C\otimes_R M$ denotes the left coaction, and $\Tilde{t}$ is the map that twists the first factor to the last.  Then the $q^{th}$-\textit{coHochschild homology of $C$ with coefficients in $M$} is given by the cohomology of the cochain complex
    $$\coHH_q(M, C) := H^q(\mathcal{H}(M,C)).$$
\end{defn}

\begin{rem}
    We can see that $\mathcal{H}(M,C)=\underline{\Omega}(M, C^e, C)$ as in Remark \ref{rem: topological cobar vs algebraic cobar}, where $C^e=C\otimes_R C^\op$, and in particular we get that $\coHH_q(M,C)\cong \cotor^q_{C^e}(M, C)$, see Definition \ref{def: cotor} and Remark \ref{rem: cotor is cohomology of cobar}.
    In particular $\coHH_0(M,C)\cong M\square_{C^e}C$.
\end{rem}

\begin{thm}\label{thm: coHH zero is a shadow}
The $0^{th}$ coHochschild homology $\coHH_0$ is a shadow on the bicategory $\bicomod{}{}$.  That is, it gives a family of functors
\begin{align*}
    \coHH_0(-, C)\colon \bicomod{C}{C} &\to \mathsf{Mod}_\mathbbm{k}\\
    M &\mapsto \coHH_0(M, C)
\end{align*}
that satisfy the required shadow properties.
\end{thm}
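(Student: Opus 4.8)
The plan is to verify the definition of a shadow functor directly, building the structure from the cotensor product properties already established. First I would define the functors $\coHH_0(-,C)\colon \bicomod{C}{C}\to \mathsf{Mod}_\k$ by $M\mapsto \coHH_0(M,C)=M\square_{C^e}C$, where $C^e=C\otimes C^{\op}$; functoriality in $M$ is immediate since the cotensor product is functorial in each variable and $C^e$-colinear maps induce maps on the equalizer. The substantive content is producing, for a $(C,D)$-bicomodule $M$ and a $(D,C)$-bicomodule $N$, a natural isomorphism
\[
\theta\colon \coHH_0(M\square_D N, C)\stackrel{\cong}{\longrightarrow}\coHH_0(N\square_C M, D).
\]

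To construct $\theta$, I would unwind both sides as iterated equalizers. Using $\coHH_0(X,C)=X\square_{C^e}C$ and Remark~\ref{rem: bicomodules are right comodules (ordinary)} which identifies $(C,C)$-bicomodules with right $C^e$-comodules, one sees that $\coHH_0(M\square_D N,C)$ is the equalizer measuring where the left $C$-coaction and right $C$-coaction on $M\square_D N$ agree after cotensoring with $C$. Concretely this is a subobject of $M\otimes N\otimes C$, cut out by the $D$-cotensor condition between $M$ and $N$ together with the $C^e$-cotensor condition. The key computational observation — this is the dual of the Dennis--Waldhausen Morita argument — is that this double equalizer is symmetric in $(M,C)$ versus $(N,D)$: both $\coHH_0(M\square_D N,C)$ and $\coHH_0(N\square_C M,D)$ are canonically isomorphic to the common equalizer of the appropriate maps out of $M\otimes N$ (or $N\otimes M$, related by the symmetry $\tau$ of the underlying symmetric monoidal category $\bimod{}{\k}$), recording the conditions that the $C$-coactions linking $N$ back to $M$ and the $D$-coactions linking $M$ back to $N$ are compatible. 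Since $\k$ has global dimension zero, the cotensor products are well-behaved (flat, associative), so all these equalizers exist and the comparison maps are isomorphisms. I would write $\theta$ explicitly on elements using Sweedler notation, sending (a representative) $m\otimes n\otimes c$ to $n\otimes m\otimes c$ with suitable bookkeeping, and check it is well-defined and inverse to its evident reverse.

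Then I would verify the two coherence diagrams. The hexagon (first diagram) relating $\langle(M\odot N)\odot P\rangle$ to $\langle N\odot(P\odot M)\rangle$ via $\theta$ and the associators $a$ reduces, after unwinding all cotensor products into equalizers inside a triple tensor product $M\otimes N\otimes P\otimes C$, to the coherence of the symmetry $\tau$ in $\bimod{}{\k}$ together with associativity of $\otimes$ — essentially a cyclic permutation identity on three factors, which is forced. The unit diagram relating $\langle P\odot U_C\rangle$, $\langle U_C\odot P\rangle$, and $\langle P\rangle$ via $\ell$, $r$, and $\theta$ reduces to the counit axioms for $C$ together with the triangle identity already recorded in Definition~\ref{def: bicategory of discrete bicomodules}; here one uses that $\coHH_0(C,C)=C\square_{C^e}C=\langle\langle C\rangle\rangle$ recovers the cocommutator submodule, and $\coHH_0(P\square_C C,C)\cong\coHH_0(P,C)$ via $r$.

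The main obstacle I expect is book-keeping the iterated equalizers correctly and checking that $\theta$ is well-defined — i.e., that the element-level formula $m\otimes n\otimes c\mapsto n\otimes m\otimes c$ (modulo the defining relations) actually lands in the target equalizer and respects all the cotensor relations. This requires carefully tracking which coaction maps are being equalized on each side, and using flatness over $\k$ (global dimension zero) to commute $C\otimes-$, $D\otimes-$ past the relevant equalizers so that nested equalizers can be reorganized into a single one. Once the well-definedness and the symmetry of the "big equalizer" are nailed down, naturality in $M$ and $N$ and the two coherence diagrams follow by diagram chases that are routine but tedious. I would likely isolate the core symmetry statement as a preliminary lemma ("the double equalizer computing $\coHH_0(M\square_D N,C)$ is symmetric under swapping $(M,C)\leftrightarrow(N,D)$") and then deduce the shadow axioms from it formally.
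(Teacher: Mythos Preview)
Your approach is correct and is essentially the same as the paper's: construct $\theta$ from the symmetry $\tau\colon M\otimes N\to N\otimes M$, verify it carries the defining equalizer conditions for $\coHH_0(M\square_D N,C)$ to those for $\coHH_0(N\square_C M,D)$, and then check the hexagon and unit coherences by elementwise diagram chases.

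One cosmetic point worth noting: the paper works with the presentation $\coHH_0(X,C)=\ker\big(\delta^0\colon X\to X\otimes C\big)\subset X$ rather than $X\square_{C^e}C\subset X\otimes C$. With that choice the map $\theta$ is literally $m\otimes n\mapsto n\otimes m$, and the verification reduces to observing that the two conditions ``$m\otimes n\in M\square_D N$'' and ``$m\otimes n\in\ker\delta^0$'' swap roles under $\tau$. Your element-level formula $m\otimes n\otimes c\mapsto n\otimes m\otimes c$ is not quite right as written (the target lives in $N\otimes M\otimes D$, not $N\otimes M\otimes C$), and untangling it in the $\square_{C^e}$ presentation requires passing through the coaction isomorphism $\ker\delta^0\cong X\square_{C^e}C$. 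Switching to the kernel presentation eliminates this bookkeeping entirely and makes the coherence checks one-line computations, exactly as the paper does.
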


\begin{proof}
Let $C$ and $D$ be coalgebras over $\mathbbm{k}$.
Given $M$ a $(C, D)$-bicomodule and $N$ a $(D, C)$-bicomodule, we need to show that we have an isomorphism
\[
\theta\colon \coHH_0(M\square_D N, C)\longrightarrow \coHH_0(N\square_C M, D).
\]
Using Sweedler notation, we denote the coactions on $M$ as follows:
\begin{align*}
 M & \longrightarrow C\otimes M,  & M & \longrightarrow M\otimes D\\
 m & \longmapsto \sum_{(m^C)} m^C_{(1)}\otimes m^C_{(0)} & m &\longmapsto\sum_{(m^D)} m^D_{(0)} \otimes m^D_{(1)},
\end{align*}
and the coactions on $N$ as follows:
\begin{align*}
    N & \longrightarrow D\otimes N, & N & \longrightarrow N\otimes C\\
    n & \longmapsto \sum_{(n^D)}n^D_{(1)}\otimes n^D_{(0)} & n& \longmapsto \sum_{(n^C)}n^C_{(0)}\otimes n^C_{(1)}.
\end{align*}
Recall that $\coHH_0(M\square_DN, C)$ is defined as the kernel of
\[
\delta^0\colon M\square_D N \rightarrow (M\square_D N)\otimes C.
\]
The desired isomorphism will be induced by
\begin{align*}
\tau\colon M\otimes N &\longrightarrow N\otimes M\\
m\otimes n & \longmapsto n\otimes m.
\end{align*}
We need to verify that if we restrict $\tau$ to $\coHH_0(M\square_DN, C)$, we indeed corestrict to $\coHH_0(N\square_CM, D)$. In other words, we need to verify that if $m\otimes n\in \coHH_0(M\square_DN, C)$, then $n\otimes m \in \coHH_0(N\square_CM, D)$.
Notice that because $m\otimes n\in \coHH_0(M\square_DN, C)$, it follows that
\begin{enumerate}
    \item\label{eq: m otimes n cond 1} $\displaystyle \sum_{(m^D)} m^D_{(0)}\otimes m^D_{(1)}\otimes n =\sum_{(n^D)} m\otimes n^D_{(1)}\otimes n^D_{(0)}$, since $m\otimes n\in M\square_D N$;
    \item\label{eq: m otimes n cond 2} $\displaystyle \sum_{(n^C)}m\otimes n^C_{(0)}\otimes n^C_{(1)}=\sum_{(m^C)}m^C_{(0)}\otimes n \otimes m^C_{(1)}$, since $m\otimes n\in \mathbbm{k}er(\delta^0)$.
\end{enumerate}
To see that $n\otimes m \in \coHH_0(N\square_CM, D)$, notice that $n\otimes m\in N\square_C M$ follows from (\ref{eq: m otimes n cond 2}) above, while $n\otimes m\in \mathbbm{k}er(\delta^0)$ follows from (\ref{eq: m otimes n cond 1}) above. Therefore we have obtained the desired homomorphism, $\theta$. 
An analogous argument defines its inverse, and thus $\theta$ is an isomorphism.

Next we must show that for a $(C,C)$-bicomodule $P$, the following diagram is commutative:
\[
\begin{tikzpicture}[baseline= (a).base]
\node[scale=1] (a) at (1,1){
\begin{tikzcd}
	{\coHH_0((M \square_D N) \square_C P, C)} & {\coHH_0(P \square_C (M \square_D N), C)} & {\coHH_0((P \square_C M) \square_D N, C)} \\
	{\coHH_0(M \square_D (N \square_C P),C)} & {\coHH_0((N \square_C P) \square_C M, D)} & {\coHH_0(N \square_C (P \square_C M), D)}
	\arrow["{\lang a \rang}"', from=1-1, to=2-1]
	\arrow["\theta"', from=2-1, to=2-2]
	\arrow["\theta"', from=2-3, to=1-3]
	\arrow["{\lang a \rang}"', from=2-2, to=2-3]
	\arrow["{\lang a \rang}", from=1-2, to=1-3]
	\arrow["\theta", from=1-1, to=1-2]
\end{tikzcd}
};  
\end{tikzpicture}
\]
We check its commutativity directly by applying the definition of $\theta$ given above and $a$ from Definition \ref{def: bicategory of discrete bicomodules}:
\[
\begin{tikzcd}
(m\otimes n)\otimes p \ar[mapsto]{r}\ar[mapsto]{d} & p \otimes (m \otimes n) \ar[mapsto]{r} & (p\otimes m)\otimes n. \\
m\otimes (n\otimes p) \ar[mapsto]{r} & (n\otimes p)\otimes m \ar[mapsto]{r} & n\otimes (p\otimes m) \ar[mapsto]{u}
\end{tikzcd}
\]
Similarly, we need to check the commutativity of the diagram below:
\[\begin{tikzcd}
	{\coHH_0(P \square_C C,C)} && {\coHH_0(C \square_C P,C)} && {\coHH_0(P \square_C C,C)} \\
	\\
	&& {\coHH_0(P, C)}
	\arrow["{\lang \ell \rang}", from=1-3, to=3-3]
	\arrow["\theta", from=1-1, to=1-3]
	\arrow["{\lang r \rang}"', from=1-1, to=3-3]
	\arrow["\theta", from=1-3, to=1-5]
	\arrow["{\lang r \rang}", from=1-5, to=3-3]
\end{tikzcd}\]
This follows again by applying the definitions of $\theta$, $r$, and $\ell$ (see Definition \ref{def: bicategory of discrete bicomodules}):
\[
\begin{tikzcd}
 p\otimes c \ar[mapsto]{r} \ar[mapsto]{dr} & c\otimes p \ar[mapsto]{d} \ar[mapsto]{r} & p\otimes c \ar[mapsto]{dl}\\
 & \varepsilon(c)p
\end{tikzcd}
\]
This proves that the $0^{th}$ coHochschild homology is a shadow in this bicategorical setting.
\end{proof}

Having established that $\coHH_0$ is a bicategorical shadow on $\bicomod{}{}$, we now consider the derived setting.
Recall that $\underline{\Omega}$ of Remark \ref{rem: topological cobar vs algebraic cobar} is not invariant under quasi-isomor\-phisms in general. 
Therefore particular care is required in order to show that $\coHH$ is a shadow in the derived setting. To do so, we must instead consider coalgebras in chain complexes that are simply connected.
Extending the definitions of \cite{HScothh} and \cite{bohmann2018computational}, the first author introduced in \cite[2.8]{klanderman2022computations} the notion of coHochschild homology with coefficients for any model category with a symmetric monoidal structure.

\begin{defn}\label{def: coHH sarah def}
Let $(\mathsf{M}, \otimes, \bI)$ be a symmetric monoidal category with a model structure, and let $C \in \mathsf{M}$ be a coalgebra with coassociative comultiplication $\Delta: C \to C \otimes C$ and counit $\varepsilon: C \to \bI$.  Further, let $M$ be a $(C,C)$-bicomodule with left and right coactions $\lambda: M \to C \otimes M$ and $\rho: M \to M \otimes C$ respectively.
Define $\coTHH^{\mathsf{M}}(M, C)^\bullet$ to be the  cosimplicial object with $r$-simplices $\coTHH^{\mathsf{M}}(M, C)^r = M \otimes C^{\otimes r}$,
with coface maps
    \begin{align*}
        d_i &= \begin{cases}
    \rho \otimes \id_C^{\otimes r} &i=0\\
    \id_M \otimes \id_C^{\otimes i-1} \otimes \Delta \otimes \id_C^{\otimes (r-i)} &1 \le i \le r\\
    \Tilde{t} \circ (\lambda \otimes \id_C^{\otimes r}) &i=r+1,
    \end{cases}
    \end{align*}
where $\Tilde{t}$ is the map that twists the first factor to the last, and with codegeneracy maps $s_i: M \otimes C^{\otimes (r+1)} \to M \otimes C^{\otimes r}$, for $0 \le i \le r$,
\[
s_i = \id_M \otimes \id_C^{\otimes i} \otimes \varepsilon \otimes \id_C^{\otimes r-i}.
\]
This gives a cosimplicial object of the form
\[\begin{tikzcd}
	\cdots & {M \otimes C\otimes C} & {M \otimes C} & M.
	\arrow[shift left=1, from=1-4, to=1-3]
	\arrow[shift right=1, from=1-4, to=1-3]
	\arrow[from=1-3, to=1-4]
	\arrow[shift right=2, from=1-3, to=1-2]
	\arrow[from=1-3, to=1-2]
	\arrow[shift left=2, from=1-3, to=1-2]
	\arrow[shift right=1, from=1-2, to=1-3]
	\arrow[shift left=1, from=1-2, to=1-3]
	\arrow[shift right=3, from=1-2, to=1-1]
	\arrow[shift left=2, from=1-1, to=1-2]
	\arrow[shift right=1, from=1-2, to=1-1]
	\arrow[from=1-1, to=1-2]
	\arrow[shift left=1, from=1-2, to=1-1]
	\arrow[shift right=2, from=1-1, to=1-2]
	\arrow[shift left=3, from=1-2, to=1-1]
\end{tikzcd}\]
The \textit{coHochschild homology in $\mathsf{M}$ of the coalgebra $C$ with coefficients in $M$} is then defined by
\[
\coTHH^\mathsf{M}(M,C) = \mathsf{holim}_\Delta \big(\coHH^\mathsf{M}(M,C)^\bullet\big).
\]
\end{defn}

\begin{rem}
In \cite{bicomodthh}, the authors give an $\infty$-categorical definition of topological Hochschild homology with coefficients, extending the approach of \cite{tch}.
In \cite{dualitySW}, a dual approach of \cite{tch} for topological coHochschild homology was given. Mimicking this approach, we can define a relative cyclic cobar construction  $\mathsf{CoBar}^\bullet_\mathcal{C}(-, -)\colon \mathsf{CoAlg}_\mathcal{AB}(\mathcal{C})\rightarrow \mathsf{Fun}(\N(\Delta), \mathcal{C})$ on a symmetric monoidal $\infty$-category $\mathcal{C}$, where $\mathsf{CoAlg}_\mathcal{AB}(\mathcal{C})$ denotes the $\infty$-category of pairs $(M,C)$ consisting of an $\mathbb{E}_1$-coalgebra $C$ and a bicomodule $M$ over $C$.
Essentially, the construction $\mathsf{CoBar}^\bullet_\mathcal{C}(C, M)$ is making precise the diagram in $\mathcal{C}$:
\[
\begin{tikzcd}
 \cdots  & \ar[shift left=3]{l}\ar[shift right=3]{l}\ar[shift left]{l}\ar[shift right]{l}
 M\otimes C \otimes C  &
 \ar[shift left=2]{l}\ar[shift right=2]{l}
 M\otimes C \ar{l}  & M\ar[shift left]{l}\ar[shift right]{l}.
\end{tikzcd}
\]
Denote $\coTHH^\mathcal{C}(M, C)$ the totalization in $\mathcal{C}$ of the cosimplicial object obtained by the cyclic cobar construction $\mathsf{CoBar}^\bullet_\mathcal{C}(M, C)$. Per usual, we denote $\coHH^\mathcal{C}(C,C)$ by $\coHH^\mathcal{C}(C)$.
If $\Mc$ is a combinatorial symmetric monoidal model category with class of weak equivalences denoted by $\mathsf{W}$, then one could also obtain a symmetric monoidal $\infty$-category $\N(\Mc_c)[\mathsf{W}^{-1}]$ obtained by Dwyer--Kan localization. Then one could consider $\coHH^{\N(\Mc_c)[\mathsf{W}^{-1}]}(C,M)$.
Suppose both $C$ and $M$ are cofibrant in $\Mc$. Then, similar to \cite[2.13]{dualitySW} we obtain a weak equivalence
    \[
\coHH^\Mc(M,C)\simeq \coHH^{\N(\Mc_c)[\mathsf{W}^{-1}]}(M,C).
    \]
\end{rem} 

\begin{ex}
    Our main example of interest is $\Mc=\Ck$. In this case, when $C$ is simply connected, the definition of coHochschild homology coincides with \cite{hess2009cohochschild}, see also Remark \ref{rem: topological cobar vs algebraic cobar} and \cite[2.11]{bohmann2018computational}.
\end{ex}

In what follows, we write $\coHH^\Ck$ simply as $\coHH$.
We show in Proposition \ref{prop: coHH is given by cobar construction over Ce} that $\coHH(M,C)\simeq \Omega (M, C^e, C)$ for $C$ simply connected and $M$ a fibrant $(C,C)$-bicomodule.

\begin{thm}\label{theorem: coHH is shadow (derived)}
CoHochschild homology defines a shadow on the bicategory of derived bicomodules over simply connected coalgebras  $\dcomod$.
\end{thm}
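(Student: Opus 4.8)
The plan is to run the same ``Dennis--Waldhausen Morita argument'' that established Theorem~\ref{thm: coHH zero is a shadow}, but now in the derived setting, using the two-sided cobar construction $\Omega(-,-,-)$ as the model for the derived cotensor product. The shadow functor should be $\lang M \rang_C = \coHH(M,C) \simeq \Omega(M, C^e, C)$ for a fibrant $(C,C)$-bicomodule $M$, where $C^e = C \otimes C^{\op}$, using Proposition~\ref{prop: coHH is given by cobar construction over Ce}. The content is to produce, for fibrant bicomodules $M \in \dcomod(C,D)$ and $N \in \dcomod(D,C)$, a natural equivalence $\theta \colon \coHH(M \dcotensor_D N, C) \xrightarrow{\simeq} \coHH(N \dcotensor_C M, D)$ and to check it is compatible with the associator and unit isomorphisms.

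First I would reduce everything to cobar constructions: by Definition~\ref{def: bicategory of derived bicomodules} we have $M \dcotensor_D N \simeq \Omega(M, D, N)$, and by the above $\coHH(\Omega(M,D,N), C) \simeq \Omega(\Omega(M,D,N), C^e, C)$. So the target is an equivalence
\[
\theta \colon \Omega(\Omega(M,D,N), C^e, C) \xrightarrow{\ \simeq\ } \Omega(\Omega(N,C,M), D^e, D).
\]
I would exhibit this at the level of (multi)cosimplicial objects before passing to homotopy limits. Each side is the homotopy limit of a diagram whose $(p,q,\dots)$-entry is an iterated tensor product of copies of $M$, $N$, $C$, and $D$; the twist map $\tau$ permuting tensor factors (exactly as in the proof of Theorem~\ref{thm: coHH zero is a shadow}, together with the cyclic twist $\tilde t$ built into the $\coHH$ cosimplicial structure) should give an isomorphism of the indexing (bi/tri)cosimplicial objects, from which $\theta$ follows by applying $\holim$. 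Here I would lean on Corollary~\ref{cor: bicosimplicial of cobar} from the appendix (already invoked to build the associator $a$ in Definition~\ref{def: bicategory of derived bicomodules}) to reorganize iterated cobar constructions, and on the fact that $\holim$ over a product-shaped diagram can be computed iteratively. The key subtlety — the one the paper flags explicitly, that totalization does not commute with $\otimes$ and that $\underline{\Omega}$ is not quasi-isomorphism invariant — is handled by working only with fibrant bicomodules and the homotopy-limit (product-total) model: Propositions~\ref{prop: cotensor of fibrants} and~\ref{Prop: cotensor preserve weak equivalence if fibrant} guarantee the cobar constructions involved stay fibrant and that the relevant maps are equivalences, and simple connectivity ensures the double complexes are suitably bounded so the manipulations are legitimate.

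Once $\theta$ is constructed, verifying the two shadow coherence diagrams (the hexagon with the associator and the triangle with the units) proceeds exactly as in the underived proof: on representatives, $\theta$ is ``permute the tensor factors,'' $\lang a \rang$ is the associativity isomorphism of $\otimes$, and $\lang \ell \rang, \lang r \rang$ are induced by the counits $\varepsilon_C, \varepsilon_D$; chasing an element $(m \otimes n) \otimes p$ around each diagram gives commutativity on the nose at the cosimplicial level, hence after $\holim$. Naturality of $\theta$ in $M$ and $N$ is immediate since the twist is natural.

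The main obstacle I anticipate is not the combinatorics of the twist but bookkeeping: making precise that the iterated homotopy limits on the two sides agree. Concretely, I must know that $\Omega(\Omega(M,D,N), C^e, C)$, as a homotopy limit, is computed by a genuine (product-)totalization of a tricosimplicial object and that reindexing this tricosimplicial object by permuting its simplicial directions is compatible with forming the iterated $\holim$ in either order — this is where Corollary~\ref{cor: bicosimplicial of cobar} and the boundedness afforded by simple connectivity (so that product-total and coproduct-total agree, cf.\ Remark~\ref{rem: topological cobar vs algebraic cobar}) do the real work. Granting those appendix results, the remainder is a direct adaptation of the proof of Theorem~\ref{thm: coHH zero is a shadow}, and I would present it as such, pointing to the appendix for the homotopical inputs rather than re-deriving them.
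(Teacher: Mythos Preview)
Your proposal is correct and follows essentially the same route as the paper: construct $\theta$ as a shuffle isomorphism of bicosimplicial objects (the dual Dennis--Waldhausen Morita argument), then pass to homotopy limits via the appendix results, and verify the hexagon and triangle coherences by the same element-level chase as in Theorem~\ref{thm: coHH zero is a shadow}. The only minor difference is that the paper works directly with the cosimplicial object $\coHH(\Omega^\bullet(M,D,N),C)^\bullet$ (whose $(i,j)$-entry is $M\otimes D^{\otimes i}\otimes N\otimes C^{\otimes j}$) and invokes Corollary~\ref{cor: bicosimplicial of cohh}, rather than first passing through the $C^e$-identification of Proposition~\ref{prop: coHH is given by cobar construction over Ce} and Corollary~\ref{cor: bicosimplicial of cobar}; the direct approach makes the shuffle $M\otimes D^{\otimes i}\otimes N\otimes C^{\otimes j}\cong N\otimes C^{\otimes j}\otimes M\otimes D^{\otimes i}$ slightly cleaner to write down.
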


\begin{proof}
Let $C$ and $D$ be simply connected coalgebras in $\Ck$.
Given $M$ a fibrant $(C,D)$-bicomodule, and $N$ a fibrant $(D,C)$-bicomodule, we need to show that we have a quasi-isomorphism:
\[
\theta\colon\coHH(M\dcotensor_D N, C) \longrightarrow \coHH(N\dcotensor_C M, D).
\]
As $M$ and $N$ are fibrant, the derived cotensor $M\dcotensor_D N$ is modeled by $M\square_D N$ or $\Omega(M, D, N)$. We choose the cobar construction as a model: it automatically gives us the desired quasi-isomorphism at the cost of some combinatorics.
To provide this equivalence, we apply Corollary \ref{cor: bicosimplicial of cohh} from Appendix \ref{chap: appendix}. In particular, we claim that $\theta$ is induced by an isomorphism of bicosimplicial objects:
\[
\theta\colon\coHH(\Omega^\bullet(M, D, N), C)^\bullet \stackrel{\cong}\longrightarrow \coHH(\Omega^\bullet(N, C, M), D)^\bullet.
\]
This is the dual to the Dennis-Waldhausen Morita argument.
Indeed, we define $\theta$ by the usual shuffling isomorphism in the bicosimplicial map below, in which the rows correspond to the two-sided cobar construction, while the columns correspond to the coHochschild complex. For ease of understanding, the diagrams below have been color-coded to illustrate this isomorphism via shuffling. 
\[\begin{tikzcd}
	\textcolor{rgb,255:red,214;green,153;blue,92}{M\otimes N} & \textcolor{rgb,255:red,45;green,200;blue,63}{M\otimes N\otimes C} & \textcolor{rgb,255:red,92;green,92;blue,214}{M\otimes N \otimes C^{\otimes 2}} & \cdots \\
	\textcolor{rgb,255:red,214;green,92;blue,92}{M\otimes D\otimes N} & \textcolor{rgb,255:red,48;green,192;blue,192}{M\otimes D\otimes N\otimes C} & \textcolor{rgb,255:red,255;green,118;blue,5}{M\otimes D\otimes N\otimes C^{\otimes 2}} & \cdots \\
	\textcolor{rgb,255:red,214;green,92;blue,214}{M\otimes D^{\otimes 2}\otimes N} & \textcolor{rgb,255:red,219;green,6;blue,73}{M\otimes D^{\otimes 2}\otimes N \otimes C} & \textcolor{rgb,255:red,245;green,204;blue,0}{M\otimes D^{\otimes 2}\otimes N \otimes C^{\otimes 2}} & \cdots \\
	\vdots & \vdots & \vdots \\
	\\
	\textcolor{rgb,255:red,214;green,153;blue,92}{N\otimes M} & \textcolor{rgb,255:red,214;green,92;blue,92}{N\otimes M\otimes D} & \textcolor{rgb,255:red,214;green,92;blue,214}{N\otimes M \otimes D^{\otimes 2}} & \cdots \\
	\textcolor{rgb,255:red,45;green,200;blue,63}{N\otimes C\otimes M} & \textcolor{rgb,255:red,48;green,192;blue,192}{N\otimes C\otimes M\otimes D} & \textcolor{rgb,255:red,219;green,6;blue,73}{N\otimes C\otimes M\otimes D^{\otimes 2}} & \cdots \\
	\textcolor{rgb,255:red,92;green,92;blue,214}{N\otimes C^{\otimes 2}\otimes M} & \textcolor{rgb,255:red,255;green,118;blue,5}{N\otimes C^{\otimes 2}\otimes M\otimes D} & \textcolor{rgb,255:red,245;green,204;blue,0}{N\otimes C^{\otimes 2}\otimes M\otimes D^{\otimes 2}} & \cdots \\
	\vdots & \vdots & \vdots
	\arrow[shift left=1, from=1-1, to=1-2]
	\arrow[shift right=1, from=1-1, to=1-2]
	\arrow[shift left=1, from=1-1, to=2-1]
	\arrow[shift right=1, from=1-1, to=2-1]
	\arrow[from=1-2, to=1-3]
	\arrow[shift left=2, from=1-2, to=1-3]
	\arrow[shift right=2, from=1-2, to=1-3]
	\arrow[shift right=1, from=1-2, to=2-2]
	\arrow[shift left=1, from=1-2, to=2-2]
	\arrow[shift left=1, from=2-1, to=2-2]
	\arrow[shift right=1, from=2-1, to=2-2]
	\arrow[shift right=1, from=1-3, to=2-3]
	\arrow[shift left=1, from=1-3, to=2-3]
	\arrow[from=2-2, to=2-3]
	\arrow[shift left=2, from=2-2, to=2-3]
	\arrow[shift right=2, from=2-2, to=2-3]
	\arrow[shift right=2, from=2-1, to=3-1]
	\arrow[from=2-1, to=3-1]
	\arrow[shift left=2, from=2-1, to=3-1]
	\arrow[shift left=1, from=3-1, to=4-1]
	\arrow[shift right=1, from=3-1, to=4-1]
	\arrow[shift right=3, from=3-1, to=4-1]
	\arrow[shift left=3, from=3-1, to=4-1]
	\arrow[shift right=1, from=3-1, to=3-2]
	\arrow[shift left=1, from=3-1, to=3-2]
	\arrow[shift left=2, from=2-2, to=3-2]
	\arrow[shift right=2, from=2-2, to=3-2]
	\arrow[from=2-2, to=3-2]
	\arrow[from=2-3, to=3-3]
	\arrow[shift left=2, from=2-3, to=3-3]
	\arrow[shift right=2, from=2-3, to=3-3]
	\arrow[from=3-2, to=3-3]
	\arrow[shift left=2, from=3-2, to=3-3]
	\arrow[""{name=0, anchor=center, inner sep=0}, shift right=2, from=3-2, to=3-3]
	\arrow[shift left=1, from=1-3, to=1-4]
	\arrow[shift left=3, from=1-3, to=1-4]
	\arrow[shift right=1, from=1-3, to=1-4]
	\arrow[shift right=3, from=1-3, to=1-4]
	\arrow[shift left=1, from=2-3, to=2-4]
	\arrow[shift left=3, from=2-3, to=2-4]
	\arrow[shift right=1, from=2-3, to=2-4]
	\arrow[shift right=3, from=2-3, to=2-4]
	\arrow[shift left=1, from=3-3, to=3-4]
	\arrow[shift right=1, from=3-3, to=3-4]
	\arrow[shift left=3, from=3-3, to=3-4]
	\arrow[shift right=3, from=3-3, to=3-4]
	\arrow[shift left=1, from=3-2, to=4-2]
	\arrow[shift right=1, from=3-2, to=4-2]
	\arrow[shift right=3, from=3-2, to=4-2]
	\arrow[shift left=3, from=3-2, to=4-2]
	\arrow[shift left=3, from=3-3, to=4-3]
	\arrow[shift right=3, from=3-3, to=4-3]
	\arrow[shift left=1, from=3-3, to=4-3]
	\arrow[shift right=1, from=3-3, to=4-3]
	\arrow[shift left=1, from=7-1, to=7-2]
	\arrow[shift right=1, from=7-1, to=7-2]
	\arrow[from=7-1, to=8-1]
	\arrow[shift left=1, from=6-1, to=6-2]
	\arrow[shift right=1, from=6-1, to=6-2]
	\arrow[shift left=1, from=6-1, to=7-1]
	\arrow[shift right=1, from=6-1, to=7-1]
	\arrow[shift right=2, from=7-1, to=8-1]
	\arrow[shift left=2, from=7-1, to=8-1]
	\arrow[shift left=1, from=6-2, to=7-2]
	\arrow[shift right=1, from=6-2, to=7-2]
	\arrow[from=6-2, to=6-3]
	\arrow[""{name=1, anchor=center, inner sep=0}, shift left=2, from=6-2, to=6-3]
	\arrow[shift right=2, from=6-2, to=6-3]
	\arrow[shift left=2, from=7-2, to=7-3]
	\arrow[shift right=2, from=7-2, to=7-3]
	\arrow[from=7-2, to=7-3]
	\arrow[shift left=2, from=8-2, to=8-3]
	\arrow[shift right=2, from=8-2, to=8-3]
	\arrow[from=8-2, to=8-3]
	\arrow[shift left=2, from=7-2, to=8-2]
	\arrow[shift right=2, from=7-2, to=8-2]
	\arrow[from=7-2, to=8-2]
	\arrow[shift left=1, from=8-1, to=8-2]
	\arrow[shift right=1, from=8-1, to=8-2]
	\arrow[from=7-3, to=8-3]
	\arrow[shift left=2, from=7-3, to=8-3]
	\arrow[shift right=2, from=7-3, to=8-3]
	\arrow[from=6-3, to=7-3]
	\arrow[shift left=2, from=6-3, to=7-3]
	\arrow[shift right=2, from=6-3, to=7-3]
	\arrow[shift left=3, from=6-3, to=6-4]
	\arrow[shift right=3, from=6-3, to=6-4]
	\arrow[shift left=1, from=6-3, to=6-4]
	\arrow[shift right=1, from=6-3, to=6-4]
	\arrow[shift left=1, from=7-3, to=7-4]
	\arrow[shift right=1, from=7-3, to=7-4]
	\arrow[shift left=3, from=7-3, to=7-4]
	\arrow[shift right=3, from=7-3, to=7-4]
	\arrow[shift left=1, from=8-3, to=8-4]
	\arrow[shift right=1, from=8-3, to=8-4]
	\arrow[shift left=3, from=8-3, to=8-4]
	\arrow[shift right=3, from=8-3, to=8-4]
	\arrow[shift left=3, from=8-1, to=9-1]
	\arrow[shift left=1, from=8-1, to=9-1]
	\arrow[shift right=1, from=8-1, to=9-1]
	\arrow[shift right=3, from=8-1, to=9-1]
	\arrow[shift left=1, from=8-2, to=9-2]
	\arrow[shift right=3, from=8-2, to=9-2]
	\arrow[shift right=1, from=8-2, to=9-2]
	\arrow[shift left=3, from=8-2, to=9-2]
	\arrow[shift left=1, from=8-3, to=9-3]
	\arrow[shift right=1, from=8-3, to=9-3]
	\arrow[shift right=3, from=8-3, to=9-3]
	\arrow[shift left=3, from=8-3, to=9-3]
	\arrow["\theta"{description}, shorten <=12pt, shorten >=12pt, from=0, to=1]
\end{tikzcd}\]
The $(i, j)$-spot in $\coHH(\Omega^\bullet(M, D, N), C)^\bullet$ is isomorphic to the $(j, i)$-spot in $\coHH(\Omega^\bullet(N, C, M), D)^\bullet$:
\[
\theta\colon M\otimes D^{\otimes i}\otimes N\otimes C^{\otimes j} \stackrel{\cong}\longrightarrow N\otimes C^{\otimes j}\otimes M\otimes D^{\otimes i},
\]
which incorporates a Koszul sign. Moreover, the map $\theta$ is compatible with the cofaces and codegeneracies.
 By \cite[18.5.3]{hir}, we obtain the equivalence
\[\theta\colon\coTHH(\Omega(M,D,N), C) \stackrel{\simeq}\longrightarrow \coTHH(\Omega(N,C,M), D),\]
using the model of homotopy limit as in \cite[18.1.8]{hir}, providing the desired quasi-isomorphism $\theta$.

Given a fibrant $(C,C)$-bicomodule $P$, we next must show that the following diagram commutes:
\[
\begin{tikzpicture}[baseline= (a).base]
\node[scale=0.73] (a) at (1,1){
\begin{tikzcd}[row sep= 50]
	{\coTHH(\Omega(\Omega(M,D,N), C, P), C)} & {\coTHH(\Omega(P,C,\Omega(M,D,N)), C)} & {\coTHH(\Omega(\Omega(P,C,M),D,N), C)} \\
	{\coTHH(\Omega(M, D, \Omega(N, C, P)),C)} & {\coTHH(\Omega(\Omega(N,C,P),C,M), D)} & {\coTHH(\Omega(N,C, \Omega(P,C, M)), D).}
	\arrow["{\lang a \rang}"', from=1-1, to=2-1]
	\arrow["\theta", from=1-1, to=1-2]
	\arrow["\theta"', from=2-1, to=2-2]
	\arrow["{\lang a \rang}", from=1-2, to=1-3]
	\arrow["{\lang a \rang}"', from=2-2, to=2-3]
	\arrow["\theta"', from=2-3, to=1-3]
\end{tikzcd}
};  
\end{tikzpicture}
\]
This follows a similar argument as above. We need to consider an isomorphism of ``tri-cosimplicial" isomorphisms in which we keep track of the swapping of the grading:
\[
\begin{tikzpicture}[baseline= (a).base]
\node[scale=0.63] (a) at (1,1){
\begin{tikzcd}[row sep= 50]
    \big((M\otimes D^{\otimes  k}\otimes N)\otimes C^{\otimes j}\otimes P\big)\otimes C^{\otimes i} \ar{r}{\theta} \ar{d}[swap]{\lang a\rang}& \big( P\otimes C^{\otimes i}\otimes (M\otimes  D^{\otimes k}\otimes  N)\big)\otimes C^{\otimes j} \ar{r}{\lang a \rang} & \big((P\otimes  C^{\otimes i}\otimes M) \otimes D^{\otimes k}\otimes N\big) \otimes C^{\otimes j}. \\
    \big( M\otimes D^{\otimes k} \otimes (N\otimes C^{\otimes j}\otimes P)\big)\otimes C^{\otimes i} \ar{r}[swap]{\theta} & \big((N\otimes C^{\otimes j}\otimes  P)\otimes C^{\otimes i}\otimes M\big)\otimes D^{\otimes k} \ar{r}[swap]{\lang a \rang} & \big(N\otimes C^{\otimes j}\otimes  (P\otimes C^{\otimes i}\otimes M)\big)\otimes D^{\otimes k} \ar{u}[swap]{\theta}
\end{tikzcd}
};  
\end{tikzpicture}
\]

We then need to check if the following diagram commutes:
\[
\begin{tikzpicture}[baseline= (a).base]
\node[scale=0.85] (a) at (1,1){
\begin{tikzcd}
	{\coTHH(\Omega(P,C, C),C)} && {\coTHH(\Omega(C, C, P),C)} && \coHH({\Omega(P, C, C),C)} \\
	\\
	&& {\coTHH(P, C)}.
	\arrow["{\lang \ell \rang}", from=1-3, to=3-3]
	\arrow["\theta", from=1-1, to=1-3]
	\arrow["{\lang r \rang}"', from=1-1, to=3-3]
	\arrow["\theta", from=1-3, to=1-5]
	\arrow["{\lang r \rang}", from=1-5, to=3-3]
\end{tikzcd}};  
\end{tikzpicture}
\]
We will prove the left triangle is commutative, but the argument for the right triangle will follow analogously. We apply the definition of $\theta$, $\ell$, and $r$ on the bicosimplicial objects:
\[
\begin{tikzcd}
    (P\otimes C^{\otimes \bullet}\otimes C)\otimes C^{\otimes \ast} \ar{d}[swap]{\lang r \rang}\ar{r}{\theta} & (C\otimes C^{\otimes \ast} \otimes P)\otimes C^{\otimes \bullet} \ar{d}{\lang \ell\rang } \\
    P\otimes C^{\otimes \ast} \ar{r}{\cong} & P\otimes C^{\otimes \bullet}.
\end{tikzcd}
\]
Here we kept track of the different gradings by  $\bullet$ and $\ast$. Therefore, once we apply the homotopy limit, we obtain the desired result. 
\end{proof}

\section{Dualizable bicomodules}\label{section: duality}

As traces are always defined on dualizable objects, we investigate the notion of duality in our bicategories of bicomodules.

\begin{defn}
Let $\mathcal{B}$ be a bicategory. A $1$-cell $M\in \mathcal{B}(C,D)$  is \emph{right dualizable} if there is another $1$-cell $M^\dual$ in $\mathcal{B}(D,C)$, with $2$-cells $\eta\colon U_C\rightarrow M\odot M^\dual$ and $\varepsilon\colon M^\dual\odot M\rightarrow U_D$, called \emph{coevaluation} and \emph{evaluation} respectively, such that the compositions in $\mathcal{B}(C,D)$ and $\mathcal{B}(D,C)$ respectively are the identity $2$-cells:
\[
\begin{tikzcd}
 	M \cong {U_C\odot M}\ar{r}{ \eta\odot \id} & {(M\odot M^\dual)\odot M \cong M\odot(M^\dual\odot M)} \ar{r}{\id\odot \varepsilon} & {M\odot U_D} \cong M,
\end{tikzcd}
\]
\[
\begin{tikzcd}[column sep=small]
 	M^\dual \cong {M^\dual\odot U_C}\ar{r}{\id\odot \eta} & {M^\dual\odot (M\odot M^\dual)\cong (M^\dual\odot M)\odot M^\dual} \ar{r}{\varepsilon\odot \id}  & {U_D\odot M^\dual} \cong M^\dual.
\end{tikzcd}
\]
We call $M^\dual$ the \emph{right dual} of $M$ and say that $(M, M^\dual)$ form a \emph{dual pair} in $\mathcal{B}$. 
\end{defn}

For instance, in the bicategory of bimodules, an $(R, S)$-bimodule $M$ is right dualizable if and only if it is finitely generated and projective as a right $S$-module, and its dual is given by its linear dual $\hom_S(M, S)$, see \cite[6.1]{Ponto_2012}. We shall obtain a very similar result for bicomodules with the subtlety that our bicategory is not ``closed" (see \cite[4.1.4]{ponto2008fixed}), and thus we cannot recognize dualizable objects (as in \cite[4.3.3]{ponto2008fixed}) since we are not provided with an internal hom. Our bicategories will almost be ``co-closed" thanks to the introduction of a cohom functor.
 We shall make use of the notion of ``quasi-finite" comodules, as introduced by Takeuchi in \cite{takeuchi}. More modern reviews, in more general settings, can be found in \cite{brzezinski2003corings} and \cite{khaled}.

Let ${}_C\bicomod{}{}$ be the category of left $C$-comodules. If $M$ is a left $C$-comodule and $V$ is a $\mathbbm{k}$-module, then $M\otimes V$ is a left $C$-comodule. 
This defines a functor:
\begin{align*}
    M\otimes - \colon \bimod{}{\mathbbm{k}} & \longrightarrow {}_C\bicomod{}{} \\
    V & \longmapsto M\otimes V.
\end{align*}

\begin{defn}\label{def: quasi-finite comodules}
 We say a left $C$-comodule $M$ is \emph{quasi-finite} if the functor $M\otimes - \colon \bimod{}{\mathbbm{k}}  \rightarrow {}_C\bicomod{}{} $ is a right adjoint. 
 In this case, we denote its left adjoint by $h_C(M,-)\colon {}_C\bicomod{}{}\rightarrow \bimod{}{\mathbbm{k}}$, and refer to it as the \textit{cohom} functor. 
 In other words, given any left $C$-comodule $N$, the cohom $h_C(M,N)$ is the universal $\mathbbm{k}$-module providing a natural $\mathbbm{k}$-linear isomorphism
\begin{equation}\label{eq: cohom adjunction}
\bimod{}{\mathbbm{k}}\Big( h_C(M, N), V\Big)\cong \bicomod{C}{}\Big(N, M\otimes V\Big),
\end{equation}
for any $\mathbbm{k}$-module $V$. In particular, for $V=\mathbbm{k}$, we obtain the isomorphism
\[
h_C(M,N)^*\cong  {}_C\hom(N, M).
\]
\end{defn}

\begin{ex}\label{ex: dualizable comodules with cofree}
An example of a quasi-finite left $C$-comodule $M$ is given by cofree comodules $M=C\otimes V$, where $V$ is a finitely generated $\mathbbm{k}$-module.
Since we have
\begin{align*}
\bimod{}{\mathbbm{k}}\Big( h_C(C\otimes V, N), W\Big) &\cong \bicomod{C}{}\Big (N, C\otimes V \otimes W \Big)  \\
& \cong \bimod{}{\mathbbm{k}}\Big( N, V\otimes W\Big)\\
& \cong \bimod{}{\mathbbm{k}}\Big( V^*\otimes N, W\Big),
\end{align*}
for any $\mathbbm{k}$-module $W$, we obtain $h_C(C\otimes V, N)\cong V^*\otimes N$.
Choosing $V=\mathbbm{k}^{\oplus n}$ so that $M=C^{\oplus n}$, then $h_C(C^{\oplus n}, N)\cong N^{\oplus n}$.
More generally, any finitely cogenerated comodule (Definition \ref{def: finitely cogenerated}) is quasi-finite.
\end{ex}

The unit of the adjunction \eqref{eq: cohom adjunction} provides the $\mathbbm{k}$-linear coevaluation
\begin{align*}
\eta\colon N\longrightarrow M\otimes h_C(M, N).
\end{align*}
If $D$ is another coalgebra, and $N$ is a $(C,D)$-bicomodule, then $h_C(M,N)$ is a right $D$-comodule, and the map $\eta$ is a morphism of $(C,D)$-bicomodules, see \cite[1.7, 1.19]{takeuchi}. Therefore, for any quasi-finite left $C$-comodule $M$, we obtain that $h_C(M,C)$ is a right $C$-comodule, and we have a $(C,C)$-bicomodule homomorphism $\eta\colon C\rightarrow M\otimes h_C(M, C)$. 

Let $T$ be a left $C$-comodule, $M$ a quasi-finite left $C$-comodule, and $N$ a $(C,C)$-bicomodule. 
Denote by $\partial\colon h_C(M, N\square_C T)\rightarrow h_C(M,N)\square_C T$  the adjoint to the left $C$-colinear homomorphism
\[
\eta \square 1 \colon N\square_C T \longrightarrow \Big(M\otimes h_C(M, N)\Big)\square_C T \cong M\otimes \Big(h_C(M,N)\square_C T\Big).
\]
It is noted that, in \cite[1.14]{takeuchi}, the $\mathbbm{k}$-linear homomorphism $\partial$ above is an isomorphism whenever $M$ is an injective quasi-finite left $C$-comodule. 
If we choose $N=C$ and $T=M$, 
combining the isomorphism with the map $h_C(M,M)\rightarrow \mathbbm{k}$ of Definition \ref{def: coendomorphism coalgebra}, we obtain the evaluation
\[
\begin{tikzcd}
\varepsilon\colon h_C(M,C)\square_C M& h_C(M,M) \ar{l}{\cong}[swap]{\partial}\ar{r} & \mathbbm{k} .
\end{tikzcd}
\]

\begin{ex}\label{ex: dualizable quasi-finite case}
Consider the bicategory $\bicomod{}{}$ of Definition \ref{def: bicategory of discrete bicomodules}.
A comodule $M$ in $\bicomod{C}{\mathbbm{k}}$ is right duali\-zable if it is quasi-finite and injective as a left $C$-comodule. 
The dual of $M$ is the right $C$-comodule $h_C(M,C)$, together with the coevaluation $\eta\colon C\rightarrow M\otimes h_C(M,C)$ and evaluation $\varepsilon\colon h_C(M,C)\square_C M\rightarrow \mathbbm{k}$ defined above. The desired triangle identities follow from the adjunction of the cohom functor \eqref{eq: cohom adjunction}.
\end{ex}

\begin{ex}\label{ex: dualizable comodules with cofree bicategory}
In Example \ref{ex: dualizable quasi-finite case}, choose $M=C\otimes V$, where $V$ is a dualizable $\mathbbm{k}$-module (i.e. finitely generated).
We obtain $M^\dual=h_C(C\otimes V, C)\cong V^*\otimes C$. 
We can then explicitly describe the coevaluation and evaluation. 
Let us denote the comultiplication and counit of $C$ by $\Delta_C\colon C\rightarrow C\otimes C$ and $\varepsilon_C\colon C\rightarrow \mathbbm{k}$ respectively.
Similarly, denote the coevaluation and evaluation of the dualizable $\mathbbm{k}$-module $V$ by $\eta_V\colon\mathbbm{k}\rightarrow V\otimes V^*$ and $\varepsilon_V\colon V^*\otimes V\rightarrow \mathbbm{k}$ respectively.
Then the coevaluation $\eta\colon C\rightarrow M\otimes M^\dual\cong (C\otimes V)\otimes (V^* \otimes C)$ is the composite
\[
\begin{tikzcd}
C \ar{r}{\Delta_C}& C\otimes C\cong C\otimes \mathbbm{k} \otimes C\ar{r}{\id\otimes \eta_V\otimes \id} & [2em] C\otimes (V\otimes V^*)\otimes C. 
\end{tikzcd}
\]
Explicitly, if we pick $(e_1, \ldots, e_n)$ for a basis of $V$, and $(e^*_1, \ldots, e_n^*)$ for the dual basis of $V^*$, and write $\Delta_C(c)=\sum_{(c)} c_{(1)}\otimes c_{(2)}$, we obtain the formula
\[
\eta(c)=\sum_{i=1}^n \sum_{(c)} c_{(1)}\otimes  e_i\otimes e^*_i \otimes c_{(2)}.
\]
The evaluation $\varepsilon\colon M^\star\square_C M\cong V^*\otimes C\otimes V\rightarrow \mathbbm{k}$ is then the composite
\[
\begin{tikzcd}
 V^*\otimes C\otimes V \ar{r}{\id\otimes \varepsilon_C \otimes \id} & [2em] V^*\otimes \mathbbm{k} \otimes V \cong V^*\otimes V \ar{r}{\varepsilon_V} & \mathbbm{k}.
\end{tikzcd}
\]
Explicitly, we obtain the formula for $f\in V^*, c\in C, v\in V$:
\[
\varepsilon(f\otimes c\otimes v)=\varepsilon_C(c)f(v).
\]
Choosing in particular $V=\mathbbm{k}^{\oplus n}$ so that $M=C^{\oplus n}$ and $M^\star\cong C^{\oplus n}$ and $V\otimes V^*\cong  \mathcal{M}_n(\mathbbm{k})$, we  get $C^{\oplus n}\otimes C^{\oplus n}\cong \mathcal{M}_n^c(C)\otimes C$ and the coevaluation can be formulated as the $(C,C)$-bicolinear morphism:
\begin{align*}
    \eta\colon C & \longrightarrow \mathcal{M}_n^c(C)\otimes C\\
    c & \longmapsto  \sum_{(c)} c_{(1)}\otimes I_n \otimes c_{(2)}.
\end{align*}
Notice that $M^\star \square_C M\cong \mathcal{M}_n^c(C)$ as $\mathbbm{k}$-modules, and thus the evaluation can be formulated as a $\mathbbm{k}$-linear homomorphism:
\begin{align*}
    \varepsilon\colon \mathcal{M}_n^c(C) & \longrightarrow \mathbbm{k}\\
    c\otimes f & \longmapsto \varepsilon_C(c)\tr(f).
\end{align*}
\end{ex}

\begin{ex}\label{ex: dualizable comodule (general)}
We now generalize Example \ref{ex: dualizable quasi-finite case}.
A comodule $M$ in $\bicomod{C}{D}$ is right dualizable whenever it is quasi-finite and injective as a left $C$-comodule. 
Define $M^\dual=h_C(M,C)$, which is a $(D,C)$-bicomodule. Indeed, by \cite[1.8]{takeuchi}, for any $(C,D)$-bicomodule $M$ such that $M$ is quasi-finite as a left $C$-comodule, and any left $C$-comodule $N$, we have that $h_C(M,N)$ is a left $D$-comodule with coaction $h_C(M,N)\rightarrow D\otimes h_C(M,N)$ given by the adjoint to the left $C$-colinear map
\[
\begin{tikzcd}
N \ar{r}{\eta} & M\otimes h_C(M,N) \ar{r}{\rho\otimes \id} & M\otimes D\otimes h_C(M,N).
\end{tikzcd}
\]
Moreover, by \cite[1.7]{takeuchi}, the $C$-bicolinear map $\eta\colon C\rightarrow M\otimes h_C(M,C)$ factors through the $(C,C)$-bicomodule $M\square_D h_C(M,C)$, and the induced map $\eta\colon C\rightarrow M\square_D h_C(M,C)$ remains $C$-bicolinear by \cite[1.9]{takeuchi}. Therefore this defines the desired $C$-bicolinear coevaluation $\eta:C\rightarrow M\square_D M^\dual$.
In fact, by \cite[1.10]{takeuchi}, the cohom functor $h_C(M,-)\colon \bicomod{C}{}\rightarrow\bimod{}{\mathbbm{k}}$ can be promoted to a functor $h_C(M,-)\colon \bicomod{C}{}\rightarrow\bicomod{D}{}$ whenever the quasi-finite left $C$-comodule $M$ is endowed with a right $D$-coaction, and it is the left adjoint of the functor
\begin{align*}
\bicomod{D}{} & \longrightarrow \bicomod{C}{}\\
L & \longrightarrow M\square_D L,
\end{align*}
i.e., we obtain an equivalence
\begin{equation}\label{eq:cohom adjunction promoted}
\bicomod{D}{}\Big( h_C(M,N), L \Big) \cong \bicomod{C}{}\Big(N, M\square_D L\Big)
\end{equation}
for any left $C$-comodule $N$ and left $D$-comodule $L$. In particular, if we choose $M=N$ and $L=D$, the adjoint of the identity map on $M$ provides a left $D$-colinear map $h_C(M,M)\rightarrow D$, which is in fact $D$-bicolinear by \cite[1.11]{takeuchi}.
Moreover, by \cite[1.15]{takeuchi}, the map
\[
\begin{tikzcd}
 h_C(M,M)\cong h_C(M, C\square_C M) \ar{r}{\partial}[swap]{} & h_C(M,C)\square_C M,
\end{tikzcd}
\]
is $D$-bicolinear. When $M$ is injective as a left $C$-comodule, the map $\partial$ is an isomorphism. Therefore, the desired evaluation $\varepsilon\colon M^\dual\square_C M\rightarrow D$ is given by composing the previous $D$-bicolinear maps
\[
\begin{tikzcd}
 h_C(M,C)\square_C M \ar{r}{\partial^{-1}}[swap]{\cong} & h_C(M,  C\square_C M)\cong h_C(M,M)\ar{r} & D.
\end{tikzcd}
\]
Just as in Example \ref{ex: dualizable quasi-finite case}, the triangle identities follow from the adjunction of the cohom functor \eqref{eq:cohom adjunction promoted}.
\end{ex}

\begin{ex}\label{ex: dualizable on other side}
A particular case of Example \ref{ex: dualizable comodule (general)} is when $C=\mathbbm{k}$. A bicomodule $M$ in $\bicomod{\mathbbm{k}}{D}$ is right dualizable when $M$ is dualizable as a $\mathbbm{k}$-module (i.e., finitely generated), and its right dual is $M^*=\hom_\mathbbm{k}(M,\mathbbm{k})$, the usual linear dual. This appeared in the proof of Proposition \ref{prop: colinear is a cotrace}.
Explicitly, its left $D$-coaction is given by:
\begin{align*}
   \lambda\colon M^* & \longrightarrow \hom_\mathbbm{k}(M,D) \cong D\otimes M^*\\
    \left(M\stackrel{\alpha}\rightarrow \mathbbm{k}\right) & \longmapsto \left(M\stackrel{\rho}\rightarrow M\otimes D \stackrel{\alpha\otimes \id}\rightarrow D\right),
\end{align*}
where $\rho\colon M\rightarrow M\otimes D$ is the right $D$-coaction of $M$.
By \cite[10.11]{brzezinski2003corings}, we have an isomorphism of $\mathbbm{k}$-modules $M\square_D M^*\cong \hom_D(M,M)$. 
Therefore, we can interpret the coevaluation $\eta\colon\mathbbm{k}\rightarrow M\square_D M^*$ as:
\begin{align*}
    \mathbbm{k} & \longrightarrow \hom_D(M, M)\\
    1 & \longmapsto \id_M.
\end{align*}
Notice that the following diagram commutes:
\[
\begin{tikzcd}
 M^*\otimes M \ar{r}{\lambda\otimes \id}\ar{d}[swap]{\id\otimes \rho} & \hom_\mathbbm{k}(M,D)\otimes M \ar{d}\\
 M^*\otimes M \otimes D \ar{r} & D,
\end{tikzcd}
\]
where the unlabeled maps are defined by evaluating. This defines the $D$-bicolinear evaluation homomorphism $\varepsilon\colon M^*\otimes M\rightarrow D$:
\[
\varepsilon(\alpha \otimes  m)=\sum_{(m)} \alpha(m_{(0)})\otimes m_{(1)}
\]
where we denoted $\rho(m)=\sum_{(m)} m_{(0)}\otimes m_{(1)}$ for all $m\in M$.
\end{ex}

We can further generalize the examples above to the differential graded case. Arguments for quasi-finite comodules are entirely categorical (see \cite{khaled}). However, the notion has not been documented before in this context, and therefore we introduce them here.

\begin{defn}
Let $C$ be a simply connected dg-coalgebra over $\mathbbm{k}$. 
A left dg-$C$-comodule $M$ is \emph{quasi-finite} if the functor
\begin{align*}
\Ck & \longrightarrow \bicomod{C}{}\\
V & \longmapsto M\otimes V
\end{align*}
admits a left adjoint, denoted $h_C(M, -)\colon \bicomod{C}{}\rightarrow \Ck$, called the \emph{cohom functor}.
In other words, we obtain an equivalence
\[
\Ck\Big( h_C(M, N), V\Big)\cong \bicomod{C}{}\Big(N, M\otimes V\Big),
\]
for any left dg-$C$-comodule $N$ and $\mathbbm{k}$-chain complex $V$.
\end{defn}

\begin{ex}
If $V$ is a perfect $\mathbbm{k}$-chain complex, then just as in Example \ref{ex: dualizable comodules with cofree}, we have that $C\otimes V$ is quasi-finite, and $h_C(C\otimes V, N)\cong V^*\otimes N$. If $C=\mathbbm{k}$, then a left dg $C$-comodule is just a chain complex, and quasi-finite comodules are precisely the perfect chain complexes.
\end{ex}

If $M$ is also endowed with a right $D$-coaction, then $h_C(M,N)$ is a right $D$-comodule and we obtain the adjunction
\[
\begin{tikzcd}
 \bicomod{C}{} \ar[shift left=2]{r}{h_C(M,-)} & [2em] \bicomod{D}{} \ar[shift left =2]{l}{M\square_D-}[swap]{\perp}.
\end{tikzcd}
\]
Therefore, just as in the discrete case, the right dual of a $(C,D)$-bicomodule $M$ should be the $(D,C)$-bicomodule $h_C(M,C)$. 
Notice, even if $M$ is a fibrant $(C,D)$-bicomodule, there is no reason to expect $h_C(M,C)$ is fibrant as a $(D,C)$-bicomodule. However, this is not needed. Indeed, by Proposition \ref{Prop: cotensor preserve weak equivalence if fibrant}, if $M$ is a fibrant $(C,D)$-bicomodule, then
\begin{align*}
    M\square_D h_C(M,C) & \simeq M\square_D \Omega(D, D, h_C(M,C))\\
    & \simeq \Omega(M, D, h_C(M,C))\\
    & \simeq  M\dcotensor_D h_C(M,C).
\end{align*}
Similarly, we obtain $h_C(M,C)\square_C M\simeq h_C(M,C)\dcotensor_C M$.

Just as in the discrete case, the $C$-bicolinear map $\eta\colon C\rightarrow M\square_D h_C(M,C)$ from the unit of the cohom adjunction induces the desired coevaluation on the dual pair $(M, h_C(M,C))$.

We now describe how to obtain the evaluation.
Recall that a $\mathbbm{k}$-chain complex $V$ is perfect if and only if the natural map $V\otimes \hom_\mathbbm{k}(V, \mathbbm{k})\rightarrow \hom_\mathbbm{k}(V, V)$ is an isomorphism. Similarly, as in Example \ref{ex: dualizable quasi-finite case}, for any quasi-finite left $C$-comodule $M$, we have a natural $D$-bicolinear map $\partial\colon h_C(M, M)\rightarrow h_C(M, C)\square_C M$. We saw in the discrete case that if $M$ is injective and quasi-finite, then $\partial$ is an isomorphism. 

\begin{defn}\label{def: coperfect dg-comodules}
A fibrant left $C$-comodule $M$ is said to be \emph{coperfect} if it is quasi-finite and the induced map $\partial\colon h_C(M, M)\rightarrow h_C(M, C)\square_C M$ is an isomorphism.
\end{defn}

Given a coperfect $(C,D)$-bicomodule $M$, define the evaluation as
\[
\begin{tikzcd}
 \varepsilon\colon h_C(M,C)\square_C M \ar{r}{\partial^{-1}}[swap]{\cong} &  h_C(M,M)\ar{r} & D.
\end{tikzcd}
\]
Combining our arguments above, we obtain the desired triangle identities by adjunction of the cohom functor, and thus the following result.

\begin{prop}
Let $C$ and $D$ be simply connected dg-coalgebras. 
A (fibrant) $(C,D)$-bicomodule $M$ is right dualizable if it is coperfect as a left $C$-comodule. Its right dual is given by $h_C(M,C)$.
\end{prop}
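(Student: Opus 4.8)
The plan is to promote Example~\ref{ex: dualizable comodule (general)} to the derived setting, using the identifications assembled in the discussion preceding the statement. First I would set $M^\dual := h_C(M,C)$; by the dg-cohom formalism this is a $(D,C)$-bicomodule, hence a $1$-cell of $\dcomod$ (no fibrant replacement of $M^\dual$ is needed, as noted before the statement). The structural input is already in place: since $M$ is fibrant, Proposition~\ref{Prop: cotensor preserve weak equivalence if fibrant} gives
\[
M\square_D h_C(M,C) \simeq \Omega(M,D,h_C(M,C)) \simeq M\dcotensor_D h_C(M,C), \qquad h_C(M,C)\square_C M \simeq h_C(M,C)\dcotensor_C M,
\]
so the strict cotensor products compute the horizontal composites $M\odot M^\dual$ and $M^\dual\odot M$ in $\dcomod$.

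Next I would write down the structure $2$-cells. The coevaluation $\eta\colon U_C = C \to M\odot M^\dual$ is the image in the homotopy category of the unit $C\to M\square_D h_C(M,C)$ of the cohom adjunction, which is $C$-bicolinear by the dg-analogue of Takeuchi's results recalled above. For the evaluation, coperfectness of $M$ (Definition~\ref{def: coperfect dg-comodules}) makes $\partial\colon h_C(M,M)\to h_C(M,C)\square_C M$ an isomorphism, so I define $\varepsilon\colon M^\dual\odot M = h_C(M,C)\square_C M \to D = U_D$ as the composite of $\partial^{-1}$ with the $D$-bicolinear map $h_C(M,M)\to D$ fixed in the preceding discussion.

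Finally I would verify the two triangle identities in $\dcomod$. The key observation is that every arrow occurring in them --- $\eta$ and $\varepsilon$ by construction, the associator $a$ (induced on strict cotensors by the strict associativity of $\otimes$), and the unitors $\ell$, $r$ (induced by the counits of $C$ and $D$), all exactly as fixed in Definition~\ref{def: bicategory of derived bicomodules} --- is represented by an honest morphism of bicomodules between strict cotensor products of the fibrant objects $M$, $M^\dual$, $C$, $D$. Hence each triangle composite in $\dcomod$ is represented by the corresponding strict composite, and the triangle identities reduce verbatim to the discrete computation of Example~\ref{ex: dualizable comodule (general)}, which itself is just the statement that $\eta$ and $\varepsilon$ are the unit and a counit-type map of the cohom adjunction \eqref{eq:cohom adjunction promoted} and therefore satisfy the adjunction triangle identities. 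The main obstacle, and the only place where care is genuinely needed, is this last coherence bookkeeping: one must check that the weak equivalences of Proposition~\ref{Prop: cotensor preserve weak equivalence if fibrant} identifying strict and derived cotensors are compatible with the associativity and unit constraints of $\dcomod$, so that a strict triangle identity really does descend to a triangle identity in $\dcomod$. This compatibility is precisely what is built into the construction of $\dcomod$ together with the explicit cobar models for $a$, $\ell$, and $r$, so the proof needs careful tracking rather than a new idea.
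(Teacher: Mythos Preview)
Your proposal is correct and follows the same approach as the paper: the paper's proof is simply the sentence ``Combining our arguments above, we obtain the desired triangle identities by adjunction of the cohom functor,'' which is precisely what you have unpacked. You have in fact given more detail than the paper does, in particular by explicitly flagging the coherence bookkeeping needed to pass from strict to derived cotensors, but the underlying argument is identical.
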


\begin{ex} 
Suppose $D=\mathbbm{k}$, and $M=C\otimes V$, where $V$ is a perfect chain complex. Then $M$ is quasi-finite as a left $C$-comodule and $M^\dual=h_C(M,C)\cong V^*\otimes C$. 
Moreover, $M$ is coperfect because
\[
V^*\otimes M\cong h_C(C\otimes V ,M) \rightarrow h_C(C\otimes V, C)\square_C M\cong V^*\otimes M
\]
is an isomorphism. 
\end{ex}

\begin{ex}
    If $C=\mathbbm{k}$, then if $M$ is a right $D$-comodule such that $M$ is a perfect chain complex, then $M$ is dualizable and $M^*=\hom_\mathbbm{k}(M,\mathbbm{k})$. 
\end{ex}

\section{Bicategorical traces}\label{section: traces}
Every shadowed bicategory defines a notion of traces on its dualizable objects. This extends the notion of traces on symmetric monoidal categories, as reviewed in \cite{traceinSMon}. 
We show the Hattori--Stallings cotrace ${}_C\Endo(M)\rightarrow \coHH_0(C)^*$ (Definition \ref{def: the hattori-stallings cotrace}) and the colinear Hattori--Stallings trace $\Endo_C(M)\rightarrow \coHH_0(C)$ (Definition \ref{def: colinear Hattori--Stallings trace}) are bicategorical traces on the shadowed bicategory $(\bicomod{}{}, \coHH_0)$.
We first recall the general definition of a bicategorical trace.

\begin{defn}[{\cite{Ponto_2012}}]
Let $(\mathcal{B}, \lang - \rang)$ be a shadowed bicategory. Let $M\in \mathcal{B}(C, D)$ be a right dualizable $1$-cell.
The \emph{trace} of a $2$-cell $f:M\rightarrow M$, denoted $\tr^\mathcal{B}(f)$, is the composite
\[
\begin{tikzpicture}[baseline= (a).base]
\node[scale=1] (a) at (1,1){
\begin{tikzcd}
 {\lang U_C \rang}_C \ar{r}{\lang \eta \rang} & \lang M\odot M^\dual \rang_C \ar{r}{\lang f\odot \id \rang} & \lang M\odot M^\dual \rang_C \ar{r}{\theta}[swap]{\cong} & [-1em]\lang M^\dual\odot M\rang_D \ar{r}{\lang \varepsilon \rang} & \lang U_D \rang_D. 
\end{tikzcd}
};  
\end{tikzpicture}
\]
More generally, given $1$-cells $P\in \mathcal{B}(D, D)$ and $Q\in \mathcal{B}(C,C)$, the trace of a $2$-cell $f:Q\odot M \rightarrow M\odot P$ is the composite
\[
\begin{tikzpicture}[baseline= (a).base]
\node[scale=1] (a) at (1,1){
\begin{tikzcd}
 {\lang Q \rang}_C \ar{r}{\lang \id\odot \eta \rang} & \lang Q\odot M\odot M^\dual \rang_C \ar{r}{\lang f\odot \id \rang} & \lang M\odot P \odot M^\dual \rang_C \ar{r}{\theta}[swap]{\cong} & [-1em]\lang M^\dual\odot M\rang_D \ar{r}{\lang \varepsilon \odot \id \rang} & \lang P \rang_D.
\end{tikzcd}
};  
\end{tikzpicture}
\]
The \emph{Euler characteristic} of $M$, denoted $\chi(M),$ is the trace of its identity $2$-cell $\tr^\mathcal{B}(\id_M)$.
\end{defn}

Bicategorical traces enjoy many useful properties that are recorded in \cite[Section  7]{Ponto_2012}. Notably, we obtain the following cyclicity property.

\begin{prop}[{\cite[7.3]{Ponto_2012}}]\label{prop: cyclic trace}
 If $M$ and $N$ are right dualizable $1$-cells in a shadowed bicategory $\mathcal{B}$ and if $f\colon M\rightarrow N$ and $g\colon N\rightarrow M$ are $2$-cells in $\mathcal{B}$, then $\tr^\mathcal{B}(f\circ g)=\tr^\mathcal{B}(g\circ f)$.   
\end{prop}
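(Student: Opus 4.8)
The plan is to adapt the diagrammatic argument of Ponto--Shulman, reducing cyclicity to two standard identities about dual pairs. Write $(M, M^\dual, \eta_M, \varepsilon_M)$ and $(N, N^\dual, \eta_N, \varepsilon_N)$ for the chosen right-dual data, and, for a $2$-cell $f\colon M\to N$, let $f^\dual\colon N^\dual\to M^\dual$ be its \emph{mate}: the composite of $\id_{N^\dual}\odot\eta_M$, then $\id_{N^\dual}\odot f\odot\id_{M^\dual}$, then $\varepsilon_N\odot\id_{M^\dual}$, with all unit and associativity constraints suppressed. A routine diagram chase using the triangle identities shows that $(-)^\dual$ is a contravariant functor on right-dualizable $1$-cells, and that the mate is compatible with coevaluation and evaluation, i.e. $(f\odot\id_{M^\dual})\circ\eta_M = (\id_N\odot f^\dual)\circ\eta_N$ and $\varepsilon_M\circ(\id_{M^\dual}\odot g) = \varepsilon_N\circ(g^\dual\odot\id_N)$ for every $g\colon N\to M$.

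Granting these, I would expand $\tr^\mathcal{B}(g\circ f)$ via the dual pair of $M$, factor $\lang (g\circ f)\odot\id_{M^\dual}\rang$ as $\lang g\odot\id_{M^\dual}\rang\circ\lang f\odot\id_{M^\dual}\rang$ through $\lang N\odot M^\dual\rang_C$, and use naturality of $\theta$ in each variable to slide the beads $f$ and $g$ across the $\theta$-turnaround. The two compatibility identities then rewrite $(f\odot\id_{M^\dual})\circ\eta_M$ as $(\id_N\odot f^\dual)\circ\eta_N$ and $\varepsilon_M\circ(\id_{M^\dual}\odot g)$ as $\varepsilon_N\circ(g^\dual\odot\id_N)$, and one more application of naturality of $\theta$ collapses the result to $\lang\varepsilon_N\circ((g^\dual\circ f^\dual)\odot\id_N)\rang\circ\theta\circ\lang\eta_N\rang$. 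Contravariance of $(-)^\dual$ gives $g^\dual\circ f^\dual = (f\circ g)^\dual$, so applying the evaluation--mate identity to the endomorphism $f\circ g$ of $N$ and using naturality of $\theta$ one final time rewrites this as $\lang\varepsilon_N\rang\circ\theta\circ\lang (f\circ g)\odot\id_{N^\dual}\rang\circ\lang\eta_N\rang$, which is exactly $\tr^\mathcal{B}(f\circ g)$.

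The only genuine work is establishing the mate identities and the contravariant functoriality of $(-)^\dual$ in the non-strict setting: written out naively these diagrams are cluttered with instances of $a$, $\ell$, and $r$, and one must repeatedly invoke the triangle and pentagon identities of $\mathcal{B}$ together with the two shadow coherence axioms relating $\theta$ to the constraints $a$, $\ell$, and $r$ in order to move the constraints out of the way. The clean way to handle this is to invoke coherence for shadowed bicategories so that every constraint may be taken to be the identity; then the needed identities are immediate from the triangle identities, and the argument reduces to the visually evident fact that $\tr^\mathcal{B}$ is a closed loop along which the beads $f$ and $g$ may be freely slid past one another.
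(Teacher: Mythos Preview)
The paper does not give its own proof of this proposition; it simply records the statement and cites \cite[7.3]{Ponto_2012}. Your sketch is precisely the standard Ponto--Shulman argument: introduce the mate $f^\dual$, use the two ``sliding'' identities relating coevaluation/evaluation to mates, and appeal to naturality of $\theta$ together with the shadow coherence axioms (or the coherence theorem for shadowed bicategories) to rearrange one trace into the other. So there is nothing to compare---your proposal \emph{is} essentially the cited proof, and it is correct.
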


\begin{ex}[{\cite[4.2.2]{ponto2008fixed}}]\label{ex: Hattori-Stallings trace as a trace of shadow}
Let $R$ be a ring and let $M$ be a finitely generated projective right $R$-module. Let $f$ be an $R$-linear endomorphism on $M$. The Hattori--Stalling trace (Definition \ref{def: hattori-stallings definition}) can be regarded as the image of $1$ in the bicategorical trace $\mathsf{tr}(f)\colon \mathbb{Z}\rightarrow \HH_0(R)$:
\[
\begin{tikzcd}
    \mathbb{Z} \ar{r}{\eta} & M\otimes_R \hom_R(M, R) \ar{r}{f\otimes 1} & M\otimes_R \hom_R(M,R) \cong \HH_0(R,  \hom_R(M,R)\otimes_\mathbb{Z} M) \ar{r}{\lang\varepsilon\rang} & \HH_0(R).
\end{tikzcd} 
\]
\end{ex}

Given $\mathbbm{k}$-coalgebras $C$ and $D$, and $M$ a right dualizable $(C,D)$-bicomodule with dual $M^\star$, for any $(C,D)$-bicolinear homomorphism $f\colon M\rightarrow M$, we obtain a trace $\tr_C^D(f)\colon \coHH_0(C)\rightarrow \coHH_0(D)$ defined as the composition:
\[
\begin{tikzcd}
    \coHH_0(C) \ar{r}{\lang \eta \rang} & [-1em]\coHH_0(M\square_D M^\star, C) \ar{r}{\lang f\square 1\rang} & \coHH_0(M\square_DM^\star, C) \cong \coHH_0(M^\star\square_C M, D) \ar{r}{\lang \varepsilon \rang} & [-1em]\coHH_0(D).
\end{tikzcd}
\]
We show how the bicategorical trace recovers the Hattori--Stallings cotrace of Definition \ref{def: the hattori-stallings cotrace}.
Recall from Example \ref{ex: dualizable quasi-finite case} that a finitely cogenerated and injective left $C$-comodule is a right dualizable $(C,\mathbbm{k})$-bicomodule.

\begin{prop}\label{prop: hs-cotrace is a bicategorical trace}
    Let $C$ be a $\mathbbm{k}$-coalgebra. Let $M$ be a finitely cogenerated and injective left $C$-comodule. 
    The Hattori--Stallings cotrace ${}_C\Endo(M)\rightarrow \coHH_0(C)^*$ sends a $C$-colinear homomorphism $f\colon M\rightarrow M$ to the bicategorical trace $\tr_C^\mathbbm{k}(f)$:
    \[
\begin{tikzcd}
    \coHH_0(C) \ar{r}{\lang \eta \rang} & [-1em]\coHH_0(M\otimes h_C(M,C), C) \ar{r}{\lang f\otimes 1\rang} & \coHH_0(M\otimes h_C(M,C), C) \cong h_C(M,C)\square_C M \ar{r}{ \varepsilon } & [-1em]\mathbbm{k}.
\end{tikzcd}
\]
\end{prop}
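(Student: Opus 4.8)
The plan is to unwind both sides of the claimed equality by tracking an explicit element through each map and matching the two results against the formula in Definition \ref{def: the hattori-stallings cotrace}. First I would reduce to the cofree case $M = C^{\oplus n}$: by Lemma \ref{lem: extending fin cogenerated to cofree} and the argument preceding Definition \ref{def: the hattori-stallings cotrace}, an arbitrary finitely cogenerated injective left $C$-comodule $M$ sits as a retract $M \oplus N \cong C^{\oplus n}$, and a $C$-colinear endomorphism $f$ of $M$ extends to $f \oplus 0$ on $C^{\oplus n}$; both the algebraic Hattori--Stallings cotrace and the bicategorical trace respect this block-diagonal extension (the latter by functoriality of $\langle - \rangle$ and naturality of $\eta, \varepsilon, \theta$), so it suffices to compute both for $M = C^{\oplus n}$. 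In that case Example \ref{ex: dualizable comodules with cofree bicategory} gives us fully explicit formulas: the coevaluation is $\eta\colon C \to \mathcal{M}_n^c(C)\otimes C$, $c \mapsto \sum_{(c)} c_{(1)} \otimes I_n \otimes c_{(2)}$, and the evaluation is $\varepsilon\colon \mathcal{M}_n^c(C) \to \k$, $c\otimes A \mapsto \varepsilon_C(c)\tr(A)$.

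Next I would chase the composite in the statement. Start with $c \in \coHH_0(C) \subseteq C$; the cocommutator condition (Definition of $\lang C\rang$) will be used only at the end to see that the answer lands correctly and is independent of choices, but the chase itself is formal. Applying $\langle \eta \rangle$ sends $c \mapsto \sum_{(c)} c_{(1)} \otimes I_n \otimes c_{(2)}$ in $\coHH_0(M \otimes h_C(M,C), C)$, which under the identification $M \otimes h_C(M,C) \cong \mathcal{M}_n^c(C) \otimes C$ becomes an element of $\coHH_0(\mathcal{M}_n^c(C)\otimes C, C)$. Then $\langle f \otimes 1\rangle$ applies the $\k$-linear map $\mathcal{M}_n^c(C) \to \mathcal{M}_n^c(C)$ dual to $f \oplus 0$; under the identification ${}_C\Endo(C^{\oplus n}) \cong \hom_\k(C, \mathcal{M}_n(\k))$ from the discussion before Definition \ref{def: the hattori-stallings cotrace}, this is precisely the map sending $d \otimes E_{ij} \mapsto \sum_{(d)} d_{(1)} \otimes (f_{d_{(2)}})\cdot E_{ij}$-type expressions recorded there by $c \mapsto f_c$. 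I would then push through the shadow isomorphism $\theta$ (which is the flip $m \otimes n \mapsto n \otimes m$ from the proof of Theorem \ref{thm: coHH zero is a shadow}), landing in $h_C(M,C)\square_C M \cong \mathcal{M}_n^c(C)$, and finally apply $\varepsilon$, which extracts $\varepsilon_C$ of the "$C$-part" times the ordinary matrix trace of the "$\mathcal{M}_n(\k)$-part." Collecting terms, the output at $c$ is $\sum_{(c)} \varepsilon_C(c_{(1)})\,\tr(f_{c_{(2)}})$, which is exactly $\cotr_M(f)(c)$ from Definition \ref{def: the hattori-stallings cotrace}.

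The one subtlety — and the step I expect to be the main obstacle — is bookkeeping the identifications $M \otimes h_C(M,C) \cong \mathcal{M}_n^c(C)\otimes C$ and $h_C(M,C)\square_C M \cong \mathcal{M}_n^c(C)$ compatibly with $\eta$, $\varepsilon$, and $\theta$, so that the Sweedler indices and the matrix indices line up without sign or transposition errors; in particular one must check that the identity matrix $I_n$ introduced by $\eta$ and the pairing with $f_c$ produced by $f \otimes 1$ combine under $\varepsilon$ to give $\tr(f_c)$ rather than some off-diagonal sum. This is the coalgebraic dual of the corresponding check for the classical Hattori--Stallings trace in Example \ref{ex: Hattori-Stallings trace as a trace of shadow}, and once the identifications of Example \ref{ex: dualizable comodules with cofree bicategory} are in hand it is a direct (if slightly tedious) diagram chase; I would organize it by first doing $n = 1$, where $\mathcal{M}_1^c(C) = C$ and the formula reads off immediately, and then observing that the general $n$ case is the evident $n\times n$ elaboration. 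Finally, since the bicategorical trace is manifestly independent of the presentation $M \oplus N \cong C^{\oplus n}$, this simultaneously discharges the coordinate-independence claim flagged in the Remark after Definition \ref{def: the hattori-stallings cotrace}.
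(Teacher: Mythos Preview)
Your proposal is correct and follows essentially the same approach as the paper: reduce to the cofree case $M=C^{\oplus n}$ via Lemma~\ref{lem: extending  fin cogenerated to cofree}, then chase an element $c\in\coHH_0(C)$ through the composite using the explicit formulas for $\eta$ and $\varepsilon$ from Example~\ref{ex: dualizable comodules with cofree bicategory}, arriving at $\sum_{(c)}\varepsilon(c_{(1)})\tr(f_{c_{(2)}})=\cotr_M(f)(c)$. The only cosmetic difference is ordering (the paper does the cofree computation first, then the reduction), and the paper's element chase records one more intermediate step $\sum_{(c)} c_{(1)}\otimes f_{c_{(2)}}\otimes c_{(3)}$ before applying $\theta$ and collapsing via the identification $M^\star\square_C M\cong\mathcal{M}_n^c(C)$.
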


\begin{proof}
    Suppose first $M=C^{\oplus n}$. 
    Then the  bicategorical trace  $\tr_C^\mathbbm{k}(f)$ can be re-written as the composition:
 \[
\begin{tikzcd}
    \coHH_0(C) \ar{r}{\lang \eta \rang} & [-1em]\coHH_0(\mathcal{M}_n^c(C)\otimes C, C) \ar{r}{\lang f\otimes 1\rang} & \coHH_0(\mathcal{M}_n^c(C)\otimes C, C) \cong \mathcal{M}^c_n(C) \ar{r}{ \varepsilon } & [-1em]\mathbbm{k}.
\end{tikzcd}
\]
Recall given $f\colon M\rightarrow M$, we denoted the corresponding $\mathbbm{k}$-linear   homomorphism $C\rightarrow \mathcal{M}_n(\mathbbm{k})$ by $c\mapsto f_c$. 
    
    Given $c\in \coHH_0(C)\subseteq C$, following  the formulas of Example \ref{ex: dualizable comodules with cofree bicategory}, we obtain that the image of $c$ under the composition is given by:
    \[
    c\mapsto \sum_{(c)} c_{(1)}\otimes I_n \otimes c_{(2)} \mapsto \sum_{(c)} c_{(1)}\otimes f_{c_{(2)}}\otimes c_{(3)} \mapsto \sum_{(c)} c_{(1)}\otimes f_{c_{(2)}} \mapsto \sum_{(c)}\varepsilon(c_{(1)})\tr(f_{c_{(2)}}).
    \]
   Therefore $\tr_C^\mathbbm{k}(f)(c)=\cotr_M(f)(c)$ for $M=C^{\oplus n}$. 
    We can conclude for a general finitely cogenerated and injective comodule $M$ by applying Lemma \ref{lem: extending  fin cogenerated to cofree}.
\end{proof}

Similarly, we can show the bicategorical trace recovers the colinear Hattori--Stallings trace of Definition \ref{def: colinear Hattori--Stallings trace}.
From Example \ref{ex: dualizable on other side}, we know that right $C$-comodules that are finitely generated as $\mathbbm{k}$-modules are right dualizable $(\mathbbm{k}, C)$-bicomodules.

\begin{prop}\label{prop: colinear hs is bicategorical}
    Let $C$ be a $\mathbbm{k}$-coalgebra.
    Let $M$ be a right $C$-comodule that is finitely generated as a $\mathbbm{k}$-module. 
    The colinear Hattori--Stallings trace $\Endo_C(M)\rightarrow \coHH_0(C)$ sends a $C$-colinear homomorphism $f\colon M\rightarrow M$ to the image of $1\in \mathbbm{k}$ in the bicategorical trace $\tr_\mathbbm{k}^C(f)$:
    \[
    \begin{tikzcd}
        \mathbbm{k} \ar{r}{\eta} & M\square_C M^* \ar{r}{f\square 1} & M\square_C M^* \cong \coHH_0(M^*\otimes M, C) \ar{r}{\lang  \varepsilon \rang}  & \coHH_0(C).
    \end{tikzcd}
    \]
\end{prop}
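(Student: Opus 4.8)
The plan is to unwind the bicategorical trace $\tr_\k^C(f)$ in the shadowed bicategory $(\bicomod{}{}, \coHH_0)$ of Theorem \ref{thm: coHH zero is a shadow}, substituting the explicit coevaluation and evaluation of the dual pair $(M, M^*)$ from Example \ref{ex: dualizable on other side}, and then identifying the resulting element of $\coHH_0(C)$ with the formula of Definition \ref{def: colinear Hattori--Stallings trace}.

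First I would record the identifications that make the composite displayed in the statement literally the bicategorical trace evaluated at $1\in\k$. A $(\k,\k)$-bicomodule is just a $\k$-module, and $\coHH_0(V,\k)\cong V$ naturally, since $\k\otimes\k^{\op}=\k$ is the trivial coalgebra and the cotensor over $\k$ is the ordinary tensor product; hence $\lang U_\k\rang_\k=\coHH_0(\k)\cong\k$, $\lang M\square_C M^*\rang_\k\cong M\square_C M^*$, and $\lang U_C\rang_C=\coHH_0(C)$. Moreover, by the proof of Theorem \ref{thm: coHH zero is a shadow} the shadow isomorphism $\theta\colon\coHH_0(M\square_C M^*,\k)\xrightarrow{\cong}\coHH_0(M^*\otimes M,C)$ is induced by the swap $m\otimes\alpha\mapsto\alpha\otimes m$, while $\lang\varepsilon\rang$ is the map induced on $\coHH_0$ by the $C$-bicolinear evaluation $\varepsilon\colon M^*\otimes M\to C$, $\varepsilon(\alpha\otimes m)=\sum_{(m)}\alpha(m_{(0)})\,m_{(1)}$, of Example \ref{ex: dualizable on other side}; note $\coHH_0(M^*\otimes M,C)\subseteq M^*\otimes M$ and $\lang\varepsilon\rang$ is just the restriction of $\varepsilon$.

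Next, fix a basis $(e_1,\dots,e_n)$ of $M$ over $\k$ with dual basis $(e_1^*,\dots,e_n^*)$, as in Definition \ref{def: colinear Hattori--Stallings trace}. By Example \ref{ex: dualizable on other side} (via \cite[10.11]{brzezinski2003corings}), the coevaluation $\eta\colon\k\to M\square_C M^*$ sends $1$ to $\id_M$, which under $M\otimes M^*\cong\hom_\k(M,M)$ is the element $\sum_i e_i\otimes e_i^*$ (automatically lying in $M\square_C M^*$). Chasing $1$ through the composite of the statement then gives
\[
1\ \overset{\eta}{\longmapsto}\ \sum_i e_i\otimes e_i^*\ \overset{f\square 1}{\longmapsto}\ \sum_i f(e_i)\otimes e_i^*\ \overset{\theta}{\longmapsto}\ \sum_i e_i^*\otimes f(e_i)\ \overset{\lang\varepsilon\rang}{\longmapsto}\ \sum_{i}\sum_{(f(e_i))} e_i^*\big(f(e_i)_{(0)}\big)\,f(e_i)_{(1)} .
\]
Finally I would use that $f$ is $C$-colinear, so $\rho(f(e_i))=(f\otimes\id_C)\rho(e_i)=\sum_{(e_i)}f(e_{i(0)})\otimes e_{i(1)}$; applying $e_i^*$ to the first tensor factor rewrites the last sum as $\sum_i\sum_{(e_i)}e_i^*\big(f(e_{i(0)})\big)\,e_{i(1)}$, which is exactly $\tr^C(f)$ of Definition \ref{def: colinear Hattori--Stallings trace}, as desired.

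I do not expect a real obstacle: the content is entirely bookkeeping with the identifications of the second paragraph, the key point being that over the trivial coalgebra $\k$ the shadow $\coHH_0(-,\k)$ is the identity functor, so $\theta$ and $\lang\varepsilon\rang$ act on underlying $\k$-modules literally by the swap and by $\varepsilon$. Unlike the cofree comodules of Proposition \ref{prop: hs-cotrace is a bicategorical trace}, a general right $C$-comodule that is finitely generated over $\k$ need not be a direct summand of a cofree $C$-comodule, so the direct dual-basis computation above—rather than a reduction to the cofree case—is the natural route, and the only comodule-theoretic input is the single use of $C$-colinearity of $f$ at the last step.
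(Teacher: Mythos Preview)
Your proposal is correct and follows essentially the same approach as the paper: both arguments chase $1\in\k$ through the bicategorical trace using a basis of $M$, the explicit coevaluation and evaluation from Example \ref{ex: dualizable on other side}, and the swap description of $\theta$, and then invoke $C$-colinearity of $f$ at the final step to match Definition \ref{def: colinear Hattori--Stallings trace}. Your additional paragraph spelling out why $\coHH_0(-,\k)$ is the identity and why no reduction to a cofree summand is available is a welcome elaboration but does not diverge from the paper's route.
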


\begin{proof}
We denote the right coaction $M\rightarrow M\otimes C$ by $m\mapsto \sum_{(m)} m_{(0)}\otimes m_{(1)}$.
 Choosing a basis $(e_1, \dots, e_n)$ of $M$, and applying the formulas from Example \ref{ex: dualizable on other side}, we obtain that the image of $1$ in the composition is given by:
 \[
 1 \mapsto \sum_{i=1}^n e_i\otimes e_i^* \mapsto \sum_{i=1}^n  f(e_i)\otimes e_i^* \mapsto \sum_{i=1}^n e_i^*\otimes  f(e_i) \mapsto \sum_{i=1}^n \sum_{(f(e_i))} e_i^*(f(e_i)_{(0)})\otimes f(e_i)_{(1)}.
 \]
As $f$ is $C$-colinear, we obtain in particular:
\[
\sum_{(f(e_i))} f(e_i)_{(0)}\otimes f(e_i)_{(1)} = \sum_{(e_i)} f({e_i}_{(0)}) \otimes {e_i}_{(1)}.
\]
Therefore, we can conclude $\tr_\mathbbm{k}^C(f)(1)=\tr^C(f)$ as in Definition \ref{def: colinear Hattori--Stallings trace}.
\end{proof}

\begin{rem}
In recent independent work of Justin Barhite  \cite{justin}, Hochschild cohomology $\HH^0$ is shown to be a so-called \textit{coshadow} on the bicategory of bimodules over $\mathbbm{k}$-algebras, and there are associated \textit{bicategorical cotraces}. 
If $C$ is a $\mathbbm{k}$-coalgebra that is finitely generated as $\mathbbm{k}$-module, then the bicategory of bimodules over $C^*$ is intimately related to the bicategory of bicomodules over $C$ since $M\square_C N\cong {}_{C^*}\hom_{C^*}(C^*, M\otimes N)$ by \cite[10.10]{brzezinski2003corings}.
This induces an equivalence $\coHH_0(N\otimes M, C)\cong \HH^0(C^*, M\otimes N)$.
From our perspective however, our cotrace comes from a shadow and not a coshadow, and we do not assume that our bicategory is closed. 
One can build a functor $(\bicomod{}{}^\mathrm{fg})^\mathrm{co-op}\rightarrow \bimod{}{}$ from the co-opposite bicategory (inverting both 1 and 2 cells) of finitely generated bicomodules to the bicategory of bimodules, and we expect that it sends the shadow $\coHH_0$ to the coshadow $\HH^0$.
Nonetheless, it does not seem that the language of coshadows and bicategorical cotraces is the appropriate vocabulary for the Hattori--Stallings cotrace, nor the colinear Hattori--Stallings trace.
\end{rem}

\begin{ex}
We can generalize the Hattori--Stallings cotrace for chain complexes.
Let $C$ be a simply connected dg-coalgebra over $\mathbbm{k}$, 
let $M$ be a coperfect left $C$-comodule, and let $f$ be an $C$-colinear endomorphism on $M$.
Then we obtain the cotrace $\coHH(C)\rightarrow \mathbbm{k}$.
Since the homomorphism is in $\Ck$, and $\mathbbm{k}$ is concentrated in degree zero, then the above trace is entirely determined by $\coHH_0(C)$.
Unfortunately, as $C$ is simply connected, $\coHH_0(C)=\mathbbm{k}$, and this cotrace is simply the alternating sum of the usual trace of $f$ on each degree.
Similarly, we can also generalize the colinear Hattori--Stallings trace for chain complexes. Letting $V$ denote a perfect chain complex with a right $C$-comodule structure, given a $C$-colinear endomorphism $f\colon V\rightarrow V$, its colinear trace defines a chain homomorphism $\mathbbm{k}\rightarrow \coHH(C)$. It is non-trivial only in degree zero, but as $C$ is simply connected, we get that the colinear trace is the alternating sum of the trace of $f$ on each degree.
\end{ex}

\begin{ex}
Let $X$ be a simply connected CW-complex, and let $Y$ be a finite CW-complex.
Let $C_*(-;\mathbbm{k})=C_*(-)$ denote the singular chain complex over $\mathbbm{k}$ associated to a space. Recall that $C_*(X)$ is a dg-coalgebra over $\mathbbm{k}$ using Alexander--Whitney formula and the diagonal $X\rightarrow X\times X$. Then $C_*(X\times Y)\cong C_*(X)\otimes C_*(Y)$ is a coperfect $C_*(X)$-comodule. 
Let $f\colon Y\rightarrow Y$ be an endomorphism. This defines an endomorphism on $X\times Y$ that is the identity on $X$, and thus an endomorphism $\widehat{f}$ on the comodule $C_*(X\times Y)$. 
Therefore the cotrace of $\widehat{f}$ is a map $\coHH(C_*(X))\rightarrow \mathbbm{k}$ in which its image is the usual Lefschetz number of $f$.
\end{ex}

\begin{ex}
    Let $X$ be a simply connected CW-complex, and let $Y$ be a finite CW-complex with a continuous map $\rho:Y\rightarrow X$. 
    Then $C_*(Y)$ is a comodule over $C_*(X)$. Let $f$ be an endomorphism over $Y$. Then the colinear trace induced on coHochschild homology $\mathbbm{k}\rightarrow \coHH(C_*(X))$ is the usual alternating sum of the traces of $C_*(Y)\stackrel{f}\rightarrow C_*(Y)\stackrel{\rho}\rightarrow C_*(X)$.
\end{ex}

\section{Morita--Takeuchi invariance}\label{section: morita}

In bicategories, Morita equivalence is the natural notion of an equivalence in a bicategory. It extends the usual notion of equivalence of categories  and Morita equivalence between rings.

\begin{defn}[{\cite[4.1, 4.2]{campbell2019topological}}]
Let $\mathcal{B}$ be a bicategory, let $M\in \mathcal{B}(C,D)$ be a right dualizable $1$-cell, and denote its right dual by $M^\dual$.
We say the dual pair $(M, M^\dual)$ is a \emph{Morita equivalence} in  $\mathcal{B}$ if the coevaluation $\eta\colon U_C\rightarrow M\odot M^\dual$ and evaluation $\varepsilon\colon M^\dual\odot M\rightarrow D$ maps are isomorphisms in $\mathcal{B}(C,C)$ and $\mathcal{B}(D,D)$ respectively.
If such a pair exists, we say $C$ and $D$ are \emph{Morita equivalent}.
\end{defn}

\begin{prop}[{\cite[4.5]{campbell2019topological}}]\label{prop: morita invariance for shadows}
Let $(\mathcal{B}, \lang - \rang)$ be a shadowed bicategory. Let $M\in \mathcal{B}(C,D)$ be a right dualizable $1$-cell and denote its right dual by  $M^\dual$. If $(M, M^\dual)$ is a Morita equivalence, then the Euler characteristic $\chi(M)$ is an isomorphism, with inverse $\chi(M^\dual)$. In particular, if $C$ and $D$ are Morita equivalent, then \[\lang U_C\rang_C \cong \lang U_D \rang_D.\]
\end{prop}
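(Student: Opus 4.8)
The plan is to read the result off directly from the definition of the Euler characteristic. By definition,
\[
\chi(M)\colon \lang U_C\rang_C \xrightarrow{\lang\eta\rang} \lang M\odot M^\dual\rang_C \xrightarrow{\theta} \lang M^\dual\odot M\rang_D \xrightarrow{\lang\varepsilon\rang} \lang U_D\rang_D .
\]
Since $(M,M^\dual)$ is a Morita equivalence, the $2$-cells $\eta\colon U_C\to M\odot M^\dual$ and $\varepsilon\colon M^\dual\odot M\to U_D$ are isomorphisms, so $\lang\eta\rang$ and $\lang\varepsilon\rang$ are isomorphisms because a shadow functor, being a functor, preserves isomorphisms; and $\theta$ is an isomorphism by the definition of a shadow. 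Hence $\chi(M)$ is a composite of isomorphisms, so it is an isomorphism. Applying this to a Morita equivalence witnessing that $C$ and $D$ are Morita equivalent yields at once $\lang U_C\rang_C\cong\lang U_D\rang_D$, the final assertion.

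To show that the inverse of $\chi(M)$ is $\chi(M^\dual)$, I would first record that $M^\dual$ is itself right dualizable, with right dual $M$: the $2$-cells $\varepsilon^{-1}\colon U_D\to M^\dual\odot M$ and $\eta^{-1}\colon M\odot M^\dual\to U_C$ (composed with the canonical unitors where needed) serve as a coevaluation and an evaluation for the pair $(M^\dual,M)$. Indeed, each triangle identity for $(M^\dual,M)$ is obtained by inverting the corresponding triangle identity for $(M,M^\dual)$: the latter states that a certain composite of $2$-cells equals an identity, and the inverse composite --- which is exactly the one appearing in the triangle identity for $(M^\dual,M)$ --- is then an identity as well. (This is the standard fact that a $1$-cell with invertible unit and counit is an equivalence.) In particular $\chi(M^\dual)$ is defined, and equals
\[
\chi(M^\dual)\colon \lang U_D\rang_D \xrightarrow{\lang\varepsilon^{-1}\rang} \lang M^\dual\odot M\rang_D \xrightarrow{\theta} \lang M\odot M^\dual\rang_C \xrightarrow{\lang\eta^{-1}\rang} \lang U_C\rang_C .
\]

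Next I would compose the two. Using functoriality of $\lang-\rang$ to cancel the middle pair $\lang\varepsilon^{-1}\rang\circ\lang\varepsilon\rang=\lang\id\rang=\id$, the composite $\chi(M^\dual)\circ\chi(M)$ collapses to
\[
\lang U_C\rang_C \xrightarrow{\lang\eta\rang} \lang M\odot M^\dual\rang_C \xrightarrow{\theta} \lang M^\dual\odot M\rang_D \xrightarrow{\theta} \lang M\odot M^\dual\rang_C \xrightarrow{\lang\eta^{-1}\rang} \lang U_C\rang_C .
\]
The two copies of $\theta$ cancel by the involutivity $\theta_{M^\dual,M}\circ\theta_{M,M^\dual}=\id$, a formal consequence of the coherence axioms for a shadow (it can be extracted from the unit and hexagon axioms for $\theta$). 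What remains is $\lang\eta^{-1}\rang\circ\lang\eta\rang=\id$, so $\chi(M^\dual)\circ\chi(M)=\id_{\lang U_C\rang_C}$; the symmetric computation gives $\chi(M)\circ\chi(M^\dual)=\id_{\lang U_D\rang_D}$. Therefore $\chi(M)^{-1}=\chi(M^\dual)$.

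The one point that needs care is the bookkeeping of the unitors, associators, and the naturality and involutivity of $\theta$ when passing between the dual pairs $(M,M^\dual)$ and $(M^\dual,M)$; this is routine but must be done attentively, and it is carried out in \cite[4.5]{campbell2019topological}.
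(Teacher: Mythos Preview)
The paper does not supply a proof of this proposition; it is quoted directly from \cite[4.5]{campbell2019topological}. Your argument is correct and is essentially the standard one found there: once $\eta$ and $\varepsilon$ are invertible, $\chi(M)$ is a composite of isomorphisms, and the identification $\chi(M)^{-1}=\chi(M^\dual)$ follows from the observation that $(M^\dual,M)$ is a dual pair with coevaluation $\varepsilon^{-1}$ and evaluation $\eta^{-1}$ (obtained, as you say, by inverting the two triangle identities for $(M,M^\dual)$), together with the involutivity $\theta_{M^\dual,M}\circ\theta_{M,M^\dual}=\id$, which is indeed a formal consequence of the shadow coherence axioms.
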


\begin{ex}
Consider the bicategory $\bicomod{}{}$ of bicomodules over $\mathbbm{k}$ as in Definition \ref{def: bicategory of discrete bicomodules}.
Then in \cite[2.3]{takeuchi}, a Morita equivalence is referred to as a \emph{set of equivalence data}. It is then shown than a Morita equivalence in this bicategory recovers the notion of Morita-Takeuchi equivalence in $\mathbbm{k}$-modules (Definition \ref{defn: Morita-Takeuchi equivalence}).
Therefore, if $C$ and $D$ are Morita--Takeuchi equivalent, then $\coHH_0(C)\cong \coHH_0(D)$ by Proposition \ref{prop: morita invariance for shadows}. This recovers the results of \cite{moritatakinvariance} at level zero.
By \cite[3.5]{takeuchi}, if $M$ is a left $C$-comodule that is a quasi-finite injective cogenerator, and $D:=e_C(M)$, the coalgebra of coendomorphisms, then $C$ and $D$ are Morita-Takeuchi equivalent, and $M$ and $M^\dual$ form a Morita equivalence in the bicategory $\bicomod{}{}$.
\end{ex}

Consider the bicategory of derived bicomodules $\dcomod$ as in Definition \ref{def: bicategory of derived bicomodules}. 
We say two simply connected coalgebras $C$ and $D$ in $\Ck$ are \textit{homotopically Morita--Takeuchi equivalent} if they are Morita equivalent in the bicategory $\dcomod$. A major consequence of Theorem \ref{theorem: coHH is shadow (derived)} toegther with Proposition \ref{prop: morita invariance for shadows} is the following.

\begin{thm}\label{thm: coHH is morita invariant}
Let $C$ and $D$ be homotopically  Morita--Takeuchi equivalent simply connected differential graded $\mathbbm{k}$-coalgebras.
Then we obtain an isomorphism: \[\coHH(C)\cong \coHH(D).\] 
\end{thm}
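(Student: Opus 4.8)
The plan is to deduce this statement directly from the bicategorical machinery established in the preceding sections, in exact parallel with the discrete case treated in the final example of Section~\ref{section: morita}. By Theorem~\ref{theorem: coHH is shadow (derived)}, the bicategory $\dcomod$ of derived bicomodules over simply connected dg-$\k$-coalgebras carries $\coHH$ as a shadow functor, landing in the homotopy category of $\k$-chain complexes. By definition, ``homotopically Morita--Takeuchi equivalent'' means precisely that there is a dual pair $(M, M^\dual)$ in $\dcomod$, with $M \in \dcomod(C,D)$, whose coevaluation $\eta\colon U_C \to M\odot M^\dual$ and evaluation $\varepsilon\colon M^\dual \odot M \to U_D$ are isomorphisms (i.e.\ quasi-isomorphisms of bicomodules) in $\dcomod(C,C)$ and $\dcomod(D,D)$ respectively. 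This is exactly the hypothesis of Proposition~\ref{prop: morita invariance for shadows}.

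\begin{proof}
By Theorem~\ref{theorem: coHH is shadow (derived)}, $(\dcomod, \coHH)$ is a shadowed bicategory. Since $C$ and $D$ are homotopically Morita--Takeuchi equivalent, there is, by definition, a right dualizable $1$-cell $M \in \dcomod(C,D)$ with right dual $M^\dual \in \dcomod(D,C)$ such that the coevaluation $\eta\colon U_C \to M\odot M^\dual$ and the evaluation $\varepsilon\colon M^\dual \odot M \to U_D$ are isomorphisms in $\dcomod(C,C)$ and $\dcomod(D,D)$ respectively; that is, $(M, M^\dual)$ is a Morita equivalence in $\dcomod$. Applying Proposition~\ref{prop: morita invariance for shadows} to the shadowed bicategory $(\dcomod, \coHH)$ and this Morita equivalence, the Euler characteristic $\chi(M)\colon \lang U_C\rang_C \to \lang U_D\rang_D$ is an isomorphism, with inverse $\chi(M^\dual)$. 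Since $U_C = C$ and $U_D = D$ as fibrant bicomodules (Definition~\ref{def: bicategory of derived bicomodules}), and since $\lang U_C\rang_C = \coHH(C,C) = \coHH(C)$ by convention, we conclude that $\coHH(C)\cong \coHH(D)$.
\end{proof}

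The only genuine content hidden behind this short argument is that the phrase ``homotopically Morita--Takeuchi equivalent'' is \emph{defined} to mean Morita equivalent in $\dcomod$, so that Proposition~\ref{prop: morita invariance for shadows} applies verbatim; all the real work was done in proving Theorem~\ref{theorem: coHH is shadow (derived)}, which required the Dennis--Waldhausen Morita argument on bicosimplicial objects together with the care about totalizations of double complexes discussed in Remark~\ref{rem: topological cobar vs algebraic cobar}. The main obstacle, therefore, is not in this proof but upstream: one must be confident that the shadow $\theta$-isomorphisms and their coherence diagrams genuinely hold in the derived setting, and that the coevaluation and evaluation $2$-cells of a homotopical Morita--Takeuchi equivalence are honest quasi-isomorphisms of derived bicomodules rather than merely maps that become isomorphisms after some further localization. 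Once Theorem~\ref{theorem: coHH is shadow (derived)} is in hand, the passage to Morita--Takeuchi invariance is formal, exactly as in Ponto's original framework and as in the discrete case recorded in the last example of this section.
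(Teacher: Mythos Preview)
Your proof is correct and follows exactly the same approach as the paper: the paper's proof consists of the single sentence ``This is a combination of Theorem~\ref{theorem: coHH is shadow (derived)} together with Proposition~\ref{prop: morita invariance for shadows}.'' Your expanded version simply unpacks this, correctly noting that homotopical Morita--Takeuchi equivalence is by definition Morita equivalence in $\dcomod$, so that the shadow framework applies verbatim.
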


\begin{proof}
    This is a combination of Theorem \ref{theorem: coHH is shadow (derived)} together with Proposition \ref{prop: morita invariance for shadows}.
\end{proof}

This extends the results of \cite{HScothh}, where we suspect that the simply-connected condition was forgotten to be mentioned.

\begin{prop}
    Suppose $(M, M^\dual)$ form a dual pair in $\dcomod$. Then we obtain a Quillen adjunction
    \[
\begin{tikzcd}
    M^\dual\square_C-\colon  \bicomod{C}{}(\Ck) \ar[shift left =2]{r} & \bicomod{D}{}(\Ck)\colon M\square_D -. \ar[shift left = 2]{l}[swap]{\perp}
\end{tikzcd}
    \]
    Moreover, it is a Quillen equivalence if and only if $(M, M^\dual)$ is a homotopical Morita-Takeuchi equivalence.
\end{prop}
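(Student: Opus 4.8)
The plan is to recognize the two stated functors as (a point-set model of) the cohom adjunction from Section~\ref{section: duality}, to verify by hand that it is a Quillen adjunction, and then to extract the Quillen-equivalence criterion from the bicategorical dictionary ``dual pairs give adjunctions, Morita equivalences give adjoint equivalences''. First, for the adjunction, represent the dual pair $(M,M^\dual)$ in $\dcomod$ by a fibrant coperfect $(C,D)$-bicomodule $M$ with $M^\dual \simeq h_C(M,C)$ and with coevaluation $\eta\colon C\to M\square_D M^\dual$ and evaluation $\varepsilon\colon M^\dual\square_C M\to D$ as constructed in Section~\ref{section: duality}. Quasi-finiteness of $M$ as a left $C$-comodule gives Takeuchi's promoted cohom adjunction $h_C(M,-)\colon\bicomod{C}{}(\Ck)\rightleftarrows\bicomod{D}{}(\Ck)\colon M\square_D-$, and the natural comparison $\partial$ identifies $h_C(M,-)$ with $h_C(M,C)\square_C- = M^\dual\square_C-$. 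Hence $M^\dual\square_C-\dashv M\square_D-$ as a point-set adjunction; here $M^\dual$ need not be fibrant, but this is not required (see the discussion preceding Definition~\ref{def: coperfect dg-comodules}).

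Second, to see it is a Quillen adjunction: in both model categories the cofibrations are the underlying monomorphisms and the weak equivalences are the underlying quasi-isomorphisms, so it suffices that the left adjoint $M^\dual\square_C-\cong h_C(M,-)$ preserves cofibrations and trivial cofibrations, i.e.\ that its underlying endofunctor of $\Ck$ is exact. Since $M$ is coperfect, this underlying functor is a retract of one of the form $W\otimes-$ with $W$ a perfect $\k$-complex; as $\k$ has global dimension zero, $W$ is a bounded complex of projective $\k$-modules, so $W\otimes-$, and therefore $h_C(M,-)$, is exact. Thus $M^\dual\square_C-$ preserves monomorphisms and quasi-isomorphisms, and the adjunction is Quillen.

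Third, for the equivalence criterion: the total derived functors of this adjunction are the horizontal compositions $M^\dual\dcotensor_C-\dashv M\dcotensor_D-$ of $\dcomod$. Unwinding the construction of the adjunction from the dual pair, the derived unit at a left $C$-comodule $L$ is the composite $L\simeq C\dcotensor_C L\to (M\dcotensor_D M^\dual)\dcotensor_C L$ induced by $\eta$ together with the coherent associativity and unit isomorphisms of $\dcotensor$; since the derived cotensor preserves weak equivalences between fibrant objects (Proposition~\ref{Prop: cotensor preserve weak equivalence if fibrant}), this is a weak equivalence for every $L$ if and only if $\eta\colon C\to M\dcotensor_D M^\dual$ is one (``if'' by that preservation property, ``only if'' by taking $L=C$). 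Dually, the derived counit is a natural weak equivalence if and only if $\varepsilon\colon M^\dual\dcotensor_C M\to D$ is. Therefore the adjunction is a Quillen equivalence if and only if both $\eta$ and $\varepsilon$ are weak equivalences, equivalently isomorphisms in $\dcomod(C,C)$ and $\dcomod(D,D)$ respectively, which is exactly the statement that $(M,M^\dual)$ is a Morita equivalence in $\dcomod$, i.e.\ a homotopical Morita--Takeuchi equivalence.

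The skeleton above is formal; the real work, as in Theorem~\ref{theorem: coHH is shadow (derived)}, is the bookkeeping with the derived cotensor product — justifying $M\square_D M^\dual\simeq M\dcotensor_D M^\dual$, the coherent associativity and unit equivalences used to compute the derived unit and counit, and their compatibility with fibrancy — all of which draw on Section~\ref{section: duality} and the Appendix (Proposition~\ref{prop: cotensor of fibrants} and Proposition~\ref{Prop: cotensor preserve weak equivalence if fibrant}). The one genuinely non-formal input is the exactness of the cohom functor on underlying complexes, which rests on the structure of coperfect comodules over a commutative ring of global dimension zero.
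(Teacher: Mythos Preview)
Your overall strategy is sound, but two steps are not justified as written, and the paper's proof avoids both by taking a more direct route.

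First, for the adjunction: you pass through Takeuchi's cohom and then identify $h_C(M,-)$ with $M^\dual\square_C-$ via the comparison $\partial$. But coperfectness (Definition~\ref{def: coperfect dg-comodules}) only asserts that the single map $\partial\colon h_C(M,M)\to h_C(M,C)\square_C M$ is an isomorphism; it does not give $h_C(M,N)\cong h_C(M,C)\square_C N$ for arbitrary $N$, so the identification of left adjoints is unjustified. The paper sidesteps this by constructing the adjunction directly from the dual-pair data: a $D$-colinear $\alpha\colon M^\dual\square_C P\to Q$ is sent to the composite
\[
P\cong C\square_C P\xrightarrow{\ \eta\square\id\ }(M\square_D M^\dual)\square_C P\cong M\square_D(M^\dual\square_C P)\xrightarrow{\ \id\square\alpha\ }M\square_D Q,
\]
and the triangle identities for $(\eta,\varepsilon)$ give the required bijection. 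This is the standard ``dual pairs yield adjunctions'' argument and needs no cohom at all.

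Second, and more seriously, your verification that the left adjoint is Quillen rests on the claim that for $M$ coperfect the underlying functor of $h_C(M,-)$ is a retract of $W\otimes-$ with $W$ perfect. Nothing in the definition of coperfect implies this; it would amount to $M$ being a retract of a cofree comodule on a perfect complex, which is not assumed. The paper's argument is both simpler and avoids any such structural input: the cotensor product is left exact, so $M^\dual\square_C-$ preserves monomorphisms (cofibrations); and since one may choose a \emph{fibrant} representative for $M^\dual$ in $\dcomod$, Proposition~\ref{Prop: cotensor preserve weak equivalence if fibrant} shows $M^\dual\square_C-$ preserves quasi-isomorphisms. Your parenthetical that ``$M^\dual$ need not be fibrant'' is thus misleading---fibrancy of $M^\dual$ is precisely what makes the Quillen condition go through.

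Your treatment of the Quillen-equivalence criterion is correct in spirit and matches the paper's argument, which is phrased via \cite[1.3.16]{hovey}: $M^\dual\square_C-$ reflects weak equivalences (apply $M\square_D-$ and use that $\eta$ is an equivalence), and the derived counit is an equivalence (since $\varepsilon$ is); the converse comes from evaluating the derived unit and counit at $C$ and $D$.
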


\begin{proof}
    Since $M$ and $M^\dual$ are dual to each other, the maps $\eta\colon  C\rightarrow M\square_D M^\dual$ and $\varepsilon\colon M^\dual\square_C M\rightarrow D$ induce the adjunction. For instance, given a left $C$-comodule $P$, a left $D$-comodule $Q$, and a $D$-colinear map $\alpha\colon M^\dual\square_C P\rightarrow Q$, we obtain a $C$-colinear map $P\rightarrow M\square_D Q$ via the composite
    \[
\begin{tikzcd}
P\cong C\square_C P \ar{r}{\eta\square \id} & (M\square_D M^\dual)\square_C P \cong M\square_D(M^\dual\square_C P) \ar{r}{\id\square \alpha} & M\square_D Q.
\end{tikzcd}
    \]
 The axioms on $\eta$ and $\varepsilon$ guarantee that the procedure gives a correspondence:
 \[
\bicomod{D}{}(\Ck) \Big( M^\dual\square_C P, Q\Big) \cong \bicomod{C}{}(\Ck) \Big( P, M\square_D Q\Big).
 \]

Since the cotensor product is left exact, then the functor $M^\dual\square_D-$ preserves monomorphisms, i.e., cofibrations.
Since $M^\dual$ is a fibrant right $C$-comodule, then $M^\dual\square_C-$ preserves quasi-isomorphisms, i.e., weak equivalences (by Proposition \ref{Prop: cotensor preserve weak equivalence if fibrant}).
Therefore we obtain that it is a Quillen adjunction.

Suppose $(M, M^\dual)$ is a homotopical Morita--Takeuchi equivalence and $P\rightarrow P'$ is a $C$-colinear map such that the induced map
\[
M^\dual\square_C P \stackrel{\simeq}\longrightarrow M^\dual\square_C P'
\]
is a quasi-isomorphism.
Then if  we apply $M\square_D-$, since $M$ is a fibrant $D$-comodule, we obtain the quasi-isomorphism by Proposition \ref{Prop: cotensor preserve weak equivalence if fibrant}:
\[
\begin{tikzcd}[column sep= small]
 \underbrace{(M\square_D M^\dual)\square_C P}_{\simeq P} \cong M\square_D(M^\dual\square_C P)\ar{r}{\simeq} & M\square_D(M^\dual\square_C P') \cong \underbrace{(M\square_D M^\dual)\square_C P'}_{\simeq P'}.
\end{tikzcd}
\]
Therefore $P\rightarrow P'$ is a quasi-isomorphism, and thus $M^\dual\square_C-$ reflects  quasi-isomor\-phism.
Moreover, for any fibrant left $D$-comodule $Q$, we have a quasi-isomorphism
\[
Q \cong  D\square_D Q \simeq M^\dual\square_C M \square_D Q \longrightarrow Q.
\]
Thus by \cite[1.3.16]{hovey}, we obtain that the adjunction is a Quillen equivalence. 

Conversely, if we supposed the adjunction to be a Quillen equivalence, then if we apply the adjoints on $C$ and $D$, we recover that the coevaluation and evaluation $C\rightarrow M\square_D M^\dual$ and $M^\dual\square_C M \rightarrow D$ are quasi-isomorphisms by again applying \cite[1.3.16]{hovey}.
\end{proof}

\begin{ex}
Given a map of simply connected coalgebras $f:C\rightarrow D$, we recover the expected result that if $f$ is a quasi-isomorphism, then $C$ and $D$ are homotopically Morita-Takeuchi equivalent. 
In more details, recall that $(C, C)$ form a dual pair of bicomodules over $C$, and $h_C(C,C)\cong C$. 
Moreover, $C$ can be regarded as a left $D$-comodule via $f$:
\[
\begin{tikzcd}
    C\ar{r} & C\otimes C \ar{r}{f\otimes \id} & D\otimes C.
\end{tikzcd}
\]
In fact, any left $C$-comodule $P$ can be regarded as a left $D$-comodule this way, we shall denote it by $f^*(C)$. Notice that $f^*(C)\cong C\square_C P$ as a left $D$-comodule. We obtain a Quillen adjunction:
 \[
\begin{tikzcd}
    f^*\colon\bicomod{C}{}(\Ck) \ar[shift left =2]{r} & \bicomod{D}{}(\Ck)\colon C\square_D - \ar[shift left = 2]{l}[swap]{\perp},
\end{tikzcd}
    \]
    which is a Quillen equivalence if and only if $f$ is a quasi-isomorphism. This recovers the usual change of coalgebras, see  \cite[4.10]{connectivecomod}.
\end{ex}

\appendix 

\section{Homotopy theory of connective bicomodules}\label{chap: appendix}

The goal of the appendix is to show that the homotopy theory of bicomodules is endowed with a derived cotensor product which provides a homotopy coherent monoidal structure.

Let $\C^\otimes$ be a symmetric monoidal $\infty$-category, as in \cite[2.0.0.7]{HA}. Subsequently, we only refer to its underlying $\infty$-category, $\C$, as in \cite[2.1.2.20]{HA}.
By an \emph{$\Ainf$-algebra in $\C$}, we mean an associative algebra in the sense of \cite[4.1.1.6]{HA}. Given $\Ainf$-algebras $A$ and $B$ in $\C$, denote the $\infty$-category of $(A,B)$-bimodule objects in $\C$ by $\bimod{A}{B}(\C)$, as in \cite[Section 4.3]{HA}.
From \cite[2.1]{coalgenr}, define an \emph{$\Ainf$-coalgebra} in $\C$ to be an $\Ainf$-algebra in the opposite category $\C$. Similarly as in \cite[4.1.1.7]{HA}, given an $\Ainf$-coalgebra $C$ in $\C$, we can define $C^\op$, the \emph{opposite $\Ainf$-coalgebra} of $C$.

\begin{defn}
Let $\C$ be a symmetric monoidal $\infty$-category. Let $C$ and $D$ be $\Ainf$-coalgebras in $\C$.
A \emph{$(C, D)$-bicomodule} object in $\C$ is a {$(C, D)$-bimodule} object in $\C^\op$. The $\infty$-category of $(C, D)$-bicomodules in $\C$ is defined as
\[
\bicomod{C}{D}(\C):=(\bimod{C}{D}(\C^\op))^\op.
\]
We define the $\infty$-categories of left $C$-comodules $\bicomod{C}{}(\C)$ and right $D$-comodules $\bicomod{}{D}(\C)$ similarly.
\end{defn}

\begin{rem}\label{rem: bicomodules are right comodules (infinity)}
Just as in Remark \ref{rem: bicomodules are right comodules (ordinary)}, given $\Ainf$-coalgebras $C$ and $D$ in a symmetric monoidal $\infty$-category $\C$, we obtain an equivalence of $\infty$-categories
\[
\bicomod{C}{D}(\C)\simeq \bicomod{}{C^\op\otimes D}(\C).
\]
This follows from \cite[4.6.3.11]{HA} applied to the opposite category. Notice that the statement requires the monoidal product of $\C$ to commute with totalizations. However, as noted above \cite[4.6.3.3]{HA}, this requirement is not essential and remains true for any symmetric monoidal $\infty$-category $\C$.
\end{rem}

\begin{rem}
Let $\mathsf{C}$ be a symmetric monoidal category. 
Let $\mathsf{C}^\otimes$ be its operator category as in \cite[2.0.0.1]{HA}. Then its nerve $\N(\mathsf{C}^\otimes)$ is a symmetric monoidal $\infty$-category whose underlying $\infty$-category is $\N(\mathsf{C})$, see \cite[2.1.2.21]{HA}. Let $C$ be a coalgebra in $\mathsf{C}$. It can be regarded as an $\mathbb{A}_\infty$-coalgebra in $\N(\mathsf{C})$, see \cite[2.3]{coalgenr}. If $D$ is also a coalgebra in $\Cc$, then we obtain an equivalence of $\infty$-categories
\[
\N\left(\bicomod{C}{D}(\Cc)\right) \simeq \bicomod{C}{D}\left(\N(\Cc)\right).
\]
\end{rem}

Let $\Mc$ be a symmetric monoidal model category as in \cite[4.2.6]{hovey}, with a class of weak equivalence denoted $\Wc$. We assume every object is cofibrant. Its Dwyer-Kan localization is the $\infty$-category denoted $\N(\Mc)[\Wc^{-1}]$ following \cite[1.3.4.15]{HA} whose homotopy category is the homotopy category of $\Mc$. It is endowed with a symmetric monoidal structure via the derived tensor product, see \cite[4.1.7.6]{HA}.
Suppose now that the model structure on $\Mc$ is combinatorial. Given algebras $A$ and $B$ in $\Mc$, the category of bimodules $\bimod{A}{B}(\Mc)$ is endowed with a model structure whose class of weak equivalences, denoted $\Wc'$, are the morphisms of bimodules which are weak equivalences regarded as in $\Mc$. By \cite[4.3.3.17]{HA}, we obtain an equivalence of $\infty$-categories
\[
\N\Big(\bimod{A}{B}(\Mc)\Big) [\Wc'^{-1}] \simeq \bimod{A}{B}\left(\N(\Mc)[\Wc^{-1}]\right).
\]
The same statement for bicomodules is challenging if we insist on considering a combinatorial monoidal model category (and not ``co-combinatorial"). Following \cite{connectivecomod}, we show in Theorem \ref{thm: bicomodules connective} when the equivalence above does hold for bicomodules.
We denote the Dwyer-Kan localization of $\Ck$ by $\DDk$; it is equivalent to the symmetric monoidal $\infty$-category of connective $H\mathbbm{k}$-modules in spectra.

\begin{thm}\label{thm: bicomodules connective}
Let $C$ and $D$ be simply connected dg-coalgebras over $\mathbbm{k}$.  Then the natural functor
\[
\N\left( \bicomod{C}{D}(\Ck)\right) \left[ \mathsf{W}^{-1} \right] \longrightarrow \bicomod{C}{D}(\DDk)
\]
is an equivalence of $\infty$-categories.
\end{thm}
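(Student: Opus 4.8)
The plan is to deduce this from the one-sided comparison theorem for comodules over a single simply connected dg-coalgebra established in \cite{connectivecomod}. First I would set $E=C^\op\otimes D$ and note that $E$ is again a simply connected dg-coalgebra: it is connective since $C$ and $D$ are, $E_0\cong C_0\otimes D_0\cong\k$, and $E_1\cong(C_1\otimes D_0)\oplus(C_0\otimes D_1)=0$. Then Remark \ref{rem: bicomodules are right comodules (ordinary)} gives an isomorphism of categories $\bicomod{C}{D}(\Ck)\cong\bicomod{}{E}(\Ck)$ which identifies the left-induced model structures---both are detected on underlying chain complexes, so carry the same cofibrations and weak equivalences---and hence induces an equivalence of Dwyer--Kan localizations. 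On the $\infty$-categorical side, Remark \ref{rem: bicomodules are right comodules (infinity)} gives an equivalence $\bicomod{C}{D}(\DDk)\simeq\bicomod{}{E}(\DDk)$.

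The second step is to check that these two identifications intertwine the natural comparison functors, so that the square whose top edge is the two-sided comparison functor, whose bottom edge is the one-sided comparison functor for $E$, and whose vertical edges are the equivalences above, commutes up to natural equivalence. Both the model-categorical identification and the $\infty$-categorical one are instances of the single equivalence ``$(A,B)$-bimodules $\simeq$ right $(A^\op\otimes B)$-modules'' of \cite[4.6.3.11]{HA}, applied in the relevant opposite category, and this equivalence is natural in the ambient symmetric monoidal (op-)category; since the comparison functors are themselves induced by applying the localization $\N(\Ck)\to\DDk$ to comodule objects (equivalently, to bimodule objects in the opposite category), the commutativity of the square is formal.

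The final step is then immediate: the one-sided comparison functor $\N(\bicomod{}{E}(\Ck))[\mathsf{W}^{-1}]\to\bicomod{}{E}(\DDk)$ is an equivalence because $E$ is simply connected, by \cite{connectivecomod}, and therefore so is the two-sided comparison functor. I expect the only real work in this argument to be the bookkeeping of the second step, since the genuine homotopy-theoretic difficulty is already resolved in \cite{connectivecomod}: comodule categories present as coalgebras in the opposite of a combinatorial model category rather than as algebras in a combinatorial one, so the rectification theorem \cite[4.3.3.17]{HA} does not apply directly, and simple connectivity is precisely what lets one model fibrant comodules by Postnikov towers and compute the relevant homotopy limits. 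If one wished to avoid appealing to \cite{connectivecomod} as a black box, the harder alternative would be to run the Postnikov-tower argument of \emph{loc.\ cit.}\ directly for bicomodules, but the reduction via $E$ makes this unnecessary.
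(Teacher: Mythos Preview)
Your proposal is correct and follows essentially the same approach as the paper: reduce to the one-sided case by setting $E=C^\op\otimes D$, verify that $E$ is simply connected, and then use Remarks~\ref{rem: bicomodules are right comodules (ordinary)} and~\ref{rem: bicomodules are right comodules (infinity)} to build a commutative square whose other three edges are equivalences by \cite{connectivecomod}. Your discussion of why the square commutes is slightly more detailed than the paper's, which simply asserts the diagram; otherwise the arguments are identical.
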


\begin{proof}
The result was proved in \cite[3.1]{connectivecomod} for right comodules. We can deduce the result for bicomodules using Remarks \ref{rem: bicomodules are right comodules (ordinary)} and \ref{rem: bicomodules are right comodules (infinity)}.
In more details, if $C$ and $D$ are simply connected, then so is $C\otimes D$:
\[
(C\otimes D)_0=C_0\otimes D_0\cong\mathbbm{k}\otimes \mathbbm{k}\cong \mathbbm{k}, \hspace{15pt} (C\otimes D)_1=(C_1\otimes D_0)\oplus (C_0\otimes D_1)=0.
\]
Therefore we can apply \cite[3.1]{connectivecomod} to the model category of right $(C^\op\otimes D)$-comodules. In particular, we obtain the following diagram of $\infty$-categories:
\[
\begin{tikzcd}
\N\left( \bicomod{C}{D}(\Ck)\right) \left[ \mathsf{W}^{-1} \right] \ar{r}{\simeq}\ar{d} & \N\left( \bicomod{}{C^\op\otimes D}(\Ck)\right) \left[ \mathsf{W'}^{-1} \right]\ar{d}{\simeq}\\
\bicomod{C}{D}(\DDk) \ar{r}{\simeq} & \bicomod{}{C^\op\otimes D}(\DDk).
\end{tikzcd}
\]
Here, we let $\mathsf{W}'$ denote the class of right $(C^\op\otimes D)$-comodule morphisms that are quasi-isomorphisms in $\Ck$. By \cite[3.1]{connectivecomod}, the right vertical map on the diagram above is an equivalence of $\infty$-categories. By Remark \ref{rem: bicomodules are right comodules (ordinary)}, the top horizontal map is an equivalence. By Remark \ref{rem: bicomodules are right comodules (infinity)}, the bottom horizontal map is an equivalence. Thus the left vertical map is an equivalence of $\infty$-categories.
\end{proof}

We now describe fibrant objects in $\bicomod{C}{D}(\Ck)$ using Postnikov towers. We say a tower $\{M(n)\}$ in $\bicomod{C}{D}(\Ck)$ \emph{stabilizes in each degree} if for all $n\geq 0$, and all $0\leq i \leq n$, the maps in the tower induce isomorphisms of $\mathbbm{k}$-modules
\[
M(n+1)_i\cong M(n+2)_i\cong M(n+3)_i\cong \cdots.
\]
Although in general non-finite limits in $\bicomod{C}{D}(\Ck)$ do not correspond to the underlying limit in $\Ck$, limits of towers that stabilizes in each degree in $\bicomod{C}{D}(\Ck)$ do correspond to their underlying limits, see \cite[4.15]{pertower}.
Furthermore, usually the functor $M\square_C-\colon\bicomod{C}{}(\Ck)\rightarrow \Ck$ does not preserve non-finite limits, but it does for towers that stabilize in each degree, see \cite[4.21]{pertower}. 

\begin{prop}[{\cite[4.17]{pertower}}]\label{prop: postnikov tower}
Let $C$ and $D$ be simply connected dg-coalgebras over $\mathbbm{k}$. Let $M$ be a $(C,D)$-bicomodule in $\Ck$. 
There exists a tower $\{ M(n)\}$ in $\bicomod{C}{D}(\Ck)$ that stabilizes in each degree defined as follows:
\begin{itemize}
    \item $M(0)=0$;
    \item $M(1)=C\otimes M\otimes D$;
    \item if the $(C,D)$-bicomodule $M(n)$ is defined together with a cofibration $M\hookrightarrow M(n)$ that induces an isomorphism $H_i(M)\cong H_i(M(n))$ for $0\leq i\leq n$, then $M(n+1)$ and the cofibration $M\hookrightarrow M(n+1)$ are defined by the pullback (both in $\bicomod{C}{D}(\Ck)$ and in $\Ck$):
    \[
    \begin{tikzcd}
    M \ar[hook, dashed]{dr} \ar[bend left]{drr}{0} \ar[bend right, hook]{ddr} & &\\
     &  M(n+1)\ar{d} \ar{r} \pull & C\otimes P\otimes D \ar{d} \\
     &  M(n) \ar{r} & C\otimes Q\otimes D,
    \end{tikzcd}
    \]
    where the right vertical map is the $(C,D)$-cofree map induced by an epimorphism $P\rightarrow Q$ in $\Ck$.
    
\end{itemize}
Moreover $\holim_n M(n) \simeq \lim_n M(n)\cong M$, and the (homotopy) limits are determined in $\Ck$.
\end{prop}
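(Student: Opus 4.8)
The plan is to carry out the inductive construction spelled out in the statement, check that each stage $M(n+1)$ inherits the properties of $M(n)$, and then identify the limit of the resulting tower. I work throughout with the left-induced model structure on $\bicomod{C}{D}(\Ck)$ and the cofree--forgetful adjunction $U\dashv (C\otimes-\otimes D)$: since the cofree functor is right Quillen and every object of $\Ck$ is fibrant, every cofree bicomodule is fibrant, and a bicomodule map $M(n)\to C\otimes P\otimes D$ is precisely the datum of a chain map $M(n)\to P$ in $\Ck$. For the base case, the composite of the two coactions $M\to C\otimes M\otimes D$ is $(C,D)$-bicolinear and a split monomorphism of chain complexes (split by $\varepsilon_C\otimes\id_M\otimes\varepsilon_D$), hence a cofibration; because $C$ and $D$ are simply connected the coideals $\ker\varepsilon_C$ and $\ker\varepsilon_D$ sit in degrees $\ge 2$, so this map is an isomorphism in degrees $0$ and $1$, in particular an isomorphism on $H_0$ and $H_1$. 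Thus $M(1)=C\otimes M\otimes D$, together with $M(0)=0$, starts the tower, and $M(1)\to M(0)$ is a fibration since $M(1)$ is fibrant.

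For the inductive step, suppose $M\hookrightarrow M(n)$ has been built as a cofibration that agrees with $M$ through degree $n-1$ and is an isomorphism on $H_i$ for $0\le i\le n$. The discrepancy between $M(n)$ and $M$ in degree $n$, and the defect in $H_{n+1}$, are both recorded in the quotient complex $M(n)/M$, which is concentrated in degrees $\ge n$. One then chooses a chain complex $Q$ concentrated in degrees $\ge n$ with a surjection $M(n)\twoheadrightarrow Q$ annihilating the image of $M$, and an epimorphism $P\twoheadrightarrow Q$ from a complex $P$ also concentrated in degrees $\ge n$, arranged so that the cofree pullback simultaneously restores $M$ in degree $n$ and kills the classes of $H_{n+1}(M(n))$ not coming from $H_{n+1}(M)$. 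Passing to adjoints produces the cofree map $C\otimes P\otimes D\to C\otimes Q\otimes D$ and the map $M(n)\to C\otimes Q\otimes D$; since the composite $M\hookrightarrow M(n)\to C\otimes Q\otimes D$ is zero, the pullback defining $M(n+1)$ receives the dashed map $M\to M(n+1)$, which is a monomorphism because $M\hookrightarrow M(n)$ factors through it, hence a cofibration. As $P$ and $Q$ are concentrated in degrees $\ge n$ and $C,D$ are non-negatively graded, $C\otimes P\otimes D$ and $C\otimes Q\otimes D$ vanish below degree $n$, so $M(n+1)$ agrees with $M(n)$---hence with $M$---through degree $n-1$, while the degree-$n$ part of the pullback is chosen to recover $M_n$. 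Finally, comparing the long exact homology sequences of the pullback square, and using that for complexes concentrated in degrees $\ge n$ the functor $C\otimes(-)\otimes D$ leaves $H_i$ unchanged for $i\le n+1$ (again because the coideals lie in degrees $\ge 2$, so $C\otimes(-)\otimes D$ agrees with the identity through degree $n+1$ on such complexes), one verifies that $M\hookrightarrow M(n+1)$ is an isomorphism on $H_i$ for $0\le i\le n+1$. Since $P\twoheadrightarrow Q$ is an epimorphism it is a fibration in $\Ck$, so $C\otimes P\otimes D\to C\otimes Q\otimes D$ is a fibration of bicomodules and its pullback $M(n+1)\to M(n)$ is a fibration too.

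It remains to treat stabilization and convergence. By construction, for $i\le n$ the maps $M(n+1)_i\to M(n+2)_i\to\cdots$ are all isomorphisms, since each further step only pulls back along a cofree map on complexes concentrated in degrees $\ge n+2>i$; so the tower stabilizes in each degree, with stable value in degree $i$ equal to $M_i$. Hence by \cite[4.15]{pertower} the limit $\lim_n M(n)$ is formed in $\Ck$ and equals $M$. Moreover each $M(n)$ is fibrant and each $M(n+1)\to M(n)$ is a fibration, so $\{M(n)\}$ is a tower of fibrations between fibrant objects and its homotopy limit agrees with its limit; therefore $\holim_n M(n)\simeq\lim_n M(n)\cong M$, all of these computed in $\Ck$. (If one only tracks homology in the inductive step rather than imposing the degreewise strengthening, the same argument identifies $\lim_n M(n)$ with a fibrant replacement of $M$ receiving a quasi-isomorphism from $M$, which is all that is needed downstream.)

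The main obstacle is the homological bookkeeping in the inductive step: one must choose the epimorphism $P\twoheadrightarrow Q$ and the map $M(n)\to C\otimes Q\otimes D$ so that forming the pullback restores $M$ in degree $n$ and corrects $H_{n+1}$ at the same time, without perturbing the lower degrees or the already-correct homology groups $H_i$, $i\le n$. This is exactly where simple connectivity of $C$ and $D$ is indispensable: only then do the correction terms $\ker\varepsilon_C$ and $\ker\varepsilon_D$ live in degrees $\ge 2$, so that tensoring with $C$ and $D$ cannot intrude into the range of degrees one is trying to pin down; without this hypothesis the cofree corrections leak downward and the tower need not converge to $M$.
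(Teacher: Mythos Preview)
The paper does not include a proof of this proposition: it is stated with a citation to \cite[4.17]{pertower} and used as a black box throughout the appendix. So there is no ``paper's own proof'' to compare against; your proposal is an attempt to reconstruct the argument from the cited reference.

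Your outline is broadly on the right track, and your parenthetical disclaimer at the end shows you are aware of the main soft spot. Two comments are worth making explicit. First, the inductive step is where the content lies, and you do not actually exhibit the choice of $P\twoheadrightarrow Q$ and $M(n)\to C\otimes Q\otimes D$: the phrase ``arranged so that the cofree pullback simultaneously restores $M$ in degree $n$ and kills the classes of $H_{n+1}(M(n))$ not coming from $H_{n+1}(M)$'' is exactly the claim that needs to be proved, not assumed. In the argument of \cite{pertower} this is done concretely by taking $Q$ to be (a truncation of) the cokernel $M(n)/M$ and $P$ an acyclic cover built from the relevant homology, and the simple-connectivity of $C$ and $D$ is used precisely as you say. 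Second, you silently strengthen the inductive hypothesis from ``$H_i$-isomorphism for $i\le n$'' (as stated) to ``degreewise agreement through degree $n-1$''; this strengthening is what makes the tower stabilize to $M$ on the nose and is indeed how the construction in \cite{pertower} proceeds, but you should state it as part of the induction rather than slipping it in. With those two points filled in, your argument would match the cited one.
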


We now show that fibrant objects in the category of bicomodules behave well with respect to the cotensor product.

Consider the general case of a symmetric monoidal category, $(\Cc, \otimes, \mathbb{I})$, that is abelian, and consider flat coalgebras $C$ and $D$ in $\Cc$. Then the category of bicomodules $\bicomod{C}{D}(\Cc)$ remains abelian, and the forgetful functor $U\colon \bicomod{C}{D}(\Cc)\rightarrow \Cc$ preserves and reflects exact sequences, kernels, cokernels, monomorphisms, and epimorphisms.

\begin{lemma}[{\cite[4.5]{connectivecomod}}]\label{lem: bousfield}
Let $C$ and $D$ be dg-coalgebras over $\mathbbm{k}$. 
Let $f\colon M\rightarrow N$ be an epimorphism in $\bicomod{C}{D}(\Ck)$, and let $F$ be its kernel. Then $f$ is a fibration in $\bicomod{C}{D}(\Ck)$ if and only if $F$ is fibrant in $\bicomod{C}{D}(\Ck)$.
\end{lemma}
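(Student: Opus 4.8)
The intention is to reduce both implications to formal properties of a model category combined with the snake and five lemmas inside the abelian category $\bicomod{C}{D}(\Ck)$. The first thing I would record is the shape of the left-induced model structure: since it is left-induced along the forgetful functor $U\colon\bicomod{C}{D}(\Ck)\to\Ck$ and $C,D$ are flat, $U$ is exact and hence preserves and reflects monomorphisms, kernels and cokernels; therefore the cofibrations of $\bicomod{C}{D}(\Ck)$ are exactly its monomorphisms, the weak equivalences are exactly the quasi-isomorphisms, and so the trivial cofibrations are precisely the monomorphisms of $(C,D)$-bicomodules whose cokernel is acyclic. Everything below takes place in this abelian category.

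For the ``only if'' direction I would simply note that $F$ is the pullback $M\times_N 0$ of $f$ along $0\to N$; since $f$ is a fibration and fibrations are stable under base change, $F\to 0$ is a fibration, i.e.\ $F$ is fibrant.

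For the ``if'' direction, assume $F=\ker f$ is fibrant and factor $f=q\circ j$ with $j\colon M\hookrightarrow M'$ a trivial cofibration and $q\colon M'\to N$ a fibration. By the direction just proved, $F':=\ker q$ is fibrant. Applying the snake lemma to the morphism of short exact sequences with rows $0\to F\to M\to N\to 0$ and $0\to F'\to M'\to N\to 0$ and identity on the $N$-column, the induced map $F\to F'$ is a monomorphism with cokernel isomorphic to $M'/M$; as $j$ is a trivial cofibration, $M'/M$ is acyclic, so $F\hookrightarrow F'$ is itself a trivial cofibration. Because $F$ is fibrant, lifting $\mathrm{id}_F$ against $F\hookrightarrow F'$ along the fibration $F\to 0$ yields a retraction $\bar r\colon F'\to F$; setting $K=\ker\bar r$ gives $F'\cong F\oplus K$ with $K\cong M'/M$ acyclic. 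I would then form $M'':=M'/K$, which fits into a short exact sequence $0\to F\to M''\to N\to 0$, and verify that the composite $M\hookrightarrow M'\twoheadrightarrow M''$ is a map of short exact sequences inducing the identity on the sub-objects $F$ and on the quotients $N$; the five lemma then makes it an isomorphism $M\xrightarrow{\cong}M''$. Precomposing its inverse with $M'\twoheadrightarrow M''$ produces $\rho\colon M'\to M$ with $\rho j=\mathrm{id}_M$ and $f\rho=q$, so $(j,\rho)$ exhibits $f$ as a retract of $q$ over $N$; since fibrations are closed under retracts, $f$ is a fibration.

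The step I expect to require the most care is this last one, passing from the retraction $\bar r$ of $F\hookrightarrow F'$ to a retraction $\rho$ of $j$: the naive approach of building $\rho$ as a lift against $f$ is circular, so instead one divides out the acyclic complement $K=\ker\bar r$ of $F$ inside $F'$ (hence inside $M'$) and recognizes $M'/K$ as $M$ via the five lemma. The rest is bookkeeping — exactness of $U$ legitimizes the snake and five lemmas in $\bicomod{C}{D}(\Ck)$, and the only other ingredients are stability of fibrations under base change and under retracts together with the factorization axiom.
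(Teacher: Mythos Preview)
The paper does not give its own proof of this lemma; it is cited from the second author's earlier paper \cite[4.5]{connectivecomod}. Your argument is correct and is the standard one for this kind of statement in a left-induced model structure on an abelian category: the forward direction is pullback-stability of fibrations, and for the converse you factor $f$, use the snake lemma to see that the induced map on kernels is a trivial cofibration, split it using fibrancy of $F$, quotient $M'$ by the acyclic complement $K$, and invoke the five lemma to identify the result with $M$, thereby exhibiting $f$ as a retract of the fibration $q$. All of the abelian-category manipulations are legitimate because the forgetful functor to $\Ck$ is exact, as you note.
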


\begin{lemma}\label{lem: fib bicomodule==> fib left and right}
Let $C$ and $D$ be simply connected dg-coalgebras over $\mathbbm{k}$. If $M$ is a fibrant $(C,D)$-bicomodule, then $M$ is also a fibrant left $C$-comodule and a fibrant right $D$-comodule.
\end{lemma}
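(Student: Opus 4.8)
The plan is to deduce fibrancy of $M$ as a one-sided comodule from its fibrancy as a bicomodule by exhibiting $M$ as the kernel of a fibration between fibrant one-sided comodules, and then invoke Lemma~\ref{lem: bousfield} (applied now to the model categories of left $C$-comodules and right $D$-comodules, where the analogous statement holds). Concretely, since $M$ is a fibrant $(C,D)$-bicomodule, Proposition~\ref{prop: postnikov tower} lets us identify $M\cong \lim_n M(n)$ with a Postnikov tower that stabilizes in each degree, built out of $(C,D)$-cofree pieces $C\otimes P\otimes D$. I would first record that each $(C,D)$-cofree object $C\otimes P\otimes D$ is fibrant as a left $C$-comodule (and as a right $D$-comodule): this is essentially Lemma~\ref{lem: tensor preserves fibrant (cofree case)} together with the fact that $C\otimes(-)$ lands in cofree, hence fibrant, left $C$-comodules. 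Then each stage map $M(n+1)\to M(n)$ in the tower is a fibration (it is a pullback of a $(C,D)$-cofree map induced by an epimorphism $P\to Q$, and by Lemma~\ref{lem: bousfield} its kernel $C\otimes F\otimes D$, being cofree, is fibrant), and the forgetful functor to left $C$-comodules preserves these pullbacks and epimorphisms, so $M(n+1)\to M(n)$ remains a fibration of left $C$-comodules between fibrant objects.

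Next I would assemble the tower. Since the tower stabilizes in each degree, both the limit and the homotopy limit are computed in $\Ck$ and agree with $M$; moreover a limit of fibrant objects along a tower of fibrations is fibrant, so $\lim_n M(n)$ is fibrant as a left $C$-comodule. More carefully, one writes $M$ as an iterated limit: $M(1)=C\otimes M\otimes D$ is cofree hence fibrant, and inductively $M\hookrightarrow M(n)$ together with the fibrations $M(n+1)\twoheadrightarrow M(n)$ expresses $M=\lim_n M(n)$ as a (homotopy) limit of a tower of fibrations between fibrant left $C$-comodules; such limits are fibrant in any model category. An entirely symmetric argument, swapping the roles of $C$ and $D$ and using right $D$-comodule structures, shows $M$ is a fibrant right $D$-comodule.

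Alternatively — and perhaps more cleanly — I would phrase the whole thing via Lemma~\ref{lem: bousfield} directly: it suffices to observe that the forgetful functor $\bicomod{C}{D}(\Ck)\to\bicomod{C}{}(\Ck)$ sends fibrations to fibrations. Indeed, by Lemma~\ref{lem: bousfield} a map $f\colon M\to N$ of $(C,D)$-bicomodules is a fibration iff $f$ is an epimorphism with fibrant kernel, and similarly for left $C$-comodules. Since the forgetful functor is exact (it preserves kernels and epimorphisms, as both model structures are left-induced from $\Ck$), it remains only to check that a fibrant $(C,D)$-bicomodule is fibrant as a left $C$-comodule — which is exactly the statement, so this reformulation alone does not close the loop. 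Hence one genuinely needs the Postnikov tower input: fibrant bicomodules are retracts of towers built from $(C,D)$-cofree objects (Proposition~\ref{prop: postnikov tower}), and $(C,D)$-cofree objects are left-$C$-cofree hence fibrant left $C$-comodules.

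The main obstacle I anticipate is the bookkeeping around non-finite limits: fibrancy in these left-induced model categories of comodules is \emph{not} detected on underlying chain complexes, and general infinite limits in $\bicomod{C}{}(\Ck)$ do not agree with underlying limits. This is precisely why the ``stabilizes in each degree'' hypothesis in Proposition~\ref{prop: postnikov tower} is essential: it guarantees the relevant limits are computed in $\Ck$ and are preserved by the forgetful and cotensor functors (cf.\ \cite[4.15, 4.21]{pertower}). So the careful point is to verify that the Postnikov tower of $M$ as a $(C,D)$-bicomodule, when pushed forward along the forgetful functor, is still a tower of fibrations between fibrant objects that stabilizes in each degree, so that its limit both computes $M$ and is fibrant as a left $C$-comodule. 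Once that is in place, the conclusion is immediate, and the right $D$-comodule case is symmetric.
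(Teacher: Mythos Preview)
Your proposal is correct and follows essentially the same route as the paper: use the Postnikov tower (Proposition~\ref{prop: postnikov tower}) to express a fibrant $(C,D)$-bicomodule as (a retract of) a limit of a tower whose stages are built from $(C,D)$-cofree objects $C\otimes V\otimes D$, observe that these cofree objects are fibrant as one-sided comodules (simply because $C\otimes(V\otimes D)$ is cofree over $C$, and symmetrically over $D$), and then invoke Lemma~\ref{lem: bousfield} to see the tower maps remain fibrations in the one-sided model structure. One small remark: your appeal to Lemma~\ref{lem: tensor preserves fibrant (cofree case)} is misplaced --- that lemma concerns fibrancy as a $(C,D)$-\emph{bicomodule}, not as a left $C$-comodule, and in fact appears after the present lemma in the paper; all you need here is the elementary fact that $C\otimes W$ is cofree (hence fibrant) as a left $C$-comodule for any chain complex $W$.
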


\begin{proof}
Since $M$ is a fibrant $(C,D)$-bicomodule, it is a retract of the limit $\widetilde{X}$ of its Postnikov tower $\{M(n)\}$. Since for all chain complexes $V$, the bicomodule $C\otimes V \otimes D$ is fibrant both as a left $C$-comodule and right $D$-comodule, then by Lemma \ref{lem: bousfield}, we can conclude the desired result.
\end{proof}

\begin{lemma}\label{lem: tensor preserves fibrant (cofree case)}
Let $V$ be in $\Ck$, 
let $C$ and $D$ be simply connected dg-coalgebras over $\mathbbm{k}$, and
let $N$ be a fibrant right $D$-comodule. 
Then $(C\otimes V)\otimes N$ is a fibrant $(C,D)$-bicomodule.
\end{lemma}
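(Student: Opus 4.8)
The plan is to run the same kind of argument used for Lemma~\ref{lem: fib bicomodule==> fib left and right}, now keeping track of cofree $(C,D)$-bicomodules. Since $N$ is a fibrant right $D$-comodule, Proposition~\ref{prop: postnikov tower} applied with $\k$ as the left coalgebra presents $N$ as a retract of $\widetilde N:=\lim_n N(n)$, where $\{N(n)\}$ is a Postnikov tower that stabilizes in each degree: $N(1)=N\otimes D$, and $N(n+1)$ is the pullback of $N(n)$ along a cofree map $P\otimes D\to Q\otimes D$ induced by an epimorphism $P\to Q$ in $\Ck$. Applying $(C\otimes V)\otimes-$ termwise, $(C\otimes V)\otimes N$ becomes a retract of $(C\otimes V)\otimes\widetilde N$, so it is enough to show that $(C\otimes V)\otimes\widetilde N$ is a fibrant $(C,D)$-bicomodule.

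First I would identify $(C\otimes V)\otimes\widetilde N$ with $\lim_n\bigl((C\otimes V)\otimes N(n)\bigr)$, computed in $\bicomod{C}{D}(\Ck)$. Here I use that $C\otimes V$ is concentrated in non-negative degrees and that $\{N(n)\}$ stabilizes in each degree: the tower $\{(C\otimes V)\otimes N(n)\}$ then also stabilizes in each degree, so its limit is computed degreewise in $\Ck$ (as noted before Proposition~\ref{prop: postnikov tower}), and in each fixed degree the limit is eventually constant and involves only finite direct sums, so $(C\otimes V)\otimes-$ commutes with it. Because $\k$ has global dimension zero, the functor $(C\otimes V)\otimes-\colon\Ck\to\Ck$ is exact, hence preserves the pullbacks in the Postnikov tower, so $(C\otimes V)\otimes N(n+1)$ is the pullback of $(C\otimes V)\otimes N(n)$ along the cofree map $C\otimes(V\otimes P)\otimes D\to C\otimes(V\otimes Q)\otimes D$.

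Next I would prove by induction on $n$ that each $(C\otimes V)\otimes N(n)$ is a fibrant $(C,D)$-bicomodule and each map in the tower is a fibration. The base case $(C\otimes V)\otimes N(1)\cong C\otimes(V\otimes N)\otimes D$ is cofree, hence fibrant. For the inductive step, the map $C\otimes(V\otimes P)\otimes D\to C\otimes(V\otimes Q)\otimes D$ is a cofree epimorphism with kernel $C\otimes(V\otimes\ker(P\to Q))\otimes D$, which is cofree and therefore fibrant, so it is a fibration by Lemma~\ref{lem: bousfield}; its pullback $(C\otimes V)\otimes N(n+1)\to(C\otimes V)\otimes N(n)$ is then a fibration with fibrant target, so its source is fibrant. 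Consequently $\lim_n\bigl((C\otimes V)\otimes N(n)\bigr)$ is a limit of a tower of fibrations between fibrant objects, hence fibrant; thus $(C\otimes V)\otimes\widetilde N$ is a fibrant $(C,D)$-bicomodule, and $(C\otimes V)\otimes N$, being a retract of it, is fibrant.

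The step I expect to be the crux is the interchange of $(C\otimes V)\otimes-$ with the infinite limit $\widetilde N=\lim_n N(n)$, since tensoring does not commute with infinite limits in general; this is exactly what is made to work by the combination of the ``stabilizes in each degree'' property of Postnikov towers with the connectivity of $C\otimes V$, together with the exactness of $(C\otimes V)\otimes-$ afforded by the global-dimension-zero hypothesis on $\k$, which is also what lets us transport the pullbacks inside the tower.
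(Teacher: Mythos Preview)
Your proposal is correct and follows essentially the same approach as the paper's proof: both take the Postnikov tower $\{N(m)\}$ of $N$ in $\bicomod{}{D}(\Ck)$, tensor it with $C\otimes V$, and show by induction that the resulting tower consists of fibrations between fibrant $(C,D)$-bicomodules, using that each stage is built from cofree bicomodules $C\otimes W\otimes D$ via pullback along a cofree epimorphism. Your explicit justification of the interchange $(C\otimes V)\otimes\lim_n N(n)\cong\lim_n\bigl((C\otimes V)\otimes N(n)\bigr)$ via degreewise stabilization, and your appeal to Lemma~\ref{lem: bousfield} for the fibration claim, spell out steps the paper leaves implicit, but the argument is the same.
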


\begin{proof}
We consider $\{N(m)\}$, the Postnikov tower in $\bicomod{}{D}(\Ck)$ of $N$. As $N$ is a retract of the limit $\widetilde{N}=\mathsf{lim}_m N(m)$, then it is enough to show that $C\otimes V \otimes \widetilde{N}$ is a fibrant $(C,D)$-bicomodule. Therefore we need to prove that $\{(C\otimes V)\otimes N(m)\}$ is a fibrant tower of $(C,D)$-bicomodules.
For $m=0$, we get $(C\otimes V)\otimes N(0)=0$, which is fibrant.
For $m=1$, we get $(C\otimes V)\otimes N(1)=(C\otimes V)\otimes N \otimes D$, which is a cofree $(C,D)$-bicomodule and is thus a fibrant $(C,D)$-bicomodule.
Suppose $m\geq 1$, then we obtain the pullback in $\bicomod{C}{D}(\Ck)$:
\[
\begin{tikzcd}
 (C\otimes V) \otimes N(m+1) \pull\ar{d} \ar{r} & (C\otimes V) \otimes P \otimes D \ar{d} \\
 (C\otimes V) \otimes N(m) \ar{r} & (C\otimes V) \otimes Q \otimes D.
\end{tikzcd}
\]
Since the right vertical map is a fibration in $\bicomod{C}{D}(\Ck)$, then so is the left vertical map. Thus $\{(C\otimes V)\otimes Y(m)\}$ is a fibrant tower of $(C,D)$-bicomodules.
\end{proof}

\begin{lemma}\label{lem: tensor preserves fibrant}
Let $C$ and $D$ be simply connected dg-coalgebras over $\mathbbm{k}$. If $M$ is a fibrant left $C$-comodule and $N$ is a fibrant right $D$-comodule, then $M\otimes N$ is a fibrant $(C,D)$-bicomodule.
\end{lemma}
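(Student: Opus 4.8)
The plan is to reduce to the cofree case already handled in Lemma~\ref{lem: tensor preserves fibrant (cofree case)} by resolving $M$ through its Postnikov tower. Since $M$ is a fibrant left $C$-comodule, it is a retract (in $\bicomod{C}{}(\Ck)$) of $\widetilde{M}:=\lim_m M(m)$, the limit of its Postnikov tower $\{M(m)\}$ regarded as a tower of $(C,\k)$-bicomodules, exactly as in the proof of Lemma~\ref{lem: fib bicomodule==> fib left and right}; recall from Proposition~\ref{prop: postnikov tower} that this tower stabilizes in each degree, so the limit is computed in $\Ck$. Because $-\otimes N$ preserves retracts, it suffices to prove that $\widetilde{M}\otimes N$ is a fibrant $(C,D)$-bicomodule.

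To that end, I would show by induction on $m$ that $\{M(m)\otimes N\}$ is a tower of fibrant $(C,D)$-bicomodules whose transition maps are fibrations. The stages $M(0)\otimes N=0$ and $M(1)\otimes N=(C\otimes M)\otimes N$ are fibrant by Lemma~\ref{lem: tensor preserves fibrant (cofree case)} (applied with $V=M$). For the inductive step, recall that $M(m+1)$ is the pullback, in $\bicomod{C}{\k}(\Ck)$, of $M(m)\to C\otimes Q\leftarrow C\otimes P$, where $C\otimes P\to C\otimes Q$ is the cofree map of an epimorphism $p\colon P\to Q$ in $\Ck$; let $K=\ker p$. Since $\k$ has global dimension zero, $N$ is flat, so $-\otimes N\colon\Ck\to\Ck$ is exact, and since finite limits of bicomodules over flat coalgebras are computed in $\Ck$, applying $-\otimes N$ to the pullback square above yields a pullback square of $(C,D)$-bicomodules. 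Its right leg $(C\otimes P)\otimes N\to(C\otimes Q)\otimes N$ is an epimorphism whose kernel is $(C\otimes K)\otimes N$, which is a fibrant $(C,D)$-bicomodule by Lemma~\ref{lem: tensor preserves fibrant (cofree case)} (now with $V=K$); hence by the Bousfield-type criterion Lemma~\ref{lem: bousfield} this right leg is a fibration. Fibrations are stable under pullback, so $M(m+1)\otimes N\to M(m)\otimes N$ is a fibration, and therefore $M(m+1)\otimes N$ is fibrant since $M(m)\otimes N$ is fibrant by the inductive hypothesis.

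It remains to pass to the limit: in each homological degree $n$ one has $(M(m)\otimes N)_n=\bigoplus_{i+j=n}M(m)_i\otimes N_j$, a finite sum each of whose summands stabilizes in $m$, so the tower $\{M(m)\otimes N\}$ stabilizes in each degree and $\lim_m(M(m)\otimes N)$ is computed in $\Ck$ and agrees with $(\lim_m M(m))\otimes N=\widetilde{M}\otimes N$. As the limit of a tower of fibrations between fibrant $(C,D)$-bicomodules, $\widetilde{M}\otimes N$ is fibrant, and hence so is its retract $M\otimes N$. The step I expect to demand the most care is the inductive step: one must check that $-\otimes N$ genuinely preserves the defining pullback of the Postnikov stage — which rests on exactness of $-\otimes N$ and on finite limits of bicomodules being underlying — and, most importantly, that the cofree transition maps stay fibrations after tensoring with $N$, which is precisely where recognizing the kernel as a cofree bicomodule and invoking Lemma~\ref{lem: tensor preserves fibrant (cofree case)} together with Lemma~\ref{lem: bousfield} does the real work.
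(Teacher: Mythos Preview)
Your proposal is correct and follows essentially the same approach as the paper: resolve $M$ by its Postnikov tower, tensor levelwise with $N$, use Lemma~\ref{lem: tensor preserves fibrant (cofree case)} together with Lemma~\ref{lem: bousfield} to see the transition maps are fibrations between fibrant objects, and pass to the limit via the degreewise stabilization. If anything, you supply more justification than the paper does for why $-\otimes N$ preserves the pullback and why the tensored tower still stabilizes in each degree.
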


\begin{proof}
Let $\{M(n)\}$ be the Postnikov tower in $\bicomod{C}{}(\Ck)$ of $X$. 
As $M$ is fibrant, it is a retract of $\widetilde{X}=\mathsf{lim}_n M(n)$. Therefore it is sufficient to show that $\widetilde{M}\otimes N$ is a fibrant $(C,D)$-bicomodule.
Since the tower stabilizes in each degree, we have
\[
(\mathsf{lim}_n M(n))\otimes N \cong \mathsf{lim}_n (M(n)\otimes N),
\]
and thus it is enough to show that $\{M(n)\otimes N\}$ is a fibrant tower of $(C,D)$-bicomodules.
For $n=0$, we have $M(0)\otimes N=0$, which is fibrant. 
For $n=1$, we have $M(1)\otimes N=(C\otimes M)\otimes N$, which is a fibrant $(C,D)$-bicomodule by Lemma \ref{lem: tensor preserves fibrant (cofree case)}. 
For $n\geq 1$, we obtain the pullback in $\bicomod{C}{D}(\Ck)$:
\[
\begin{tikzcd}
 M(n+1)\otimes N\ar{d}\ar{r} \pull & (C\otimes P)\otimes N\ar{d}\\
 M(n) \otimes N\ar{r} & (C\otimes Q)\otimes N.
\end{tikzcd}
\]
The right vertical map is a fibration in $\bicomod{C}{D}(\Ck)$ by Lemma \ref{lem: bousfield} since its kernel $(C\otimes K)\otimes N$ is a fibrant $(C,D)$-bicomodule by Lemma \ref{lem: tensor preserves fibrant (cofree case)}, where $K$ is the kernel of $P\rightarrow Q$. Thus the left vertical map is a fibration.
\end{proof}

\begin{prop}\label{prop: cotensor of fibrants}
Let $C$, $D$ and $E$ be simply connected dg-coalgebras over $\mathbbm{k}$. If $M$ is a fibrant $(D, C)$-bicomodule and $N$ is a fibrant $(C, E)$-bicomodule, then $M\square_C N$ is a fibrant $(D, E)$-bicomodule.
\end{prop}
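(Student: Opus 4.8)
The plan is to follow the template of the proof of Lemma~\ref{lem: tensor preserves fibrant}, replacing ``$-\otimes N$'' by the functor $M\square_C-\colon\bicomod{C}{E}(\Ck)\to\bicomod{D}{E}(\Ck)$ and running the argument over the Postnikov tower of the right factor $N$. First I would record three preliminary facts. (i) Since $\k$ has global dimension zero, $\otimes$ is exact, so $M\otimes-$ and $M\otimes C\otimes-$ are exact; as $M\square_C-$ is the kernel of the natural pair $M\otimes-\rightrightarrows M\otimes C\otimes-$, it is left exact, and in particular preserves pullbacks, kernels, and monomorphisms. (ii) For any chain complex $W$ carrying a right $E$-coaction, there is a natural isomorphism of $(D,E)$-bicomodules $M\square_C(C\otimes W)\cong M\otimes W$, induced by the right unit isomorphism $M\square_C C\cong M$; here $C\otimes W$ denotes the cofree left $C$-comodule on the underlying complex of $W$, with the $E$-coaction untouched. (iii) By Lemma~\ref{lem: fib bicomodule==> fib left and right}, $M$ is a fibrant left $D$-comodule.

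Next I would form the Postnikov tower $\{N(m)\}$ of $N$ in $\bicomod{C}{E}(\Ck)$ as in Proposition~\ref{prop: postnikov tower}; it stabilizes in each degree, and since $N$ is fibrant, $N$ is a retract of $\widetilde N:=\lim_m N(m)$. Applying $M\square_C-$ and using that this functor preserves limits of towers that stabilize in each degree~\cite[4.21]{pertower}, we get $M\square_C\widetilde N\cong\lim_m\bigl(M\square_C N(m)\bigr)$, so that $M\square_C N$ is a retract of this object. Since a retract of a fibrant object is fibrant, and since the limit of a tower of fibrations between fibrant objects is fibrant, it is enough to show that $\{M\square_C N(m)\}$ is a fibrant tower of $(D,E)$-bicomodules. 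For $m=0$ we have $M\square_C N(0)=M\square_C 0=0$, which is fibrant; for $m=1$ we have, by (ii), $M\square_C N(1)=M\square_C(C\otimes N\otimes E)\cong M\otimes(N\otimes E)$, which is a fibrant $(D,E)$-bicomodule by Lemma~\ref{lem: tensor preserves fibrant}, using (iii) together with the fact that $N\otimes E$ is a cofree, hence fibrant, right $E$-comodule. For $m\ge 1$, I would apply $M\square_C-$ to the defining pullback square expressing $N(m+1)$ as the pullback of $N(m)\to C\otimes Q\otimes E\leftarrow C\otimes P\otimes E$ along the cofree map induced by an epimorphism $P\twoheadrightarrow Q$ in $\Ck$. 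By (i) this produces a pullback square in $\bicomod{D}{E}(\Ck)$, and by (ii) its right-hand vertical map is identified with $\id_M\otimes(P\twoheadrightarrow Q)\otimes\id_E\colon M\otimes P\otimes E\to M\otimes Q\otimes E$; this is an epimorphism whose kernel is $M\otimes K\otimes E$ with $K=\ker(P\to Q)$, and $M\otimes K\otimes E$ is again a fibrant $(D,E)$-bicomodule by Lemma~\ref{lem: tensor preserves fibrant}. Hence by Lemma~\ref{lem: bousfield} the right-hand vertical map is a fibration in $\bicomod{D}{E}(\Ck)$, and since fibrations are stable under pullback, the transition map $M\square_C N(m+1)\to M\square_C N(m)$ is a fibration. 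This completes the induction and the proof.

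The step I expect to be the main obstacle is the interaction of $M\square_C-$ with the infinite limit defining $\widetilde N$: the cotensor product does not commute with arbitrary limits, so the argument genuinely relies on the tower $\{N(m)\}$ stabilizing in each degree together with \cite[4.21]{pertower}, and cannot be shortened by, say, realizing a fibrant replacement as a plain product. Everything else is routine bookkeeping with the left exactness of the cotensor, the cofree identification (ii), and the already-established Lemmas~\ref{lem: bousfield} and~\ref{lem: tensor preserves fibrant}.
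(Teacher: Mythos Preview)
Your proof is correct and follows essentially the same approach as the paper's: both run an induction along a Postnikov tower, using left exactness of the cotensor to preserve pullbacks, the cofree identification to reduce to Lemma~\ref{lem: tensor preserves fibrant}, and Lemma~\ref{lem: bousfield} to recognize fibrations via their kernels. The only difference is that the paper takes the Postnikov tower of $M$ in $\bicomod{D}{C}(\Ck)$ and applies $-\square_C N$, whereas you take the tower of $N$ in $\bicomod{C}{E}(\Ck)$ and apply $M\square_C-$; these are mirror images of one another and neither buys anything over the other.
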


\begin{proof}
Let $\{M(n)\}$ be the Postnikov tower of $M$ in $\bicomod{D}{C}(\Ck)$. Then $M$ is a retract of $\widetilde{M}\simeq\mathsf{lim}_n M(n)$. As $\widetilde{M}\square_C N=(\mathsf{lim}_n M(n)\square_C N)\cong \lim_n (M(n)\square_C N)$, it is enough to show that $\{M(n)\square_C N\}$ is a fibrant tower of $(D,E)$-bicomodules. For $n=0$, then $M(0)\square_C N=0$ and is thus fibrant. For $n=1$, we get $M(1)\square_C N\cong (D\otimes M\otimes C)\square_C N \cong (D\otimes M)\otimes N$. By Lemmas \ref{lem: fib bicomodule==> fib left and right} and \ref{lem: tensor preserves fibrant}, it is a fibrant $(D,E)$-bicomodule. As the functor $-\square_C Y$ preserves pullbacks, we obtain the pullback in $\bicomod{D}{E}(\Ck)$:
\[
\begin{tikzcd}
M(n+1)\square_C N \ar{d} \ar{r} \pull & (D\otimes P) \otimes N\ar{d}\\
M(n)\square_C N \ar{r} & (D\otimes Q) \otimes N.
\end{tikzcd}
\]
The right vertical map is a fibration by Lemma \ref{lem: bousfield} as its kernel is $(D\otimes K)\otimes N$, which is a fibrant $(D,E)$-bicomodule by Lemmas \ref{lem: fib bicomodule==> fib left and right} and \ref{lem: tensor preserves fibrant}, where $K$ is the kernel of $P\rightarrow Q$. Thus the left vertical map is a fibration.
\end{proof}

One particularly nice characterizing algebraic property of fibrant comodules is that they are \emph{coflat}, i.e., they interplay well with the cotensor product and exact sequences. The cotensor product is left-exact, as it preserves finite products and equalizers. We are interested in knowing the cases in which it preserves exactness, without any cocommutativity requirement.

\begin{defn}
Let $C$ and $D$ be flat coalgebras in a symmetric monoidal abelian category, $(\Cc, \otimes, \mathbb{I})$.
Let $M$ be a $(C, D)$-bicomodule. We say \emph{$M$ is left coflat over $C$} if given any short exact sequence in $\bicomod{}{C}$
\[
\begin{tikzcd}
0\ar{r} & N \ar{r} & N'\ar{r} & N''\ar{r} & 0,
\end{tikzcd}
\]
we obtain a short exact sequence in $\Cc$
\[
\begin{tikzcd}
0\ar{r} & N\square_C M\ar{r} & N'\square_C M\ar{r} & N''\square_C M \ar{r} & 0.
\end{tikzcd}
\]
Similarly, we say \emph{$M$ is right coflat over $D$} if given any short exact sequence in $\bicomod{D}{}$
\[
\begin{tikzcd}
0\ar{r} & N \ar{r} & N'\ar{r} & N''\ar{r} & 0,
\end{tikzcd}
\]
we obtain a short exact sequence in $\Cc$
\[
\begin{tikzcd}
0\ar{r} & M\square_D N \ar{r} & M\square_D N'\ar{r} & M\square_D N''\ar{r} & 0.
\end{tikzcd}
\]
We say the bicomodule $M$ is \emph{two-sided coflat over $(C,D)$} if it is left coflat over $C$ and right coflat over $D$.
Using Definition \ref{def: cotor}, if $M$ is right coflat over $C$, or if $N$ is left coflat over $C$, then $\cotor^i_C(M,N)=0$ for all $i\geq 1$.
\end{defn}

\begin{prop}\label{prop: fibrant==>coflat}
Let $C$ and $D$ be simply connected dg-coalgebras over $\mathbbm{k}$. If a $(C,D)$-bicomodule is fibrant, then it is a two-sided coflat bicomodule over $(C,D)$.
\end{prop}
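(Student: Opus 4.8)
The plan is to reduce to a one‑sided statement and then climb the Postnikov tower of Proposition \ref{prop: postnikov tower}, exploiting that cofree comodules are coflat. First, by Lemma \ref{lem: fib bicomodule==> fib left and right} a fibrant $(C,D)$-bicomodule $M$ is fibrant both as a left $C$-comodule and as a right $D$-comodule. Since a left $C$-comodule is the same thing as a right $C^\op$-comodule and the corresponding model structures (both left‑induced from $\Ck$ along forgetful–cofree) agree under this identification, and since $N\square_C M\cong M\square_{C^\op}N$, it suffices to prove the single assertion: if $M$ is a fibrant right $D$-comodule, then $M\square_D-$ is exact on $\bicomod{D}{}(\Ck)$. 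As $M\square_D-$ is always left exact (the cotensor is an equalizer of $\k$-linear maps between $\k$-modules, all of which are flat since $\k$ has global dimension zero), this is equivalent to $\cotor^i_D(M,-)=0$ for all $i\geq 1$; the reverse implication uses the long exact sequence obtained by deriving $M\square_D-$ in the coefficient variable.

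Next I would record the relevant closure properties of the class $\mathcal{F}$ of right $D$-comodules $X$ with $\cotor^{\geq 1}_D(X,-)=0$. (i) Every cofree comodule $V\otimes D$ lies in $\mathcal{F}$, because $(V\otimes D)\square_D-\cong V\otimes-$ is exact over $\k$. (ii) $\mathcal{F}$ is closed under retracts, since a retract of an exact sequence of $\k$-modules is exact. (iii) $\mathcal{F}$ is closed under extensions: a short exact sequence $0\to A\to B\to C\to 0$ of right $D$-comodules induces, for every left $D$-comodule $N$, a long exact sequence $\cdots\to\cotor^i_D(A,N)\to\cotor^i_D(B,N)\to\cotor^i_D(C,N)\to\cotor^{i+1}_D(A,N)\to\cdots$ (deriving $-\square_D N$), so $A,C\in\mathcal{F}$ forces $B\in\mathcal{F}$. (iv) If $\{X(m)\}$ is a tower in $\bicomod{D}{}(\Ck)$ that stabilizes in each degree with each $X(m)\in\mathcal{F}$, then $\lim_m X(m)\in\mathcal{F}$: here one checks that $-\square_D N$ commutes with limits of towers stabilizing in each degree (the mirror of \cite[4.21]{pertower}), that $\{X(m)\square_D N\}$ again stabilizes in each degree and therefore has vanishing $\lim^1$, so that $\lim_m$ of the short exact sequences $0\to X(m)\square_D N\to X(m)\square_D N'\to X(m)\square_D N''\to 0$ remains short exact; hence $(\lim_m X(m))\square_D-\cong\lim_m(X(m)\square_D-)$ is exact.

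Then I would feed the Postnikov tower $\{M(n)\}$ of $M$ into this. By Proposition \ref{prop: postnikov tower}, $M$ is a retract of $\lim_n M(n)$ and the tower stabilizes in each degree, so by (ii) and (iv) it is enough to show $M(n)\in\mathcal{F}$ for all $n$, which follows by induction: $M(0)=0$ and $M(1)=M\otimes D$ are cofree, hence in $\mathcal{F}$ by (i); and in the defining pullback square for $M(n+1)$ the cofree map $P\otimes D\to Q\otimes D$ is an epimorphism with kernel $K\otimes D$, so the pullback projection $M(n+1)\to M(n)$ is an epimorphism with kernel $K\otimes D$, giving a short exact sequence $0\to K\otimes D\to M(n+1)\to M(n)\to 0$ of right $D$-comodules and hence $M(n+1)\in\mathcal{F}$ by (iii), (i) and the inductive hypothesis. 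Applying this conclusion once to $D$ and once to $C^\op$ shows that a fibrant $(C,D)$-bicomodule $M$ is right coflat over $D$ and left coflat over $C$, i.e. two‑sided coflat over $(C,D)$.

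The step I expect to be most delicate is (iv): one must be careful that $-\square_D N$ genuinely commutes with the limit of a tower that stabilizes in each degree — so that $(\lim_m X(m))\square_D N\cong\lim_m(X(m)\square_D N)$ — and that the resulting tower $\{X(m)\square_D N\}$ of cotensor products is itself eventually constant in each degree, so that the $\lim^1$-obstruction to exactness vanishes. Both points are close in spirit to the arguments underpinning \cite[4.21]{pertower} and the proof of Proposition \ref{prop: cotensor of fibrants}, but the bookkeeping needed to carry them out with the coefficient variable $N$ held fixed (rather than the comodule $M$) should be done with care.
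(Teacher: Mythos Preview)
Your proof is correct and follows essentially the same strategy as the paper: show cofree comodules are coflat, establish closure under extensions and under limits of towers that stabilize in each degree, and then climb the Postnikov tower. The only organizational difference is that you first invoke Lemma~\ref{lem: fib bicomodule==> fib left and right} to reduce to a one-sided statement and run the tower argument in $\bicomod{}{D}(\Ck)$, whereas the paper stays in $\bicomod{C}{D}(\Ck)$ throughout and uses the bicomodule Postnikov tower directly (with cofree terms $C\otimes V\otimes D$); both routes lead to the same induction and the same delicate step (iv), which the paper handles exactly as you anticipate, via Mittag--Leffler and the commutation of cotensor with limits of degreewise-stabilizing towers.
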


\begin{proof}
Notice that any cofree $(C,D)$-bicomodule $C\otimes V \otimes D$ is two-sided coflat. Let us show that coflatness is preserved under extensions. Consider a short exact sequence in $\bicomod{C}{D}(\Ck)$,
\[
\begin{tikzcd}
 0 \ar{r} & M\ar{r} & N \ar{r} & P \ar{r} & 0.
\end{tikzcd}
\]
Suppose $M$ and $P$ are two-sided coflat.
Let $Q$ be a left $D$-comodule. We then obtain an exact sequence
\[
\begin{tikzcd}
\cotor^D_1(M, Q) \ar{r} & \cotor^D_1(N, Q) \ar{r} & \cotor^D_1(P, Q).
\end{tikzcd}
\]
By exactness, we get that $\cotor^D_1(N,Q)=0$ for any left $D$-comodule $Q$. Thus $N$ is coflat as a right $D$-comodule. We argue similarly to show that $N$ is coflat as a left $C$-comodule.

Now let $F$ be a fibrant $(C,D)$-bicomodule. Let $\{F(n)\}$ be its Postnikov tower in $\bicomod{C}{D}(\Ck)$ and write $\widetilde{F}=\mathsf{lim}_n F(n)$. Since $F$ is a retract of $\widetilde{F}$, then for any left $D$-comodule $Q$, we get $\cotor^D_1(F, Q)$ is a retract of $\cotor^D_1(\widetilde{F}, Q)$. Thus if $\widetilde{F}$ is coflat as a right $D$-comodule, so is $F$. We prove $\widetilde{F}$ is right coflat by induction. For $n=0$, we see that $F(0)=0$ is coflat. For $n=1$, we get that $F(1)=C\otimes F\otimes D$, a cofree bicomodule, and is thus right coflat. 
Suppose we have shown that $F(n)$ is right coflat over $D$. Then from the short exact sequence in $\bicomod{C}{D}(\Ck)$:
\[
\begin{tikzcd}
 0 \ar{r} & C\otimes K \otimes D \ar{r} & F(n+1) \ar{r} & F(n) \ar{r} & 0,
\end{tikzcd}
\]
we get that $F(n+1)$ is also right coflat over $D$. Given any short exact sequence in $\bicomod{D}{}(\Ck)$:

\[
\begin{tikzcd}
 0 \ar{r} & Q \ar{r} & Q' \ar{r} & Q'' \ar{r} & 0,
\end{tikzcd}
\]
we obtain a short exact sequence of towers in $\Ck$:
\[
\begin{tikzcd}
 0 \ar{r} & \{F(n)\square_D W\} \ar{r} & \{F(n)\square_D W'\} \ar{r} & \{ F(n)\square_D W''\} \ar{r} & 0.
\end{tikzcd}
\]
Each of these towers satisfies the Mittag-Leffler condition, and since limit of towers that stabilizes in each degree commute with cotensor product, we obtain a short exact sequence
\[
\begin{tikzcd}
 0 \ar{r} &  \widetilde{F}\square_D Q \ar{r} & \widetilde{F}\square_D Q' \ar{r} & \widetilde{F}\square_D Q'' \ar{r} & 0.
\end{tikzcd}
\]
Therefore $\widetilde{F}$ is right coflat over $D$. We can show $\widetilde{F}$ is left coflat over $C$ in a similar fashion.
\end{proof}

\begin{rem}
In fact, the result of \cite[4.7]{connectivecomod} remains true in the non-commu\-tative case. Thus one can show that a $(C,D)$-bicomodule is fibrant if and only if it is coflat as a right $(C^\op\otimes D)$-comodule. We shall not need this result here so we do not provide details, however we do mention below the relationship between two-sided coflat $(C,D)$-comodules and right coflat $(C^\op\otimes D)$-comodules.
In particular, by Lemma \ref{lem: fibrant==>two-sided coflat}, a fibrant $(C,D)$-bicomodule is always two-sided coflat.
\end{rem}

\begin{lemma}
Let $(\Cc, \otimes, \bI)$ be a symmetric monoidal category.
Let $(C, \Delta_C, \varepsilon_C)$ and $(D, \Delta_D, \varepsilon_D)$ be flat coalgebras in $\Cc$. Let $M$ be a right $(C\otimes D)$-comodule. Let $N$ be a left $D$-comodule. Then $C\otimes N$ is a left $(C\otimes D)$-comodule, and we obtain an isomorphism in $\Cc$:
\[
M\square_{C\otimes D}(C\otimes N)\cong M\square_D N.
\]
\end{lemma}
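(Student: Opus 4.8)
The plan is to exhibit the isomorphism explicitly via a pair of mutually inverse maps, deliberately \emph{avoiding} associativity of the cotensor product (the feature the paper flags as pathological). First I would record the structures: the coalgebra $C\otimes D$ has comultiplication $(\id_C\otimes\tau\otimes\id_D)(\Delta_C\otimes\Delta_D)$ and counit $\varepsilon_C\otimes\varepsilon_D$, and $C\otimes N$ is a left $(C\otimes D)$-comodule via
\[
\delta_{C\otimes N}=(\id_C\otimes\tau\otimes\id_N)\circ(\Delta_C\otimes\lambda_N)\colon C\otimes N\longrightarrow (C\otimes D)\otimes(C\otimes N),
\]
whose coassociativity and counitality follow from those of $\Delta_C$ and $\lambda_N$ plus naturality of $\tau$. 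Since $\varepsilon_C\otimes\id_D\colon C\otimes D\to D$ and $\id_C\otimes\varepsilon_D\colon C\otimes D\to C$ are maps of coalgebras, the right $(C\otimes D)$-coaction $\rho_M$ on $M$ corestricts both to a right $D$-coaction $\bar\rho_M=(\id_M\otimes\varepsilon_C\otimes\id_D)\rho_M$ and to a right $C$-coaction $\rho_M^C=(\id_M\otimes\id_C\otimes\varepsilon_D)\rho_M$; in particular $M\square_D N$ is defined.

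Next I would define the two maps via the universal property of equalizers. The map $\beta\colon M\square_{C\otimes D}(C\otimes N)\to M\square_D N$ is induced by $\id_M\otimes\varepsilon_C\otimes\id_N\colon M\otimes C\otimes N\to M\otimes N$; that this composite factors through $M\square_D N$ amounts to checking it equalizes $\bar\rho_M\otimes\id_N$ and $\id_M\otimes\lambda_N$, which is a diagram chase obtained by applying $\varepsilon_C$ to the two $C$-factors of the defining equaliser equation of $M\square_{C\otimes D}(C\otimes N)$ and using counitality. Dually, $\alpha\colon M\square_D N\to M\square_{C\otimes D}(C\otimes N)$ is induced by $\rho_M^C\otimes\id_N\colon M\otimes N\to M\otimes C\otimes N$; that this equalizes the pair $M\otimes C\otimes N\rightrightarrows M\otimes(C\otimes D)\otimes(C\otimes N)$ uses coassociativity of $\rho_M$, the $D$-balancing equation defining $M\square_D N$, and counitality of $\Delta_C$ and $\Delta_D$.

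Finally I would check $\alpha$ and $\beta$ are mutually inverse. The identity $\beta\circ\alpha=\id$ is immediate: on underlying objects the composite is $\big((\id_M\otimes\varepsilon_C\otimes\varepsilon_D)\rho_M\big)\otimes\id_N=\id_M\otimes\id_N$ by counitality of $\rho_M$ (using $\varepsilon_{C\otimes D}=\varepsilon_C\otimes\varepsilon_D$), and equalizer inclusions are monic. The reverse identity $\alpha\circ\beta=\id$ is the crux. Post-composing with the monic inclusion $\iota\colon M\square_{C\otimes D}(C\otimes N)\hookrightarrow M\otimes C\otimes N$, it suffices to show $(\rho_M^C\otimes\id_N)\circ(\id_M\otimes\varepsilon_C\otimes\id_N)\circ\iota=\iota$. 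This falls out of applying $\id_M\otimes\id_C\otimes\varepsilon_D\otimes\varepsilon_C\otimes\id_N$ to the balancing equation $(\rho_M\otimes\id_{C\otimes N})\iota=(\id_M\otimes\delta_{C\otimes N})\iota$: the left side collapses, via $(\id_M\otimes\id_C\otimes\varepsilon_D)\rho_M=\rho_M^C$, to $(\rho_M^C\otimes\id_N)(\id_M\otimes\varepsilon_C\otimes\id_N)\iota$, while the right side collapses to $\iota$ by counitality of $\Delta_C$ and $\lambda_N$. Hence $\alpha$ is an isomorphism with inverse $\beta$.

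The main obstacle is purely bookkeeping: everything above must be phrased with commutative (or string) diagrams in the general symmetric monoidal $\Cc$ rather than with Sweedler elements, and in the step $\alpha\circ\beta=\id$ one has to apply the two counits to exactly the correct tensor factors of a fivefold tensor product; once this is set up the verifications are mechanical. I would also remark that flatness of $C$ and $D$ is the standing hypothesis ensuring the various cotensor products carry bicomodule structures, with respect to which the constructed $\alpha$ is in fact bicolinear, but it is not needed for the bare isomorphism in $\Cc$.
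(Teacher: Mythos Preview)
Your proposal is correct and follows essentially the same approach as the paper: both construct the map $\alpha$ via the induced right $C$-coaction $\rho_M^C=(\id_M\otimes\id_C\otimes\varepsilon_D)\rho_M$ and the inverse $\beta$ via $\id_M\otimes\varepsilon_C\otimes\id_N$, then argue they are mutually inverse on the equalizers. Your write-up is in fact more detailed than the paper's, which simply asserts that $\beta$ is inverse to $\alpha$ without spelling out the $\alpha\circ\beta=\id$ verification you carry out.
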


\begin{proof}
Let $\lambda:N\rightarrow D\otimes N$ be the left $D$-coaction on $N$. Then we obtain a left $(C\otimes D)$-coaction on $N$ via
\[
\begin{tikzcd}
C\otimes N\ar{r}{\id_C\otimes \lambda} & C\otimes D\otimes N \ar{r}{\Delta_C\otimes \id_{D\otimes Y}} & [20pt] C\otimes C \otimes D \otimes N\cong C\otimes D\otimes C \otimes N.
\end{tikzcd}
\]
The above coaction, denoted $\widetilde{\lambda}\colon C\otimes N\rightarrow (C\otimes D)\otimes C \otimes N$, is counital and coassociative because the left $D$-coaction on $N$ and the coalgebra structure on $C$ are both coassociative and counital.

To prove the desired isomorphism, we verify that $M\square_D N$ satisfies the universal property of the equalizer. From the right $(C\otimes D)$-coaction $\rho\colon M\rightarrow M\otimes C\otimes D$, we obtain the underlying right $C$-coaction $\rho_C$ on $M$ as the composite
\[
\begin{tikzcd}
M\ar{r}{\rho} & M\otimes C\otimes D \ar{r}{\id_{X\otimes C}\otimes \varepsilon_D} &[20pt] M\otimes C.
\end{tikzcd}
\]
Now we obtain a morphism $\alpha\colon M\square_D N\rightarrow M\square_{C\otimes D}(C\otimes N)$ by functoriality of the equalizers:
\[
\begin{tikzcd}
M\square_D N \ar{r} \ar[dashed]{d}{\exists !}[swap]{\alpha} & M\otimes N \ar{d}{\rho_C\otimes \id_N}\ar[shift left]{r}{\rho_D\otimes \id_N} \ar[shift right]{r}[swap]{\id_M\otimes \lambda} & [4em] M\otimes D \otimes N \ar{d} \\
M\square_{C\otimes D}(C\otimes N) \ar{r} &  M\otimes (C\otimes N) \ar[shift left]{r}{\rho\otimes \id_{C\otimes N}} \ar[shift right]{r}[swap]{\id_M\otimes \widetilde{\lambda}} & M\otimes (C\otimes D) \otimes (C \otimes N).
\end{tikzcd}
\]
The right unlabeled vertical arrow is induced by applying the functor $-\otimes N$ on the map
\[
\begin{tikzcd}
M\otimes D\ar{r}{\rho_C\otimes \id_D} & (M\otimes C)\otimes D \ar{r}{\id_M\otimes \Delta_C \otimes \id_D} & [2.5em] \underbrace{(M\otimes (C \otimes C)) \otimes D}_{\cong M\otimes (C \otimes D) \otimes C}. 
\end{tikzcd}
\]
Similarly, by applying the counit map $\varepsilon_C\colon C\rightarrow \mathbb{I}$ vertically on each $C$, we obtain the dashed map  on the equalizers below:
\[
\begin{tikzcd}
M\square_{C\otimes D}(C\otimes N) \ar{r}\ar[dashed]{d}{\exists !}[swap]{\beta} &  M\otimes C\otimes N \ar[shift left]{r}{\rho\otimes \id_{C\otimes N}} \ar[shift right]{r}[swap]{\id_M\otimes \widetilde{\lambda}} \ar{d}& [4em] M\otimes (C\otimes D) \otimes C \otimes N\ar{d}\\
M\square_D N \ar{r}  & M\otimes N \ar[shift left]{r}{\rho_D\otimes \id_N} \ar[shift right]{r}[swap]{\id_M\otimes \lambda} &  M\otimes D \otimes N.
\end{tikzcd}
\]
The induced morphism $\beta$ is the inverse of the morphism $\alpha$ defined above. Therefore we obtain the desired isomorphism in $\Cc$.
\end{proof}

\begin{lemma}\label{lem: fibrant==>two-sided coflat}
Let $(\Cc, \otimes, \bI)$ be a symmetric monoidal abelian category.
Let $C$ and $D$ be flat coalgebras in $\Cc$.
Let $M$ be a $(C, D)$-bicomodule in $\Cc$. If $M$ is right coflat as a right $(C^\op\otimes D)$-comodule, then it is two-sided coflat as a $(C, D)$-bicomodule.
\end{lemma}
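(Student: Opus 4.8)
The plan is to transport exactness along the natural isomorphism of the preceding lemma and of its left-handed analogue. Throughout I would use Remark~\ref{rem: bicomodules are right comodules (ordinary)} together with its evident left-handed analogue: a $(C,D)$-bicomodule is the same datum as a right $(C^\op\otimes D)$-comodule and also as a left $(C\otimes D^\op)$-comodule. Using the symmetry $\tau$ of $\Cc$, for a coalgebra $E$, a right $E$-comodule $N$ and a left $E$-comodule $P$ there is a natural isomorphism $N\square_E P\cong P\square_{E^\op}N$; since $(C^\op\otimes D)^\op\cong C\otimes D^\op$, I would first record that the hypothesis ``$M$ is right coflat as a right $(C^\op\otimes D)$-comodule'' is equivalent to ``$M$ is left coflat as a left $(C\otimes D^\op)$-comodule'', because $N\mapsto N\square_{C\otimes D^\op}M$ is naturally isomorphic to $P\mapsto M\square_{C^\op\otimes D}P$ composed with the (exact) equivalence between right $(C\otimes D^\op)$-comodules and left $(C^\op\otimes D)$-comodules.

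Next I would prove right coflatness over $D$. Given a short exact sequence $0\to N\to N'\to N''\to 0$ of left $D$-comodules, apply the exact functor $C^\op\otimes-$, which by the preceding lemma lands in left $(C^\op\otimes D)$-comodules, obtaining a short exact sequence there; then apply $M\square_{C^\op\otimes D}-$, which is exact by hypothesis; finally identify the resulting sequence with $0\to M\square_D N\to M\square_D N'\to M\square_D N''\to 0$ via the isomorphism $M\square_{C^\op\otimes D}(C^\op\otimes -)\cong M\square_D -$ of the preceding lemma, which is natural in its argument since it is built from universal properties of equalizers. For left coflatness over $C$ I would run the mirror argument: the left-handed analogue of the preceding lemma (proved identically, interchanging left and right via $\tau$) gives a natural isomorphism $(L\otimes D^\op)\square_{C\otimes D^\op}M\cong L\square_C M$ for right $C$-comodules $L$, with $L\otimes D^\op$ carrying its canonical right $(C\otimes D^\op)$-comodule structure; applying the exact functor $-\otimes D^\op$ to a short exact sequence of right $C$-comodules, then $-\square_{C\otimes D^\op}M$, which is exact by the first paragraph, and transporting along this isomorphism shows $-\square_C M$ preserves the sequence. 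Equivalently, one may phrase the argument via $\cotor^{\geq 1}=0$, noting that $C^\op\otimes-$ carries the cofree left $D$-comodule $D\otimes V$ to the cofree left $(C^\op\otimes D)$-comodule $(C^\op\otimes D)\otimes V$, hence sends injective resolutions to injective resolutions.

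The step I expect to require the most care is the left-coflatness half, because of the bookkeeping with opposite coalgebras: one must check that the left $(C\otimes D^\op)$-comodule structure on $M$ induced by its $(C,D)$-bicomodule structure is precisely the one for which $-\square_{C\otimes D^\op}M$ is exact, that the symmetry isomorphism $N\square_E P\cong P\square_{E^\op}N$ is natural and respects short exactness, and that the isomorphisms borrowed from the preceding lemma (and its mirror) are natural in the variable comodule. One also uses that $C^\op\otimes-$ and $-\otimes D^\op$ preserve short exact sequences, which is immediate in the situations where the lemma is applied (e.g.\ over $\k$ or in $\Ck$), since there $\otimes$ is exact.
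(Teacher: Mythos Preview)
Your proposal is correct and follows essentially the same approach as the paper: both prove right coflatness over $D$ by tensoring a short exact sequence of left $D$-comodules with $C^\op$, applying the exact functor $M\square_{C^\op\otimes D}-$, and then invoking the preceding lemma's isomorphism $M\square_{C^\op\otimes D}(C^\op\otimes N)\cong M\square_D N$. For left coflatness over $C$ the paper simply says ``in a similar fashion'', whereas you spell out the mirror argument via the left $(C\otimes D^\op)$-comodule structure and the symmetry isomorphism $N\square_E P\cong P\square_{E^\op}N$; this is exactly the intended meaning, and your extra bookkeeping with opposite coalgebras is the right way to make that phrase precise.
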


\begin{proof}
Suppose $M$ is right coflat a $(C^\op\otimes D)$-comodule.
Let us first show that $M$ is right coflat over $D$.  Consider a short exact sequence in $\bicomod{D}{}$:
\[
\begin{tikzcd}
0\ar{r} & N \ar{r} & N'\ar{r} & N''\ar{r} & 0.
\end{tikzcd}
\]
By the previous lemma, we obtain that $C^\op\otimes N$, $C^\op\otimes N'$, and $C^\op\otimes N''$ are left $(C^\op\otimes D)$-comodules. Since $C^\op\otimes-$ preserves exactness because it is flat, we obtain a short exact sequence in $\bicomod{(C^\op\otimes D)}{}$:
\[
\begin{tikzcd}
0\ar{r} & C^\op\otimes N\ar{r} & C^\op\otimes N'\ar{r} & C^\op\otimes N''\ar{r} & 0.
\end{tikzcd}
\]
Since $M$ is a right coflat over $(C^\op\otimes D)$, we obtain that the sequence is exact in $\Cc$:
\[
\begin{tikzpicture}[baseline= (a).base]
\node[scale=.855] (a) at (1,1){
\begin{tikzcd}
0\ar{r} & M\square_{C^\op\otimes D} C^\op\otimes N \ar{r} & M\square_{C^\op\otimes D} C^\op\otimes N'\ar{r} & M\square_{C^\op\otimes D} C^\op\otimes N''\ar{r} & 0.
\end{tikzcd}
};
\end{tikzpicture}
\]
By the previous lemma, this induces the short exact sequence in $\Cc$:
\[
\begin{tikzcd}
0\ar{r} & M\square_{ D}  N \ar{r} & M\square_{D} N'\ar{r} & M\square_{ D}  N''\ar{r} & 0.
\end{tikzcd}
\]
Thus $M$ is right coflat over $D$ as desired. We prove that $M$ is left coflat over $C$ in a similar fashion.
\end{proof}

\begin{ex}
The converse is not true: a two-sided coflat $(C, D)$-bicomodule need not to be a right coflat $(C^\op\otimes D)$-comodule, and thus need not be a fibrant $(C,D)$-bicomodule. This can already be seen for modules over an algebra (even commutative). Indeed, consider the polynomial ring, $\mathbbm{k}[x]$, as a $\mathbbm{k}$-algebra. It is a two-sided flat $\mathbbm{k}[x]$-module, but it is not flat as a right $(\mathbbm{k}[x]\otimes \mathbbm{k}[x])$-module. Indeed, note first that $\mathbbm{k}[x,y]\cong\mathbbm{k}[x]\otimes \mathbbm{k}[x]$. Consider the following short exact sequence of left $k[x,y]$-modules
\[
\begin{tikzcd}
 0\ar{r} & \mathbbm{k}[x,y] \ar{r}{\cdot y} & \mathbbm{k}[x, y]\ar{r} & \mathbbm{k}[x] \ar{r} & 0.
\end{tikzcd}
\]
If we apply $\mathbbm{k}[x]\otimes_{\mathbbm{k}[x,y]}-$, we obtain the exact sequence
\[
\begin{tikzcd}
 \mathbbm{k}[x]\ar{r}{0} & \mathbbm{k}[x]\ar{r} & \mathbbm{k}[x] \ar{r} & 0.
\end{tikzcd}
\]
The above sequence is not a short exact sequence however, which shows that $\mathbbm{k}[x]$ is not flat as a right $\mathbbm{k}[x,y]$-module.
\end{ex}

We are now equipped to prove a crucial result in this paper: the cotensor product with a fibrant comodule preserves quasi-isomorphisms.

\begin{prop}\label{Prop: cotensor preserve weak equivalence if fibrant}
Let $C$ and $D$ be simply connected dg-coalgebras over $\mathbbm{k}$. 
Let $M$ be a fibrant $(C,D)$-bicomodule. Then $M\square_D-\colon  \bicomod{D}{}(\Ck)\rightarrow \Ck$ and $-\square_C M:\bicomod{}{C}(\Ck)\rightarrow \Ck$ preserve weak equivalences.
\end{prop}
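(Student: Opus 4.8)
The plan is to reduce, via the Postnikov tower description of fibrant bicomodules from Proposition~\ref{prop: postnikov tower}, to the case of cofree bicomodules, where the statement follows immediately from the hypothesis that $\k$ has global dimension zero. I will only treat $M\square_D -$; the argument for $-\square_C M$ is identical after replacing $\bicomod{C}{D}(\Ck)$ by the category in which the corresponding Postnikov tower lives.

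First I would note that since $M$ is fibrant it is a retract in $\bicomod{C}{D}(\Ck)$ of $\widetilde{M}:=\lim_n M(n)$, where $\{M(n)\}$ is the Postnikov tower of Proposition~\ref{prop: postnikov tower}, which stabilizes in each degree. As retracts of quasi-isomorphisms are quasi-isomorphisms, it suffices to prove that $\widetilde{M}\square_D -$ preserves weak equivalences. Because $\{M(n)\}$ stabilizes in each degree, both $\widetilde{M}\cong\lim_n M(n)$ and the cotensor products $\widetilde{M}\square_D N\cong\lim_n\left(M(n)\square_D N\right)$ are computed degreewise (by \cite[4.15]{pertower} and the analogue of \cite[4.21]{pertower} for the first tensor factor), and each tower $\{M(n)\square_D N\}$ again stabilizes in each degree. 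Hence it is enough to show that each functor $M(n)\square_D -$ preserves weak equivalences and then pass to the limit: if $f\colon N\to N'$ is a quasi-isomorphism and each $M(n)\square_D f$ is one, then in every homological degree $i$ the complexes $\widetilde{M}\square_D N$ and $\widetilde{M}\square_D N'$ agree in all degrees $\le i+1$ with $M(n)\square_D N$ and $M(n)\square_D N'$ for $n\gg 0$, so $\widetilde{M}\square_D f$ is an isomorphism on $H_i$.

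To handle each $M(n)$, I would induct on $n$. For $n=0$ this is trivial; for $n=1$ we have $M(1)=C\otimes M\otimes D$ and $(C\otimes M\otimes D)\square_D N\cong C\otimes M\otimes N$, so $(C\otimes M\otimes D)\square_D f\cong\id_{C\otimes M}\otimes f$ is a quasi-isomorphism because $\k$ has global dimension zero, making $C\otimes M$ a complex of flat $\k$-modules (Künneth). For the inductive step, recall that the tower provides a short exact sequence of $(C,D)$-bicomodules
\[
0\longrightarrow C\otimes K\otimes D\longrightarrow M(n+1)\longrightarrow M(n)\longrightarrow 0,
\]
where $K=\ker(P\to Q)$. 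Since $C\otimes K\otimes D$ is cofree, hence coflat (Proposition~\ref{prop: fibrant==>coflat}), cotensoring this sequence over $D$ with a left $D$-comodule $N$ keeps it short exact, the relevant obstruction $\cotor^1_D(C\otimes K\otimes D,N)$ being zero. Given a quasi-isomorphism $f\colon N\to N'$, we thus obtain a map of short exact sequences of chain complexes whose left-hand term is $\id_{C\otimes K}\otimes f$ (a quasi-isomorphism, as above) and whose right-hand term is $M(n)\square_D f$ (a quasi-isomorphism by induction); the five lemma applied to the associated long exact homology sequences shows $M(n+1)\square_D f$ is a quasi-isomorphism.

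The main obstacle I anticipate is the bookkeeping at the limit stage: making precise that the cotensor product commutes with the limit of the Postnikov tower and that the resulting tower of chain complexes stabilizes in each degree, so that the degreewise computation of homology is legitimate. This is exactly where simple connectivity enters (through Proposition~\ref{prop: postnikov tower} and \cite[4.21]{pertower}): without it, non-finite limits in $\bicomod{C}{D}(\Ck)$ need not be computed on underlying complexes and $-\square_D N$ need not commute with them, so the reduction to the cofree case would break down.
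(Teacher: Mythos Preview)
Your proof is correct and takes a genuinely different route from the paper's. The paper argues via an Eilenberg--Moore spectral sequence: since $D$ is simply connected, the conormalized cobar double complex $\underline{\Omega}^\bullet(M,D,N)$ is bounded, and its two spectral sequences both converge. Filtering one way and using that fibrant $M$ is right coflat over $D$ (Proposition~\ref{prop: fibrant==>coflat}) forces collapse to $H_*(M\square_D N)$; filtering the other way gives $E^2_{\bullet,q}=\cotor^q_{H_*(D)}(H_*(M),H_*(N))$, which manifestly depends only on $H_*(N)$. A quasi-isomorphism $N\to N'$ therefore induces an isomorphism on $E^2$-pages, hence on the abutment. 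Your argument instead climbs the Postnikov tower directly, reducing to the cofree case by the five lemma and then passing to the stabilized limit. This is more elementary in that it avoids spectral sequences entirely, and it reuses the same inductive pattern already present in the proof of Proposition~\ref{prop: fibrant==>coflat}; what you lose is the conceptual payoff that $H_*(M\square_D N)$ depends only on homology, which the paper's argument makes visible for free. Both approaches ultimately lean on simple connectivity in the same place: to make the tower stabilize degreewise so that limits and cotensors commute.
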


\begin{proof}
We use an Eilenberg--Moore spectral sequence argument.
Let $N$ be any left $D$-comodule in $\Ck$.
As in Remark \ref{rem: topological cobar vs algebraic cobar}, the conormalized cobar complex $\underline{\Omega}^\bullet(M,D,N)$ is a second quadrant double chain complex that is bounded. We grade the row cohomologically, but the columns homologically. Hence its two associated spectral sequences converge to the same page, see \cite[2.15]{mccleary}.

The first spectral sequence has its $E^1$-page induced by the cohomology of the rows and therefore
\[
E^1_{\bullet, q}=H^q(\underline{\Omega}^\bullet(M,D,N))\cong \cotor^q_D(M,N).
\]
Since $M$ is a fibrant $(C,D)$-bicomodule, then it is coflat as a right $D$-comodule. Thus $E^1_{\bullet, q}=0$ for all $q\geq 1$, and $E^1_{\bullet, 0}=M\square_D N$.
Hence the spectral sequence collapses onto its second page, $E^2_{\bullet, 0}=H_*(M\square_D N)$.

The second spectral sequence has its $E^1$-page induced by the homology of its columns and therefore
\[
E^1_{\bullet,q}=H_*(\underline{\Omega}^{q}({X},D,{Y}))=\underline{\Omega^q}(H_*(M), H_*(D), H_*(N)).
\]
Thus, as its $E^2$-page is given by the cohomology of the induced cochain complex, we obtain
\[
E^2_{\bullet, q}= \mathsf{CoTor}^q_{H_*(D)}(H_*(M), H_*(N)).
\]
It converges to the page with trivial columns except its $0^{th}$ column, which is given by the cohomology $H_*(M\square_D N)$.

Combining the arguments above, we obtain a converging Eilenberg--Moore spectral  sequence
\[
E^2_{\bullet,q}= \mathsf{CoTor}^q_{H_*(D)}(H_*(M), H_*(N)) \Rightarrow E^\infty_{\bullet, 0}=H_*(M \square_D N),
\] 
for any left $D$-comodule $N$. In particular, given a map of left $D$-comodules $N\rightarrow N'$ that is a quasi-isomorphism, we get
\[
\mathsf{CoTor}^q_{H_*(D)}(H_*(M), H_*(N))\cong \mathsf{CoTor}^q_{H_*(D)}(H_*(M), H_*(N')).
\]
Thus the map $N\rightarrow N'$ induces an isomorphism, $H_*(M \square_D N)\cong H_*(M\square_D N')$.
\end{proof}

\begin{cor}\label{cor: cotensor of fibrant is equal to cobar}
    Let $C$, $D$ and $E$ be simply connected coalgebras in $\Ck$.
    Let $M$ be a fibrant $(D,C)$-bicomodule and $N$ a fibrant $(C,E)$-bicomodule.
    Then we obtain a quasi-isomorphism of $(D,E)$-bicomodules
    \[
M\square_C N \simeq \Omega(M,C,N).
    \]
\end{cor}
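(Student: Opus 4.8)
The plan is to assemble the equivalence from three facts already in place: that a fibrant bicomodule is resolved by its two-sided cobar construction, that cotensoring with a fibrant bicomodule preserves weak equivalences, and Proposition~\ref{equivalence}, which identifies $\Omega(M,C,C)\square_C N$ with $\Omega(M,C,N)$.

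First I would note that since $M$ is a fibrant $(D,C)$-bicomodule, Lemma~\ref{lem: fib bicomodule==> fib left and right} shows $M$ is fibrant as a left $D$-comodule. Hence, by the discussion following Definition~\ref{def: cobar cosimplicial} (that is, \cite[2.5]{connectivecomod}), the canonical map $M \xrightarrow{\simeq} \Omega(M,C,C)$ into the two-sided cobar resolution is a weak equivalence of $(D,C)$-bicomodules. I would then apply the functor $-\square_C N$. Because $N$ is a fibrant $(C,E)$-bicomodule, Proposition~\ref{Prop: cotensor preserve weak equivalence if fibrant} guarantees that $-\square_C N$ takes weak equivalences to weak equivalences; since weak equivalences of bicomodules are detected on underlying chain complexes, this applies equally to the zig-zag realizing $M\simeq \Omega(M,C,C)$ and respects the $(D,E)$-bicomodule structure. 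This produces a weak equivalence of $(D,E)$-bicomodules $M\square_C N \simeq \Omega(M,C,C)\square_C N$. Finally, Proposition~\ref{equivalence}, applied to the $(D,C)$-bicomodule $M$ and the fibrant $(C,E)$-bicomodule $N$, gives $\Omega(M,C,C)\square_C N \simeq \Omega(M,C,N)$ as $(D,E)$-bicomodules, and composing the two equivalences yields the claim.

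I do not expect a genuine obstacle, since the substantive homotopical input --- coflatness of fibrant bicomodules, fibrancy of cotensor products, and the Eilenberg--Moore argument --- has already been carried out in the Appendix and is invoked through Proposition~\ref{Prop: cotensor preserve weak equivalence if fibrant} and Proposition~\ref{equivalence}. The only point needing minor care is the bookkeeping of algebraic structure: checking that each intermediate weak equivalence is (a zig-zag of) $(D,C)$- respectively $(D,E)$-bicolinear maps rather than merely underlying chain maps. This is immediate from the cited statements, all of which are already phrased at the level of bicomodules, so the proof reduces to chaining them together.
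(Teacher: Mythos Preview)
Your proposal is correct and matches the intended argument: the corollary is stated in the paper without proof precisely because it follows immediately from Proposition~\ref{Prop: cotensor preserve weak equivalence if fibrant} together with Proposition~\ref{equivalence} and the cobar resolution $M\simeq\Omega(M,C,C)$, exactly as you chain them.
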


\begin{cor}\label{cor: tensor commute with cobar}
Let $C$, $D$, and $E$ be simply connected coalgebras in $\Ck$.
    Let $M$ be a fibrant $(C,D)$-bicomodule, let $N$ be a left fibrant $D$-comodule and $P$ be a fibrant right $E$-comodule.
Then there is a natural quasi-isomorphism of $(C, E)$-bicomodules
\[
\Omega(M, D, N)\otimes P \simeq \Omega(M,C, N\otimes P).
\]
\end{cor}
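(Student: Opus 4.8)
The plan is to trade both cobar constructions for ordinary cotensor products via Corollary~\ref{cor: cotensor of fibrant is equal to cobar}, and then exploit that $-\otimes P$ is exact, hence commutes with the equalizers computing $\square_D$. (Given the hypotheses the third slot on the right should read $D$, not $C$: as $M$ is a $(C,D)$-bicomodule and $N$ a left $D$-comodule, $N\otimes P$ is naturally a $(D,E)$-bicomodule, and the claim is $\Omega(M,D,N)\otimes P\simeq\Omega(M,D,N\otimes P)$.)

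First I would note that $N\otimes P$ is a fibrant $(D,E)$-bicomodule by Lemma~\ref{lem: tensor preserves fibrant}, applied to the fibrant left $D$-comodule $N$ and the fibrant right $E$-comodule $P$. Corollary~\ref{cor: cotensor of fibrant is equal to cobar} then supplies natural bicomodule quasi-isomorphisms $\Omega(M,D,N)\simeq M\square_D N$ and $\Omega(M,D,N\otimes P)\simeq M\square_D(N\otimes P)$; tensoring the first with $P$ keeps it a quasi-isomorphism, since $P$ is a complex of projective $\k$-modules, so $-\otimes P$ preserves weak equivalences in $\Ck$. It then suffices to exhibit a natural isomorphism of $(C,E)$-bicomodules $(M\square_D N)\otimes P\cong M\square_D(N\otimes P)$. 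Because $\k$ has global dimension zero, every $\k$-module is flat, so $-\otimes P\colon\Ck\to\Ck$ is exact and preserves the degreewise kernels, hence the equalizers, defining the cotensor product; writing $M\square_D N$ as the equalizer of $\rho_M\otimes\id_N$ and $\id_M\otimes\lambda_N$ from $M\otimes N$ to $M\otimes D\otimes N$ and applying $-\otimes P$ produces exactly the equalizer diagram for $M\square_D(N\otimes P)$, since $N\otimes P$ carries the left $D$-coaction $\lambda_N\otimes\id_P$. Concatenating
\[
\Omega(M,D,N)\otimes P \;\simeq\; (M\square_D N)\otimes P \;\cong\; M\square_D(N\otimes P)\;\simeq\;\Omega(M,D,N\otimes P)
\]
yields the statement, with naturality inherited stepwise.

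The one genuine subtlety --- and the reason this is isolated as a corollary --- is that one may \emph{not} simply commute $-\otimes P$ past the homotopy limit defining $\Omega$, which is precisely the failure of totalization to commute with $\otimes$ flagged in the introduction; the argument above avoids it by routing through the strict cotensor product, where exactness of $-\otimes P$ is available. The only remaining routine point is to check that every identification above is compatible with the $(C,E)$-bicomodule structure (the $C$-coaction coming from $M$, the $E$-coaction from $P$), which is immediate from the explicit equalizer descriptions together with the fact that Corollary~\ref{cor: cotensor of fibrant is equal to cobar} already produces its equivalences as maps of bicomodules.
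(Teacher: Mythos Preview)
Your proof is correct and follows essentially the same route as the paper's: replace both cobar constructions by strict cotensors via Corollary~\ref{cor: cotensor of fibrant is equal to cobar}, commute $-\otimes P$ past the equalizer by flatness, and invoke Lemma~\ref{lem: tensor preserves fibrant} to see $N\otimes P$ is fibrant. You also correctly spotted the typo in the displayed equivalence (the middle coalgebra on the right-hand side should be $D$, not $C$), which the paper's own proof confirms.
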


\begin{proof}
    Since $M$ and $N$ are fibrant, we obtain
    \begin{align*}
       \Omega(M, D, N)\otimes P & \simeq (M\square_D N) \otimes P\\
       &  \cong M\square_D (N\otimes P)\\
       & \simeq \Omega(M,D, N\otimes P).
    \end{align*}
    The isomorphism follows from the fact that $P$ is flat in $\Ck$, and thus preserves equalizers.
    The last quasi-isomorphism follows from the fact that $N\otimes P$ remains a fibrant $(D, E)$-bicomodule by Lemma \ref{lem: tensor preserves fibrant}.
\end{proof}

\begin{rem}
    Corollary \ref{cor: tensor commute with cobar} is not true if the comodules are not fibrant, since in general the tensor product does not preserve infinite homotopy limits. 
\end{rem}

\begin{cor}\label{cor: bicosimplicial of cobar}
    Let $C$, $D$, $E$, and $F$ be simply connected coalgebras in $\Ck$.
    Let $M$ be a fibrant $(C,D)$-bicomodule, let $N$ be a fibrant $(D, E)$-bicomodule, and let $P$ be a fibrant $(E,F)$-bicomodule. 
    Then we obtain a quasi-isomorphism of $(C, F)$-bicomodules 
    \[
\Omega\big(\Omega(M, D, N), E, P\big) \simeq \holim_{\Delta\times \Delta} \Omega^\bullet\big(\Omega^\bullet(M,D,N),E,P\big).
    \]
\end{cor}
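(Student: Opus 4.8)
The plan is to deduce the statement from two facts available above: the interchange (``Fubini'') of iterated homotopy limits, and Corollary~\ref{cor: tensor commute with cobar}, which absorbs a tensor factor by a fibrant comodule into a two-sided cobar construction. Write $Y^{q,p}=\Omega^q\big(\Omega^p(M,D,N),E,P\big)$ for the $(C,F)$-valued bicosimplicial object appearing on the right-hand side, with $q$ the cosimplicial coordinate in the outer ($E$-)direction and $p$ the one in the inner ($D$-)direction. Unwinding Definition~\ref{def: cosimplicial cobar general} and using that every chain complex is flat over $\k$, each spot identifies as $Y^{q,p}\cong M\otimes D^{\otimes p}\otimes N\otimes E^{\otimes q}\otimes P$; moreover each $\Omega^p(M,D,N)\cong M\otimes D^{\otimes p}\otimes N$ is a fibrant $(C,E)$-bicomodule (apply Lemma~\ref{lem: tensor preserves fibrant (cofree case)} with the trivial coalgebra $\k$ to see that tensoring a fibrant one-sided comodule with an arbitrary chain complex stays fibrant, then combine with Lemmas~\ref{lem: fib bicomodule==> fib left and right} and \ref{lem: tensor preserves fibrant}).

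First I would invoke the Fubini principle for homotopy limits: since $\bicomod{C}{F}(\Ck)$ is a simplicial model category, the homotopy limit over $\Delta\times\Delta$ may be computed by iterating the homotopy limits over the two $\Delta$-factors (passing, if needed, through a Reedy fibrant replacement), so that, with homotopy limits modeled as in \cite[18.1.8]{hir},
\[
\holim_{\Delta\times\Delta}Y^{\bullet,\bullet}\;\simeq\;\holim_{q\in\Delta}\Big(\holim_{p\in\Delta}Y^{q,p}\Big).
\]
Because each $\Omega^p(M,D,N)$ is fibrant, the inner homotopy limit is $\holim_{p}\big(\Omega^p(M,D,N)\otimes E^{\otimes q}\otimes P\big)$, and it then suffices to produce, naturally in the remaining variable $q$, a quasi-isomorphism of $(C,F)$-bicomodules
\[
\holim_{p\in\Delta}\big(\Omega^p(M,D,N)\otimes E^{\otimes q}\otimes P\big)\;\simeq\;\Omega(M,D,N)\otimes E^{\otimes q}\otimes P,
\]
since reassembling these over $q\in\Delta$ and using that $\Omega(M,D,N)$ is a fibrant $(C,E)$-bicomodule (Corollary~\ref{cor: cotensor of fibrant is equal to cobar} and Proposition~\ref{prop: cotensor of fibrants}) identifies the result with $\holim_q\Omega^q\big(\Omega(M,D,N),E,P\big)=\Omega\big(\Omega(M,D,N),E,P\big)$.

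The crux is this last quasi-isomorphism, and here I would not claim that $-\otimes E^{\otimes q}\otimes P$ commutes with homotopy limits (it does not in general). Instead, exactly as in the proof of Corollary~\ref{cor: tensor commute with cobar}, I would use that $E^{\otimes q}\otimes P$ is flat over $\k$, hence preserves the equalizers defining the cotensor products, to obtain an isomorphism of cosimplicial $(C,F)$-bicomodules
\[
\Omega^\bullet(M,D,N)\otimes E^{\otimes q}\otimes P\;\cong\;\Omega^\bullet\big(M,D,\;N\otimes E^{\otimes q}\otimes P\big)
\]
compatible with all cofaces and codegeneracies. Since $N\otimes E^{\otimes q}\otimes P$ is again a fibrant $(D,F)$-bicomodule (Lemma~\ref{lem: tensor preserves fibrant (cofree case)} with $\k$, then Lemmas~\ref{lem: fib bicomodule==> fib left and right} and \ref{lem: tensor preserves fibrant}), taking homotopy limits and applying Corollary~\ref{cor: cotensor of fibrant is equal to cobar} twice gives
\[
\holim_{p}\Omega^p\big(M,D,N\otimes E^{\otimes q}\otimes P\big)\simeq M\square_D\big(N\otimes E^{\otimes q}\otimes P\big)\cong(M\square_D N)\otimes E^{\otimes q}\otimes P\simeq\Omega(M,D,N)\otimes E^{\otimes q}\otimes P,
\]
the middle isomorphism again using flatness of $E^{\otimes q}\otimes P$. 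Combining these equivalences proves the corollary.

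The step I expect to be the main obstacle is not any single computation but the verification that all of this is legitimate: one must check the Fubini interchange of homotopy limits in the form used (this is where the simplicial model structure and the cofibrancy of every object of $\Ck$ are needed), and one must confirm that every isomorphism and quasi-isomorphism above is genuinely a morphism of $(C,F)$-bicomodules, compatible with the full (bi)cosimplicial structure --- which amounts to chasing coactions through the identifications in the spirit of Remark~\ref{rem: topological cobar vs algebraic cobar}. Granting that bookkeeping, the argument is formal, resting only on Corollary~\ref{cor: tensor commute with cobar} and the commutation of homotopy limits with homotopy limits.
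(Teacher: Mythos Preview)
Your proposal is correct and follows essentially the same route as the paper: both arguments fix the outer cosimplicial index, apply Corollary~\ref{cor: tensor commute with cobar} (or its proof) to pull the inner homotopy limit past the tensor factor $E^{\otimes q}\otimes P$, check compatibility with the remaining cosimplicial structure, and then invoke the Fubini interchange of iterated homotopy limits (which the paper cites as \cite[18.5.2, 18.5.3]{hir}). Your extra care about fibrancy of the intermediate objects and about naturality in $q$ is precisely the bookkeeping the paper sweeps into the phrase ``compatible with cofaces and codegeneracies.''
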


\begin{proof}
    By the previous Corollary, for any $i\geq 0$
    \begin{align*}
        \holim_\Delta\big( \Omega^\bullet(M,D, N)\otimes E^{\otimes i}\otimes P \big) & \simeq \holim_\Delta \big( \Omega^\bullet (M, D, N\otimes E^{\otimes i}\otimes P) \big)\\
        &  \simeq \Omega(M, D, N\otimes E^{\otimes i}\otimes P )\\
        & \simeq \Omega (M, D, N) \otimes E^{\otimes i}\otimes P\\
        & \simeq \holim_\Delta \big( \Omega^\bullet(M,D, N) \big) \otimes E^{\otimes i}\otimes P.
    \end{align*}
In other words, we have proved that we obtain a quasi-isomorphism between fibrant $(C, F)$-bicomodules (recall that we fixed a model for the homotopy limit):
\[
\holim_\Delta \big(\Omega^i(\Omega^\bullet(M,D, N), E, P)\big) \simeq \Omega^i\big(\holim_\Delta \Omega^\bullet(M,D, N), E, P \big).
\]
This above equivalence is compatible with cofaces and codegeneracies of the cobar construction, and therefore we obtain by \cite[18.5.2, 18.5.3]{hir}:
\begin{align*}
    \Omega(\Omega(M, D, N), E, P) & \simeq \holim_\Delta \Omega^\bullet \Big( \Omega(M,D, N) , E, P\Big)\\
    & \simeq \holim_\Delta \Omega^\bullet \Big( \holim_\Delta \Omega^\bullet \big( M, D, N\big), E, P \Big)\\
    &\simeq \holim_\Delta  \holim_\Delta \Omega^\bullet\big( \Omega^\bullet(M,D, N), E, P \big) \\
    & \simeq \holim_{\Delta \times \Delta} \Omega^\bullet(\Omega^\bullet(M, D, N), E, P). \qedhere 
\end{align*}
\end{proof}

\begin{cor}\label{cor: bicosimplicial of cohh}
   Let $C$, $D$, and $E$ be simply connected coalgebras in $\Ck$.
   Let $M$ be a fibrant $(C,D)$-bicomodule and $N$ a fibrant $(D,E)$-bicomodule. Then we obtain a quasi-isomorphism
   \[
\coHH(\Omega(M,D,N), C)\simeq \holim_{\Delta \times \Delta} \coHH^\bullet(\Omega^\bullet(M, D, N), E, P).
   \]
\end{cor}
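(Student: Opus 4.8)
The plan is to reproduce the proof of Corollary \ref{cor: bicosimplicial of cobar} verbatim, with the outer two-sided cobar construction replaced by the coHochschild cosimplicial object $\coHH^\bullet(-,C)$. Here $N$ is taken to be a fibrant $(D,C)$-bicomodule, so that $\Omega(M,D,N)=M\square_D N$ is a $(C,C)$-bicomodule and the left-hand side is defined; the right-hand side is the homotopy limit, over the double cosimplicial index, of the bicosimplicial object $\coHH^\bullet(\Omega^\bullet(M,D,N),C)^\bullet$ whose $(j,r)$-entry is $M\otimes D^{\otimes j}\otimes N\otimes C^{\otimes r}$. Write $X=\Omega(M,D,N)$; by Proposition \ref{prop: cotensor of fibrants} it is a fibrant $(C,C)$-bicomodule, so $\coHH(X,C)=\holim_\Delta\coHH^\bullet(X,C)$ with $\coHH^r(X,C)=X\otimes C^{\otimes r}$. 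For each $j\geq 0$ the bicomodule $\Omega^j(M,D,N)=M\otimes D^{\otimes j}\otimes N$ is a fibrant $(C,C)$-bicomodule, by Lemma \ref{lem: fib bicomodule==> fib left and right} together with iterated use of Lemma \ref{lem: tensor preserves fibrant}.

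First I would fix the coHochschild degree $r$ and take the homotopy limit of the $r$-th column over the cobar direction. Every $\k$-chain complex is flat (because $\k$ has global dimension zero), and $C^{\otimes r}$ is a fibrant right $C$-comodule via $\Delta$ on its last tensor factor (apply Lemma \ref{lem: tensor preserves fibrant} to $C^{\otimes r-1}\otimes C$, using that $U_C=C$ is fibrant). Hence Corollary \ref{cor: tensor commute with cobar}, applied with $P=C^{\otimes r}$, yields a natural quasi-isomorphism
\[
\coHH^r(X,C)=\Omega(M,D,N)\otimes C^{\otimes r}\;\simeq\;\Omega(M,D,N\otimes C^{\otimes r})=\holim_\Delta\Omega^\bullet(M,D,N\otimes C^{\otimes r})=\holim_\Delta\coHH^r(\Omega^\bullet(M,D,N),C),
\]
where the last identifications use $\Omega^j(M,D,N\otimes C^{\otimes r})=M\otimes D^{\otimes j}\otimes N\otimes C^{\otimes r}=\coHH^r(\Omega^j(M,D,N),C)$. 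These equivalences are natural in $r$ and compatible with the coHochschild cofaces and codegeneracies, since those maps only involve the $C$-coactions of $M$ and $N$ on the outermost factors together with $\Delta_C$ and $\varepsilon_C$, and thus never interact with the middle block $D^{\otimes j}$ or with the cobar-over-$D$ structure. So the cosimplicial object $r\mapsto\holim_\Delta\coHH^r(\Omega^\bullet(M,D,N),C)$ is levelwise quasi-isomorphic, compatibly with its structure maps, to $\coHH^\bullet(X,C)$.

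Then I would apply $\holim_\Delta$ over the coHochschild direction. Since the homotopy limits in question are computed in $\Ck$ by the model of \cite[18.1.8]{hir}, and a levelwise quasi-isomorphism of (Reedy-)fibrant cosimplicial objects induces a quasi-isomorphism on homotopy limits by \cite[18.5.3]{hir}, the previous step gives
\[
\coHH(X,C)=\holim_\Delta\coHH^\bullet(X,C)\;\simeq\;\holim_\Delta\holim_\Delta\coHH^\bullet(\Omega^\bullet(M,D,N),C)^\bullet\;\simeq\;\holim_{\Delta\times\Delta}\coHH^\bullet(\Omega^\bullet(M,D,N),C)^\bullet,
\]
the last equivalence being the Fubini property of iterated homotopy limits, exactly as in the closing display of Corollary \ref{cor: bicosimplicial of cobar}. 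This is the asserted quasi-isomorphism.

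The step I expect to be the main obstacle is the same one that makes Corollary \ref{cor: bicosimplicial of cobar} delicate: justifying that $-\otimes C^{\otimes r}$ commutes with the cobar homotopy limit --- that is, $\Omega(M,D,N)\otimes C^{\otimes r}\simeq\Omega(M,D,N\otimes C^{\otimes r})$ --- and that these equivalences are sufficiently natural in $r$ to descend to totalizations. This is precisely where the fibrancy (equivalently coflatness, via Proposition \ref{prop: fibrant==>coflat}) of $M$ and $N$ is used, through Corollary \ref{cor: tensor commute with cobar}, and where the simple-connectivity hypotheses are indispensable, since without them the cobar construction fails to model the derived cotensor product and the interchange breaks down. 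Once these naturality and fibrancy points are in hand, the passage to $\holim_{\Delta\times\Delta}$ is formal.
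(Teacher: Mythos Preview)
Your proposal is correct and follows essentially the same approach as the paper: apply Corollary~\ref{cor: tensor commute with cobar} with $P=C^{\otimes r}$ to commute $-\otimes C^{\otimes r}$ past the cobar homotopy limit, note compatibility with the coHochschild cofaces and codegeneracies, and then use Fubini for iterated homotopy limits. You also correctly identify and repair the typos in the statement (the hypothesis should read that $N$ is a fibrant $(D,C)$-bicomodule, and the right-hand side should be $\coHH^\bullet(\Omega^\bullet(M,D,N),C)$ rather than involving $E$ and $P$).
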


\begin{proof}
    By Corollary \ref{cor: tensor commute with cobar}, for any $i\geq 0$
    \[
\holim_\Delta (\Omega^\bullet(M,D, N)\otimes C^{\otimes i}) \simeq \holim_\Delta (\Omega^\bullet(M,D, N))\otimes C^{\otimes i}.
    \]
The above equivalence is compatible with cofaces and codegeneracies of the cobar construction, and therefore we obtain \cite[18.5.2, 18.5.3]{hir}:
\begin{align*}
    \coHH(\Omega(M,D, N), C) & \simeq  \holim_\Delta \coHH^\bullet(\holim_\Delta \Omega^\bullet(M,D, N), C)\\
    & \simeq \holim_{\Delta\times \Delta} \coHH^\bullet(\Omega^\bullet(M,D, N), C). \qedhere
\end{align*}
\end{proof}

\begin{prop}\label{prop: coHH is given by cobar construction over Ce}
Let $C$ be a simply connected in $\Ck$.
Let $M$ be a fibrant $(C,C)$-bicomodule.
There is a quasi-isomorphism
$$\coTHH(M, C) \simeq \Omega(M, C\otimes C^\op, C).$$
\end{prop}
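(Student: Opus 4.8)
The strategy is to route both sides through the conormalized coHochschild complex $\mathcal{H}(M,C)$ of Definition~\ref{def: cohochschild homology underived}, using that the enveloping coalgebra $C^e = C\otimes C^\op$ is again simply connected: $(C^e)_0\cong C_0\otimes C_0\cong\k$ and $(C^e)_1\cong(C_1\otimes C_0)\oplus(C_0\otimes C_1)=0$, so every construction of Section~\ref{section: bicategory of comodules} and Remark~\ref{rem: topological cobar vs algebraic cobar} applies verbatim with $C$ replaced by $C^e$. Throughout, $M$ fibrant is what guarantees that $\Omega(M, C\otimes C^\op, C)$ is the homotopically meaningful model for the derived cotensor $M\dcotensor_{C^e}C$, via Corollary~\ref{cor: cotensor of fibrant is equal to cobar} and Proposition~\ref{Prop: cotensor preserve weak equivalence if fibrant}.

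First I would unwind the right-hand side. Regarding $M$ as a right $C^e$-comodule and $C$ as a left $C^e$-comodule via Remark~\ref{rem: bicomodules are right comodules (ordinary)}, the two-sided cobar construction $\Omega(M, C^e, C)$ of Definition~\ref{def: cosimplicial cobar general} is defined, and since $C^e$ is simply connected, Remark~\ref{rem: topological cobar vs algebraic cobar} applied to the coalgebra $C^e$ gives a quasi-isomorphism $\Omega(M, C^e, C)\simeq\underline{\Omega}(M, C^e, C)$ onto the conormalized two-sided cobar complex. On the other hand, the remark following Definition~\ref{def: cohochschild homology underived} identifies $\mathcal{H}(M,C)=\underline{\Omega}(M, C^e, C)$. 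Composing these gives $\mathcal{H}(M,C)\simeq\Omega(M, C^e, C)$.

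Second I would identify the left-hand side $\coHH(M,C)=\coHH^{\Ck}(M,C)=\holim_\Delta\coHH^\bullet(M,C)$ with $\mathcal{H}(M,C)$. When $C$ is simply connected this is the coincidence, recorded in the example following Definition~\ref{def: coHH sarah def}, of $\coHH^{\Ck}$ with the Hess--Parent--Scott coHochschild homology \cite{hess2009cohochschild}. Concretely, one reruns the argument of Remark~\ref{rem: topological cobar vs algebraic cobar} for the cyclic cosimplicial object $\coHH^\bullet(M,C)$: a homotopy limit over $\Delta$ in the simplicial model category $\Ck$ is the product-total complex of the conormalization, whose $(-q)^{\mathrm{th}}$ column $M\otimes\overline{C}^{\otimes q}$ is concentrated in chain degrees $\geq 2q$ since $C$ is simply connected, so the product- and coproduct-totalizations agree and the result is the normalized coHochschild complex, which is quasi-isomorphic to $\mathcal{H}(M,C)$. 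Chaining $\coHH(M,C)\simeq\mathcal{H}(M,C)=\underline{\Omega}(M, C^e, C)\simeq\Omega(M, C^e, C)$ then finishes the proof.

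The step I expect to be the main obstacle is making the middle identification $\mathcal{H}(M,C)=\underline{\Omega}(M, C^e, C)$ genuinely precise at the level of complexes rather than merely on homology, and carrying out both conormalization-and-boundedness arguments with care: one must match the cyclic coface map $\widetilde{t}\circ(\lambda\otimes\id)$ of the coHochschild complex with the cobar codifferential over $C^e$, track the Koszul signs, and check that simple-connectivity of both $C$ and $C^e$ really forces the two totalizations to coincide so that each homotopy limit is modeled correctly. Conceptually this is the cobar-theoretic dual of the classical identification $\HH(R,M)\simeq M\otimes^{\mathbb{L}}_{R\otimes R^\op}R$ obtained by cotensoring against the bar resolution of the diagonal, here realized through the cofree cobar coresolution $\Omega^\bullet(C^e, C^e, C)\to C$ of $C$ over $C^e$.
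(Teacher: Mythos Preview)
Your diagnosis of the obstacle is exactly right, and it is a genuine gap rather than a technicality. The conormalized coHochschild complex has $r$-th term $M\otimes\overline{C}^{\,\otimes r}$, while $\underline{\Omega}(M,C^e,C)$ has $r$-th term $M\otimes(\overline{C^e})^{\otimes r}\otimes C$; these are \emph{different} complexes, and the remark after Definition~\ref{def: cohochschild homology underived} only records the cohomological identification $\coHH_q(M,C)\cong\cotor^q_{C^e}(M,C)$, not a chain-level one. Producing a chain map between these two complexes and showing it is a quasi-isomorphism is essentially the content of the proposition, so routing the argument through that identification is circular. Note also that none of your steps (1)--(3) actually invokes the fibrancy of $M$.

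The paper avoids this comparison entirely by choosing a different coresolution of $C$ as a $C^e$-comodule. Instead of the cofree cobar $\Omega^\bullet(C^e,C^e,C)$ over $C^e$ (which after cotensoring with $M$ merely reproduces $\Omega^\bullet(M,C^e,C)$), it uses the cobar $\Omega^\bullet(C,C,C)$ of $C$ over itself, with levels $C\otimes C^{\otimes n}\otimes C$. Each level is cofree on $C^{\otimes n}$ as a $C^e$-comodule, so
\[
M\square_{C^e}\bigl(C\otimes C^{\otimes n}\otimes C\bigr)\cong M\square_{C^e}\bigl(C^e\otimes C^{\otimes n}\bigr)\cong M\otimes C^{\otimes n}=\coHH^n(M,C),
\]
and one checks this is an isomorphism of \emph{cosimplicial objects}. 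Since $\Omega(C,C,C)$ is a fibrant $C^e$-comodule quasi-isomorphic to $C$, and $M$ is fibrant (this is where the hypothesis enters), one obtains
\[
\Omega(M,C^e,C)\simeq M\dcotensor_{C^e}C\simeq M\square_{C^e}\Omega(C,C,C)\simeq\holim_\Delta\bigl(M\square_{C^e}\Omega^\bullet(C,C,C)\bigr)\cong\holim_\Delta\coHH^\bullet(M,C)=\coHH(M,C).
\]
This is precisely the dual of the bar-resolution-of-the-diagonal argument you allude to at the end, but with the correct resolution: the analogue of $\mathsf{Bar}^\bullet(R,R,R)$, not of $\mathsf{Bar}^\bullet(R^e,R^e,R)$.
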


\begin{proof}
We denote the enveloping coalgebra by $C^e=C\otimes C^\op$.
Recall that we have the quasi-isomorphism $C\simeq \Omega(C, C, C)$ as $(C, C)$-bicomodules. Thus we obtain
\[ 
M \widehat{\square}_{C^e} C \simeq M \widehat{\square}_{C^e} \Omega(C, C, C).
\]
Notice that each object in the cosimplicial diagram $\ccobars{C}$ is a fibrant $(C,C)$-bicomodule by Lemma \ref{lem: tensor preserves fibrant (cofree case)}.
Thus $\ccobar{C}$ is a fibrant $(C,C)$-bicomodule by \cite[18.5.2]{hir}. 
Therefore
\[
M \widehat{\square}_{C^e} \Omega(C, C, C)\simeq M \square_{C^e} \Omega(C, C, C).
\]
Since the functor $M\square_{C^e}-$ preserves towers that stabilize in each degree, we get
\begin{align*}
    M {\square}_{C^e} \Omega (C,C,C) &\simeq \holim\Big({M} \square_{C^e} \Omega^\bullet(C,C,C)\Big).
\end{align*}
We have an isomorphism, $\coTHH^n(M, C) \cong M\square_{C^e}\Omega^n(C, C, C)$, for each $n\geq 0$, induced by
\begin{align*}
    M \otimes C^n \cong M {\square}_{C^e} (C^e \otimes C^n) &\cong M {\square}_{C^e} (C \otimes C^n \otimes C),
\end{align*}
given by the permutation of $C^{op} \cong C$ past $C^n$. 
These isomorphisms are compatible with the cosimplicial structures and thus provide an isomorphism, $\coTHH^\bullet(M, C)\cong M\square_{C^e}\Omega^\bullet(C, C, C)$.  
\end{proof}

\bibliography{coTHH}

\newcommand{\etalchar}[1]{$^{#1}$}
\begin{thebibliography}{EKMM97}

\bibitem[{Al-}02]{khaled}
Khaled {Al-Takhman}.
\newblock Equivalences of comodule categories for coalgebras over rings.
\newblock {\em J. Pure Appl. Algebra}, 173(3):245--271, 2002.

\bibitem[Bar24]{justin}
Justin Barhite.
\newblock Bicategorical traces and cotraces.
\newblock {\em Theory Appl. Categ.}, 41:Paper No. 22, 707--759, 2024.

\bibitem[BGH{\etalchar{+}}18]{bohmann2018computational}
Anna~Marie Bohmann, Teena Gerhardt, Amalie H{\o}genhaven, Brooke Shipley, and Stephanie Ziegenhagen.
\newblock Computational tools for topological co{H}ochschild homology.
\newblock {\em Topology Appl.}, 235:185--213, 2018.

\bibitem[BGS22]{loop}
Anna~Marie Bohmann, Teena Gerhardt, and Brooke Shipley.
\newblock Topological co{H}ochschild homology and the homology of free loop spaces.
\newblock {\em Math. Z.}, 301(1):411--454, 2022.

\bibitem[BHK{\etalchar{+}}15]{left1}
Marzieh Bayeh, Kathryn Hess, Varvara Karpova, Magdalena K\c{e}dziorek, Emily Riehl, and Brooke Shipley.
\newblock Left-induced model structures and diagram categories.
\newblock In {\em Women in topology: collaborations in homotopy theory}, volume 641 of {\em Contemp. Math.}, pages 49--81. Amer. Math. Soc., Providence, RI, 2015.

\bibitem[BHM93]{trace1}
M.~B\"{o}kstedt, W.~C. Hsiang, and I.~Madsen.
\newblock The cyclotomic trace and algebraic {$K$}-theory of spaces.
\newblock {\em Invent. Math.}, 111(3):465--539, 1993.

\bibitem[BM11]{blum-man}
Andrew~J. Blumberg and Michael~A. Mandell.
\newblock Derived {K}oszul duality and involutions in the algebraic {$K$}-theory of spaces.
\newblock {\em J. Topol.}, 4(2):327--342, 2011.

\bibitem[BM12]{blumberg2012localization}
Andrew~J. Blumberg and Michael~A. Mandell.
\newblock Localization theorems in topological {H}ochschild homology and topological cyclic homology.
\newblock {\em Geom. Topol.}, 16(2):1053--1120, 2012.

\bibitem[BP23]{dualitySW}
{\"O}zg{\"u}r~Haldun {Bay{\i}nd{\i}r} and Maximilien {P{\'e}roux}.
\newblock Spanier-{W}hitehead duality for topological co{H}ochschild homology.
\newblock {\em J. Lond. Math. Soc. (2)}, 107(5):1780--1822, 2023.

\bibitem[{Bun}12]{ulrich}
Ulrich {Bunke}.
\newblock {Differential cohomology}.
\newblock {\em arXiv e-prints}, page arXiv:1208.3961, August 2012.

\bibitem[BW03]{brzezinski2003corings}
Tomasz Brzezinski and Robert Wisbauer.
\newblock {\em Corings and comodules}, volume 309 of {\em London Mathematical Society Lecture Note Series}.
\newblock Cambridge University Press, Cambridge, 2003.

\bibitem[CP19]{campbell2019topological}
Jonathan~A. Campbell and Kate Ponto.
\newblock Topological {H}ochschild homology and higher characteristics.
\newblock {\em Algebr. Geom. Topol.}, 19(2):965--1017, 2019.

\bibitem[Doi81]{doi1981homological}
Yukio Doi.
\newblock Homological coalgebra.
\newblock {\em J. Math. Soc. Japan}, 33(1):31--50, 1981.

\bibitem[DP80]{doldpuppe}
Albrecht Dold and Dieter Puppe.
\newblock Duality, trace, and transfer.
\newblock In {\em Proceedings of the {I}nternational {C}onference on {G}eometric {T}opology ({W}arsaw, 1978)}, pages 81--102. PWN, Warsaw, 1980.

\bibitem[Dun97]{trace2}
Bj\o rn~Ian Dundas.
\newblock Relative {$K$}-theory and topological cyclic homology.
\newblock {\em Acta Math.}, 179(2):223--242, 1997.

\bibitem[EKMM97]{elmendorf1995rings}
A.~D. Elmendorf, I.~Kriz, M.~A. Mandell, and J.~P. May.
\newblock {\em Rings, modules, and algebras in stable homotopy theory}, volume~47 of {\em Mathematical Surveys and Monographs}.
\newblock American Mathematical Society, Providence, RI, 1997.
\newblock With an appendix by M. Cole.

\bibitem[FS98]{moritatakinvariance}
Marco~A. Farinati and Andrea Solotar.
\newblock Morita-{T}akeuchi equivalence, cohomology of coalgebras and {A}zumaya coalgebras.
\newblock In {\em Rings, {H}opf algebras, and {B}rauer groups ({A}ntwerp/{B}russels, 1996)}, volume 197 of {\em Lecture Notes in Pure and Appl. Math.}, pages 119--146. Dekker, New York, 1998.

\bibitem[GKR20]{left3}
Richard {Garner}, Magdalena {K\c{e}dziorek}, and Emily {Riehl}.
\newblock Lifting accessible model structures.
\newblock {\em J. Topol.}, 13(1):59--76, 2020.

\bibitem[GP87]{families}
Luzius Gr\"{u}nenfelder and Robert Par\'{e}.
\newblock Families parametrized by coalgebras.
\newblock {\em J. Algebra}, 107(2):316--375, 1987.

\bibitem[GPS25]{coalgKtheory}
Teena Gerhardt, Maximilien P\'eroux, and W.~Hermann~B. Sor\'e.
\newblock Coalgebraic {$K$}-theory, 2025.
\newblock ArXiv:2503.04897.

\bibitem[Hat65]{hattori}
Akira Hattori.
\newblock Rank element of a projective module.
\newblock {\em Nagoya Math. J.}, 25:113--120, 1965.

\bibitem[Heu24]{gijs}
Gijs Heuts.
\newblock Koszul duality and a conjecture of {F}rancis-{G}aitsgory, 2024.
\newblock ArXiv:2408.06173.

\bibitem[Hir03]{hir}
Philip~S. Hirschhorn.
\newblock {\em Model categories and their localizations}, volume~99 of {\em Mathematical Surveys and Monographs}.
\newblock American Mathematical Society, Providence, RI, 2003.

\bibitem[HKRS17]{hkrs}
Kathryn Hess, Magdalena K\c{e}dziorek, Emily Riehl, and Brooke Shipley.
\newblock A necessary and sufficient condition for induced model structures.
\newblock {\em J. Topol.}, 10(2):324--369, 2017.

\bibitem[Hov99]{hovey}
Mark Hovey.
\newblock {\em Model categories}, volume~63 of {\em Mathematical Surveys and Monographs}.
\newblock American Mathematical Society, Providence, RI, 1999.

\bibitem[HPS09]{hess2009cohochschild}
Kathryn Hess, Paul-Eug\`ene Parent, and Jonathan Scott.
\newblock Co{H}ochschild homology of chain coalgebras.
\newblock {\em J. Pure Appl. Algebra}, 213(4):536--556, 2009.

\bibitem[HR21a]{nima}
Kathryn {Hess} and Nima {Rasekh}.
\newblock {Shadows are Bicategorical Traces}.
\newblock {\em arXiv e-prints}, page arXiv:2109.02144, September 2021.

\bibitem[HR21b]{bicomodthh}
Geoffroy Horel and Maxime Ramzi.
\newblock A multiplicative comparison of {M}ac {L}ane homology and topological {H}ochschild homology.
\newblock {\em Ann. K-Theory}, 6(3):571--605, 2021.

\bibitem[HS16]{HSwaldausen}
Kathryn Hess and Brooke Shipley.
\newblock Waldhausen {$K$}-theory of spaces via comodules.
\newblock {\em Adv. Math.}, 290:1079--1137, 2016.

\bibitem[HS21]{HScothh}
Kathryn Hess and Brooke Shipley.
\newblock Invariance properties of co{H}ochschild homology.
\newblock {\em J. Pure Appl. Algebra}, 225(2):Paper No. 106505, 27, 2021.

\bibitem[Kla22]{klanderman2022computations}
Sarah Klanderman.
\newblock Computations of relative topological co{H}ochschild homology.
\newblock {\em J. Homotopy Relat. Struct.}, 17(3):393--417, 2022.

\bibitem[Lan02]{lang}
Serge Lang.
\newblock {\em Algebra}, volume 211 of {\em Graduate Texts in Mathematics}.
\newblock Springer-Verlag, New York, third edition, 2002.

\bibitem[Lod98]{loday1998cyclichomology}
Jean-Louis Loday.
\newblock {\em Cyclic homology}, volume 301 of {\em Grundlehren der mathematischen Wissenschaften [Fundamental Principles of Mathematical Sciences]}.
\newblock Springer-Verlag, Berlin, second edition, 1998.
\newblock Appendix E by Mar\'{\i}a O. Ronco, Chapter 13 by the author in collaboration with Teimuraz Pirashvili.

\bibitem[Lur17]{HA}
Jacob Lurie.
\newblock Higher algebra.
\newblock \url{https://www.math.ias.edu/~lurie/papers/HA.pdf}, 2017.
\newblock electronic book.

\bibitem[Mal17]{Cary}
Cary Malkiewich.
\newblock Cyclotomic structure in the topological {H}ochschild homology of {$DX$}.
\newblock {\em Algebr. Geom. Topol.}, 17(4):2307--2356, 2017.

\bibitem[McC01]{mccleary}
John McCleary.
\newblock {\em A user's guide to spectral sequences}, volume~58 of {\em Cambridge Studies in Advanced Mathematics}.
\newblock Cambridge University Press, Cambridge, second edition, 2001.

\bibitem[NS18]{tch}
Thomas Nikolaus and Peter Scholze.
\newblock On topological cyclic homology.
\newblock {\em Acta Math.}, 221(2):203--409, 2018.

\bibitem[P{\'e}r22a]{coalgenr}
Maximilien P{\'e}roux.
\newblock The coalgebraic enrichment of algebras in higher categories.
\newblock {\em J. Pure Appl. Algebra}, 226(3):Paper No. 106849, 11, 2022.

\bibitem[P{\'e}r22b]{dkcoalg}
Maximilien P{\'e}roux.
\newblock Coalgebras in the {D}wyer-{K}an localization of a model category.
\newblock {\em Proc. Amer. Math. Soc.}, 150(10):4173--4190, 2022.

\bibitem[P{\'e}r24]{pertower}
Maximilien P{\'e}roux.
\newblock A monoidal {D}old-{K}an correspondence for comodules.
\newblock {\em J. Pure Appl. Algebra}, 228(8):Paper No. 107660, 2024.

\bibitem[P{\'e}r25]{connectivecomod}
Maximilien P{\'e}roux.
\newblock Rigidificaton of connective comodules, 2025.
\newblock To appear in Proc. Amer. Soc., arXiv:2006.09398.v5.

\bibitem[Pon10]{ponto2008fixed}
Kate Ponto.
\newblock Fixed point theory and trace for bicategories.
\newblock {\em Ast\'{e}risque}, pages xii+102, 2010.

\bibitem[Pos23]{leo}
Leonid Positselski.
\newblock Homological full-and-faithfulness of comodule inclusion and contramodule forgetful functors, 2023.
\newblock arXiv2301.09561.v2.

\bibitem[PS13]{Ponto_2012}
Kate Ponto and Michael Shulman.
\newblock Shadows and traces in bicategories.
\newblock {\em J. Homotopy Relat. Struct.}, 8(2):151--200, 2013.

\bibitem[PS14]{traceinSMon}
Kate Ponto and Michael Shulman.
\newblock Traces in symmetric monoidal categories.
\newblock {\em Expo. Math.}, 32(3):248--273, 2014.

\bibitem[PS19]{perouxshipley}
Maximilien P\'{e}roux and Brooke Shipley.
\newblock Coalgebras in symmetric monoidal categories of spectra.
\newblock {\em Homology Homotopy Appl.}, 21(1):1--18, 2019.

\bibitem[Qui88]{quillencochain}
Daniel Quillen.
\newblock Algebra cochains and cyclic cohomology.
\newblock {\em Publications Math\'ematiques de l'IH\'ES}, 68:139--174, 1988.

\bibitem[Qui89]{ddaniel}
Daniel Quillen.
\newblock Cyclic cohomology and algebra extensions.
\newblock {\em $K$-Theory}, 3(3):205--246, 1989.

\bibitem[Rav86]{ravenel}
Douglas~C. Ravenel.
\newblock {\em Complex cobordism and stable homotopy groups of spheres}, volume 121 of {\em Pure and Applied Mathematics}.
\newblock Academic Press, Inc., Orlando, FL, 1986.

\bibitem[Sta65]{stallings}
John Stallings.
\newblock Centerless groups---an algebraic formulation of {G}ottlieb's theorem.
\newblock {\em Topology}, 4:129--134, 1965.

\bibitem[Swa70]{Swan}
Richard~G. Swan.
\newblock {\em {$K$}-theory of finite groups and orders}, volume Vol. 149 of {\em Lecture Notes in Mathematics}.
\newblock Springer-Verlag, Berlin-New York, 1970.

\bibitem[Tak77]{takeuchi}
Mitsuhiro Takeuchi.
\newblock Morita theorems for categories of comodules.
\newblock {\em J. Fac. Sci. Univ. Tokyo Sect. IA Math.}, 24(3):629--644, 1977.

\bibitem[Wal79]{waldhausen}
Friedhelm Waldhausen.
\newblock Algebraic {$K$}-theory of topological spaces. {II}.
\newblock In {\em Algebraic topology, {A}arhus 1978 ({P}roc. {S}ympos., {U}niv. {A}arhus, {A}arhus, 1978)}, volume 763 of {\em Lecture Notes in Math.}, pages 356--394. Springer, Berlin, 1979.

\bibitem[Wal85]{waldy}
Friedhelm Waldhausen.
\newblock Algebraic {$K$}-theory of spaces.
\newblock In {\em Algebraic and geometric topology ({N}ew {B}runswick, {N}.{J}., 1983)}, volume 1126 of {\em Lecture Notes in Math.}, pages 318--419. Springer, Berlin, 1985.

\bibitem[Wei94]{weibel}
Charles~A. Weibel.
\newblock {\em An introduction to homological algebra}, volume~38 of {\em Cambridge Studies in Advanced Mathematics}.
\newblock Cambridge University Press, Cambridge, 1994.

\bibitem[Wei13]{Kbook}
Charles~A. Weibel.
\newblock {\em The {$K$}-book}, volume 145 of {\em Graduate Studies in Mathematics}.
\newblock American Mathematical Society, Providence, RI, 2013.
\newblock An introduction to algebraic $K$-theory.

\end{thebibliography}
\bibliographystyle{alpha}

\end{document}